\DeclareMathOperator{\Ker}{Ker}
\DeclareMathOperator*{\colim}{colim}
\DeclareMathOperator{\Hom}{Hom}
\DeclareMathOperator{\RHom}{\mathrm{R}\!\Hom}
\DeclareMathOperator{\cHom}{\mathcal{H}\hspace{-0.3pt}om}
\DeclareMathOperator{\End}{End}
\DeclareMathOperator{\cEnd}{\mathcal{E}\hspace{-0.3pt}nd}
\DeclareMathOperator{\Ext}{Ext}
\DeclareMathOperator{\gd}{gl.dim}
\DeclareMathOperator{\md}{mod}
\DeclareMathOperator{\Md}{Mod}
\DeclareMathOperator{\proj}{proj}
\DeclareMathOperator{\fl}{fl}
\DeclareMathOperator{\qgr}{qgr}
\DeclareMathOperator{\QGr}{QGr}
\DeclareMathOperator{\thick}{thick}
\DeclareMathOperator{\per}{per}
\DeclareMathOperator{\add}{add}
\DeclareMathOperator{\CM}{CM}
\DeclareMathOperator{\sCM}{\underline{\CM}}
\DeclareMathOperator{\Tot}{Tot}
\DeclareMathOperator{\Totp}{\widehat{\Tot}}
\DeclareMathOperator{\coh}{coh}
\DeclareMathOperator{\silt}{\mathrm{silt}}
\DeclareMathOperator{\ctilt}{\mathrm{ctilt}}
\newcommand\ct[1]{#1\text{-\!}\ctilt}
\DeclareMathOperator{\cone}{cone}
\def\A{\mathcal{A}}
\def\B{\mathcal{B}}
\def\C{\mathcal{C}}
\def\D{\mathcal{D}}
\def\F{\mathcal{F}}
\def\K{\mathcal{K}}
\def\M{\mathcal{M}}
\def\P{\mathcal{P}}
\renewcommand\S{\mathcal{S}}
\def\T{\mathcal{T}}
\def\L{\Lambda}
\def\G{\Gamma}
\def\Del{\Delta}
\def\a{\alpha}
\def\s{\sigma}
\def\Z{\mathbb{Z}}
\def\R{\mathbb{R}}
\def\disoplus{\displaystyle\bigoplus}
\def\fd{\mathrm{fd}}
\def\op{\mathrm{op}}
\def\dg{\mathrm{dg}}
\def\sg{\mathrm{sg}}
\newtheorem{Thm}{Theorem}[section]
\newtheorem{Lem}[Thm]{Lemma}
\newtheorem{Prop}[Thm]{Proposition}
\newtheorem{Cor}[Thm]{Corollary}
\newtheorem{Prop-Def}[Thm]{Proposition-Definition}
\theoremstyle{definition}
\newtheorem{Def}[Thm]{Definition}
\newtheorem{Ex}[Thm]{Example}
\newtheorem{Q}[Thm]{Question}
\newtheorem{Conv}[Thm]{Convention}
\theoremstyle{remark}
\newtheorem{Rem}[Thm]{Remark}
\renewcommand{\theequation}{\arabic{section}.\arabic{equation}}
\DeclareMathOperator{\Free}{Free}
\DeclareMathOperator{\qper}{qper^\Z\!}
\renewcommand{\P}{\mathbb{P}}
\title[Cluster categories and singularity categories]{Cluster categories of formal DG algebras and \linebreak singularity categories}
\author{Norihiro Hanihara}
\thanks{This work is supported by JSPS KAKENHI Grant Number JP19J21165}
\subjclass[2010]{16E45, 18E30, 16E35, 16S38, 16G50}
\keywords{Cluster category, DG algebra, Calabi-Yau algebra, cluster tilting subcategory, $d$-representation-infinite algebra, Iwanaga-Gorenstein algebra, derived orbit category, singularity category, DG orbit category}
\address{Graduate School of Mathematics, Nagoya University, Furocho, Chikusa-ku, Nagoya, 464-8602, Japan}
\email{m17034e@math.nagoya-u.ac.jp}
\begin{document}
\begin{abstract}
Given a negatively graded Calabi-Yau algebra, we regard it as a DG algebra with vanishing differentials and study its cluster category. We show that this DG algebra is sign-twisted Calabi-Yau, and realize its cluster category as a triangulated hull of an orbit category of a derived category, and as the singularity category of a finite dimensional Iwanaga-Gorenstein algebra. Along the way, we give two results which stand on their own. First, we show that the derived category of coherent sheaves over a Calabi-Yau algebra has a natural cluster tilting subcategory whose dimension is determined by the Calabi-Yau dimension and the $a$-invariant of the algebra. Secondly, we prove that two DG orbit categories obtained from a DG endofunctor and its homotopy inverse are quasi-equivalent. As an application, we show that the higher cluster category of a higher representation infinite algebra is triangle equivalent to the singularity category of an Iwanaga-Gorenstein algebra which is explicitly described. Also, we demonstrate that our results generalize the context of Keller--Murfet--Van den Bergh on the derived orbit category involving a square root of the AR translation.
\end{abstract}
\maketitle
\setcounter{tocdepth}{1}
\tableofcontents
\section{Introduction}
Cluster tilting theory emerged in the beginning of this century in the contexts of categorification of cluster algebras \cite{BMRRT} and higher dimensional Auslander-Reiten theory \cite{Iy07a}, which has lead to fruitful connections between various areas of mathematics. 
A central role is played by the concept of cluster tilting objects in Calabi-Yau (CY) triangulated categories, which gives a categorification of Fomin--Zeleinsky's cluster algebra \cite{CA1}; see for example \cite{Ke10} for an introduction. A general construction of such triangulated categories is given as Amiot's generalized cluster categories \cite{Am09}, which is based on the formalism of differential graded (DG) algebras \cite{Ke06}.
Throughout we fix a field $k$. Recall that a DG $k$-algebra $\L$ is {\it bimodule $(n+1)$-CY} \cite{G} if it is homologically smooth and there is an isomorphism
\[ \RHom_{\L^e}(\L,\L^e)[n+1]\simeq\L \]
in $\D(\L^e)$, where $\L^e$ is the enveloping algebra $\L^\op\otimes_k\L$. Then the {\it cluster category} $\C(\L)$ of $\L$ is the Verdier quotient of the perfect derived category $\per\L$ by the thick subcategory $\D^b(\L)$ consisting of DG modules of finite dimensional total cohomology. The fundamental result due to Amiot and its generalization by Guo \cite{Am09,Guo} states that if $\L$ is a bimodule $(n+1)$-CY DG algebra, then $\C(\L)$ is an $n$-CY triangulated category and $\L\in\C(\L)$ is an $n$-cluster tilting object.


The aim of this paper is to give some descriptions of this cluster category for a certain class of DG algebras, namely {\it formal} DG algebras. Recall that a DG algebra is formal if it is isomorphic to its cohomology in the homotopy category of DG algebras. We therefore start our discussion with a graded (non-DG) algebra $R$ and view it as a DG algebra $R^\dg$ with trivial differentials.

\subsection{Cluster categories and orbit categories}
Our first observation is that we can obtain a class of CY DG algebras from certain graded (non-DG) algebras. Recall the distinct notion of CY algebra for graded non-DG algebras; a graded algebra $R$ over a field $k$ is {\it bimodule $(d+1)$-CY of $a$-invariant $a$} if it is homologically smooth and there is an isomorphism
\[ \RHom_{R^e}(R,R^e)(a)[d+1]\simeq R \]
in the derived category $\D(\Md^\Z\!R^e)$ of graded bimodules (this should not be confused with the derived category $\D((R^\dg)^e)$ of the DG algebra $(R^\dg)^e$). Here $(1)$ is the degree shift functor on the graded modules, while $[1]$ is the suspension in the derived category. Such algebras arise naturally and are studied extensively in representation theory and commutative or non-commutative algebraic geometry \cite{AS,YZ,Boc08,KS08,IR,BS,BSW,MM,AIR,VdB15,RR}.

It is well-known that among CY algebras, those of $a$-invariant $1$ are fundamental in the sense that it is the higher preprojective algebra \cite{IO} of its degree $0$ part \cite{Ke11,MM,HIO,AIR}. Although our results are already non-trivial for $a=1$, we study CY algebras of arbitrary $a$-invariant, which exhibits some additional symmetries.

Let $R$ be a CY algebra. We view it as a DG algebra with vanishing differentials, which we denote by $R^\dg$, and study its properties. Note that the gradings on $R$ and on $R^\dg$ are of different nature; the first one is `algebraic' while the second one is `cohomological' (see \cite[Section 3.1, 15.1]{Ye}). Such homological properties of DG algebras have been investigated for example in \cite{HM,MGYC}.
The following observation shows a relationship between the CY properties of $R$ and $R^\dg$. In particular, we obtain from a graded CY algebra a DG algebra which is always very close to being CY, and often in fact CY. We refer to Theorem \ref{dgcy} for a precise statement. Here we do not need any additional assumptions on $R$ such as (R0) etc below.
\begin{Prop}[Theorem \ref{dgcy}]
Let $R$ be a graded bimodule $(d+1)$-CY algebra of $a$-invariant $a$. Then $R^\dg$ is sign twisted bimodule $(d+a+1)$-CY.
\end{Prop}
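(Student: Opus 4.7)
The plan is to transport the graded $(d+1)$-CY property of $R$ to a DG CY property of $R^\dg$ by reinterpreting the algebraic grading on $R$ as a cohomological grading on $R^\dg$. Under this translation, a degree shift $(a)$ of graded bimodules becomes a cohomological suspension $[a]$, and this is what accounts for the increase in CY dimension from $d+1$ to $d+a+1$.

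More concretely, I would introduce a functor $(-)^\dg \colon \D(\Md^\Z\!R^e) \to \D((R^\dg)^e)$ sending a graded bimodule $M=\bigoplus_i M_i$ to the DG bimodule with $(M^\dg)^i=M_i$ and vanishing differential. Two compatibilities then need to be checked: first, that $R(a)^\dg \simeq R^\dg[a]$, so an internal degree shift is converted into a cohomological suspension; second, that this functor intertwines $\RHom_{R^e}$ with $\RHom_{(R^\dg)^e}$ (and similarly $\lotimes$) modulo the Koszul sign automorphism discussed below. Homological smoothness of $R^\dg$ then follows by applying $(-)^\dg$ to a finite graded projective bimodule resolution of $R$ over $R^e$: the result is a perfect resolution of $R^\dg$ by finite direct sums of shifts of $(R^\dg)^e$.

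Using the same resolution to compute RHom, I would identify $\RHom_{(R^\dg)^e}(R^\dg,(R^\dg)^e) \simeq \RHom_{R^e}(R,R^e)^\dg$ up to the sign twist. Invoking the graded CY hypothesis $\RHom_{R^e}(R,R^e) \simeq R(-a)[-d-1]$ and applying $(-)^\dg$ yields
\[ \RHom_{(R^\dg)^e}(R^\dg,(R^\dg)^e) \;\simeq\; R^\dg[-a][-d-1] \;=\; R^\dg[-(d+a+1)] \]
in $\D((R^\dg)^e)$, which is exactly the desired sign-twisted bimodule $(d+a+1)$-CY property.

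The principal technical obstacle is the bookkeeping of Koszul signs. Most notably, the DG opposite $(R^\dg)^\op$ carries the signed multiplication $r \cdot s = (-1)^{|r||s|}sr$, so the DG enveloping algebra $(R^\dg)^e = (R^\dg)^\op \otimes_k R^\dg$ differs from the naive $(R^e)^\dg$ by a canonical sign automorphism, and this discrepancy propagates through every identification made above. It is precisely this sign defect that forces the word \emph{sign-twisted} in the conclusion, and pinning down the resulting sign automorphism and matching it with the one fixed in Theorem \ref{dgcy} is the heart of the argument.
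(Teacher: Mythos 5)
Your proposal is essentially the paper's own proof: apply $(-)^\dg$ to a graded free bimodule resolution of $R$, identify $(R^e(l))^\dg\simeq(R^\dg)^e[l]$ (Lemma \ref{free}), compute $\RHom_{(R^\dg)^e}(R,(R^\dg)^e)$ as a total product of $\cHom$'s (Lemma \ref{Tot}), and track the sign automorphism $\s\colon x\mapsto(-1)^{|x|}x$ introduced by the duality (Lemma \ref{forget}). The one inaccuracy in your sketch is that converting the internal shift into a suspension is itself not sign-free — the correct statement is $({}_{\s^l}X(l))^\dg\simeq X^\dg[l]$ (Lemma \ref{comm}), so $R(a)^\dg\simeq R^\dg[a]$ fails for odd $a$ — and it is the cancellation of this twist $\s^a$ against the twist $\s$ coming from the duality that yields the parity dichotomy (untwisted CY for $a$ odd, twisted by $\s$ for $a$ even), rather than the $(R^\dg)^e$ versus $(R^e)^\dg$ discrepancy alone.
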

For a DG algebra $\L$ satisfying $\per\L\supset\D^b(\L)$, we set
\[ \C(\L):=\per\L/\D^b(\L) \]
and call it, by abuse of language, the {\it cluster category} of $\L$. If $\L$ is a CY DG algebra (e.g. the derived preprojective algebra \cite{Ke11} of a finite dimensional algebra, and the Ginzburg DG algebra \cite{G} associated to a quiver with potential) then $\C(\L)$ is the usual cluster category introduced in \cite{Am09}.
Although our DG algebra $R^\dg$ is not CY in general, it is close enough to CY so that we can define the cluster category $\C(R^\dg)$, which gives rise to a cluster tilting object. To understand this category we first study the categories arising from the graded algebra $R$, and then compare with those arising from $R^\dg$.

We now assume the following on the CY algebra $R$.
\begin{enumerate}
\renewcommand{\labelenumi}{(R\arabic{enumi})}
\setcounter{enumi}{-1}
\item $R$ is negatively graded.
\item Each $R_i$ is finite dimensional.
\end{enumerate}
We note that the condition (R0) can be replaced by positive grading up to Theorem \ref{1dact} below, but {\it negative} grading will be essential in the later discussion.

Let $\per R$ be the perfect derived category of $R$, that is, the thick subcategory of $\D(\Md^\Z\!R)$ generated by the finitely generated graded projective modules. Also let $\D^b(\fl^\Z\!R)$ be the bounded derived category of graded $R$-modules of finite length. We set
\[ \qper R:=\per^\Z\!R/\D^b(\fl^\Z\!R). \]
When $R$ is Noetherian (or more generally graded coherent), we have $\per^\Z\!R=\D^b(\md^\Z\!R)$, the bounded derived category of finitely presented graded $R$-modules. Then the Verdier quotient $\qper R$ is nothing but the derived category of the Serre quotient $\qgr R=\md^\Z\!R/\fl^\Z\!R$, which is regarded as the category of coherent sheaves over the non-commutative projective scheme \cite{AZ} and plays an essential role in non-commutative algebraic geometry. Our category $\qper R$ is thus a generalization of the derived category $\D^b(\qgr R)$.


Our first main result is the existence of a natural cluster tilting subcategory in $\qper R$, which is of independent interest. More importantly, we prove that the construction of $\qper R$ as the Verdier quotient $\per^\Z\!R/\D^b(\fl^\Z\!R)$ lies on the context of Iyama--Yang's formulation \cite{IYa1} of Amiot's cluster category, see Theorem \ref{IYa} and Theorem \ref{dact}, which consequently yields a cluster tilting subcategory.
\begin{Thm}[Theorem \ref{dact}(3)]\label{1dact}
Let $R$ be a graded bimodule $(d+1)$-CY algebra of $a$-invariant $a$ satisfying (R0) and (R1). Then the subcategory
\[ \add\{R(-i)[i]\mid i\in\Z\} \subset \qper R \]
is a $(d+a)$-cluster tilting subcategory.
\end{Thm}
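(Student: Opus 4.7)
The plan is to derive this statement from the Iyama--Yang framework \cite{IYa1} (Theorem \ref{IYa}), whose input data is arranged by the earlier parts of Theorem \ref{dact}. Since this is part (3) of that theorem, I take it that parts (1) and (2) already install the triple $(\per^\Z\!R,\D^b(\fl^\Z\!R),R)$, together with the autoequivalence $(-1)[1]$, into Iyama--Yang's setup; the remaining task is to identify Iyama--Yang's canonical cluster tilting subcategory with the orbit $\add\{R(-i)[i]\mid i\in\Z\}$ and to verify that its cluster tilting dimension is $d+a$.

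A preliminary step is to check that $\qper R=\per^\Z\!R/\D^b(\fl^\Z\!R)$ is a well-defined Verdier quotient. This uses homological smoothness of $R$ (a consequence of the CY hypothesis) together with (R1) to produce bounded resolutions of finite-length graded modules by finitely generated graded projectives. The CY datum itself is then repackaged as a Serre-type duality of the form
\[ D\Hom_{\per^\Z R}(X,Y)\simeq\Hom_{\D^b(\fl^\Z\!R)}(Y,X(a)[d+1]) \]
for $X\in\per^\Z\!R$, $Y\in\D^b(\fl^\Z\!R)$, which is the CY input into Iyama--Yang. The combination of this Serre shift $(a)[d+1]$ with the cluster shift $(-1)[1]$ is precisely what accounts for the cluster tilting dimension $d+a$.

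The essential Hom-vanishing required is $\Ext^k_{\qper R}(R(-i)[i],R(-j)[j])=0$ for $1\le k\le d+a-1$. At the level of $\per^\Z\!R$ this reduces, by projectivity of $R$, to
\[ \Hom_{\per^\Z R}(R,R(j)[k])=\begin{cases} R_{-j} & k=0, \\ 0 & k\neq 0, \end{cases} \]
and (R0) forces $R_{-j}=0$ whenever $j>0$, supplying the required vanishing in $\per^\Z\!R$. The main obstacle will be the maximality clause of cluster tilting: Hom-spaces in $\qper R$ are computed by zigzags through $\D^b(\fl^\Z\!R)$, so the projective computation above does not pass to the quotient directly, and in addition one must show that any object whose Ext-groups with the orbit vanish in the relevant range already lies in the orbit. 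Here (R0) is essential for ensuring that every object of $\per^\Z\!R$ is, modulo finite-length corrections, built by extensions from $\{R(-i)[i]\mid i\in\Z\}$, while (R1) keeps the finite-length category manageable. These generator-type properties, combined with the Serre duality above, feed into Iyama--Yang's theorem to yield both the Ext-vanishing in the quotient and maximality; I expect this ``generation modulo finite length'' step to be the most delicate ingredient, and exactly what parts (1) and (2) of Theorem \ref{dact} are designed to establish.
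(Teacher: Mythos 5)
Your proposal follows the paper's route exactly: parts (1)--(2) of Theorem \ref{dact} install the relative Serre quadruple $(\per^\Z\!R,\,\D^b(\fl^\Z\!R),\,(a)[d+1],\,\M)$ with $\M=\add\{R(-i)[i]\mid i\in\Z\}$ a silting subcategory admitting adjacent $t$-structures, and part (3) is then just the observation that $S_{d+a+1}=(a)[-a]$ preserves $\M$, so Theorem \ref{IYa}(3) applies with $n=d+a$ — precisely your ``Serre shift combined with $(-1)[1]$'' accounting. The only flaw is a notational slip: with the paper's conventions $\Hom^\Z_R(R,R(j))=R_j$ rather than $R_{-j}$, and it is $R_j$ for $j>0$ that (R0) forces to vanish.
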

For example, by setting $R$ to be the polynomial ring with standard positive grading, we deduce that the derived category of coherent sheaves over the projective space $\mathbb{P}^d$ has a $(2d+1)$-cluster tilting subcategory $\add\{\mathcal{O}(i)[i] \mid i\in\Z\}$ (cf. \cite{HIO}).

Now we compare the derived categories of the graded algebra $R$ and that of the DG algebra $R^\dg$. An important step is to consider the {\it total module} (see Section \ref{asDG}), which gives a DG functor
\[ \Tot \colon \C(\Md^\Z\!R) \to \C_\dg(R^\dg) \]
on the DG categories of complexes of graded $R$-modules and of DG $R^\dg$-modules, and in turn induces a functor on the derived categories.
We deduce the following result as a consequence of Theorem \ref{1dact} above.
\begin{Cor}[Theorem \ref{reasonable}]\label{1reasonable}
The functor $\Tot$ induces a fully faithful functor
\[ \qper R/(-1)[1] \to \C(R^\dg) \]
whose image generates $\C(R^\dg)$ as a thick subcategory.
\end{Cor}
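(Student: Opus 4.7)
The plan is to exploit the fact that $\Tot$ intertwines the two shifts: for any complex $M^{\bullet}$ of graded $R$-modules there is a canonical isomorphism $\Tot(M(-1)[1])\cong\Tot(M)$ in $\C_\dg(R^\dg)$, since the algebraic shift $(-1)$ and the cohomological shift $[1]$ cancel along the diagonal. Hence $\Tot$ is $(-1)[1]$-equivariant and descends to a triangle functor $\qper R/(-1)[1]\to\C(R^\dg)$. Well-definedness requires that $\Tot$ preserves the thick subcategories used in forming the two quotients: $\Tot$ sends $\per^\Z\!R$ into $\per R^\dg$, which follows from $\Tot(R)=R^\dg$ together with compatibility with shifts and coproducts, and it sends $\D^b(\fl^\Z\!R)$ into $\D^b(R^\dg)$ because, by (R1), a finite-length graded $R$-module has finite total dimension.

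For full faithfulness, the idea is to reduce to a comparison of Hom spaces between the distinguished objects $R$ and $R^\dg$. By Theorem \ref{1dact}, the family $\{R(-i)[i]\}_{i\in\Z}$ is a $(d+a)$-cluster tilting subcategory of $\qper R$; since all of these objects are identified with $R$ after passing to the orbit category, a standard d\'evissage shows that $R$ alone classically generates $\qper R/(-1)[1]$ as a thick triangulated category. On the target side, the preceding proposition makes $R^\dg$ sign-twisted $(d+a+1)$-CY, and the Amiot--Guo machinery applies (using (R0) to ensure $R^\dg$ is connective with finite-dimensional cohomologies) to yield $R^\dg$ as a cluster tilting generator of $\C(R^\dg)$. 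It therefore suffices to verify that the map induced by $\Tot$,
\[ \bigoplus_{n\in\Z}\Hom_{\qper R}(R,R(-n)[n+k])\longrightarrow\Hom_{\C(R^\dg)}(R^\dg,R^\dg[k]), \]
is an isomorphism for every $k\in\Z$. Both sides admit a description in terms of the cohomology of $R^\dg$: the left-hand side unravels via the cluster tilting structure of Theorem \ref{1dact} together with the graded-CY duality of $R$, while the right-hand side is computed via the Amiot--Guo formula and the sign-twisted CY duality on $\C(R^\dg)$, and the natural map identifies these descriptions.

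The generation claim is immediate from the above, since $\Tot(R)=R^\dg$ lies in the image and $R^\dg$ is a classical generator of $\per R^\dg$, hence of $\C(R^\dg)$; so the image generates $\C(R^\dg)$ as a thick subcategory.

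The principal obstacle is the Hom comparison of the second paragraph. One must carefully match the algebraic grading (responsible for the direct-sum decomposition on the left) with the cohomological grading (controlling the right) and keep track of the signs arising from the sign-twisted CY duality. This is precisely where the hypothesis (R0) becomes essential: negative grading of $R$ translates into connectivity of $R^\dg$, which is the exact setting in which the Amiot--Guo theory and its attendant $\Hom$-formulas are valid, and it also provides the finiteness needed for the d\'evissage that extends full faithfulness from the generator $R$ to the whole of $\qper R/(-1)[1]$.
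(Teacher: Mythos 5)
Your overall skeleton---descend $\Tot$ to the orbit category, reduce full faithfulness to the distinguished objects, and get generation from $\Tot(R)=R^\dg$---is the right one, and the first and third paragraphs are essentially fine. But the proof has a genuine gap exactly where you flag "the principal obstacle": the comparison map
\[ \bigoplus_{n\in\Z}\Hom_{\qper R}(R,R(-n)[n+k])\longrightarrow\Hom_{\C(R^\dg)}(R^\dg,R^\dg[k]) \]
is never actually shown to be bijective; you only assert that "both sides admit a description in terms of the cohomology of $R^\dg$" and that "the natural map identifies these descriptions." That identification is the entire content of the statement for $k\neq 0$ (for $k>0$ in particular one must pass through the Serre dualities on both sides and match the twist $(-)_\a(a)$ upstairs with the sign twist downstairs), so as written the proof is circular at its core step. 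The paper avoids this computation entirely: it verifies only the \emph{degree-zero} statement, namely that $\Tot$ carries the $(d+a)$-cluster tilting subcategory $\M=\add\{R(-i)[i]\}$ of $\qper R$ (Theorem \ref{dact}(3)) onto the $(d+a)$-cluster tilting object $R^\dg\in\C(R^\dg)$ and induces an equivalence $\M/(-1)[1]\xrightarrow{\sim}\add R^\dg$ --- an easy computation of endomorphism rings --- and then invokes the covering version of the cluster-Beilinson criterion \cite[Lemma 4.5]{KRac}. That lemma is precisely the device that upgrades a degree-zero equivalence of cluster tilting subcategories to full faithfulness in all degrees, using the fundamental-domain decomposition $\M\ast\cdots\ast\M[d+a-1]$ on both sides; without it (or an equivalent dévissage), you must do the $\Hom$-comparison for every $k$ by hand, which you have not done.

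Two smaller points. First, your reduction "it therefore suffices to verify the map is an isomorphism for every $k$" is justified by dévissage, but the dévissage must be run in the triangulated category $\qper R$ (fixing one variable and using that the orbit-category $\Hom$, being a direct sum of cohomological functors on $\qper R$, is still cohomological), not in the orbit category $\qper R/(-1)[1]$ itself, which is not a priori triangulated --- speaking of $R$ "classically generating it as a thick triangulated category" does not parse. Second, $R^\dg$ is only \emph{sign-twisted} $(d+a+1)$-CY, so the cluster tilting property of $R^\dg\in\C(R^\dg)$ is not literally Amiot--Guo; one needs the Iyama--Yang formulation for relative Serre quadruples (Theorem \ref{IYa}) applied to $(\per R^\dg,\D^b(R^\dg),S,R^\dg)$ with the twisted Serre functor.
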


This is a cluster category analogue of the result in \cite[Theorem 1.3]{KY} for the perfect derived category. Note that this gives a reasonable description of the cluster category since on $\C(R^\dg)$ the degree shift and the suspension is identified, and more accessible in the sense that derived categories are sometimes explicitly described.

Now we apply Minamoto--Mori's equivalence \cite{MM} (see Proposition \ref{mm}); there exists a triangle equivalence $\qper R\simeq\D^b(\md A)$ for the finite dimensional algebra
\begin{equation}\label{eqA}
A=A(R)=\left( 
\begin{array}{cccc}
R_0& 0 & \cdots& 0 \\ 
R_{-1} & R_0&\cdots& 0 \\
\vdots& \vdots&\ddots&\vdots \\
R_{-(a-1)}&R_{-(a-2)}&\cdots&R_0
\end{array}     
\right).
\end{equation}
This algebra is {\it $d$-representation infinite}, which is fundamental in higher dimensional Auslander-Reiten theory \cite{HIO} and non-commutative algebraic geometry \cite{Mi12,MM}.
By the derived equivalence above we deduce that the autoequivalence $\nu_d=-\otimes^L_ADA[-d]$ of $\D^b(\md A)$ has an $a$-th root $\nu_d^{1/a}$ (see (\ref{equp})).  Then we can rewrite Corollary \ref{1reasonable} as a fully faithful functor
\begin{equation}\label{eqroot}
\D^b(\md A)/\nu_d^{-1/a}[1] \hookrightarrow \C(R^\dg)
\end{equation}
whose image generates $\C(R^\dg)$ as a thick subcategory. Note that we can formally write $\nu_d^{-1/a}[1]$ as $\nu_{d+a}^{-1/a}$, thus $\C(R^\dg)$ can be regarded as a `$\Z/a\Z$-quotient' of the $(d+a)$-cluster category of $A$ in the sense that it is obtained from an $a$-th root of the automorphism $\nu_{d+a}$.

The existence of a square root of the AR translation appears in \cite{KMV} and was important in their structure theorem for certain CY categories \cite[Theorem 1.4]{KMV}. Our result (\ref{eqroot}) is an interpretation and a generalization of a situation of their theorem. We discuss in examples (see Example \ref{kmv1} and \ref{kmv3}) how our results specialize to their setting.

\subsection{Cluster categories and singularity categories}
We further describe the cluster category as a singularity category.
Recall that the {\it singularity category} $\D_\sg(\L)$ of a Noetherian ring $\L$ is the Verdier quotient $\D^b(\md\L)/\per\L$, which is widely studied in representation theory and algebraic geometry.
If $\L$ is {\it Iwanaga-Gorenstein} in the sense that the free module $\L$ has finite injective dimension on left and right, then $\D_\sg(\L)$ is canonically equivalent to the stable category $\sCM\L$ of Cohen-Macaulay modules \cite{Bu}.
In the context of cluster tilting theory, Iwanaga-Gorenstein algebras which are stably CY and admit cluster tilting objects, together with the relationship between the cluster categories, have been of particular interest \cite{GLS,Iy07a,KR,IYo,Am09,BIRSc,AIRT,ART,KMV,IO,AO,AIR,TV}.

Let a finite dimensional algebra $A=A(R)$ as in (\ref{eqA}) and an $(A,A)$-bimodule $U$ be
\[ U=U(R)=\left( 
\begin{array}{cccc}
R_{-1} & R_0&\cdots& 0 \\
\vdots& \vdots&\ddots&\vdots \\
R_{-(a-1)}&R_{-(a-2)}&\cdots&R_0 \\
R_{-a}&R_{-(a-1)}&\cdots&R_{-1}
\end{array}     
\right). \]
This is a `relative' $d$-APR tilting of $A$. We have a trivial extension algebra 
\[ B=B(R)=A\oplus U,\]
which turns out to be $d$-Iwanaga-Gorenstein (Proposition \ref{IG}).
Our second main result is a description of the cluster category $\C(R^\dg)$ of a DG algebra in terms of a finite dimensional Iwanaga-Gorenstein algebra $B$.
\begin{Thm}[Theorem \ref{hope}]\label{1hope}
There exists a triangle equivalence
\[ \C(R^\dg) \simeq \D_\sg(B) \]
In particular, $\D_\sg(B)$ is a twisted $(d+a)$-CY category with a $(d+a)$-cluster tilting object.
\end{Thm}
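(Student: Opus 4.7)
The plan is to identify both sides with essentially the same orbit category of $\qper R$: the $\D_\sg(B)$ side via a graded Cohen-Macaulay/Morita argument, and the $\C(R^\dg)$ side via Corollary \ref{1reasonable}. Since $B$ is Iwanaga-Gorenstein by Proposition \ref{IG}, Buchweitz's theorem gives a triangle equivalence $\D_\sg(B)\simeq\sCM B$. I would endow $B=A\oplus U$ with the $\Z$-grading placing $A$ in degree $0$ and $U$ in degree $1$, lift the picture to the graded stable CM category $\sCM^\Z B$, and use the standard Galois-cover formalism to express $\sCM B$ as the triangulated hull of the orbit category $\sCM^\Z B/(1)$, where $(1)$ is the degree-shift autoequivalence.

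Next I would construct a triangle equivalence $\sCM^\Z B\simeq\qper R$ intertwining the degree shift $(1)$ on the $B$-side with the autoequivalence $(-1)[1]$ on the $R$-side. The key observation is that the $a\times a$ block presentations of $A$ and $U$ precisely parameterize a $\Z$-graded $R$-module modulo the ideal $R_{\le -a}$, while the Cohen-Macaulay condition over $B$ matches, after resolution, the data of an object in $\qper R=\per^\Z\! R/\D^b(\fl^\Z\! R)$. The shift $(1)$ on $B$ corresponds to the grading shift $(-1)$ on $R$, and the additional suspension $[1]$ appears because passing to a singularity category identifies $\Omega^{-1}$ with $[1]$.

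Combining these steps with Corollary \ref{1reasonable}, which realises $\C(R^\dg)$ as the thick closure of $\qper R/(-1)[1]$, yields the chain
\[ \D_\sg(B)\simeq\sCM B\simeq\sCM^\Z B/(1)\simeq\qper R/(-1)[1]\simeq\C(R^\dg) \]
after passing to the appropriate triangulated hulls, proving the main equivalence. The ``in particular'' part is then automatic: Theorem \ref{dgcy} shows that $R^\dg$ is sign-twisted $(d+a+1)$-CY, so Guo's theorem makes $\C(R^\dg)$ a twisted $(d+a)$-CY category with a $(d+a)$-cluster tilting object, and this structure transfers through the equivalence to $\D_\sg(B)$.

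The main obstacle is the second step: establishing $\sCM^\Z B\simeq\qper R$ together with the matching of autoequivalences. This is a singularity-category refinement of the Minamoto--Mori tilting equivalence $\qper R\simeq\D^b(\md A)$, and requires a careful analysis of projective resolutions over $B$, the interplay between the negative grading of $R$ and the non-negative grading of $B$, and a precise identification of how the tensor shift $U\otimes_A-$ on the $B$-side implements $(-1)[1]$ on the $R$-side. Once this singular analogue is in place, the remaining steps are formal consequences of triangulated orbit category theory and Corollary \ref{1reasonable}.
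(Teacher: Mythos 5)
There is a genuine gap, and it sits exactly at the point your argument treats as formal: the phrase ``after passing to the appropriate triangulated hulls.'' A triangulated hull of an orbit category is not determined by the orbit category alone; it depends on a choice of DG enhancement and of a DG lift of the autoequivalence. The paper is explicit about this danger (``the above embedding is not dense in general, and thus it is not clear a naive expectation for orbit categories carries over to triangulated hulls''). Concretely, by Keller's theorem the singularity category $\D_\sg(B)=\thick_{\D(B)}A/\per B$ with $B=A\oplus U$ is the hull of $\D^b(\md A)/\nu_d^{-1/a}[1]$ built from the bimodule $U[1]$, i.e.\ from the functor $-\otimes_A U[1]$; whereas $\C(R^\dg)$, via Amiot's relative Koszul duality (Lemma \ref{Ami}), is $\thick_{\D(C)}A/\per C$ with $C=A\oplus\RHom_A(U[1],A)[-1]$, i.e.\ the hull built from the \emph{inverse} functor. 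These are hulls of the same orbit category with respect to two different DG lifts, and both of your intermediate descriptions (the Galois-cover statement for $\sCM B$ and Corollary \ref{1reasonable} for $\C(R^\dg)$) only say that each category contains $\qper R/(-1)[1]$ as a generating full subcategory. Two triangulated categories can share such a subcategory without being equivalent, so your chain does not close. The missing ingredient is precisely the paper's Theorem \ref{DGorbits} (via Corollary \ref{typical} and Proposition \ref{BS}): the DG orbit categories attached to a DG endofunctor and to a homotopy inverse of it are quasi-equivalent, hence their hulls agree. This is the third main result of the paper and the heart of the proof of Theorem \ref{hope}; it cannot be absorbed into ``standard Galois-cover formalism.''

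A secondary, smaller issue: even granting an equivalence $\sCM^\Z B\simeq\qper R$ intertwining $(1)$ with $(-1)[1]$ at the triangulated level (which is plausible and close to the Minamoto--Yamaura results cited for Proposition \ref{IG}), you would still need it to lift to a quasi-equivalence of enhancements commuting with the chosen DG autoequivalences before any hull comparison can be made; triangle equivalences do not automatically do this. By contrast, the paper never compares graded and ungraded CM categories: it goes $\D_\sg(B)\simeq\C(S)$ for the tensor algebra $S=T_A(U[1])$ (Proposition \ref{BS}, using Theorem \ref{DGorbits} and relative Koszul duality), then $\C(S)\simeq\C(\widetilde S)\simeq\C(R^\dg)$ by an adjunction argument and DG Morita equivalence (Lemmas \ref{SStilde} and \ref{RStilde}). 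Your ``in particular'' paragraph is correct and agrees with the paper.
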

To build the equivalence above, we need a general result on DG orbit categories (Theorem \ref{1orbits} below). Let us explain the connection, for simplicity, in the case $a=1$.

In this case, $R$ is bimodule $(d+1)$-CY of $a$-invariant $1$, thus it is the $(d+1)$-preprojective algebra of its degree $0$ part, which is $A$ in (\ref{eqA}). Then $R^\dg$ is the {\it derived $(d+2)$-preprojective algebra} (or the {\it $(d+2)$-CY completion})
\[ \mathbf{\Pi}_{d+2}(A)=T_A^L\RHom_A(DA,A)[d+1] \]
in the sense of \cite{Ke11}, thus its cluster category $\C(R^\dg)$ is the $(d+1)$-cluster category $\C_{d+1}(A)$ of $A$. 
On the other hand, we have another description of this cluster category $\C_{d+1}(A)$ as a certain singularity category; setting
\[ C=A\oplus DA[-d-2], \]
there exists an equivalence
\[ \C(R^\dg)=\per R^\dg/\D^b(R^\dg) \simeq \thick_{\D(C)}A/\per C \]
by the relative Koszul dual \cite{Am09}. Therefore the equivalence we need is one between the singularity categories 
\[ \thick_{\D(C)}A/\per C\simeq\thick_{\D(B)}A/\per B=\D_\sg(B). \]
Note that they are precisely Keller's description of triangulated hulls \cite{Ke05}, and their equivalence is a consequence of a general equivalence of triangulated hulls, which is our third main result.


Let $\A$ be a pretriangulated DG category, and let $F$, $G$ be DG endofunctors on $\A$ inducing mutually inverse equivalences on $H^0\A$. We then have DG orbit categories $\A/F$ and $\A/G$, whose perfect derived categories give triangulated hulls of $H^0\A/H^0F\simeq H^0\A/H^0G$. 
Our result shows that these triangulated hulls are equivalent.
\begin{Thm}[Theorem \ref{DGorbits}]\label{1orbits}
Suppose there exists a natural transformation $G\circ F \to 1_\A$ inducing a natural isomorphism on $H^0\A$. Then the DG orbit categories $\A/F$ and $\A/G$ are quasi-equivalent. In particular, the triangulated hulls $\per(\A/F)$ and $\per(\A/G)$ are equivalent.
\end{Thm}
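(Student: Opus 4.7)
My plan is to work within the bimodule model of the DG orbit category. Recall that $\A/F$ has the same objects as $\A$ and morphism complexes
\[ \Hom_{\A/F}(X,Y) = \bigoplus_{p \geq 0} \Hom_\A(X, F^p Y), \]
with composition induced by $F$; equivalently, setting $F^\vee = \Hom_\A(-, F(-))$, one has $\A/F = T_\A(F^\vee)$ as a DG category. A key feature is the following universal property: giving a DG functor $\A/F \to \B$ is equivalent to giving a DG functor $\Psi : \A \to \B$ together with a DG natural transformation $\Psi F \to \Psi$. The canonical transformation $F \to 1$ in $\A/F$ is witnessed by $1_{FX}$ viewed in the $p=1$ summand of $\Hom_{\A/F}(FX, X)$.

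With this in hand, my aim is to construct a quasi-functor $\A/F \to \A/G$ by choosing $\Psi = \pi_G : \A \to \A/G$ and producing a suitable DG natural transformation $\pi_G F \to \pi_G$. The natural candidate is obtained by combining two ingredients: the canonical transformation $\epsilon : G \to 1$ in $\A/G$, whiskered with $F$ to give $\epsilon F : \pi_G GF \to \pi_G F$; and the given $\alpha : GF \to 1_\A$, pushed forward to $\pi_G \alpha : \pi_G GF \to \pi_G$. These assemble into a zigzag
\[ \pi_G F \xleftarrow{\epsilon F} \pi_G GF \xrightarrow{\pi_G \alpha} \pi_G \]
in $\A/G$, in which both arrows are quasi-isomorphisms on $H^0(\A/G)$: the left one because $\epsilon$ is such there (using that $H^0 G$ is an autoequivalence), and the right one because $\alpha$ is already a quasi-iso in $\A$.

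To convert this zigzag into a single DG natural transformation, the plan is to pass to a cofibrant replacement of $\A/G$ in the standard model structure on DG categories (equivalently, to work with quasi-functors), so that the quasi-iso $\epsilon F$ becomes strictly invertible. Applying the universal property of $\A/F$ then yields a quasi-functor $\Phi : \A/F \to \A/G$ represented, informally, by the composite $(\pi_G \alpha) \circ (\epsilon F)^{-1}$. A symmetric construction, using that $FG \to 1_\A$ is also a quasi-iso at the level of bimodules (since $F, G$ are quasi-inverses on $H^0$, the bimodule quasi-iso $G^\vee \otimes_\A F^\vee \to \A$ forces $F^\vee \otimes_\A G^\vee \to \A$ to be one as well), produces a quasi-functor $\Xi : \A/G \to \A/F$ in the opposite direction.

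The main technical obstacle is to verify that $\Phi$ and $\Xi$ are mutually inverse quasi-equivalences. Since both act as the identity on objects, this reduces to checking that the induced maps on morphism complexes are quasi-isomorphisms. I would carry out this check on $H^0$, where both $H^0(\A/F)$ and $H^0(\A/G)$ identify with the triangulated orbit category of $H^0\A$ by the mutually inverse autoequivalences $H^0 F$ and $H^0 G$; the two morphism complexes, filtered by powers of $F$ respectively $G$, compute the same graded sum $\bigoplus_{n \in \Z} \Hom_{H^0\A}(X, (H^0 F)^n Y)$ under the identification $(H^0 F)^{-1} = H^0 G$, and the constructed quasi-functors realize precisely this identification. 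The final assertion that $\per(\A/F) \simeq \per(\A/G)$ then follows at once from the general principle that a quasi-equivalence of DG categories induces an equivalence of perfect derived categories.
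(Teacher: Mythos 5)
Your argument, as written, has two serious gaps, both traceable to the model of the orbit category you start from. You take $\Hom_{\A/F}(X,Y)=\bigoplus_{p\geq0}\Hom_\A(X,F^pY)$, i.e.\ the tensor DG category $T_\A(F^\vee)$, and then invoke its strict universal property. But the DG orbit category in Keller's sense (which is what the theorem concerns) is the colimit $\colim_p\bigoplus_{n\geq0}\A(F^nL,F^pM)$; its $H^0$ realizes $\bigoplus_{m\in\Z}\Hom_{H^0\A}(L,(H^0F)^mM)$, whereas $H^0(T_\A(F^\vee))$ only contains the summands with $m\geq0$. So $T_\A(F^\vee)$ is a genuinely different object (its perfect derived category gives the triangulated hull only after a further Verdier quotient, which is exactly how it is used in Section \ref{CM}), and the universal property you appeal to is the universal property of the wrong category. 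Second, even granting a universal property for the correct $\A/F$, it is a \emph{strict} one: your key step of converting the zigzag $\pi_GF\xleftarrow{\epsilon F}\pi_GGF\xrightarrow{\pi_G\alpha}\pi_G$ into an honest DG natural transformation by "passing to a cofibrant replacement of $\A/G$" does not work. Cofibrantly replacing the target DG category does not strictify an objectwise $H^0$-invertible natural transformation; the zigzag only yields a morphism in the derived category of $(\A,\A/G)$-bimodules, and to feed that into the construction of $\A/F$ you would need a homotopical universal property that you have not established. This strictification problem is precisely the technical heart of the theorem.

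There is a third, smaller issue: your reverse quasi-functor $\Xi$ is built from a transformation $FG\to1_\A$ (or a bimodule quasi-isomorphism $F^\vee\otimes_\A G^\vee\to\A$), but no such datum is assumed — the paper explicitly points out in Remark \ref{typ} that only $G\circ F\to1_\A$ is needed, and $F$, $G$ are arbitrary DG endofunctors, not assumed to be given by bimodules. The paper's proof sidesteps all three difficulties by writing down an explicit $(\A/G,\A/F)$-bimodule $U(L,M)=\colim_{p,q\geq0}\bigoplus_{n\geq0}\A(G^{p+q}F^nL,G^qM)$ built from the single datum $GF\to1_\A$, checking by hand that $U(-,M)$ is quasi-isomorphic to the representable $(\A/F)(-,M)$ (so $U$ is a quasi-functor), and then verifying that on representables it induces exactly the canonical identification $H^0(\A/F)\simeq H^0(\A/G)$, whence $-\otimes^{\mathrm{L}}U$ is an equivalence. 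If you want to salvage your approach, you would need to either prove a derived universal property for Keller's DG orbit category or reduce to a setting where $F$ and $G$ are cofibrant bimodules and the relevant transformations can be chosen strictly; as it stands, the proposal does not constitute a proof.
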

We obtain the singular equivalence of $B$ and $C$ by applying this general result to $\A=\C^b(\proj A)$, $F=-\otimes_Ap(\RHom_A(U[1],A))$, and $G=-\otimes_Ap({U}[1])$ (Corollary \ref{typical}), where $p{(-)}$ means a bimodule projective resolution.

As one of applications and examples of our main results, we give a realization of certain higher cluster categories as singularity categories.
\begin{Thm}[Theorem \ref{corpi}]
Any $m$-cluster category of a $d$-representation infinite algebra with $m>d$ is a singularity category of a $d$-Iwanaga-Gorenstein algebra.
\end{Thm}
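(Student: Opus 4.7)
The plan is to realise the given $m$-cluster category as $\C(R^\dg)$ for a concrete graded CY algebra $R$ and then quote Theorem \ref{1hope}. Write $a := m - d \geq 1$. I would take $R$ to be the ordinary $(d+1)$-preprojective algebra
\[ \Pi := \Pi_{d+1}(A) = T_A \Ext^d_A(DA, A) \]
of $A$, but endowed with the \emph{rescaled negative} grading in which $\Ext^d_A(DA,A)$ sits in degree $-a$, so that $R_0 = A$, $R_{-ja} = \Ext^d_A(DA,A)^{\otimes_A j}$ for $j \geq 0$, and $R_i = 0$ for all other $i$. Since $A$ is $d$-representation infinite, $\Pi$ is graded bimodule $(d+1)$-CY of $a$-invariant $1$ by \cite{Ke11, MM, AIR}; rescaling the grading by the factor $a$ then produces a graded bimodule $(d+1)$-CY algebra of $a$-invariant $a$ that clearly satisfies (R0) and (R1).

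Next I would identify $\C(R^\dg)$ with the $m$-cluster category $\C_m(A) = \C(\mathbf{\Pi}_{m+1}(A))$, where $\mathbf{\Pi}_{m+1}(A) = T_A^L(\RHom_A(DA, A)[m])$ is the derived $(m+1)$-preprojective algebra of \cite{Ke11}. Since $A$ is $d$-representation infinite, $\RHom_A(DA, A) \simeq \Ext^d_A(DA, A)[-d]$ sits in a single cohomological degree in $\D(A^e)$, and so $\RHom_A(DA, A)[m]$ sits in cohomological degree $-a$. Taking a bimodule projective resolution and forming the tensor algebra yields a cofibrant model of $\mathbf{\Pi}_{m+1}(A)$ which, by the Tor-vanishing for tensor powers of $\Ext^d_A(DA,A)$ characteristic of $d$-representation infinite algebras (see \cite{HIO, MM}), is quasi-isomorphic as a DG algebra to $R^\dg$ equipped with the rescaled grading above. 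Consequently $\C(\mathbf{\Pi}_{m+1}(A)) \simeq \C(R^\dg)$.

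Finally I would apply Theorem \ref{1hope} to $R$ to obtain a triangle equivalence $\C(R^\dg) \simeq \D_\sg(B)$, where $B = B(R) = A(R) \oplus U(R)$ is the explicit finite-dimensional algebra built from $R$ as in \eqref{eqA} and the display immediately preceding the definition of $B$. Proposition \ref{IG} guarantees that $B$ is $d$-Iwanaga-Gorenstein, completing the proof. The main technical obstacle is the formality identification $\mathbf{\Pi}_{m+1}(A) \simeq R^\dg$ in the second paragraph, which reduces to verifying that the underived tensor powers of $\Ext^d_A(DA,A)$ over $A$ agree with the derived ones; this is a known feature of $d$-representation infinite algebras but requires careful bookkeeping with the rescaled internal/cohomological gradings.
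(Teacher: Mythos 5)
Your proposal is correct and follows essentially the same route as the paper: take the $(d+1)$-preprojective algebra $T_A\Ext_A^d(DA,A)$ with the generating bimodule placed in degree $-(m-d)$, identify its associated DG algebra with the derived $(m+1)$-preprojective algebra so that its cluster category is $\C_m(A)$, and then apply Theorem \ref{hope}. The only difference is that you spell out the formality/Tor-vanishing verification that the paper leaves implicit in the phrase ``$\Pi^\dg$ is quasi-isomorphic to the $(d+n+1)$-Calabi-Yau completion.''
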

For example, any (higher) cluster category of a non-Dynkin quiver is the singularity category of a $1$-Iwanaga-Gorenstein algebra. Moreover, we can explicitly describe the Iwanaga-Gorenstein algebra, see Theorem \ref{corpi} and Proposition \ref{Qhat}. This should be compared with the results in \cite{HJ}, where they give a description of higher cluster categories of $1$-representation {finite} algebras (or of Dynkin types) in terms of singularity categories of self-injective algebras, using a combinatorial method.

We also give systematic examples for the case $R$ is a polynomial ring (Section \ref{poly}), and consider examples arising from dimer models (Section \ref{dimer}).
\subsection*{Acknowledgement}
The author is deeply grateful to his supervisor Osamu Iyama for his constant and valuable discussions.

\section{Preliminaries}
We recall some basic concepts on certain structures in triangulated categories. At the end of this section we state Iyama--Yang's result (Theorem \ref{IYa}) which gives a general framework for the construction of `cluster-like' categories. 

Let us start with the following fundamental notion. 
\begin{Def}\label{s}
An object or a subcategory $\M$ in a triangulated category $\T$ is {\it silting} if $\Hom_\T(\M,\M[>\!0])=0$ and $\thick\M=\T$.
\end{Def}
The standard example of a silting object is $\L\in\per\L$ for a negative DG algebra $\L$, that is, a DG algebra with $H^{>0}\L=0$. The same holds for negative DG categories.

Let $\C$ and $\D$ be subcategories of a triangulated category $\T$. We set
\[ \C\ast\D=\{ X\in\T \mid \text{there is a triangle } C \to X \to D \to C[1] \text{ for some } C\in\C,\, D\in\D \}. \]
By the octahedral axiom, the operation $\ast$ is associative. One obtains a co-$t$-structure (or weight structure) \cite{Bo,P} from a silting subcategory, which is given as follows.
\begin{Prop-Def}[{See \cite[Proposition 2.17]{AI}}]
Let $\T$ be an idempotent complete triangulated category with a silting subcategory $\M$. Set
\begin{equation*}
\begin{aligned}
t_{\geq0}&=\bigcup_{l\geq0}\M[-l]\ast\cdots\ast\M[-1]\ast\M, \\
t_{\leq0}&=\bigcup_{l\geq0}\M\ast\M[1]\ast\cdots\ast\M[l]
\end{aligned}
\end{equation*}
Then $(t_{\geq0},t_{\leq0})$ is a co-$t$-structure. We call it the {\rm co-$t$-structure associated to $\M$}.
\end{Prop-Def}
In what follows, we will simply write $t_{\geq0}=\cdots\ast\M$, $t_{\leq0}=\M\ast\cdots$ and so on.
It is important for us that some co-$t$-structures and $t$-structures are related. 
\begin{Def}[\cite{Bo}]
Let $\M\subset\T$ be a silting subcategory and $(t_{\geq0},t_{\leq0})$ the associated co-$t$-structure. Let $t=(t^{\leq0},t^{\geq0})$ be a $t$-structure in $\T$.
\begin{enumerate}
\item We say $t$ is {\it right adjacent to $\M$} if $t_{\leq0}=t^{\leq0}$.
\item We say $t$ is {\it left adjacent to $\M$} if $t_{\geq0}=t^{\geq0}$.
\end{enumerate}
\end{Def}
For example, if $\L$ be a negative DG algebra which is homologically smooth such that each cohomology is finite dimensional, then the standard $t$-structure on $\per\L$ is right adjacent to a silting object $\L\in\per\L$. It follows that its image under the duality $\RHom_\L(-,\L)\colon\per\L\leftrightarrow\per\L^\op$ is left adjacent to a silting object $\L\in\per\L^\op$.

Now let us recall the notion of (relative) Serre functors.
\begin{Def}
Let $\T$ be a $k$-linear $\Hom$-finite triangulated category and $\T^\fd \subset \T$ a thick subcategory.
\begin{enumerate}
	\item An autoequivalence $S\colon\T\to\T$ is a {\it Serre functor} on $\T$ if there is a functorial isomorphism
	\[ \Hom_\T(X,Y) \simeq D\Hom_\T(Y,SX) \]
	for all $X, Y \in \T$.
	\item An autoequivalence $S \colon \T^\fd \to \T^\fd$ is a {\it relative Serre functor} for $\T^\fd\subset\T$ if the above functorial isomorphism holds for all $X\in\T^\fd$ and $Y \in \T$.
	\item We say that $(\T,\T^\fd,S,\M)$ is a {\it relative Serre quadruple} if $S$ is a relative Serre functor for $\T^\fd\subset\T$ and $\M$ is a silting subcategory of $\T$.
\end{enumerate}
\end{Def}
We have one more notion to recall.
\begin{Def}
A $k$-linear category $\C$ is a {\it dualizing variety} if $D=\Hom_k(-,k)$ induces a duality $\md\C\leftrightarrow\md\C^\op$ between the category of finitely presented $\C$-modules.
\end{Def}
For example, the category $\proj\L$ (resp. $\proj^\Z\!\L$) of finitely generated (graded) projective modules over a finite dimensional algebra $\L$ is a dualizing variety.

We are now ready to state the following generalized formulation of Amiot's cluster category.
\begin{Thm}[{\cite{IYa1,IYa2}}]\label{IYa}
Let $(\T,\T^\fd, S, \M)$ be a relative Serre quadruple such that $\M$ is a dualizing variety, and $(t_{\geq0},t_{\leq0})$ be the co-$t$-structure associated to $\M$.
\begin{enumerate}
\item The silting subcategory $\M$ has a right adjacent $t$-structure with $t_{\leq0}^\perp \subset \T^\fd$ if and only if it has a left adjacent $t$-structure with ${}^\perp t_{\geq0} \subset \T^\fd$.
\suspend{enumerate}
Suppose in what follows the equivalent conditions above are satisfied. 
\resume{enumerate}
\item The quotient functor $\pi\colon\T \to \T/\T^\fd$ induces bijections $\Hom_\T(X,Y) \to \Hom_{\T/\T^\fd}(X,Y)$ for all $X \in t_{\leq0}$ and $Y \in St_{\geq2}$. In particular the composition $t_{\leq0}\cap S t_{\geq2} \subset \T \to \T/\T^\fd$ is an additive equivalence.
\item Let $n\geq1$ and assume $\M$ is stable under $S_{n+1}=S\circ[-n-1]$. Then $\pi(\M)\subset\T/\T^\fd$ is an $n$-cluster tilting subcategory.
\end{enumerate}
\end{Thm}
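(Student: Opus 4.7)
The plan is to treat part (2) as the main technical content, from which (3) follows essentially formally, while (1) provides the symmetry that justifies working with whichever adjacency condition is convenient. For part (1), the equivalence between right- and left-adjacent $t$-structures is mediated by the relative Serre functor $S$: using the functorial isomorphism $\Hom_\T(X,Y) \cong D\Hom_\T(Y, SX)$ valid for $X \in \T^\fd$, one translates the Hom-vanishing that characterizes the orthogonal $t_{\leq 0}^\perp$ into the symmetric vanishing that characterizes ${}^\perp t_{\geq 0}$. The dualizing-variety hypothesis on $\M$ guarantees that finite presentation passes through the duality $D$, so that containment in $\T^\fd$ on one side forces containment on the other.

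For part (2), I would apply the standard criterion that the canonical map $\Hom_\T(X,Y) \to \Hom_{\T/\T^\fd}(X,Y)$ is bijective provided $\Hom_\T(Z, Y) = 0$ and $\Hom_\T(X, Z) = 0$ for every $Z \in \T^\fd$ that can arise as the cone of a morphism in a roof representing an element of $\Hom_{\T/\T^\fd}(X,Y)$. Writing $t_{\leq 0} = \bigcup_{l \geq 0} \M \ast \M[1] \ast \cdots \ast \M[l]$ and $St_{\geq 2} = S\bigl(\bigcup_{l \geq 0} \M[-l] \ast \cdots \ast \M[-2]\bigr)$, I would reduce by d\'evissage to the cases $X \in \M[i]$ with $i \geq 0$ and $Y \in S\M[-j]$ with $j \geq 2$, and then apply both adjacency conditions: the right-adjacent $t$-structure forces $\Hom_\T(\M[i], Z) = 0$ for the relevant $Z$, while Serre duality converts $\Hom_\T(Z, S\M[-j])$ into $D\Hom_\T(\M[-j], Z)$, which vanishes by symmetric reasoning using the left adjacency. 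The additive equivalence $t_{\leq 0} \cap St_{\geq 2} \to \T/\T^\fd$ then follows by showing essential surjectivity via successive truncations with respect to the co-$t$-structure and both $t$-structures.

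For part (3), the equivalence of (2) transports $\M$ to a subcategory $\pi(\M) \subset \T/\T^\fd$ inheriting the silting vanishing $\Hom_\T(\M, \M[i]) = 0$ for $i > 0$; together with the hypothesis $S_{n+1}\M = \M$ this produces a Serre functor on $\T/\T^\fd$ of degree $n$ stabilizing $\pi(\M)$, which upgrades the silting property to $n$-cluster tilting via the usual equivalence between the two orthogonal characterizations. The main obstacle I anticipate lies in part (2): one must carefully juggle the co-$t$-structure, both adjacent $t$-structures, and the fact that the relative Serre functor $S$ is defined only on $\T^\fd$ rather than on all of $\T$, so that invoking Serre duality requires verifying that the object to which it is applied genuinely lies in $\T^\fd$. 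The bookkeeping of these containments is where the technical weight of the theorem resides.
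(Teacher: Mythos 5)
For parts (1) and (2) the paper gives no argument of its own: it cites \cite[Theorem 4.10]{IYa1} for (1) and instructs the reader to adapt \cite[Proposition 5.9]{IYa1} for (2). Your sketches for these are in the right spirit — the duality $D$ on the dualizing variety $\M$ is indeed what exchanges the two adjacency conditions, and the bijectivity in (2) is proved by d\'evissage to the generators $\M[i]$ of $t_{\leq0}$ and $S\M[-j]$ of $St_{\geq2}$ while using the two adjacent $t$-structures to control which objects of $\T^\fd$ can occur as cones in a roof. You also correctly flag that $S$ is a priori only defined on $\T^\fd$, which is the real bookkeeping burden. Since the paper supplies no proof here, there is nothing concrete to compare against beyond confirming that your outline matches the Iyama--Yang strategy.

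The genuine gap is in part (3), the one item the paper actually proves. You establish that $\pi(\M)$ is rigid in degrees $1,\dots,n-1$ (correct, via (2)) and that $S_{n+1}\M=\M$ yields a Serre functor of the appropriate degree on $\T/\T^\fd$ stabilizing $\pi(\M)$, and you then assert that this ``upgrades the silting property to $n$-cluster tilting via the usual equivalence between the two orthogonal characterizations.'' That equivalence (the left and right orthogonals of $\pi(\M)$ in degrees $1,\dots,n-1$ coincide, by Serre duality and $S_{n+1}$-stability) is true but is not the point: the hard half of cluster tilting is \emph{maximality}, i.e.\ that every $X$ with $\Hom_{\T/\T^\fd}(\pi(\M),X[i])=0$ for $1\le i\le n-1$ already lies in $\add\pi(\M)$. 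Rigidity plus stability under the shifted Serre functor does not imply this — any stable proper subcategory of a cluster tilting subcategory satisfies both conditions. What is needed, and what constitutes the entirety of the paper's proof, is the observation that $S_{n+1}$-stability of $\M$ forces $St_{\geq2}=S_{n+1}t_{\geq1-n}=t_{\geq1-n}=\cdots\ast\M[n-1]$, so that the fundamental domain $t_{\leq0}\cap St_{\geq2}$ from (2) collapses to $\M\ast\M[1]\ast\cdots\ast\M[n-1]$; maximality then follows by induction on the $\ast$-length of a lift of $X$ to this domain, peeling off one $\M[i]$ at a time using the orthogonality. Your proposal never identifies the fundamental domain in this form, so the decisive step — the only place where the hypothesis $S_{n+1}\M=\M$ does real work — is missing.
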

\begin{proof}
	(1) is \cite[Theorem 4.10]{IYa1}. For (2), adapt the proof of \cite[Proposition 5.9]{IYa1}.
	We include a proof of (3) since it requires a modification from \cite[Theorem 5.8]{IYa1}. Since $\M$ is stable under $S_{n+1}$, so is $t_{\geq i}$ for each $i\in\Z$, thus we have $St_{\geq2}=S_{n+1}t_{\geq 1-n}=t_{\geq 1-n}=\cdots\ast\M[n-1]$. Therefore we deduce that the fundamental domain is $\M\ast\cdots\ast\M[n-1]$, hence the result.
\end{proof}
If $\L$ is a bimodule $(n+1)$-CY negative DG algebra with finite dimensional $H^0\L$, then $(\per\L,\D^b(\L),[n+1],\L)$ is a relative Serre quadruple. One can apply the above theorem and recover the original results of Amiot and Guo.

\section{$t$-structure in $\per^\Z\!R$}\label{per}
We will be interested in a negatively graded CY algebra with an $a$-invariant. Before that we will place ourselves in a slightly general setting. Let $R=\bigoplus_{i \leq 0}R_i$ be a {negatively} graded algebra. We assume the following on $R$.
\begin{itemize}
	\item[(R1)] Each $R_i$ is finite dimensional.
	\item[(R2)] Any finite length $R$-module has finite projective dimension.
\end{itemize}
The condition (R2) is clearly satisfied if $R$ is homologically smooth. The aim of this section is to show that there is a $t$-structure in $\per ^\Z\!R$ in this setting.
\begin{Thm}\label{tstr}
Let $R$ be a negatively graded algebra satisfying (R1) and (R2). Set
\begin{equation*}
\begin{aligned}
	t^{\leq0}&=\{ X \in \per^\Z\!R \mid H^i(X) \in \Md^{\leq-i}\!R \text{ for all } i \in \Z \} ,\\
	t^{\geq0}&=\{ X \in \per^\Z\!R \mid H^i(X) \in \Md^{\geq-i}\!R \text{ for all } i \in \Z \}.
\end{aligned}
\end{equation*}
Then $(t^{\leq0},t^{\geq0})$ is a $t$-structure in $\per^\Z\!R$.
\end{Thm}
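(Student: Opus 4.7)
My plan is to verify the three axioms of a $t$-structure directly, relying on the Serre-subcategory properties of $\Md^{\leq n}R$ and $\Md^{\geq n}R$ together with the two hypotheses. The shift axiom is immediate: for $X\in t^{\leq 0}$ one computes $H^i(X[1])=H^{i+1}(X)\in\Md^{\leq-i-1}R\subset\Md^{\leq-i}R$ so $X[1]\in t^{\leq 0}$, and the verification for $t^{\geq 0}$ is dual. Moreover, both aisles are stable under extensions because the cohomology long exact sequence feeds into the fact that $\Md^{\leq n}R$ and $\Md^{\geq n}R$ are Serre subcategories of $\Md^\Z R$; crucially, negativity of the grading is what makes $M_{\leq n}$ an $R$-submodule of any graded module $M$ (otherwise $R_{>0}$ would send $M_{\leq n}$ out of itself).

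For the Hom-vanishing, the key lemma to establish is that for $M\in\Md^{\leq p}R$ and $N\in\Md^{\geq p+1}R$ one has $\Ext^*_{\Md^\Z R}(M,N)=0$. Indeed, $M$ is generated in internal degrees $\leq p$, so its projective cover is a direct sum of $R(-j)$ with $j\leq p$, each in $\Md^{\leq p}R$; iterating (the subcategory is closed under submodules thanks to negative grading) one obtains a projective resolution of $M$ entirely within $\Md^{\leq p}R$, on which $\Hom(-,N)$ vanishes termwise. To deduce $\Hom_\D(X,Y)=0$ for $X\in t^{\leq 0}$ and $Y\in t^{\geq 1}$ in $\per^\Z R$, I apply the hypercohomology spectral sequence $E_2^{p,q}=\Ext^p(H^i(X),H^q(Y))\Rightarrow\Hom_\D(H^i(X),Y[p+q])$ to each cohomology of $X$: on the target diagonal $p+q=i$ with $p\geq 0$ one has $q\leq i$, so $H^q(Y)\in\Md^{\geq 1-q}\subset\Md^{\geq 1-i}$ pairs against $H^i(X)\in\Md^{\leq -i}$ and the lemma annihilates the $E_2$-term. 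Thus $\Hom_\D(H^i(X)[-i],Y)=0$ for every $i$, and a Postnikov-tower induction along the bounded cohomology of $X$ (which is perfect) yields $\Hom_\D(X,Y)=0$.

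For the truncation axiom, I would argue by downward induction on cohomological degree. Let $i_{\max}$ be the top cohomological degree of $X\in\per^\Z R$. By (R1) each graded piece $H^i(X)_j$ is finite dimensional, and since $X$ is perfect $H^i(X)$ is supported in internal degrees bounded above; hence the canonical short exact sequence $0\to H^{i_{\max}}(X)_{\leq-i_{\max}}\to H^{i_{\max}}(X)\to H^{i_{\max}}(X)_{>-i_{\max}}\to 0$ has its right-hand quotient of finite length, so by (R2) it lies in $\per^\Z R$. Composing the Postnikov projection $X\to H^{i_{\max}}(X)[-i_{\max}]$ in $\D(\Md^\Z R)$ with the induced map onto $B_0:=H^{i_{\max}}(X)_{>-i_{\max}}[-i_{\max}]$ produces a morphism $X\to B_0$ between perfect objects; the cohomology LES shows that its fiber $X'\in\per^\Z R$ satisfies $H^{i_{\max}}(X')=H^{i_{\max}}(X)_{\leq-i_{\max}}\in\Md^{\leq-i_{\max}}R$ and $H^i(X')=H^i(X)$ for $i<i_{\max}$, so the top-degree violation has been removed. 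Iterating this peeling downwards through each cohomological degree yields, in finitely many steps, an $A\in t^{\leq 0}\cap\per^\Z R$; the octahedral assembly of the successive fiber triangles presents the cone of $A\to X$ as an iterated extension of the $B_k\in t^{\geq 1}\cap\per^\Z R$, hence in $t^{\geq 1}$, producing the desired truncation triangle.

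The main obstacle is the truncation axiom, and specifically keeping every intermediate object inside $\per^\Z R$: this is precisely the role of hypothesis (R2), since without it the finite-length ``violating cohomologies'' need not have finite projective dimension. The remaining ingredients---the Serre-subcategory properties of $\Md^{\leq n}R$ and $\Md^{\geq n}R$, termination of the peeling, and closure of the aisles under extensions---are formal consequences of negativity of the grading together with the cohomology long exact sequence.
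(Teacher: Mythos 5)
Your overall strategy---a direct verification of the three axioms inside $\per^\Z\!R$---is genuinely different from the paper's proof of this theorem, which instead realizes the pair as the restriction to $\per^\Z\!R$ of the $t$-structure on the big derived category $\D(\Md^\Z\!R)$ associated with the silting subcategory $\add\{R(-i)[i]\mid i\in\Z\}$ (via Keller's theorem presenting $\D(\Md^\Z\!R)$ as the derived category of a negative DG category); in that approach only the preservation of perfectness by the truncation functors has to be checked, and that is exactly where (R1) and (R2) enter. Your route is instead close in spirit to the paper's second, elementary proof of the coherent variant in the appendix. Your shift and Hom-orthogonality arguments are correct: the key vanishing $\Ext^*_{\Md^\Z\!R}(M,N)=0$ for $M\in\Md^{\leq p}\!R$ and $N\in\Md^{\geq p+1}\!R$, obtained from a projective resolution of $M$ inside $\Add\{R(-j)\mid j\leq p\}$, is the right lemma, and your identification of the ``violating'' cohomology pieces as finite-length (bounded above in internal degree because $X$ is perfect, finite-dimensional graded pieces by (R1)), hence perfect by (R2), matches the paper's use of the hypotheses.

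However, the iteration in your truncation argument has a gap. The ``Postnikov projection'' $X\to H^{i}(X)[-i]$ exists only when $i$ is the \emph{top} cohomological degree of $X$. After the first peeling, the fiber $X_1$ in general still has nonzero cohomology in degree $i_{\max}$, so there is no projection $X_1\to H^{i_{\max}-1}(X_1)[-(i_{\max}-1)]$: in the triangle $H^{i_{\max}-1}(X_1)[-(i_{\max}-1)]\to\sigma^{\geq i_{\max}-1}X_1\to H^{i_{\max}}(X_1)[-i_{\max}]$ the module you want to map onto occurs as a \emph{sub}object of the two-step truncation, not as a quotient. The step can be repaired, but it requires an argument you have not supplied: one must extend the composite $\sigma^{\leq i_{\max}-1}X_1\to H^{i_{\max}-1}(X_1)[-(i_{\max}-1)]\to B_1$ along $\sigma^{\leq i_{\max}-1}X_1\to X_1$, and the obstruction lives in $\Hom\bigl((\sigma^{>i_{\max}-1}X_1)[-1],B_1\bigr)$, which vanishes precisely by your key Ext-vanishing lemma (the cohomologies of $X_1$ in degrees $j>i_{\max}-1$ already lie in $\Md^{\leq-j}\!R$, while $B_1$ is built from $\Md^{\geq-(i_{\max}-1)+1}\!R$); the same vanishing also guarantees that the chosen extension is surjective on $H^{i_{\max}-1}$, which you need for the fiber to have the asserted cohomology. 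Some such gluing argument is unavoidable---the paper's direct proof faces the identical issue in its $3\times3$-lemma assembly, where the claim $\Hom(Z'[-1],Y'')=0$ plays exactly this role---so as written your peeling does not go through beyond the first step.
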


\begin{Rem}\label{coh}
Under the assumption that $R$ is Noetherian (or more generally graded coherent), there is a version of this for $\D^b(\md^\Z\!R)$ without the `smoothness' condtion (R2), which is in practice far more general than Theorem \ref{tstr} above. We give this general result in Appendix \ref{D}.
\end{Rem}

In the remainder of this section we simply write $\D$ for $\D(\Md^\Z\!R)$. Recall from \cite[Definition 4.1]{AI} that a subcategory $\S$ of a triangulated category $\T$ with arbitrary (set-indexed) coproducts is {\it silting} if it forms a compact set of generators such that $\Hom_\T(A,B[>\!0])=0$ for all $A,B \in \S$. Note that this is a modified version of Definition \ref{s}.

We start our discussion with the following observation.
\begin{Prop}\label{silt}
The subcategory $\M=\add\{R(-i)[i] \mid i \in \Z \} \subset \D$ is silting.
\end{Prop}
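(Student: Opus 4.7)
The plan is to verify the three defining properties of a silting subcategory of $\D = \D(\Md^\Z R)$ in the compactly-generated sense of \cite[Definition 4.1]{AI} recalled just above: each object of $\M$ is compact, $\M$ generates $\D$, and $\Hom_\D(A, B[n]) = 0$ for all $A, B \in \M$ and $n > 0$.

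Compactness and generation are essentially formal. Each $R(-i)$ is a finitely generated graded projective $R$-module, hence compact in $\D$, and compactness is stable under suspension and direct summands, so every object of $\M$ is compact. For generation, the key identity is
\[ \Hom_\D(R(-i)[i], X[n]) = \Hom_\D(R(-i), X[n-i]) = (H^{n-i}X)_i, \]
which computes the degree-$i$ component of a cohomology group. Vanishing for all $i, n \in \Z$ therefore forces every graded component of every $H^m X$ to be zero, so $X = 0$.

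The heart of the argument, and the only place where negative grading is actually used, is the positive Hom-vanishing. For $n > 0$ and $i, j \in \Z$ the same manipulation gives
\[ \Hom_\D(R(-i)[i], R(-j)[j+n]) = \Ext^{n+j-i}_{\Md^\Z R}(R(-i), R(-j)). \]
Projectivity of $R(-i)$ collapses this to $0$ unless $n + j - i = 0$, i.e.\ $i - j = n$; in that single case it reduces to $\Hom_{\Md^\Z R}(R(-i), R(-j)) = R_{i-j} = R_n$, which vanishes because $R$ is negatively graded and $n > 0$. I do not expect any genuine obstacle: the statement is a direct calculation, and the only conceptual point is that the shift by $[i]$ built into the generators $R(-i)[i]$ is what converts \emph{negative} internal grading into vanishing in \emph{positive} cohomological degrees. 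Note that the standing hypotheses (R1) and (R2) play no role at this stage; they will only be invoked downstream, in Theorem \ref{tstr}, to upgrade the co-$t$-structure associated to $\M$ to an honest $t$-structure on $\per^\Z R$.
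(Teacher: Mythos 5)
Your proof is correct and follows essentially the same route as the paper's: compactness and generation are noted as formal, and the positive Hom-vanishing is reduced by projectivity to the single case where the cohomological degree is zero, which then dies because $R_n=0$ for $n>0$. The only cosmetic difference is that the paper uses shift-invariance to test maps out of $R$ alone rather than between general pairs $R(-i)[i]$, $R(-j)[j]$.
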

\begin{proof}
	Clearly $\M$ is a compact set of generators for $\D$. It remains to show that $\Hom_\D(R,R(-i)[i][j])$ vanishes for each $i \in \Z$ and $j>0$. We only have to consider the case $i=-j<0$, in which case $\Hom_\D(R,R(-i)[i][j])=\Hom^\Z_R(R,R(j))=0$ since $R$ is negatively graded.
\end{proof}

We deduce by \cite[Theorem 4.3]{Ke94} that $\D$ is triangle equivalent to the derived category $\D(\A)$ of a negative DG category $\A$. Then we can consider the standard $t$-structure $(\D_\M^{\leq0},\D_\M^{\geq0})$ associated to $\M$, which is given by
\begin{equation*}
	\begin{aligned}
	\D_\M^{\leq0}&=\{ X \in \D \mid \Hom_{\D}(M,X[i])=0 \text{ for all } M \in \M \text{ and } i>0. \},\\
	\D_\M^{\geq0}&=\{ X \in \D \mid \Hom_{\D}(M,X[i])=0 \text{ for all } M \in \M \text{ and } i<0. \}.
	\end{aligned}
\end{equation*}

Now we use the following computation.
\begin{Lem}\label{henkei}
We have
\begin{equation*}
	\begin{aligned}
	\D_\M^{\leq0}&=\{ X \in \D \mid H^i(X) \in \Md^{\leq-i}\!R \text{ for all } i \in \Z \}, \\
	\D_\M^{\geq0}&=\{ X \in \D \mid H^i(X) \in \Md^{\geq-i}\!R \text{ for all } i \in \Z \}.
	\end{aligned}
\end{equation*}
\end{Lem}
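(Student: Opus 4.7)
The plan is to reduce the defining vanishing condition for $\D_\M^{\leq 0}$ and $\D_\M^{\geq 0}$ to an explicit condition on the graded pieces of the cohomology modules $H^n(X)$, via a direct computation of morphisms from the generators.

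First, since each shifted module $R(-j)$ is projective in $\Md^\Z\!R$, morphisms in $\D$ out of $R(-j)$ compute graded pieces of cohomology. Concretely, for any $X\in\D$ and any integers $j,m$ one has
\[ \Hom_\D(R(-j),X[m]) \simeq H^m(X)_j, \]
with the convention that $M(-j)_i=M_{i-j}$, so that a graded homomorphism $R(-j)\to M$ of degree $0$ picks out an element of $M_j$. Applying this to $M=R(-j)[j]$ and shifting, I obtain
\[ \Hom_\D(R(-j)[j],X[i]) \simeq \Hom_\D(R(-j),X[i-j]) \simeq H^{i-j}(X)_j. \]

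Now I unpack the two defining conditions. By definition $X\in\D_\M^{\leq 0}$ exactly when these groups vanish for all $j\in\Z$ and all $i>0$; equivalently, $H^{i-j}(X)_j=0$ whenever $i-j+j=i>0$. Setting $n=i-j$, this says $H^n(X)_j=0$ for every pair $(n,j)$ with $n+j>0$, that is, for every $j>-n$. This is precisely the condition that $H^n(X)\in\Md^{\leq -n}\!R$ for each $n\in\Z$, giving the first identity. Similarly, $X\in\D_\M^{\geq 0}$ means the Hom groups vanish for all $i<0$; the same substitution $n=i-j$ yields $H^n(X)_j=0$ whenever $j<-n$, which is the condition $H^n(X)\in\Md^{\geq -n}\!R$.

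I do not foresee any serious obstacle; the content is purely a bookkeeping of degrees. The only points worth double-checking are the grading convention used for $R(-j)$ (to make sure the formula $\Hom^\Z_R(R(-j),M)=M_j$ has the correct sign) and the fact that the identification $\Hom_\D(R(-j),-)=\Hom^\Z_R(R(-j),-)$ on cohomology uses nothing beyond the projectivity of $R(-j)$, which holds because $R$ is a free graded $R$-module. Once these are in place the two equalities drop out simultaneously from the single Hom computation above.
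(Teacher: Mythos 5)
Your argument is correct and is essentially the paper's own proof: both identify $\Hom_\D(R(-j)[j],X[i])$ with a graded piece of cohomology via the projectivity of $R(-j)$ and then perform the same reindexing $n=i-j$ to translate the orthogonality conditions defining $\D_\M^{\leq0}$ and $\D_\M^{\geq0}$ into the stated constraints on $H^n(X)$. Your grading convention $\Hom^\Z_R(R(-j),M)=M_j$ agrees with the one used in the paper, so the bookkeeping matches exactly.
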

\begin{proof}
	Since $\D_\M^{\leq0}=\M[<\!0]^\perp$, we have
	\[
	\begin{aligned}
	\D_\M^{\leq0}&=\{X \in \D \mid \Hom_\D(R(i)[-i][-j],X)=0 \text{ for all } i \in \Z  \text{ and } j>0 \} \\
	&=\{X \in \D \mid H^{i+j}(X)_{-i}=0 \text{ for all } i \in \Z  \text{ and } j>0 \} \\
	&=\{X \in \D \mid H^i(X)_{-i+j}=0 \text{ for all } i \in \Z  \text{ and } j>0 \},
	\end{aligned}
	\]
	thus the first assertion. By $\D_\M^{\geq0}=\M[>\!0]^\perp$, we similarly have the second equation.
\end{proof}

We need one lemma to ensure that the above $t$-structure in $\D$ restricts to the small derived category.
\begin{Lem}\label{ses}
Consider the truncation triangle $X'\to X\to X'' \to X'[1]$ for $X\in\D$ with $X'\in\D_\M^{\leq0}$, $X''\in\D_\M^{\geq1}$, and let $i\in\Z$.
\begin{enumerate}
\item The triangle induces a short exact sequence
\[ \xymatrix{ 0\ar[r]& H^iX' \ar[r]& H^iX\ar[r]& H^iX''\ar[r]& 0. } \]
\item The above exact sequence is isomorphic to the truncation
\[ \xymatrix{ 0\ar[r]& (H^iX)_{\leq-i} \ar[r]& H^iX\ar[r]& (H^iX)_{>-i}\ar[r]& 0 } \]
of $H^iX\in\Md^\Z\!R$ with respect to the grading.
\end{enumerate}
\end{Lem}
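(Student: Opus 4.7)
The plan is to extract both statements from the long exact cohomology sequence of the triangle combined with the graded-concentration information provided by Lemma \ref{henkei}. The key observation is that $X'\in\D_\M^{\leq0}$ forces each $H^j X'$ to live in degrees $\leq -j$, while $X''\in\D_\M^{\geq1}$ (which, using the shift convention $\D_\M^{\geq 1}=\D_\M^{\geq0}[-1]$, means $X''[1]\in\D_\M^{\geq 0}$) forces each $H^j X''$ to live in degrees $\geq 1-j$.

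For part (1), I would write down the relevant portion of the long exact sequence,
\[ \cdots\to H^{i-1}X''\to H^iX'\to H^iX\to H^iX''\to H^{i+1}X'\to\cdots, \]
and argue that the two outer connecting maps vanish by comparing degrees. The map $H^{i-1}X''\to H^iX'$ goes from a module concentrated in degrees $\geq 2-i$ to one concentrated in degrees $\leq -i$, and the map $H^iX''\to H^{i+1}X'$ goes from degrees $\geq 1-i$ to degrees $\leq -i-1$; in both cases the source and target have disjoint support, so the morphisms are zero. This yields the desired short exact sequence $0\to H^iX'\to H^iX\to H^iX''\to 0$.

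For part (2), I would check both inclusions by degree analysis. The injection $H^iX'\hookrightarrow H^iX$ lands in $(H^iX)_{\leq -i}$ because $H^iX'$ itself is concentrated in degrees $\leq -i$. Conversely, any element of $H^iX$ of degree $\leq -i$ must map to zero in $H^iX''$, since $H^iX''$ is concentrated in degrees $\geq 1-i=>-i$, and hence lies in $H^iX'$ by exactness. Therefore $H^iX'=(H^iX)_{\leq -i}$ as graded submodules of $H^iX$, and the quotient is $(H^iX)_{>-i}$, giving the identification with the grading-truncation sequence.

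There is no real obstacle: once one has Lemma \ref{henkei} identifying the $t$-structure aisles with pointwise graded-concentration conditions, everything reduces to the standard long exact sequence and a support comparison. The only subtle point is a bookkeeping check that $X''\in\D_\M^{\geq1}$ translates into $H^jX''$ being concentrated in degrees $\geq 1-j$, which I would verify at the beginning using the shift identity $\D_\M^{\geq1}=\D_\M^{\geq0}[-1]$ together with Lemma \ref{henkei}.
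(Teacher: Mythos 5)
Your proof is correct and follows essentially the same route as the paper: identify the degree concentration of $H^jX'$ and $H^jX''$ via Lemma \ref{henkei}, kill the connecting homomorphisms in the long exact sequence by observing that source and target have disjoint graded support, and then match the resulting short exact sequence with the grading truncation by the same support comparison. The only difference is that you spell out the shift bookkeeping for $\D_\M^{\geq1}$ and both connecting maps explicitly, which the paper leaves implicit.
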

\begin{proof}
	(1)  It is enough to show that the connecting homomorphism $H^iX'' \to H^{i+1}X'$ is $0$ for each $i\in\Z$. Since $X' \in\D_\M^{\leq0}$ and $X''\in\D_\M^{\geq1}$, we have $H^iX''\in\Md^{\geq i+1}\!R$ and $H^{i+1}X\in\Md^{\leq-i-1}\!R$, hence the assertion.\\
	(2)  Similarly, we have $H^iX'\in\Md^{\leq-i}\!R$ and $H^iX''\in\Md^{\geq-i+1}\!R$, thus the exact sequence in (1) has to be the truncation of $H^iX$ with respect to the grading.
\end{proof}

We are now ready to prove the main result. 
\begin{proof}[Proof of Theorem \ref{tstr}]
	We show that the above $t$-structure in $\D$ restricts to $\per^\Z\!R$. Let $X\in\per^\Z\!R$. We have to show that its truncation $X', X''$ in Lemma \ref{ses} are perfect. We may replace $X$ by a bounded complex of finitely generated graded projective $R$-modules. Then we have that $H^iX=0$ for almost all $i$ and that each $H^iX\in\Md^\Z\!R$ is bounded above. Moreover by assumption (R1), each vector space $(H^iX)_j$ is finite dimensional. Then by Lemma \ref{ses}(1) the cohomology $H^iX''$ is $0$ for almost all $i$ and by Lemma \ref{ses}(2) that each $H^iX''\in\Md^\Z\!R$ is bounded below. Therefore $H^iX''$ lies in $\D^b(\fl^\Z\!R)$, hence in $\per^\Z\!R$ by (R2). We conclude that the remaining term $X'$ is also perfect.
\end{proof}

\section{Cluster tilting in $\qper R$ and the $a$-th root of the AR translation}\label{tct}
\subsection{Cluster tilting}
Let us first recall the notion of {\it (twisted) Calabi-Yau algebras}, in the graded case, which is of our central interest. For a graded automorphism $\a$ of a graded ring $\L$, we denote by $(-)_\a$ the twist automorphism on $\Md^\Z\!\L$. 
\begin{Def}\label{cydef}
A graded algebra $R$ is {\it bimodule twisted $n$-Calabi-Yau of $a$-invariant $a$} if it satisfies the following conditions.
\begin{itemize}
	\item $R$ is homologically smooth, that is, $R\in\per^\Z\!R^e$.
	\item There exists a graded automorphism $\a$ of $R$ such that $\RHom_{R^e}(R,R^e)(a)[n]\simeq{}_\a R_1$ in $\D(\Md^\Z\!R^e)$.
\end{itemize}
We refer to $\a$ as the {\it Nakayama automorphism}, which is uniquely determined up to inner automorphism. We say that $R$ is {\it Calabi-Yau} if $\a$ is inner. 
\end{Def}
	
Let $R$ be a negatively graded bimodule twisted $(d+1)$-CY algebra of $a$-invariant $a$ with Nakayama automorphism $\a$. We moreover assume that each $R_i$ is finite dimensional over $k$.

Let us first collect some basic facts on the derived categories $\D^b(\fl^\Z\!R)$, $\per^\Z\!R$, and $\qper R$.
\begin{Prop}
\begin{enumerate}
\item $\D^b(\fl^\Z\!R) \subset \per^\Z\!R$ has a relative Serre functor $(-)_\a(a)[d+1]$.
\item $\qper R$ has a Serre functor $(-)_\a(a)[d]$.
\end{enumerate}
\end{Prop}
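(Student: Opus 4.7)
The plan is that both parts follow from the bimodule twisted CY hypothesis via standard Serre/Nakayama-type manipulations; (1) is the graded Nakayama duality for homologically smooth algebras, and (2) then descends to the Verdier quotient $\qper R$, with the characteristic reduction of shift by one familiar from Amiot's cluster category.

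For (1), my strategy is to rewrite the target and invoke the CY isomorphism. The functor $(-)_\a(a)[d+1]$ on $\Md^\Z\!R$ coincides with $-\otimes_R^L R_\a(a)[d+1]$, and the CY identification $\RHom_{R^e}(R,R^e)(a)[d+1]\simeq {}_\a R_1$ can be rewritten, using that ${}_{\a^{-1}}R\simeq R_\a$ as bimodules, in the form $\omega_R^{-1}\simeq R_\a(a)[d+1]$ with $\omega_R:=\RHom_{R^e}(R,R^e)$ an invertible graded bimodule. Thus it suffices to produce a natural isomorphism
\[
\RHom_R^\Z(Y,\,X\otimes_R^L\omega_R^{-1})\simeq D\RHom_R^\Z(X,Y)
\]
for $X\in\D^b(\fl^\Z\!R)$ and $Y\in\per^\Z\!R$. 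This is the graded Nakayama duality, which I would verify by reducing to the generating case $Y=R(i)$, where the left-hand side becomes $X\otimes_R^L\omega_R^{-1}$ read off in appropriate graded degree and the right-hand side becomes $D X$ in the same degree, and then matching via the CY pairing; homological smoothness guarantees that $\RHom_R(Y,-)\simeq Y^\vee\otimes_R^L-$ so that both sides are exact triangulated functors in $Y$, and boundedness of the graded pieces of $X$ reduces the verification to a direct check using $R_i$ finite dimensional.

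For (2), I observe that the autoequivalence $(-)_\a(a)$ of $\Md^\Z\!R$ preserves both $\per^\Z\!R$ and $\D^b(\fl^\Z\!R)$ (the latter because $\a$ and degree shift preserve finite length), so it induces an autoequivalence of $\qper R$; shifting by $[d]$ gives the candidate Serre functor. To upgrade (1) into a genuine Serre duality on the quotient with shift $[d]$ rather than $[d+1]$, I plan to apply Theorem \ref{IYa}(2) in the setting $\T=\per^\Z\!R$, $\T^\fd=\D^b(\fl^\Z\!R)$, $S=(-)_\a(a)[d+1]$ and $\M=\add\{R(-i)[i]\}$ from Proposition \ref{silt}: representing objects of $\qper R$ by elements of the fundamental domain of the co-$t$-structure associated to $\M$, Hom-spaces in $\qper R$ coincide with those in $\per^\Z\!R$, and the duality of (1) transfers. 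The loss of one suspension is precisely the Amiot phenomenon, arising because one copy of $[1]$ is absorbed by passing to the fundamental domain modulo $\T^\fd$.

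The main obstacle is the final bookkeeping in (2): one must check that the Serre pairing actually descends to $\qper R$ with the correct shift and yields an \emph{absolute} Serre functor (not merely a relative one), which in turn requires $\qper R$ to be Hom-finite. For this I would use the grading truncation afforded by Theorem \ref{tstr} together with the finite-dimensionality of each $R_i$ to control morphism spaces, and then verify the functoriality of the resulting pairing by a diagram chase through the co-$t$-structure.
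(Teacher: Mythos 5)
Your treatment of (1) is essentially a re-derivation of the result the paper simply cites (Keller's graded Nakayama duality for homologically smooth algebras, \cite[Lemma 4.1]{Ke08}); the reduction to $Y=R(i)$ by d\'evissage through $\thick\{R(i)\mid i\in\Z\}$ is the standard argument and is fine in spirit, though the ``matching via the CY pairing'' at the generators is where all the actual content sits.

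Part (2), however, has a genuine gap. The relative Serre duality of (1) reads $\Hom_{\per^\Z\!R}(X,Y)\simeq D\Hom_{\per^\Z\!R}(Y,SX)$ \emph{only} for $X\in\D^b(\fl^\Z\!R)$, and objects of the fundamental domain $\M\ast\cdots\ast\M[d+a-1]$ are not of finite length, so the duality does not ``transfer'' by representing $X,Y$ there. Worse, the bijection of Theorem \ref{IYa}(2) requires the target to lie in $St_{\geq2}$; for $X$ in the fundamental domain one has $X\in St_{\geq 2}$, hence $SX[-1]\in S^2t_{\geq 1}$ but typically $SX[-1]\notin St_{\geq2}$ (one would need $X\in t_{\geq1}$, which fails), so you cannot even identify $\Hom_{\qper R}(Y,SX[-1])$ with $\Hom_{\per^\Z\!R}(Y,SX[-1])$ this way. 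The drop from $[d+1]$ to $[d]$ is not absorbed by bookkeeping in the fundamental domain; it comes from a connecting morphism in a specific triangle. The paper's proof supplies exactly this missing ingredient via Amiot's local envelopes: for $X,Y\in\per^\Z\!R$, choose a projective resolution $P\to Y$ generated in internal degrees $\geq n$ and take the grading truncation triangle $X_{<n}\to X\to X_{\geq n}$; then $X_{\geq n}\in\D^b(\fl^\Z\!R)$ (by (R0) and (R1)) and $\Hom(Y,X_{<n})=0$, so $X\to X_{\geq n}$ is a local $\D^b(\fl^\Z\!R)$-envelope relative to $Y$, and \cite[Section 1]{Am09} then produces the Serre functor $S[-1]=(-)_\a(a)[d]$ on the quotient. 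Your proposal contains neither this construction nor an equivalent long-exact-sequence argument, so as written the key step of (2) is unproved. (Your concern about Hom-finiteness is legitimate but secondary; it also falls out of the envelope/fundamental-domain description once the main argument is in place.)
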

\begin{proof}
We include a sketch of the proof for the convenience of the reader. (1) follows from \cite[Lemma 4.1]{Ke08}. To prove (2), we apply \cite[Section 1]{Am09}. For this we construct for each $X, Y \in \per^\Z\!R$ a local $\D^b(\fl^\Z\!R)$-envelope of $X$ relative to $Y$ in the sense of \cite[Definition 1.2]{Am09}. Take a projective resolution $P \to Y$ and pick an integer $n$ such that each term of $P$ is generated in degree $\geq n$. Consider the exact sequence $0 \to X_{<n} \to X \to X_{\geq n} \to 0$ in $\C^b(\Md^\Z\!R)$ obtained by the truncation with respect to the grading on $X$. Then $X_{\geq n} \in \D^b(\fl^\Z\!R)$ and it yields a triangle in $\per^\Z\!R$. Since $\Hom_{\D^b(\Md^\Z\!R)}(Y,X_{<n})=\Hom_{\K^b(\Md^\Z\!R)}(P,X_{<n})=0$, we see that $X \to X_{\geq n}$ gives a local envelope.
\end{proof}

We need the following reformulation of Proposition \ref{silt}.
\begin{Prop}\label{smallsilt}
$\M=\add\{R(-i)[i] \mid i \in \Z \}$ is a silting subcategory of $\per^\Z\!R$.
\end{Prop}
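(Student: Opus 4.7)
The plan is to verify directly the two conditions in Definition \ref{s} for $\M$ as a subcategory of $\per^\Z\!R$. This is essentially a reinterpretation of Proposition \ref{silt}, the only subtlety being that silting here means the ``small'' notion (Definition \ref{s}), whereas Proposition \ref{silt} was in the compactly-generated sense of \cite{AI}.

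The thick-generation condition $\thick\M=\per^\Z\!R$ I would handle as follows. On one hand, each $R(-i)[i]$ is a shift of a graded projective, so $\M\subseteq\per^\Z\!R$. On the other hand, closing $\M$ under the suspension $[1]$ produces $R(-i)[i][-i]=R(-i)$ for every $i\in\Z$, and $\per^\Z\!R$ is by definition the thick subcategory of $\D(\Md^\Z\!R)$ generated by the graded shifts $R(-i)$. Hence $\thick\M=\per^\Z\!R$.

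For the Hom-vanishing, fix $i,i'\in\Z$ and $j>0$ and compute
\[ \Hom_{\per^\Z\!R}(R(-i)[i],R(-i')[i'+j])=\Hom_{\D(\Md^\Z\!R)}(R,R(i-i')[i'-i+j])=H^{i'-i+j}(R)_{i-i'}. \]
Since $R$ viewed as a complex is concentrated in cohomological degree $0$, this vanishes unless $i'-i+j=0$, in which case the group equals $R_{i-i'}$. But $j>0$ and $i'-i+j=0$ force $i-i'=j>0$, and the hypothesis that $R$ is negatively graded then yields $R_{i-i'}=0$. So the Hom vanishes in all cases, as required. No genuine obstacle is expected; the argument is essentially a translation of Proposition \ref{silt} into the setting of Definition \ref{s}.
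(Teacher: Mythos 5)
Your proposal is correct and follows essentially the same route as the paper: the Hom-vanishing reduces, via the autoequivalence $(i)[-i]$, to the computation $\Hom_{\D(\Md^\Z\!R)}(R,R(i-i')[i'-i+j])=R_{i-i'}$ with $i-i'=j>0$, which vanishes by negative grading (this is exactly the content of Proposition \ref{silt}), and the thick-generation is the observation the paper dismisses as clear. Nothing further is needed.
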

\begin{proof}
	We have seen in Proposition \ref{silt} that $\M$ has no positive self-extensions. Also we clearly have $\thick\M=\per^\Z\!R$.
\end{proof}

Therefore we obtain a relative Serre quadruple $(\per^\Z\!R, \D^b(\fl^\Z\!R),(-)_\a(a)[d+1],\M)$. The first main result of this paper is that this lies on a context of Theorem \ref{IYa}.
\begin{Thm}\label{dact}
Let $R$ be a negatively graded bimodule twisted $(d+1)$-CY algebra of $a$-invariant $a$ such that each $R_i$ is finite dimensional.
\begin{enumerate}
\item $\M$ is a dualizing variety with left and right adjacent $t$-structures.
\item The quotient functor $\pi \colon \per^\Z\!R \to \qper R$ induces bijections $\Hom_{\per^\Z\!R}(X,Y) \to \Hom_{\qper R}(X,Y)$ for each $X\in\M\ast\cdots$ and $Y\in\cdots\ast\M[d+a-1]$. In particular, $\pi$ has a fundamental domain $\M \ast \cdots \ast \M[d+a-1]$.
\item $\pi(\M)=\add\{R(-i)[i] \mid i \in \Z \} \subset \qper R$ is a $(d+a)$-cluster tilting subcategory.
\end{enumerate}
\end{Thm}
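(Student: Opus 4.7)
The plan is to verify that the relative Serre quadruple $(\per^\Z\!R,\D^b(\fl^\Z\!R),(-)_\a(a)[d+1],\M)$ falls within the scope of Iyama--Yang's Theorem \ref{IYa}, and to deduce (1), (2), (3) from the corresponding parts of that theorem applied with $n=d+a$.

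First I would verify the hypotheses of Theorem \ref{IYa}(1). The dualizing variety property of $\M$ follows from the computation $\Hom_\M(R(-i)[i],R(-j)[j])=R_0$ when $i=j$ and $0$ otherwise: as an additive $k$-category $\M$ decomposes as $\bigoplus_{i\in\Z}\add R_0$, and each summand $\add R_0=\proj R_0$ is dualizing since $R_0$ is finite-dimensional by (R1). For the adjacent $t$-structure, homological smoothness of $R$ (from the twisted CY hypothesis) gives (R2), so Theorem \ref{tstr} equips $\per^\Z\!R$ with the $t$-structure $(t^{\leq0},t^{\geq0})$. I would verify its right adjacency to $\M$ through Lemma \ref{henkei}: the inclusion $t_{\leq0}\subset t^{\leq0}$ reduces to checking that each generator $R(-i)[i+n]$ with $n\geq0$ lies in $t^{\leq0}$ (a direct cohomological computation, using that $R(-i)\in\Md^{\leq i}\!R$), and the reverse inclusion follows by iteratively approximating an object of $t^{\leq0}\cap\per^\Z\!R$ by silting triangles for $\M$. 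The perpendicular condition $t_{\leq0}^\perp\subset\D^b(\fl^\Z\!R)$, which by adjacency becomes $t^{\geq1}\cap\per^\Z\!R\subset\D^b(\fl^\Z\!R)$, would follow by combining upper boundedness of cohomology grading (from perfectness), lower boundedness (from $t^{\geq1}$), and the finite-dimensionality of each $R_i$.

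For parts (2) and (3), the essential step is to show $\M$ is stable under the twisted shift $S_{d+a+1}=(-)_\a(a)[-a]$. Since the Nakayama twist $(-)_\a$ preserves each $R(-i)$ up to isomorphism in $\per^\Z\!R$, a direct computation yields
\[ S_{d+a+1}(R(-i)[i])\simeq R(-i)(a)[i-a]=R(-(i-a))[i-a]\in\M. \]
With $n=d+a$, Theorem \ref{IYa}(2) then yields the bijection $\Hom_{\per^\Z\!R}(X,Y)\to\Hom_{\qper R}(X,Y)$ for $X\in\M\ast\cdots$ and $Y\in\cdots\ast\M[d+a-1]$, together with the fundamental domain $\M\ast\cdots\ast\M[d+a-1]$ of (2); Theorem \ref{IYa}(3) then yields the $(d+a)$-cluster tilting property of $\pi(\M)\subset\qper R$ asserted in (3).

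I expect the main technical obstacle to lie in the adjacency verification, specifically in checking the perpendicular condition $t^{\geq1}\cap\per^\Z\!R\subset\D^b(\fl^\Z\!R)$ and the reverse inclusion $t^{\leq0}\subset t_{\leq0}$. The former requires a careful interplay between the cohomological boundedness inherent to perfect complexes and the lower-bound grading constraint coming from $t^{\geq1}$, combined with (R1) to pin down finite-length cohomology; the latter requires a silting-theoretic resolution argument that expresses any $X\in t^{\leq0}\cap\per^\Z\!R$ as an iterated extension of shifts $R(-i)[i+n]$ with $n\geq0$. Once these technical points are in hand, parts (2) and (3) follow formally from the Iyama--Yang framework.
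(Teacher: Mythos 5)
Your proposal is correct and follows essentially the same route as the paper: identify $\M\simeq\proj^{\Z}\!R_0$ for the dualizing-variety claim, match the co-aisle $t_{\leq0}$ with the aisle of the $t$-structure from Theorem \ref{tstr} (via the cohomological description of Lemma \ref{henkei}) to get the right adjacent $t$-structure with $t_{\leq0}^{\perp}\subset\D^b(\fl^{\Z}\!R)$, and then deduce (2) and (3) from Theorem \ref{IYa} with $n=d+a$ using the stability of $\M$ under $S_{d+a+1}=(-)_\a(a)[-a]$. The only presentational difference is that you verify the adjacency by proving the two inclusions directly, whereas the paper invokes the standard identity $t_{\leq0}=\M[<\!0]^{\perp}$ for silting subcategories and then computes that orthogonal; the substance is identical.
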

In the proof below, we write $\D=\per^\Z\!R$ and $\D^\fd=\D^b(\fl^\Z\!R)$.
\begin{proof} 
(1)  Since $\M=\add\{R(-i)[i]\mid i\in\Z \}$, we have $\M \simeq \proj^\Z\!\L$ with $\L=\bigoplus_{i \in \Z}\Hom_\D(R,R(-i)[i])=R_0$, hence $\M$ is a dualizing variety.

We next show that the silting subcategory $\M\subset\D$ has left and right adjacent $t$-structures. By Theorem \ref{IYa}(1), it suffices to show the existence of the right adjacent $t$-structure with $t_{\leq0}^\perp \subset \D^\fd$. Set $t^{\leq0}=t_{\leq0}$ and $t^{\geq0}=(t^{\leq-1})^\perp$, where $t^{\leq n}=t^{\leq0}[-n]$ and so on.
Note that $t^{\leq0}=\M[<\!0]^\perp$ and $t^{\geq0}=(t_{\leq-1})^\perp=\M[>\!0]^\perp$. Then as in Lemma \ref{henkei} we have
\begin{equation*}
\begin{aligned}
t^{\leq0}&=\{ X \in \D \mid H^i(X) \in \Md^{\leq-i}\!R \text{ for all } i \in \Z \}, \\
t^{\geq0}&=\{ X \in \D \mid H^i(X) \in \Md^{\geq-i}\!R \text{ for all } i \in \Z \}.
\end{aligned}
\end{equation*}
Now the assertion that $(t^{\leq0}, t^{\geq0})$ is a $t$-structure is precisely what we showed in Theorem \ref{tstr}, and clearly $t^{\geq0} \subset \D^\fd$.

(2)(3)  Let $S=(-)_\a(a)[d+1]$ be the relative Serre functor for $\D^\fd\subset\D$. Then $S_{d+a+1}=(-)_\a(a)[-a]$ preserves $\M$. Therefore we have $St_{\geq2}=S_{d+a+1}t_{\geq-d-a+1}=t_{\geq-d-a+1}$, hence (2) by Theorem \ref{IYa}(2), and (3) by Theorem \ref{IYa}(3).
\end{proof}

\subsection{Tilting and the $a$-th root of the AR-translation}
In this subsection we note the result due to Minamoto--Mori \cite{MM}, and give a finite dimensional algebra $A$ which will play a crucial role in the sequel.
Before that let us recall the following notion.
\begin{Def}[\cite{HIO}]
A finite dimensional algebra $\L$ is {\it $d$-representation infinite} if $\gd \L\leq d$ and
\[ \nu_d^{-i}\L\in\md\L \]
holds for all $i\geq0$, where $\nu_d$ is the autoequivalence $-\otimes^L_\L D\L[-d]$ on $\D^b(\md\L)$.
\end{Def}

\begin{Prop}[{\cite[Theorem 4.12]{MM}}]\label{mm}
Let $R$ be a negatively graded bimodule twisted $(d+1)$-CY algebra of $a$-invariant $a$ such that each $R_i$ is finite dimensional.
\begin{enumerate}
\item\label{tilt} $T=\bigoplus_{l=0}^{a-1}R(l)$ is a tilting object in $\qper R$.
\item\label{end} $A=\End_{\qper R}(T)$ is $d$-representation infinite.
\end{enumerate}
Therefore there exists a triangle equivalence $\qper R\simeq\D^b(\md A)$.
\end{Prop}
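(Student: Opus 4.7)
My strategy is to exploit the relative Serre quadruple structure of Theorem \ref{dact}, together with its Hom-isomorphism of part (2), to reduce computations in $\qper R$ to the projective category $\per^\Z R$ where the answer is essentially trivial.

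For the tilting property (\ref{tilt}), the key computation is $\Hom_{\qper R}(R(l), R(l')[i])$ for $l, l' \in \{0, \ldots, a-1\}$. Since $R(l) \in \M[l] \subset t_{\leq 0}$ (as $l \geq 0$) and $R(l')[i] \in \M[l'+i] \subset t_{\geq -(d+a-1)}$ whenever $l' + i \leq d + a - 1$, Theorem \ref{dact}(2) identifies this Hom with $\Hom_{\per^\Z R}(R(l), R(l')[i])$, which equals $R_{l-l'}$ for $i = 0$ and vanishes for $i \neq 0$ by projectivity of $R(l)$. This covers the shifts $1 \leq i \leq d$. For $i \leq -1$, I would use the shift identity $\Hom(X, Y[i]) = \Hom(X[-i], Y)$ and then apply Theorem \ref{dact}(2) to the pair $(R(l)[-i], R(l'))$, which still lies in the fundamental domain. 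For $i \geq d+1$, combine Serre duality $\Hom_{\qper R}(R(l), R(l')[i]) = D\Hom_{\qper R}(R(l'), R(l+a)_\a[d-i])$ with the shift trick, and iterate if necessary to bring both objects into the scope of Theorem \ref{dact}(2). The degree-zero computation yields $A = \End_{\qper R}(T) = \bigoplus_{l,l'} R_{l-l'}$, which is exactly the block matrix algebra in (\ref{eqA}), and is finite dimensional by (R1).

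For generation, use that $\pi(\M) = \add\{R(-j)[j] \mid j \in \Z\}$ generates $\qper R$ as a thick subcategory by Theorem \ref{dact}(3). The Serre functor $S = (-)_\a(a)[d]$ on $\qper R$ satisfies $S(R(l)) \simeq R(l+a)_\a[d]$, so iterating $S^{\pm 1}$ on the summands of $T$ and taking cohomological shifts recovers every generator $R(-j)[j]$. Hence $\thick T = \qper R$, and combined with the Ext-vanishing the tilting equivalence $\qper R \simeq \D^b(\md A)$ follows.

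For (\ref{end}), transport the Serre functor through the equivalence: $(-)_\a(a)[d]$ on $\qper R$ corresponds to $\nu = -\otimes^L_A DA$ on $\D^b(\md A)$, so $\nu_d = \nu[-d]$ corresponds to $(-)_\a(a)$. Then $\nu_d^{-i}A$ corresponds in $\qper R$ to $T_{\a^{-i}}(-ia) = \bigoplus_{l=0}^{a-1} R(l-ia)_{\a^{-i}}$, which must be shown to lie in the heart $\md A$ for all $i \geq 0$. This reduces to a $t$-structure argument: the standard $t$-structure on $\D^b(\md A)$ transfers to a $t$-structure on $\qper R$ whose heart contains these objects, which can be verified using the fundamental domain $\M\ast\cdots\ast\M[d+a-1]$ together with the negativity of the grading on $R$. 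The bound $\gd A \leq d$ then follows from the Ext vanishing proven in part (\ref{tilt}) and the derived equivalence. The main obstacle I expect is the case analysis in the tilting part for shifts $i$ outside the direct range of Theorem \ref{dact}(2), particularly when $a > d$, where several iterations of Serre duality and the shift trick must be combined while carefully tracking the Nakayama twist $(-)_\a$ to keep both objects inside the fundamental domain.
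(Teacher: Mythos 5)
Your overall strategy---reducing Hom-computations in $\qper R$ to $\per^\Z\!R$ via Theorem \ref{dact}(2), with Serre duality for out-of-range shifts, and then transporting Serre functors for part (2)---is the same as the paper's proof in Appendix \ref{mmpr}, but two steps do not go through as written. First, the Ext-vanishing for $i\geq d+1$. Dualizing the way you propose gives $D\Hom_{\qper R}(R(l')[i-d],R(l+a)_\a)$, whose target lies in $\M[l+a]$; Theorem \ref{dact}(2) requires $l+a\leq d+a-1$, i.e.\ $l\leq d-1$, which fails for $l\geq d$ (possible exactly when $a>d$), as you suspect. The proposed ``iteration'' does not converge: a second application of Serre duality merely replaces $(l,l')$ by $(l+a,l'+a)$ and returns you to the same situation. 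The case split that actually works is $l'+i\leq d+a-1$ versus $l'+i\geq d+a$: in the first range a direct application of Theorem \ref{dact}(2) suffices (note this already covers all $i\leq d$, including every negative $i$, since $\M[j]\subset\cdots\ast\M[d+a-1]$ for every $j\leq d+a-1$---your shift trick for $i\leq-1$ is unnecessary); in the second range apply Serre duality in the form $\Hom(X,Y)\simeq D\Hom(S^{-1}Y,X)$, so that $S^{-1}(R(l')[i])=R(l'-a)_{\a^{-1}}[i-d]\in\M[l'+i-d-a]$ with $l'+i-d-a\geq0$, and the pair $(S^{-1}(R(l')[i]),R(l))$ is in range. (The paper instead disposes of $i<0$ and $i>d$ by the separate calculus-of-fractions computation of Lemma \ref{nex}.) The same range issue recurs in your sketch of part (\ref{end}), where one needs $\Hom_{\qper R}(T(ia),T[l])=0$ for $l\neq0$.

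The more serious gap is the generation argument. Knowing that every $R(-j)[j]$ is isomorphic to some $S^{m}(R(l))[-md]$ up to twist does not place it in $\thick T$: a thick subcategory is not in general stable under the Serre functor (e.g.\ $\thick\mathcal{O}\subset\D^b(\coh\P^1)$ is not), and proving that $\thick T$ is $S$-stable is equivalent to the generation statement itself, so the argument is circular. The paper's fix uses the CY structure concretely: the minimal graded projective resolution $0\to P_{d+1}\to\cdots\to P_0\to R_0\to0$ of the finite-length module $R_0$ has $P_0=R$, $P_{d+1}=R(a)$ and $P_i\in\add\{R(m)\mid0<m<a\}$ for $0<i<d+1$; since $R_0$ vanishes in $\qper R$, this shows $R(a)\in\thick T$, and twisting and inducting yields all $R(j)$. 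You need this (or some equivalent input beyond formal Serre-functor bookkeeping) to close the proof of (\ref{tilt}).
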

\begin{proof}
	We will include a proof using Theorem \ref{dact} in Appendix \ref{mmpr}.
	Here we note a complementary discussion to \cite{MM}. Let $\QGr R$ be the Serre quotient of $\Md^\Z\!R$ by the torsion $R$-modules, where a graded $R$-module is torsion if any of its element is annihilated by $R_{\leq n}$ for some $n\leq0$. 
	By \cite{MM} there is a triangle equivalence $\D(\QGr R)\simeq\D(\Md A)$ of big derived categories. Consider its restriction to the thick subcategories of compact objects. By \cite{BV,MM}, $\D(\QGr R)$ is compactly generated by $T$, so the compact objects are $\thick T$ \cite{Ne92}, thus $\qper R$. Similarly the compact objects in $\D(\Md A)$ are $\D^b(\md A)$.
\end{proof}

We are now in the position to state the following important consequence. 
Suppose in what follows that $(-)_\a\simeq1$ on $\qper R$, for example, that $R$ is CY. Let $A$ be the $d$-representation-infinite algebra given in Proposition \ref{mm}, and let $F$ be the autoequivalence on $\D^b(\md A)$ making the diagram below commutative.
\begin{equation}\label{equp}
\xymatrix{ \qper R \ar[r]^-\simeq\ar[d]_{(1)}& \D^b(\md A)\ar[d]^F \\
	\qper R\ar[r]^-\simeq& \D^b(\md A) }
\end{equation}
\begin{Cor}\label{root}
We have $F^a=\nu_d$ as autoequivalences of $\D^b(\md A)$.
\end{Cor}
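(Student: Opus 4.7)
The plan is to compare Serre functors on both sides of the equivalence $\qper R\simeq\D^b(\md A)$ of Proposition \ref{mm}. By the commutativity of diagram (\ref{equp}), the autoequivalence $(1)$ on $\qper R$ corresponds to $F$ on $\D^b(\md A)$, and hence $(a)=(1)^a$ corresponds to $F^a$. It therefore suffices to identify $(a)$ with $\nu_d$ under the equivalence.

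First, by the proposition at the beginning of Section \ref{tct} (stating that $\qper R$ has Serre functor $(-)_\a(a)[d]$) together with the standing hypothesis $(-)_\a\simeq 1$ on $\qper R$, the Serre functor of $\qper R$ is simply $(a)[d]$. Secondly, since $A$ is $d$-representation infinite by Proposition \ref{mm}(\ref{end}), in particular $\gd A\leq d<\infty$, so $\D^b(\md A)$ admits the Serre functor $\nu=-\otimes^L_AD\!A$, which by definition satisfies $\nu=\nu_d[d]$. A Serre functor on a $\Hom$-finite triangulated category is unique up to isomorphism, and triangle equivalences carry Serre functors to Serre functors, so under the equivalence of Proposition \ref{mm} we obtain $(a)[d]\simeq\nu_d[d]$.

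Shifting by $[-d]$ gives $(a)\simeq\nu_d$ as autoequivalences on $\D^b(\md A)$, and combined with the observation above that $(a)$ corresponds to $F^a$ via (\ref{equp}), this yields $F^a=\nu_d$ as claimed.

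There is no serious obstacle here; the only point requiring minor care is verifying that the twist $(-)_\a$ really does act trivially on the quotient $\qper R$ under our hypothesis (so that the Serre functor of $\qper R$ simplifies to $(a)[d]$), but this is given as a standing assumption in the statement.
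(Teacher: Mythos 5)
Your proposal is correct and follows exactly the paper's approach: the paper's (one-line) proof also identifies $(a)$ with $\nu_d$ by comparing the Serre functors $(a)[d]$ on $\qper R$ and $\nu=\nu_d[d]$ on $\D^b(\md A)$ under the equivalence of Proposition \ref{mm}. Your write-up merely fills in the standard details (uniqueness of Serre functors, their transport along triangle equivalences, and the reduction $(-)_\a\simeq 1$), all of which are handled correctly.
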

\begin{proof}
	Comparing the Serre functors on $\qper R\simeq \D^b(\md A)$, the autoequivalences $(a)$ on $\qper R$ and $\nu_d$ on $\D^b(\md A)$ are compatible, hence we obtain the desired result.
\end{proof}
We can therefore regard $F$ as an $a$-th root of the $d$-AR translation $\nu_d$, and denote $F=:\nu_d^{1/a}$, and also $F^{-1}=:\nu_d^{-1/a}$.

Let us give some easy examples of an $a$-th root of the AR translation.
\begin{Ex}\label{easiest}
	Let $R=k[x,y]$ with $\deg x=\deg y=-1$, so $R$ is $2$-CY  of $a$-invariant $2$.	Applying Proposition \ref{mm}, we have a well-known equivalence $\D^b(\qgr R) \simeq \D^b(\md A)$ with $A$ the Kronecker algebra. The AR-quiver of this category looks
	\newcommand{\ara}{\ar@<0.7ex>[ur]\ar@<-0.2ex>[ur]}
	\newcommand{\arb}{\ar@<0.7ex>[dr]\ar@<-0.2ex>[dr]}
	\[ \xymatrix@!R=1mm@!C=1mm{
		&R(1)\arb&&R(-1)\arb&&R(-3)\arb \\
		\cdots\ara&&R\ara&&R(-2)\ara&&\cdots. } \]
	By diagram (\ref{equp}) above, this shows that $\nu_1^{-1/2}$ on $\D^b(\md A)$ acts by `moving one place to the right'.
\end{Ex}
We next look at a higher root of the AR translation.
\begin{Ex}
Let $R=k[x,y]$ with $\deg x=-2$ and $\deg y=-3$, so $R$ is $2$-CY of $a$-invariant $5$. By Proposition \ref{mm}, there is a triangle equivalence $\D^b(\qgr R)\simeq\D^b(\md A)$, where $A$ is the path algebra over $k$ of the following quiver of type $\widetilde{A_4}$:
\[	\begin{tikzpicture}
	\def\radius{1cm} 
	\node (0) at (90:\radius)   {$0$};
	\node (-1) at (18:\radius)    {$1$};
	\node (-2) at (-54:\radius)  {$2$};
	\node (-3) at (-126:\radius) {$3$};
	\node (-4) at (162:\radius)  {$4$};
	
	\begin{scope}[arrows={->[scale=4]}]
	\path[->]
		(0) edge (-2)
		(-2) edge (-4);
		\path[->,font=\small]
		(-1) edge (-3);
		\path[->,font=\small]
		(0) edge (-3);
		\path[->,font=\small]
		(-1) edge (-4);
		\end{scope}
	\end{tikzpicture}
		,\]
with the vertex $i$ corresponding to the summand $R(-i)$. By the triangle equivalence, we see that the AR-quiver of the triangulated category $\D^b(\qgr R)$ has the following connected component.
\[ \xymatrix@!R=1mm@!C=1mm{
	\circ\ar[dr]\ar@{--}[rr]&&\circ\ar[dr]\ar@{--}[rr]&&R(-1)\ar[dr]\ar@{--}[rr]&&\circ\ar[dr]\ar@{--}[rr]&&\circ\ar[dr]\ar@{--}[r]&\\
	&\circ\ar[dr]\ar[ur]&&\circ\ar[dr]\ar[ur]&&R(-4)\ar[dr]\ar[ur]&&\circ\ar[dr]\ar[ur]&&\circ\\
	\circ\ar[dr]\ar[ur]&&\circ\ar[dr]\ar[ur]&&R(-2)\ar[dr]\ar[ur]&&\circ\ar[dr]\ar[ur]&&\circ\ar[dr]\ar[ur]&\\
	&\circ\ar[ur]\ar[dr]&&R\ar[dr]\ar[ur]&&R(-5)\ar[dr]\ar[ur]&&\circ\ar[dr]\ar[ur]&&\circ\\
	\circ\ar[dr]\ar[ur]&&\circ\ar[dr]\ar[ur]&&R(-3)\ar[dr]\ar[ur]&&\circ\ar[dr]\ar[ur]&&\circ\ar[dr]\ar[ur]&\\
	\ar@{--}[r]&\circ\ar[ur]\ar@{--}[rr]&&R(-1)\ar[ur]\ar@{--}[rr]&&\circ\ar[ur]\ar@{--}[rr]&&\circ\ar[ur]\ar@{--}[rr]&&\circ \quad,} \]
where the horizontal ends are identified. We see that $\nu_1^{-1/5}=(-1)$ acts on this component by `moving one place down'.
\end{Ex}
We refer to Section \ref{poly} for more general examples for polynomial rings.

The existence of a square root of the AR translation appears in \cite{KMV} for generalized Kronecker quivers. We show in the following example how to recover their context.
\begin{Ex}\label{kmv1}
	Let $m\geq2$ and set
	\[ R=k\!\left\langle x_1,\ldots,x_m\right\rangle /(x_1^2+\cdots+x_m^2), \quad \deg x_i=-1. \]
	This is a (non-Noetherian) Artin-Schelter regular algebra of dimension $2$ (see \cite{Zh}), thus is twisted CY (\cite[Proposition 4.5]{YZ}, \cite[Theorem 5.15]{RR}).
	Then it is not difficult to deduce that the complex
	\[ \xymatrix{ 0\ar[r]& R\otimes R(2)\ar[r]^-{d_2}& \disoplus_{i=1}^mR\otimes R(1)\ar[r]^-{d_1}& R\otimes R\ar[r]& 0 }\]
	with maps
	\begin{equation*}
	\begin{aligned}
		d_1((1\otimes1)_i)&=x_i\otimes1-1\otimes x_i \\ 
		d_2(1\otimes1)&=\sum_{i=1}^m(x_i\otimes1+1\otimes x_i),
	\end{aligned}
	\end{equation*}
	together with the multiplication map $R\otimes R\to R$ gives a bimodule projective resolution of $R$. Applying $\Hom_{R^e}(-,R^e)$ to this complex shows that $R$ is graded bimodule twisted $2$-CY of $a$-invariant $2$ with Nakayama automorphism $\sigma\colon x_i\mapsto -x_i$.
	
	By Proposition \ref{mm}, we have a derived equivalence $\D^b(\qgr R)\simeq\D^b(\md A)$ with $A=\begin{pmatrix}R_0& 0\\R_{-1}&R_0\end{pmatrix}$, which is the path algebra of the $m$-Kronecker quiver $Q_m=(\bullet\overset{m}{\longrightarrow}\bullet)$. Now the twist automorphism $(-)_\s$ is isomorphic to the identity functor on $\Md^\Z\!R$ so we have an autoequivalence $\nu_1^{-1/2}$ on $\D^b(\md kQ_m)$. The AR quiver of the derived categories has a connected component
	\[ \xymatrix@!R=1mm@!C=1mm{
		&R(1)\ar[dr]|m&&R(-1)\ar[dr]|m&&R(-3)\ar[dr]|m \\
		\cdots\ar[ur]|m&&R\ar[ur]|m&&R(-2)\ar[ur]|m&&\cdots. } \]
	We see that $\nu_1^{-1/2}$ acts by `moving one place right'; compare Example \ref{easiest}.
\end{Ex}

\section{CY algebras as DG algebras}\label{asDG}
We will consider a graded algebra $R$ as a DG algebra with the same underlying graded ring and the vanishing differential. We write $R^\dg$ when considering $R$ as a DG algebra. 

We first collect some sign conventions which is heavily used in this section. 
Throughout this section we denote by $|x|$ the degree of a homogeneous element $x$ in a graded vector space.

\begin{Conv}
Let $\L$ and $\G$ be DG algebras.
\begin{enumerate}
	\item Let $X$ be a DG {\it right} $\L$-module. Then its shift $X[1]$ has the same right $\L$-action as $X$.
	\item Let $X$ be a DG {\it left} $\L$-module. Then its shift $X[1]$ has a left $\L$-action $a\cdot x=(-1)^{|a|}ax$ for $a \in \L$ and $x \in X[1]$.
	\item There is an isomorphism $\cHom_\L(\L[-l],\L) \simeq \L[l]$ of DG left $\L$-modules by $f \mapsto (-1)^{l|f|}f(1)$.
	\item We identify $(\L^e)^\op \simeq \L^e$ via $x \otimes y \leftrightarrow (-1)^{|x||y|}y \otimes x$.
	\item We identify a DG $(\L,\G)$-bimodule $X$ and a DG $\L^\op\otimes_k\G$-module via $\lambda\cdot x\cdot\gamma=(-1)^{|\lambda||x|}x\cdot(\lambda\otimes\gamma)$.
\end{enumerate}
\end{Conv}

We say that a DG algebra $\L$ is {\it twisted bimodule $n$-CY} if it is homologically smooth and there exists a DG automorphism $\s$ of $\L$ such that we have an isomorphism $\RHom_{\L^e}(\L,\L^e)[n] \simeq {}_1\L_\s$ in $\D(\L^e)$. The aim of this section is to note the following observation. Note that the term `CY algebra' means precisely as in Definition \ref{cydef}, and {\it no} additional conditions are imposed.
\begin{Thm}\label{dgcy}
Let $R$ be a graded bimodule $n$-CY algebra of $a$-invariant $a$. Then, $R^\dg$ is twisted bimodule $(n+a)$-CY. Precisely we have the following.
\begin{enumerate}
\item If $a$ is odd, then $R^\dg$ is CY.
\item If $a$ is even, then $R^\dg$ is homologically smooth and $\RHom_{(R^\dg)^e}(R,(R^\dg)^e)[n+a]\simeq{}_1R_\s$ for the automorphism $\sigma \colon x \mapsto (-1)^{|x|}x$ of $R$.
\end{enumerate}
\end{Thm}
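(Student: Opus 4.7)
The plan is to translate a graded bimodule resolution of $R$ into a DG bimodule resolution of $R^\dg$ via the total complex functor, and then carefully compare the resulting $\RHom$ computations into the enveloping algebra. By the graded homological smoothness of $R$, fix a bounded resolution $P^\bullet \to R$ in which each $P^{-i}$ is a finitely generated graded projective $R^e$-module. Applying $\Tot$---which folds the algebraic grading into cohomological degree---yields a DG $(R^\dg)^e$-module $\Tot P$: a summand $R^e(-j)$ of $P^{-i}$ produces the summand $(R^\dg)^e[i-j]$ of $\Tot P$, so $\Tot P$ is a finite iterated extension of shifts of $(R^\dg)^e$, hence perfect. For each fixed algebraic degree $q$ the subcomplex $(P^\bullet)_q \to R_q$ resolves the vector space $R_q$ concentrated at total degree $q$, so $\Tot P \to R^\dg$ is a quasi-isomorphism. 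This gives $R^\dg \in \per((R^\dg)^e)$, establishing homological smoothness.

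Next I would compute the CY dual. Using $\Tot P$, we have $\RHom_{(R^\dg)^e}(R^\dg, (R^\dg)^e) \simeq \cHom_{(R^\dg)^e}(\Tot P, (R^\dg)^e)$. The graded CY hypothesis gives $\Hom^\bullet_{R^e}(P, R^e) \simeq R(-a)[-n]$ in $\D(\Md^\Z R^e)$; applying $\Tot$ produces $R^\dg[-n-a]$, since the algebraic shift $(-a)$ and the complex shift $[-n]$ fuse into a single cohomological shift $[-n-a]$. The key step is the comparison
\[ \cHom_{(R^\dg)^e}(\Tot P, (R^\dg)^e) \simeq \Tot(\Hom^\bullet_{R^e}(P, R^e))_\sigma \]
of DG $(R^\dg)^e$-modules, for a Koszul-sign twist $\sigma$ of $R^\dg$. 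On underlying graded vector spaces the two sides agree summand by summand: the DG dual of $(R^\dg)^e[i-j]$ is $(R^\dg)^e[j-i]$, matching the $\Tot$-image of $\Hom_{R^e}(R^e(-j), R^e) = R^e(j)$ sitting in complex position $i$ of $\Hom^\bullet_{R^e}(P, R^e)$. The hard part is tracking the Koszul signs that enter---from the shift-dualization sign $(-1)^{l|f|}$, the bimodule/enveloping-algebra identification with its Koszul twist, the sign discrepancy between the DG opposite $(R^\dg)^\op$ and the graded opposite $R^\op$, and the $\Tot$ differentials---and checking that they combine into a single graded automorphism $\sigma$ of $R^\dg$; this is the main obstacle of the proof.

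Combining the two computations yields $\RHom_{(R^\dg)^e}(R^\dg, (R^\dg)^e)[n+a] \simeq {}_{1}R^{\dg}{}_{\sigma}$ in $\D((R^\dg)^e)$, which is the twisted $(n+a)$-CY formula. A final sign analysis identifies $\sigma$: the accumulated Koszul sign depends on the parity of the algebraic CY shift $a$, and one finds that $\sigma$ is trivial when $a$ is odd, yielding the CY property (1), while $\sigma(x) = (-1)^{|x|}x$ when $a$ is even, yielding the twisted formula (2).
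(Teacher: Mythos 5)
Your strategy is essentially the paper's: resolve $R$ by graded projective bimodules, totalize to get a cofibrant resolution of $R^\dg$ over $(R^\dg)^e$ (whence smoothness), and identify the DG dual of this resolution with the totalization of the graded dual up to a sign twist. The outline and the final parity statement are correct, and taking a bounded projective resolution rather than a bounded-above free one even spares you the paper's care in distinguishing total sum from total product. What your write-up defers, however, is the entire substance: the ``single graded automorphism $\sigma$'' is asserted rather than produced. In the paper it arises from two separate computations, and it is worth knowing both, since each contributes a factor. First, on free graded bimodules there is a natural isomorphism between ${}_\s(-)\circ\Hom_{R^e}(-,R^e)$ and the DG dual $\cHom_{(R^\dg)^e}((-)^\dg,(R^\dg)^e)$ (Lemma \ref{forget}); this contributes one factor of $\s$, independently of $a$. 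Second, converting the algebraic shift into a cohomological one costs a twist depending on $l$: $({}_{\s^l}X(l))^\dg\simeq X^\dg[l]$ (Lemma \ref{comm}), so $({}_\s R(-a))^\dg[-n]\simeq{}_{\s^{a+1}}R[-n-a]$. Since $\s^2=1$, the accumulated twist $\s^{a+1}$ is trivial exactly when $a$ is odd, which is the parity you claim. Without exhibiting these two isomorphisms (together with the explicit identification $(R^e(l))^\dg\simeq(R^\dg)^e[l]$ and the compatibility of $\cHom$ with totalization), the conclusion that all the Koszul signs assemble into this particular $\sigma$ --- rather than into signs that vary with the resolution or fail to be multiplicative --- is a guess, not a proof; this verification is where the work lies.
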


Let us introduce some notations. For a DG algebra $\L$, we denote by $\mathrm{C}(\L)=Z^0\C_\dg(\L)$ the (abelian) category of DG $\L$-modules.
Let $X$ be a graded $(R,R)$-bimodule. We can view it as a DG $(R^\dg,R^\dg)$-bimodule with trivial differentials, hence as an $(R^\dg)^e$-module, which gives a functor
\[ (-)^\dg \colon \Md^\Z\!R^e \longrightarrow \mathrm{C}((R^\dg)^e). \] 

We note the following sign-conventional lemmas. The proofs are left to the reader.
\begin{Lem}\label{free}
Let $F=R^e(l)$ be a free $R^e$-module. Then, $F^\dg$ is isomorphic to the free DG $(R^\dg)^e$-module $(R^\dg)^e[l]$. The isomorphism is given by
\[ F^\dg \longrightarrow (R^\dg)^e[l], \quad x\otimes y \mapsto (-1)^{l|x|} x\otimes y. \]
\end{Lem}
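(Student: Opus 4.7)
The lemma is a sign-chasing verification, which I would approach in three short steps. First, I would check that the formula defines a bijection of underlying graded vector spaces of the correct cohomological degree. Both $F^\dg$ and $(R^\dg)^e[l]$ have underlying vector space $R\otimes_kR$, and a homogeneous element $x\otimes y$ with $|x|+|y|=n$ in $R^e$ lands in cohomological degree $n-l$ on each side (on the source via the graded degree shift $(l)$ read cohomologically through $(-)^\dg$, and on the target via the DG shift $[l]$). Since multiplication by the scalar $(-1)^{l|x|}$ is a bijection on each homogeneous component, the map is a degree-preserving linear isomorphism, and the differentials on the two sides trivially agree because $R^\dg$ has zero differential.

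The substantive step is compatibility with the left $(R^\dg)^e$-action. On $(R^\dg)^e[l]$, iterating Convention~(2) gives the action $\alpha\cdot_{[l]}z=(-1)^{l|\alpha|}\alpha z$ for $\alpha,z\in (R^\dg)^e$, where the product on the right is the multiplication of $(R^\dg)^e$; expanding this via Convention~(4) injects a further Koszul sign when the two tensor factors are permuted. On $F^\dg$, the left $(R^\dg)^e$-action is obtained from the graded bimodule structure $a(x\otimes y)b=(ax)\otimes(yb)$ on $R^e(l)$ by Conventions~(4) and~(5); once the algebraic grading is read cohomologically, these conventions insert Koszul signs in $|a|,|b|,|x|,|y|$. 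The plan is then to expand both $\phi((\alpha\otimes\beta)\cdot_{F^\dg}(x\otimes y))$ and $(\alpha\otimes\beta)\cdot_{[l]}\phi(x\otimes y)$, where $\phi$ denotes the claimed map, and check that after cancelling the Koszul signs common to both sides the discrepancy is precisely the sign $(-1)^{l|\alpha|}$ from the DG shift --- which is absorbed by the change of $(-1)^{l|x|}$ into $(-1)^{l|\alpha x|}=(-1)^{l(|\alpha|+|x|)}$ when rewriting the action on the left-hand factor.

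The main obstacle is purely notational. One has to fix once and for all which factor of $R^e=R^{\op}\otimes_kR$ corresponds to the left, respectively right, bimodule action, and then propagate the Koszul signs from Conventions~(2), (4) and (5) through consistently; it is very easy to lose or gain a $(-1)^{l|y|}$ or a $(-1)^{|\alpha||\beta|}$ along the way. The fact that the sign in the isomorphism depends only on $|x|$ (the first tensor factor) and not on $|y|$ is forced by the asymmetry of Convention~(2): the shift $[l]$ acts on \emph{left} modules, and under the identification of Convention~(4) it is the left-hand factor of $R^{\op}\otimes_kR$ which carries the resulting sign.
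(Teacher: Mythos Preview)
Your plan is correct; the paper explicitly leaves this sign verification to the reader, and a direct check via the stated conventions is exactly what is intended. One clarification worth noting: in the paper's setup both $F^\dg$ and $(R^\dg)^e[l]$ are \emph{right} $(R^\dg)^e$-modules, so by Convention~(1) the shift $[l]$ does not itself alter the right action; the twist $(-1)^{l|\alpha|}$ you identify via Convention~(2) appears once you pass through Convention~(5) to the $(R^\dg,R^\dg)$-bimodule picture, where it twists the left $R^\dg$-action exactly as you describe, and your identity $(-1)^{l(|\alpha|+|x|)}=(-1)^{l|\alpha|}\cdot(-1)^{l|x|}$ then finishes the check.
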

\begin{Lem}\label{comm}
Let $X\in\Md^\Z\!R^e$ and $l\in\Z$. Then we have an isomorphism $({}_{\s^l}X(l))^\dg\simeq X^\dg[l]$ in $\mathrm{C}((R^\dg)^e)$.
\end{Lem}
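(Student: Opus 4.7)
The plan is to write down the evident candidate for the isomorphism---namely the identity on underlying graded vector spaces modulo the degree relabeling---and then verify it respects the right $(R^\dg)^e$-action by chasing the Koszul signs of Conventions (1)--(5). Concretely, I will denote by $\bar x$ and $\tilde x$ the element $x \in X$ viewed respectively in $({}_{\s^l}X(l))^\dg$ and in $X^\dg[l]$, both of degree $|x|-l$, and define $\Phi(\bar x)=\tilde x$. Since the graded bimodule $X$ and the DG algebra $R^\dg$ both have zero differential, both sides carry zero differential, so it suffices to check that $\Phi$ is $(R^\dg)^e$-linear.

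The verification splits into two short computations. On the $X^\dg[l]$ side, Convention (1) says the right $(R^\dg)^e$-action is unchanged by the shift, so it is literally the right action on $X^\dg$ coming from Convention (5) applied to the underlying graded bimodule $X$, producing the sign $(-1)^{|\lambda||x|}$ in front of $\widetilde{\lambda x\gamma}$. On the $({}_{\s^l}X(l))^\dg$ side, I first record that the graded bimodule ${}_{\s^l}X(l)$ has left action $\lambda\cdot\bar x=\s^l(\lambda)\cdot\bar x=(-1)^{l|\lambda|}\overline{\lambda x}$, since $\s^l(\lambda)=(-1)^{l|\lambda|}\lambda$, while the right action is unchanged; then I apply Convention (5) with $|\bar x|=|x|-l$ to convert to a right $(R^\dg)^e$-action, producing the sign $(-1)^{|\lambda|(|x|-l)}(-1)^{l|\lambda|}=(-1)^{|\lambda||x|}$. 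The two signs match, so $\Phi$ is an isomorphism in $\mathrm{C}((R^\dg)^e)$.

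The main (and only) obstacle is purely bookkeeping: making sure the sign $(-1)^{l|\lambda|}$ contributed by the Nakayama twist $\s^l$ exactly cancels the discrepancy between $|\bar x|$ and $|x|$ introduced by the degree shift in the bimodule-to-right-module conversion of Convention (5). Convention (4), the Koszul swap of tensor factors in $(R^\dg)^e$, plays no role, since the entire computation stays on the left-right bimodule side of the $\op$-identification.
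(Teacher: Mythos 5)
Your proof is correct and is exactly the verification the paper omits (it states ``the proofs are left to the reader'' for Lemmas \ref{free} and \ref{comm}): the identity map on underlying elements, with the sign $(-1)^{l|\lambda|}$ from the twist ${}_{\s^l}$ cancelling against the degree discrepancy $|x|\mapsto|x|-l$ in Convention (5), so that both sides carry the right $(R^\dg)^e$-action $x\cdot(\lambda\otimes\gamma)=(-1)^{|\lambda||x|}\lambda x\gamma$. Your observation that shifting $X^\dg$ as a bimodule via Conventions (1)--(2) and then applying (5) agrees with applying (5) first and then Convention (1) is the only point where a reader could stumble, and you handle it correctly.
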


Let us now recall the notion of {total module} of a complex of DG modules. We consider two variations; {\it total sum $\Tot$} and {\it total product $\Totp$}. Let $X=(\cdots\to X^{i-1}\xrightarrow{\delta_X^{i-1}} X^i\xrightarrow{\delta_X^i} X^{i+1}\to\cdots)$ be a complex of DG $\L$-modules, thus each $X^i$ is a DG $\L$-module, each $\delta_X^i$ is a morphism of DG $\L$-modules, and $\delta_X^{i+1}\circ\delta_X^i=0$. Then define
\[ \Tot X=\bigoplus_{i\in\Z}X^i[-i], \qquad \Totp X=\prod_{i\in\Z}X^i[-i] \]
as graded $\L$-modules, and with differentials $\delta_X+\sum_id_{X^i[-i]}$. Here, $\delta_X$ is the differential of the complex $X$ and $d_{X^i[-i]}$ is the differential of the DG module $X^i[-i]$. Then $\Tot X$ and $\Totp X$ are DG $\L$-modules.

For a complex $X=(\cdots\to X^{i-1}\to X^i\to X^{i+1}\to\cdots)$ of DG $\L$-modules and a DG $\L$-module $Y$, we denote by $\cHom_\L(X,Y)$ the complex 
\[ \xymatrix{ \cdots\ar[r]& \cHom_\L(X^{i+1},Y) \ar[r]^-{\cdot \delta_X^i}& \cHom_\L(X^i,Y)\ar[r]^-{\cdot \delta_X^{i-1}}& \cHom_\L(X^{i-1},Y)\ar[r]& \cdots } \]
of DG $k$-modules, with $\cHom_\L(X^i,Y)$ at degree $-i$. 

The following is quite useful for computations.
\begin{Lem}\label{Tot}
Let $\L$ and $\G$ be DG algebras and $X$ a complex of DG $\L$-modules. Then for any DG $(\G,\L)$-bimodule $Y$, we have an isomorphism $\cHom_\L(\Tot X,Y)\simeq \Totp\cHom_\L(X,Y)$ of left DG $\G$-modules.
\end{Lem}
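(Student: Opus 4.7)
The plan is to first establish the isomorphism as graded $\G$-modules, and then verify compatibility with differentials; $\G$-linearity will be automatic.

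On the level of graded $\G$-modules, by the universal property of direct sums,
\[ \cHom_\L(\Tot X, Y) = \cHom_\L\Bigl(\bigoplus_{i\in\Z} X^i[-i],\, Y\Bigr) \simeq \prod_{i\in\Z}\cHom_\L(X^i[-i], Y) \simeq \prod_{i\in\Z}\cHom_\L(X^i, Y)[i], \]
using the standard shift identification $\cHom_\L(M[l],N)\simeq\cHom_\L(M,N)[-l]$ in the second step. On the other hand, since the complex $\cHom_\L(X,Y)$ places $\cHom_\L(X^i,Y)$ at complex-degree $-i$, the definition of the total product immediately gives $\Totp\cHom_\L(X,Y) = \prod_{i\in\Z}\cHom_\L(X^i,Y)[i]$ as a graded $\G$-module. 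Thus the two sides are canonically identified.

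To verify that the differentials agree, I would decompose the differential of $\Tot X$ as $\delta_X + \sum_i d_{X^i[-i]}$, so that for $f\in\cHom_\L(\Tot X, Y)$ of degree $n$ one has
\[ d(f) = d_Y\circ f - (-1)^n f\circ\delta_X - (-1)^n f\circ\Bigl(\sum_i d_{X^i[-i]}\Bigr). \]
Under the identification above, the terms involving $d_Y$ and the internal differentials $d_{X^i[-i]}$ reproduce, component-wise, the internal DG-module differential on each $\cHom_\L(X^i,Y)[i]$, while the term involving $\delta_X$ reproduces precisely the differential $\cdot\delta_X$ of the complex $\cHom_\L(X,Y)$ displayed just before the lemma. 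Taken together, these are exactly the differential defining $\Totp\cHom_\L(X,Y)$.

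The main obstacle is the consistent bookkeeping of signs, since each shift, each invocation of $\cHom$, and the identification of left modules with right modules over the opposite algebra each carries its own sign, per the conventions fixed at the start of Section \ref{asDG}. In particular one has to check that the sign in the shift isomorphism $\cHom_\L(X^i[-i],Y)\simeq\cHom_\L(X^i,Y)[i]$ matches the sign by which $d_{X^i[-i]}$ differs from $d_{X^i}$, so that the internal differentials agree on the nose rather than up to a sign. Finally, the left $\G$-action on both sides is induced solely from that on $Y$ via $(\gamma\cdot f)(x)=\gamma\cdot f(x)$, so the constructed map is $\G$-linear by construction.
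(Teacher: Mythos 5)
Your overall strategy --- identify the graded pieces of the two sides and then compare differentials --- is the same as the paper's, but the step you defer as "sign bookkeeping" is the entire content of the lemma, and the way you have stated it the verification would fail. Under the obvious identification of the degree-$n$ part of both sides with $\prod_{i\in\Z}\cHom_\L(X^i,Y)^{i+n}$, the two differentials do \emph{not} agree on the nose. For $f\in\cHom_\L(X^i,Y)^{i+n}$ the differential of $\cHom_\L(\Tot X,Y)$ sends $f$ to
\[ d_Yf-(-1)^nf\delta_X^{i-1}-(-1)^{n+i}fd_{X^i}, \]
while the differential of $\Totp\cHom_\L(X,Y)$ sends $f$ to
\[ f\delta_X^{i-1}+(-1)^id_Yf-(-1)^nfd_{X^i}. \]
Note that the complex $\cHom_\L(X,Y)$ is defined (in the display preceding the lemma) with differential equal to plain precomposition by $\delta_X$, with no Koszul sign, whereas in $\cHom_\L(\Tot X,Y)$ the term $f\delta_X$ enters with the sign $-(-1)^{|f|}$. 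So the mismatch is not confined to the internal differentials $d_{X^i}$ versus $d_{X^i[-i]}$ (which a shift isomorphism for $\cHom$ could plausibly absorb); the $d_Y$-terms and the $\delta_X$-terms are also off by signs, and the discrepancies depend on both $i$ and $n$. Consequently the identity map, and more generally any rescaling by a sign independent of $i$ and $n$, is not a morphism of DG modules.

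The missing idea is therefore the explicit correcting sign: the paper shows that $f\mapsto(-1)^{i(i+1)/2+in}f$ for $f\in\cHom_\L(X^i,Y)^{i+n}$ intertwines the two differentials, and one must check this against all three terms ($d_Yf$, $fd_{X^i}$, and $f\delta_X^{i-1}$, the last of which changes $i$ as well as $n$) simultaneously. Your proposal asserts that a suitable choice of shift isomorphisms makes everything match componentwise, but that assertion is precisely what has to be proved; producing a sign that works for all three terms at once is the proof. The remark about $\G$-linearity is fine, since the correcting factor is a scalar on each graded piece.
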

\begin{proof}
	It is easily verified that the degree $n$ part of each side is $\prod_{i \in \Z}\cHom_\L(X^i,Y)^{i+n}$, thus two sides coincide as graded vector spaces. Now we check that this identification is compatible with the differentials and the $\G$-actions. 
	
	On the one hand, the differential on the left-hand-side maps $f \in \cHom_\L(X^i,Y)^{i+n} \subset \cHom_\L(\Tot X,Y)$ to $d_Yf-(-1)^nfd_{\Tot X}=d_Yf-(-1)^nf\delta_X^{i-1}-(-1)^{n+i}fd_{X^i}$.
	
	On the other hand, the differential of the right-hand-side maps $f \in \cHom_\L(X^i,Y)^{i+n} \subset \Totp\cHom_\L(X,Y)$ to $f\delta_X^{i-1}+(-1)^{-i}d_{\cHom_\L(X^i,Y)}f=f\delta_X^{i-1}+(-1)^i(d_Yf-(-1)^{i+n}fd_{X^i})=f\delta_X^{i-1}+(-1)^id_Yf-(-1)^nfd_{X^i}$.
	
	Then one can check that the map $\cHom_\L(\Tot X,Y)\to\Totp\cHom_\L(X,Y)$ given by $f \mapsto (-1)^{i(i+1)/2+in}f$ for $f \in \cHom_\L(X^i,Y)^{i+n}$ is an isomorphism of left DG $\G$-modules.
\end{proof}


An important step for the proof of Theorem \ref{dgcy} is the following observation. We denote by $\Free^\Z\!\L$ the category of (not necessarily finitely generated) graded free modules over a graded ring $\L$.
\begin{Lem}\label{forget}
Let $\s$ be the automorphism $x\mapsto(-1)^{|x|}x$ on $R$. The following two functors are naturally isomorphic.
\begin{enumerate}
	\renewcommand{\labelenumi}{(\alph{enumi})}
	\item $F\colon\Free^\Z\!R^e\xrightarrow{\Hom_{R^e}(-,R^e)}\Md^\Z\!R^e \xrightarrow{{}_\s(-)}\Md^\Z\!R^e$.
	\item $G\colon\Free^\Z\!R^e\xrightarrow{(-)^\dg}{\mathrm{C}}((R^\dg)^e)\xrightarrow{\cHom_{(R^\dg)^e}(-,(R^\dg)^e)}{\mathrm{C}}((R^\dg)^e)\xrightarrow{\rm forget}\Md^\Z\!R^e$.
\end{enumerate}
\end{Lem}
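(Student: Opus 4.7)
The plan is to construct the desired natural isomorphism explicitly on the generators $\{R^e(l) \mid l \in \Z\}$ of $\Free^\Z\!R^e$ and then verify compatibility with morphisms. Since both $F$ and $G$ are additive and contravariant and every graded free $R^e$-module is a coproduct of shifts $R^e(l)$, it suffices to check the claim on these generators together with the generating morphisms, which are left multiplications by homogeneous elements $\xi \in R^e$ of degree $m - l$.

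First I would compute $F(R^e(l))$: the canonical evaluation $f \mapsto f(1)$ identifies $\Hom_{R^e}(R^e(l), R^e) \simeq R^e(-l)$ as graded $R^e$-bimodules, so $F(R^e(l)) = {}_\s R^e(-l)$. For $G(R^e(l))$, I would compose Lemma \ref{free} (giving $(R^e(l))^\dg \simeq (R^\dg)^e[l]$ in $\mathrm{C}((R^\dg)^e)$) with the DG analogue of Convention (3) (namely $f \mapsto (-1)^{l|f|} f(1)$, giving $\cHom_{(R^\dg)^e}((R^\dg)^e[l], (R^\dg)^e) \simeq (R^\dg)^e[-l]$), and finally the forgetful functor to $\Md^\Z\!R^e$. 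The crucial observation is that the forgotten graded bimodule matches ${}_\s R^e(-l)$: by Convention (2), the DG shift $[-l]$ introduces a sign $(-1)^{l|a|}$ in the left action of $a$, and unwinding the bimodule structure via Convention (5) rewrites this precisely as the twist ${}_\s(-)$. Packaging this yields a candidate isomorphism $\eta_l \colon F(R^e(l)) \to G(R^e(l))$.

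For naturality, I would evaluate both $F$ and $G$ on a generating morphism $\xi \colon R^e(l) \to R^e(m)$ and check that the square with $\eta_l$ and $\eta_m$ commutes. This reduces to verifying that the sign $(-1)^{l|\xi|}$ from Lemma \ref{free}, the sign from Convention (3), and the shift sign built into the bimodule structure conspire to cancel, leaving exactly the map $F(\xi)$. The principal obstacle of the whole argument is sign-bookkeeping: each of Conventions (1)--(5) contributes its own sign, and the content of the lemma is precisely that the accumulated sign on the $G$ side matches the sign $(-1)^{|x|}$ introduced by $\s$ on the $F$ side. The individual signs are uninformative; the delicate point is to perform the identification of $(R^\dg)^e[-l]$ as a DG bimodule with ${}_\s R^e(-l)$ as a graded bimodule consistently, and then to carry this identification naturally through the left multiplication by $\xi$, where further sign factors of the form $(-1)^{l|\xi|}$ and $(-1)^{|\xi||1|}$ (from Convention (5)) must be tracked.
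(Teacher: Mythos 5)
Your overall strategy coincides with the paper's: define the isomorphism objectwise on the generators $R^e(l)$ of $\Free^\Z\!R^e$ via Lemma \ref{free} and Convention (3), then verify naturality against a generating morphism $R^e(l)\to R^e(m)$. So the architecture is fine. The problem is that the one step carrying all the content of the lemma is asserted rather than established, and as phrased it is not correct. You claim that after forgetting the differential, the DG bimodule $(R^\dg)^e[-l]$ ``matches'' ${}_\s R^e(-l)$ because the shift sign of Convention (2) combined with Convention (5) ``rewrites precisely as the twist ${}_\s(-)$.'' If one unwinds the conventions, the $(R,R)$-action on $G(R^e(l))=R\otimes R[-l]$ is
\[ b\cdot(x\otimes y)\cdot a=(-1)^{|a|(|b|+|y|)+|b|(l+|x|)}\,xa\otimes by, \]
whereas the action on $F(R^e(l))={}_\s(R\otimes R(-l))$ is $b\cdot(x\otimes y)\cdot a=(-1)^{|b|}xa\otimes by$. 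These do not coincide: the sign on the $G$-side depends on the degree $|y|$ of the second tensor factor, so the identity map on underlying sets is not bilinear, and no rearrangement of the shift sign alone produces the twist. The two modules are isomorphic only via the nontrivial intertwiner
\[ \varphi\colon x\otimes y\longmapsto(-1)^{(|x|+l+1)|y|}\,x\otimes y, \]
whose existence and bilinearity is exactly what has to be checked.

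The same issue propagates to your naturality check: once $\varphi$ carries a sign depending on both tensor factors, the square for a generating morphism $1\otimes1\mapsto\sum_iu_i\otimes v_i$ does not commute for formal reasons; one must also track that under $(-)^\dg$ and Lemma \ref{free} the morphism picks up the factor $(-1)^{m|u_i|}$, and that the induced map on the $\cHom$ side acquires a further $(-1)^{m(l+1)}$, before the cancellation can be verified. Your proposal correctly identifies sign-bookkeeping as the obstacle, but a proof of this lemma consists precisely of exhibiting $\varphi$ and performing that bookkeeping; without it the argument is a restatement of the claim rather than a derivation of it.
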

\begin{proof}
	First we define an isomorphism $\varphi_P\colon F(P) \to G(P)$ at the free module $P=R^e(l)$. Clearly we have $F(P)=R\otimes R(-l)$, with the action of $R$ given by $b\cdot(x \otimes y)\cdot a=(-1)^{|b|}xa\otimes by$. On the other hand, using Lemma \ref{free} we see $G(P)=R\otimes R[-l]$, with the $R$-action $b\cdot(x \otimes y)\cdot a=(-1)^{|a|(|b|+|y|)+|b|(l+|x|)}xa\otimes by$ for $x,y \in R$. Now we define an isomorphism $F(P)=R\otimes R(-l) \to R\otimes R(-l)=G(P)$ by the formula
	\[ \varphi_P \colon x \otimes y \mapsto (-1)^{(|x|+l+1)|y|}x \otimes y. \] 
	We can readily check that this is $(R,R)$-bilinear.
	
	Next we show that this isomorphism is natural. Let $P \to Q$ be a morphism in $\Free^\Z\!R^e$. We may assume that this is of the form $R^e(l) \to R^e(m)$ with $1\otimes 1 \mapsto \sum_i u_i\otimes v_i$. Under the functor $(-)^\dg$ and the isomorphism in Lemma \ref{free}, it becomes an $(R^\dg)^e$-linear map $(R^\dg)^e[l]\to (R^\dg)^e[m]$ sending $1\otimes1$ to $\sum_i(-1)^{m|u_i|}u_i\otimes v_i$. Note that we have $|u_i|+|v_i|-m=-l$. Our task is to show that in the diagram below, the middle square is commutative in $\Md^\Z\!R^e$.
	\[ \xymatrix{
		{}_\s\Hom_{R^e}(R^e(m),R^e)\ar[d]\ar@{=}[r]& {}_\s R^e(-m)\ar[d]_f\ar[r]^-{\varphi_Q}& {(R^\dg)^e}[-m]\ar[d]^g& \cHom_{(R^\dg)^e}({(R^\dg)^e}[m],{(R^\dg)^e})\ar@{=}[l]\ar[d] \\
		{}_\s\Hom_{R^e}(R^e(l),R^e)\ar@{=}[r]& {}_\s R^e(-l)\ar[r]^-{\varphi_P}& (R^\dg)^e[-l]& \cHom_{(R^\dg)^e}({(R^\dg)^e}[l],{(R^\dg)^e})\ar@{=}[l] } \]
	By our sign conventions the map $f$ is a left $(R^\dg)^e$-linear map with $1\otimes1\mapsto \sum u_i \otimes v_i$, while $g$ is $1\otimes1 \mapsto (-1)^{m(l+1)}\sum_i(-1)^{m|u_i|}u_i\otimes v_i$. Using these we can verify the desired commutativity.
\end{proof}

We are now ready to prove the main theorem of this section.
\begin{proof}[Proof of Theorem \ref{dgcy}]
	Let $P=(\cdots\to P_1\xrightarrow{} P_0)$ be a free resolution of $R$ in $\C^-(\Md^\Z\!R^e)$. Then the cohomology of $P^\vee=\Hom_{R^e}(P,R^e)$ is concentrated in degree $n$, where it is $R(-a)$. Considering $P$ as a complex $P^\dg$ of DG bimodules as above, the total sum of $P^\dg$ gives an $(R^\dg)^e$-cofibrant resolution of the DG $(R^\dg)^e$-module $R$. Then $\RHom_{(R^\dg)^e}(R,(R^\dg)^e)=\cHom_{(R^\dg)^e}(\Tot P^\dg,(R^\dg)^e)$, which is isomorphic by Lemma \ref{Tot} to the total product of the complex
	\begin{equation*}\label{eqQ}
	Q =( \cHom_{(R^\dg)^e}(P_0^\dg,(R^\dg)^e)\to\cHom_{(R^\dg)^e}(P_1^\dg,(R^\dg)^e)\to\cdots).
	\end{equation*}
	Now by Lemma \ref{forget} this complex is isomorphic to ${}_\s(P^\vee)$ as complexes of graded $(R,R)$-bimodules. Then it has cohomology only at degree $n$, where it is isomorphic to ${}_\s R(-a)$. Therefore $\Totp Q$ is quasi-isomorphic to ${}_\s R(-a)^\dg[-n]$. We conclude by Lemma \ref{comm} that it is $R[-n-a]$ if $a$ is odd, ${}_\s R[-n-a]$ if $a$ is even.	
\end{proof}

\begin{Ex}
Let $R=k[x_1,\cdots,x_n]$ be the polynomial ring.
\begin{enumerate}
\item Set $\deg x_i=-1$ for all $1\leq i\leq n$. Then $R$ is bimodule $n$-CY of $a$-invariant $n$. By Theorem \ref{dgcy}, we see that
\begin{itemize}
	\item $R^\dg$ is $2n$-CY if $n$ is odd.
	\item $R^\dg$ is twisted $2n$-CY if $n$ is even.
\end{itemize}
See Example \ref{fail} below for an illustration in $n=2$ how $R^\dg$ fails to be CY.
\item Set $\deg x_i=1$ for all $1\leq i\leq n$. Then $R$ is bimodule $n$-CY of $a$-invariant $-n$. By Theorem \ref{dgcy}, we see that
\begin{itemize}
	\item $R^\dg$ is $0$-CY if $n$ is odd.
	\item $R^\dg$ is twisted $0$-CY if $n$ is even.
\end{itemize}
This partially recovers \cite[Theorem 6.4]{MGYC}, see also \cite[Example 6.1]{HM}.
\item Set $\deg x_i=2$ for all $1\leq i\leq n$. Then $R$ is bimodule $n$-CY of $a$-invariant $-2n$. By Theorem \ref{dgcy}, we have $\RHom_{(R^\dg)^e}(R,(R^\dg)^e)[-n]\simeq {}_1R_\s$. Note that the automorphism $\s$ is the identity since $R$ is concentrated in even degrees. Therefore we conclude that $R^\dg$ is $(-n)$-CY. This partially recovers \cite[Theorem 6.2]{MGYC}.
\end{enumerate}
\end{Ex}

We explicitly demonstrate how $R^\dg$ fails to be CY.
\begin{Ex}\label{fail}
Let $R=k[x,y]$ with $\deg x=\deg y=-1$. Then the graded ring $R$ is bimodule $2$-CY of $a$-invariant $2$, and has the Koszul resolution, which we depict in the following way.
\[ \xymatrix@R=1mm@C=15mm{& R\otimes R(1)\ar[dr]^{x\otimes 1-1\otimes x}& \\
	R\otimes R(2)\ar[ur]^{-y\otimes1+1\otimes y}\ar[dr]_{x\otimes 1-1\otimes x}&\oplus& R\otimes R \\
	& R \otimes R(1)\ar[ur]_{y\otimes1-1\otimes y} & \qquad, } \]
where the values on the arrows show the image of $1\otimes1$. Now consider this resolution as a complex of DG modules over $S:=(R^\dg)^e$. Under the isomorphism in Lemma \ref{free}, it becomes
\[ \xymatrix@R=1mm@C=15mm{& S[1]\ar[dr]^{x\otimes 1-1\otimes x}& \\
	S[2]\ar[ur]^{y\otimes1+1\otimes y}\ar[dr]_{-x\otimes1-1\otimes x}&\oplus& S \\
	& S[1]\ar[ur]_{y\otimes 1-1\otimes y}& \qquad .} \]
Applying $\cHom_S(-,S)$ we get the complex of left DG $S$-modules
\[ \xymatrix@R=1mm@C=15mm{& S[-1]\ar[dr]^{-y\otimes1-1\otimes y}& \\
	S\ar[ur]^{x\otimes 1-1\otimes x}\ar[dr]_{y\otimes 1-1\otimes y}&\oplus& S[-2] \\
	& S[-1]\ar[ur]_{x\otimes1+1\otimes x}& \qquad ,} \]
whose total module is $\RHom_S(R,S)$ by Lemma \ref{Tot}. We therefore see that $\RHom_S(R,S)[4]\simeq {}_1R_\s$.
\end{Ex}

The appearance of the twist automorphism $\s$ suggests that we should twist the multiplication of the CY algebra $R$ in order that the DG algebra $R^\dg$ to be CY. Let us give an instance where $R$ is twisted CY and $R^\dg$ is CY.
\begin{Ex}\label{kmv2}
Let $m\geq2$ and
\[ R=k\!\left\langle x_1,\ldots,x_m\right\rangle /(x_1^2+\cdots+x_m^2), \quad \deg x_i=-1, \]
If $m=2$, this is a skew polynomial ring with $2$ variables; compare Example \ref{fail} above. For all $m\geq2$, this algebra is Artin-Schelter regular of dimension $2$ (see Example \ref{kmv1}). Recall that the bimodule projective resolution of $R$ is give by the complex
\[ \xymatrix{ 0\ar[r]& R\otimes R(2)\ar[r]^-{d_2}& \disoplus_{i=1}^mR\otimes R(1)\ar[r]^-{d_1}& R\otimes R\ar[r]& 0 }\]
with maps
\begin{equation*}
	\begin{aligned}
		d_1((1\otimes1)_i)&=x_i\otimes1-1\otimes x_i \\ 
		d_2(1\otimes1)&=\sum_{i=1}^m(x_i\otimes1+1\otimes x_i).
	\end{aligned}
\end{equation*}
We claim $R^\dg$ is bimodule $4$-CY.
For this we follow the computation in Example \ref{fail} above. Set $S=(R^\dg)^e$. Applying the functor $(-)^\dg$ and the isomorphism in Lemma \ref{free}, the above complex becomes
\[ \xymatrix{ 0\ar[r]& S[2]\ar[r]^-{d_2}& \disoplus_{i=1}^mS[1]\ar[r]^-{d_1}& S\ar[r]& 0, }\]
where the maps are right $S$-linear morphism such that
\begin{equation*}
	\begin{aligned}
	d_1((1\otimes1)_i)&=x_i\otimes1-1\otimes x_i \\ 
	d_2(1\otimes1)&=\sum_{i=1}^m(-x_i\otimes1+1\otimes x_i).
	\end{aligned}
\end{equation*}
Applying $\cHom_S(-,S)$, we get an isomorphic complex, thus we see that $\RHom_S(R,S)[4]\simeq R$ in $\D(S)$.
\end{Ex}

\section{Cluster categories, derived orbit categories, and singularity categories}\label{thms}
Let $R$ be a CY algebra. We state main results of this paper which describes the cluster category of $R^\dg$ as an orbit category and a singularity category.
\subsection{Cluster categories and orbit categories}
Let $R$ be a negatively graded bimodule $(d+1)$-CY algebra of $a$-invariant $a$ such that each $R_i$ is finite dimensional.
In this subsection, we compare the derived category of $R$ considered as a graded ring, and that of $R$ considered as a DG algebra $R^\dg$ with vanishing differentials. By Theorem \ref{dgcy}, we know that $R^\dg$ is twisted bimodule $(d+a+1)$-CY.

Recall the notion of total module from the previous section. As in the previous section, consider the DG functor
\[ \Tot \colon \C^b_\dg(\Md^\Z\!R) \to \C_\dg(R^\dg). \]
Taking the $0$-th cohomology, it induces a triangle functor $\K^b(\Md^\Z\!R) \to \K(R^\dg)$, which clearly takes acyclic complexes to acyclic DG modules. We therefore obtain a triangle functor
\[ \Tot \colon \per^\Z\!R \to \per R^\dg .\]
Note that this restricts to $\D^b(\fl^\Z\!R) \to \D^b(R^\dg)$, thus it again induces a triangle functor,
\[ \Tot \colon \qper R \to \C(R^\dg). \]

The following result gives a natural and more concrete description of the cluster category of $R^\dg$. Similar type of results for derived or singularity categories are recently obtained in \cite{KY,Bri}.
\begin{Thm}\label{reasonable}
The functor $\Tot \colon \qper R \to \C(R^\dg)$ induces a fully faithful functor
\[ \xymatrix{\qper R/(-1)[1] \ar[r]& \C(R^\dg) }\]
whose image generates $\C(R^\dg)$ as a thick subcategory. Therefore, $\C(R^\dg)$ is a triangulated hull of the orbit category $\qper R/(-1)[1]$.
\end{Thm}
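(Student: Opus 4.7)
My proof plan proceeds in three stages. First, I would verify that $\Tot$ descends to the Verdier quotients: since a complex of finite-length graded $R$-modules has cohomology in only finitely many graded degrees and each of these is finite-dimensional, $\Tot$ sends $\D^b(\fl^\Z R)$ into $\D^b(R^\dg)$, hence induces a triangle functor $\qper R\to\C(R^\dg)$. Lemma \ref{comm}, extended from modules to complexes, provides a natural isomorphism $\Tot(X(-1)[1])\simeq\Tot X$, making the functor factor through $\overline{\Tot}\colon\qper R/(-1)[1]\to\C(R^\dg)$. Generation of the target by the image is then immediate since $\Tot R=R^\dg$ thickly generates $\per R^\dg$ and hence $\C(R^\dg)$.

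The heart of the argument is full faithfulness of $\overline{\Tot}$. By Theorem \ref{dact}(3), the subcategory $\M=\add\{R(-i)[i]\mid i\in\Z\}$ thickly generates $\qper R$, and its image in the orbit category collapses to $\add R$. Standard dévissage then reduces the claim to the single morphism-space identity
\[ \bigoplus_{i\in\Z}\Hom_{\qper R}(R, R(-i)[i+n]) \;\xrightarrow{\sim}\; \Hom_{\C(R^\dg)}(R^\dg, R^\dg[n]) \quad\text{for every } n\in\Z. \]
Both sides admit explicit descriptions. On the target, Theorem \ref{dgcy} makes $R^\dg$ a twisted $(d+a+1)$-Calabi--Yau DG algebra, so Amiot--Guo exhibits $R^\dg$ as a $(d+a)$-cluster tilting object of $\C(R^\dg)$: the target spaces vanish for $n\in[1,d+a-1]$, equal $R_0$ at $n=0$, and equal $DR_0$ at $n=d+a$ via Serre duality, repeating $(d+a)$-periodically (up to the Nakayama twist). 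On the source, Theorem \ref{dact}(2) identifies each summand with $\Hom_{\per^\Z R}(R, R(-i)[i+n])$ when the indices lie in the fundamental domain, in which case projectivity of $R$ pins down a single contributing summand $i=-n$; for indices outside the window, the Serre functor $(a)[d]$ on $\qper R$ brings the computation back into the window.

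The hard part will be matching these computations uniformly in $n\in\Z$. The critical point is that the Serre functor $(a)[d]$ on $\qper R$, reduced modulo the orbit relation $(-1)[1]=\mathrm{id}$, equals $[d+a]$, which is precisely the Calabi--Yau shift on $\C(R^\dg)$; this compatibility, ensured exactly by the exponent in Theorem \ref{dgcy}, is what makes the two morphism computations agree globally and thereby upgrades the naive comparison at $R$ to a genuine bijection for all shifts.
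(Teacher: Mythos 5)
Your outline follows the morally correct route (and is essentially the content hidden inside the criterion the paper invokes), but it contains a genuine error at the decisive step. The Hom-spaces $\Hom_{\C(R^\dg)}(R^\dg,R^\dg[n])$ are \emph{not} $(d+a)$-periodic in $n$. The $(d+a)$-CY property only gives the duality $\Hom(R^\dg,R^\dg[n])\simeq D\Hom(R^\dg,R^\dg[d+a-n])$ (up to twist), not periodicity, and the paper's own Example \ref{kmv1}/Remark \ref{rem1} refutes your claim: there $d+a=3$, $\Hom_{\C}(R,R[2])=0$ by cluster tilting, yet $\Hom_{\C}(R,R[-1])=k^m\neq0$. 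The correct description of the target is: $\Hom_{\C(R^\dg)}(R^\dg,R^\dg[n])\simeq H^n(R^\dg)=R_n$ for $n\leq 0$ (via the fundamental domain of Theorem \ref{IYa}(2) applied to $\per R^\dg$), $0$ for $0<n<d+a$, and $DR_{d+a-n}$ (up to the Nakayama twist) for $n\geq d+a$. With that correction the matching does work, because on the source Theorem \ref{dact}(2) applies to $Y=R(-i)[i+n]\in\M[n]\subset\cdots\ast\M[d+a-1]$ for \emph{every} $n\leq d+a-1$ (not only $n\geq 0$), giving $\bigoplus_i\Hom_{\qper R}(R,R(-i)[i+n])\simeq R_n$ there; as written, your computation would produce a spurious mismatch at every $n<0$ with $R_n\neq0$.

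Two further points. First, even after correcting the computation, exhibiting equal dimensions on both sides does not prove that the map \emph{induced by} $\Tot$ is an isomorphism; you must track that $\Tot$ identifies $\bigoplus_i\Hom_{\per^\Z R}(R,R(-i)[i+n])$ with $H^n(R^\dg)=\Hom_{\per R^\dg}(R^\dg,R^\dg[n])$ and that it intertwines the two Serre functors (this is where the exponent in Theorem \ref{dgcy} genuinely enters), and then use that both quotient functors are bijective on the relevant fundamental domains. Second, the dévissage must be run in $\qper R$ (using $\qper R=\M\ast\cdots\ast\M[d+a-1]$ from cluster tilting), since the orbit category itself is not triangulated. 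The paper bypasses all of this by observing that $\Tot$ carries the cluster tilting subcategory $\M$ to the cluster tilting object $R^\dg$ and induces an equivalence $\M/(-1)[1]\xrightarrow{\simeq}\add R^\dg$, and then quoting the covering version of the cluster--Beilinson criterion \cite[Lemma 4.5]{KRac}, which packages exactly the comparison you are trying to do by hand.
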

\begin{proof}
Note that the cluster tilting subcategory $\M=\add\{R(-i)[i] \mid i \in \Z \} \subset \qper R$ given in Theorem \ref{dact} is mapped to a cluster tilting object $R \in \C(R^\dg)$, and the functor $\Tot$ induces an equivalence $\M/(-1)[1] \xrightarrow{\simeq}\add R$. Therefore the assertion follows from covering version of the `cluster-Beilinson criterion', see \cite[Lemma 4.5]{KRac}.
\end{proof}

Let $A$ be the $d$-representation infinite algebra given in Proposition \ref{mm} as the endomorphism ring of a tilting object in $\qper R$. Explicitly, we have
\[ A=\left( 
     \begin{array}{cccc}
     R_0& 0 & \cdots& 0 \\ 
     R_{-1} & R_0&\cdots& 0 \\
     \vdots& \vdots&\ddots&\vdots \\
     R_{-(a-1)}&R_{-(a-2)}&\cdots&R_0
     \end{array}     
     \right). \]
Using diagram (\ref{equp}), we deduce the following.
\begin{Cor}\label{maincor}
There exists a fully faithful functor
\[ \xymatrix{\D^b(\md A)/\nu_{d}^{-1/a}[1] \ar[r]& \C(R^\dg) } \]
whose image generates $\C(R^\dg)$ as a thick subcategory. Therefore, $\C(R^\dg)$ is a triangulated hull of the orbit category $\D^b(\md A)/\nu_{d}^{-1/a}[1]$.
\end{Cor}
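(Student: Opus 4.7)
The plan is to derive this directly from Theorem \ref{reasonable} by transporting the orbit category description along the derived equivalence $\qper R \simeq \D^b(\md A)$ of Proposition \ref{mm}. The only content to verify is that the degree-shift autoequivalence on $\qper R$ corresponds, under this equivalence, to $\nu_d^{1/a}$ on $\D^b(\md A)$, which is exactly the definition coming from diagram (\ref{equp}).

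More precisely, I would proceed as follows. First, fix the triangle equivalence $\Phi \colon \qper R \xrightarrow{\simeq} \D^b(\md A)$ given by $\RHom_{\qper R}(T,-)$ with $T=\bigoplus_{l=0}^{a-1}R(l)$. By construction of $F = \nu_d^{1/a}$ in (\ref{equp}), there is a natural isomorphism $\Phi \circ (1) \simeq F \circ \Phi$, and consequently $\Phi \circ \bigl((-1)[1]\bigr) \simeq \bigl(F^{-1}[1]\bigr) \circ \Phi = \nu_d^{-1/a}[1]\circ\Phi$ as autoequivalences. Thus $\Phi$ induces an equivalence of orbit categories
\[ \qper R/(-1)[1] \xrightarrow{\;\simeq\;} \D^b(\md A)/\nu_d^{-1/a}[1]. \]

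Second, composing the inverse of this orbit equivalence with the fully faithful functor $\qper R/(-1)[1] \hookrightarrow \C(R^\dg)$ produced by Theorem \ref{reasonable} yields a fully faithful functor
\[ \D^b(\md A)/\nu_d^{-1/a}[1] \longrightarrow \C(R^\dg). \]
Since the image of Theorem \ref{reasonable}'s functor generates $\C(R^\dg)$ as a thick subcategory and the orbit equivalence is essentially surjective, the image of our composite functor also generates $\C(R^\dg)$ as a thick subcategory. The triangulated-hull statement is then just a rephrasing.

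There is no genuine obstacle: the only point that needs to be stated carefully is the compatibility of $(-1)[1]$ with $\nu_d^{-1/a}[1]$, and this is precisely built into the definition of $\nu_d^{1/a}$ via (\ref{equp}). The proof is thus essentially a one-line composition of Theorem \ref{reasonable}, Proposition \ref{mm}, and Corollary \ref{root}.
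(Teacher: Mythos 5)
Your proposal is correct and is exactly the paper's argument: Corollary \ref{maincor} is obtained by transporting the fully faithful functor of Theorem \ref{reasonable} along the equivalence $\qper R\simeq\D^b(\md A)$ of Proposition \ref{mm}, with the compatibility of $(-1)$ and $\nu_d^{1/a}$ being the very definition of $\nu_d^{1/a}$ via diagram (\ref{equp}). No further comment is needed.
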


\subsection{Cluster categories and singularity categories}\label{yokoku}
We present another description of the cluster category $\C(R^\dg)$.
Set
\[ U=\left( 
\begin{array}{cccc}
	R_{-1} & R_0&\cdots& 0 \\
	\vdots& \vdots&\ddots&\vdots \\
	R_{-(a-1)}&R_{-(a-2)}&\cdots&R_0 \\
	R_{-a}&R_{-(a-1)}&\cdots&R_{-1}
\end{array}     
\right), \]
which is an $(A,A)$-bimodule and let $B=A\oplus U$ be the trivial extension algebra. Note that $U=\nu_d^{-1/a}A$ in $\D^b(\md A)$. We have the following basic properties.
\begin{Prop}\label{IG}
	\begin{enumerate}
		\item $U$ is a cotilting bimodule over $A$.
		\item $B$ is a $d$-Iwanaga-Gorenstein algebra.
	\end{enumerate}
\end{Prop}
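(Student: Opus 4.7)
The plan is to exploit the identification, via Minamoto--Mori's equivalence $\qper R \simeq \D^b(\md A)$ of Proposition \ref{mm}, of the bimodule $U$ with the object $T(-1) = R(-1) \oplus R \oplus \cdots \oplus R(a-2) \in \qper R$; equivalently, $U \simeq \nu_d^{-1/a}A$ in $\D^b(\md A)$ by Corollary \ref{root}, as the paper already records. Since $T(-1)$ lies in cohomological degree $0$, so does $U$, confirming that $U$ is a genuine $A$-module, and the analogous description works on the opposite side using the symmetric structure on $R^\op$.

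For part (1), I verify the three cotilting axioms on each side. The self-extension vanishing $\Ext^i_A(U,U) = 0$ for $i > 0$ is immediate from $\Ext^i_A(A,A) = 0$ and the fact that $\nu_d^{-1/a}$ is an autoequivalence of $\D^b(\md A)$. Finiteness $\id_A U \leq d$ follows from $\gd A \leq d$, since $A$ is $d$-representation-infinite by Proposition \ref{mm}(\ref{end}). The remaining axiom is the existence of a coresolution $0 \to U_d \to \cdots \to U_0 \to DA \to 0$ with $U_i \in \add U$. I would establish this by working in $\qper R$: the Serre functor $(a)[d]$ sends $T$ to $T(a)[d] \simeq DA$, and applying the Koszul-type bimodule resolution of $R$ (of length $d+1$ by the $(d+1)$-CY property) after the grading shift $(a)$ yields a bounded exact sequence whose intermediate terms belong to $\add T(-1) = \add U$ and whose cokernel is $T(a)[d]$. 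This is a Beilinson-type resolution adapted to the CY algebra $R$. The left-module argument is entirely symmetric.

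For part (2), I need to show $\id(B_B), \id({}_BB) \leq d$. The right injective $B$-modules are precisely those of the form $\Hom_A(B, I) = I \oplus \Hom_A(U, I)$ for $I$ an injective right $A$-module (by adjunction $(-)_A \dashv \Hom_A(B, -)$ between $\md B$ and $\md A$). Combining the cotilting coresolution of $DA$ from (1) with the decomposition $DB = DA \oplus DU$ (arising from $B = A \oplus U$), and using that $\id_A U \leq \gd A \leq d$, I build an explicit injective $B$-coresolution of $B_B$ of length $\leq d$ by applying $\Hom_A(B, -)$ to injective coresolutions of $A_A$ and $U_A$; the bimodule cotilting structure ensures the lifts glue. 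The left-side bound is symmetric, so $B$ is $d$-Iwanaga-Gorenstein.

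The main obstacle will be constructing the $\add U$-coresolution of $DA$ in (1), which requires carefully combining the $(d+1)$-CY bimodule resolution of $R$ with the degree shift $(a)$ to remain within $\add\{T(-i)\}_{i \geq 1}$; once this is done, both the bimodule cotilting property and the Iwanaga-Gorenstein property of the trivial extension follow by essentially formal manipulations.
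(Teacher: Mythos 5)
Your overall strategy is genuinely different from the paper's: the paper disposes of (1) in one line by observing that $U=\RHom_{\qper R}(T,T(1))$ with both $T$ and $T(1)$ tilting objects of $\qper R$ (so $U$ is a two-sided tilting complex concentrated in degree $0$, hence a tilting and therefore cotilting bimodule, since $\gd A<\infty$), and it obtains (2) by citing Minamoto--Yamaura \cite[Theorem 4.3]{MY}. Your direct verification of the cotilting axioms is fine for the first two axioms, but the step you yourself flag as the main obstacle does not work as described. The resolution you want to twist is the minimal graded projective resolution $0\to R(a)\to P_d\to\cdots\to P_1\to R\to R_0\to 0$, whose middle terms satisfy $P_i\in\add\{R(l)\mid 0<l<a\}$. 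Twisting by $(j)$ to handle the summand $R(a+j)$ of $T(a)$ produces middle terms in $\add\{R(j+1),\dots,R(j+a-1)\}$, and as $j$ ranges over $0,\dots,a-1$ these sweep out $\add\{R(1),\dots,R(2a-2)\}$, which is \emph{not} contained in $\add T(1)=\add\{R(1),\dots,R(a)\}$ once $a\geq 3$; the $j=0$ sequence moreover terminates in $R(0)\notin\add T(1)$. So the claimed Beilinson-type coresolution of $DA$ with all intermediate terms in $\add U$ is not what this construction yields. The coresolution does exist, but the efficient route is the one the paper takes implicitly: $U$ is the image of $A$ under a standard autoequivalence, hence a tilting complex isomorphic to a module, hence a tilting module (as $\pd U\leq\gd A<\infty$), and over an algebra of finite global dimension tilting modules coincide with cotilting modules.

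For (2) there is a second, independent gap. Applying $\Hom_A(B,-)$ to an injective coresolution of $A_A$ produces an injective $B$-coresolution of $\Hom_A(B,A)=A\oplus\Hom_A(U,A)$, not of the $B$-module $A$, and likewise for $U$; so "applying $\Hom_A(B,-)$ to injective coresolutions of $A_A$ and $U_A$" does not coresolve $B_B=A\oplus U$, and the assertion that "the bimodule cotilting structure ensures the lifts glue" is precisely the nontrivial content that needs proof. The genuine argument (this is what \cite[Theorem 4.3]{MY} supplies) splices the $\add U$-coresolution of $DA$ from (1) with injective coresolutions while keeping track of the grading $B=A\oplus U$ in degrees $0$ and $1$, and is not a formal consequence of the adjunction. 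As written, your part (2) restates the desired conclusion rather than proving it; either carry out that splicing in detail or, as the paper does, invoke the Minamoto--Yamaura result.
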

\begin{proof}
	Since $U=\RHom_{\qper R}(T,T(1))$ for a tilting object $T$ given in Proposition \ref{mm}, it is a cotilting bimodule over $A=\End_{\qper R}(T)$. Then the second assertion follows from \cite[Theorem 4.3]{MY}.
\end{proof}

Now we state the second main result of this paper. 
\begin{Thm}\label{hope}
	There exists a triangle equivalence
	\[ \C(R^\dg) \simeq \D_\sg(B). \]
	In particular, $\D_\sg(B)$ is a twisted $(d+a)$-CY category with a $(d+a)$-cluster tilting object.
\end{Thm}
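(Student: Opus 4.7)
The plan is to realize both $\C(R^\dg)$ and $\D_\sg(B)$ as triangulated hulls of DG orbit categories on $\A = \C^b_\dg(\proj A)$ and then match them via Theorem \ref{1orbits}. On the cluster side, Corollary \ref{maincor} already presents $\C(R^\dg)$ as a triangulated hull of $\D^b(\md A)/\nu_d^{-1/a}[1]$. Since $U = \nu_d^{-1/a}A$ in $\D^b(\md A)$ by construction, this autoequivalence is induced by $-\otimes_A^L U[1]$. Lifting to the DG level, take $G = -\otimes_A p(U[1])$ with $p(U[1])$ a projective bimodule resolution; Keller's description of triangulated hulls of DG orbit categories then gives $\C(R^\dg)\simeq\per(\A/G)$.

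On the singularity side, since $B = A \oplus U$ is $d$-Iwanaga-Gorenstein (Proposition \ref{IG}), Buchweitz's theorem identifies $\D_\sg(B)$ with $\thick_{\D(B)}A/\per B$. Projective $B$-resolutions of $A$-modules (viewed as $B$-modules via $B \to A$) successively twist by the bimodule $U$, reflecting the decomposition $B = A \oplus U$. After a careful analysis one obtains $\D_\sg(B) \simeq \per(\A/F)$ with $F = -\otimes_A p(\RHom_A(U[1], A))$.

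To compare these two descriptions, note that $U$ is a cotilting $(A,A)$-bimodule (Proposition \ref{IG}(1)), which forces the evaluation map $\RHom_A(U[1], A) \otimes_A^L U[1] \to A$ to be a quasi-isomorphism in $\D(A^e)$. Lifting this to a DG natural transformation $G \circ F \to 1_\A$ that is a natural isomorphism on $H^0\A$, Theorem \ref{1orbits} yields $\per(\A/F) \simeq \per(\A/G)$, whence $\D_\sg(B) \simeq \C(R^\dg)$. The \emph{in particular} statement is then a transport of structure: by Theorem \ref{dgcy} the DG algebra $R^\dg$ is sign-twisted bimodule $(d+a+1)$-CY, so the (sign-twisted) Amiot--Guo theorem endows $\C(R^\dg)$ with a twisted $(d+a)$-CY structure and $R$ as a $(d+a)$-cluster tilting object, both of which transport along the equivalence.

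The main obstacle I anticipate is the realization $\D_\sg(B) \simeq \per(\A/F)$. While the alternating pattern of $B$-projective resolutions (terms being of the shape $A$ and $U \otimes_A -$) is classical for trivial extensions, identifying the Verdier quotient $\thick_{\D(B)}A/\per B$ precisely as the triangulated hull of the DG orbit category $\A/F$ requires careful bookkeeping with fundamental domains and cofibrant resolutions at the DG level. A secondary delicate point is checking that the natural transformation $G \circ F \to 1_\A$ produced from the evaluation map is indeed a transformation of \emph{DG} endofunctors inducing an $H^0$-isomorphism, so that Theorem \ref{1orbits} can be invoked directly rather than after an auxiliary homotopy manipulation.
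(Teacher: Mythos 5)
Your overall architecture --- realize both sides as triangulated hulls of DG orbit categories on $\A=\C^b(\proj A)$ and match them via Theorem \ref{DGorbits} --- uses the right family of ideas, and your treatment of the natural transformation $G\circ F\to 1_\A$ via the evaluation quasi-isomorphism is correct (this is exactly Remark \ref{typ}(2)). But your first step contains the essential gap. From Corollary \ref{maincor} you only know that $\C(R^\dg)$ receives a fully faithful functor from $\D^b(\md A)/\nu_d^{-1/a}[1]$ with generating image; this weak notion of ``triangulated hull'' does not determine the ambient triangulated category, so it does not follow that $\C(R^\dg)\simeq\per(\A/G)$. Indeed, if that identification were free, the theorem would follow at once from Keller's theorem alone, since $B$ \emph{is} the trivial extension of $A$ by $(U[1])[-1]$ and hence $\per(\A/(-\otimes_A p(U[1])))\simeq\thick_{\D(B)}A/\per B=\D_\sg(B)$ directly --- your steps on the singularity side and the appeal to Theorem \ref{DGorbits} would then be redundant, which signals that the hard part has been assumed. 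Relatedly, your assignment is swapped: Keller's theorem identifies $\D_\sg(B)$, not $\C(R^\dg)$, with the hull of the orbit category by $-\otimes_Ap(U[1])$, and the ``careful analysis of projective $B$-resolutions'' you defer is precisely this already-available identification rather than one involving $\RHom_A(U[1],A)$.

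What actually has to be proved is that $\C(R^\dg)$ is equivalent to $\per$ of the \emph{other} DG orbit category, $\A/F$ with $F=-\otimes_Ap(\RHom_A(U[1],A))$, equivalently to $\thick_{\D(C)}A/\per C$ for $C=A\oplus\RHom_A(U[1],A)[-1]$. The paper does this in two stages, neither of which appears in your proposal: (i) relative Koszul duality (Lemma \ref{Ami}), using $\RHom_C(A,A)\simeq T^L_A(U[1])=:S$, identifies $\thick_{\D(C)}A/\per C$ with the cluster category $\C(S)$ --- this requires that $S$ be a negative DG algebra with finite-dimensional cohomologies and a verification that $DC\in\per C$; and (ii) a comparison of $\C(S)$ with $\C(R^\dg)$ through the intermediate DG algebra $\widetilde{S}=\cEnd_R(R\oplus R[-1]\oplus\cdots\oplus R[-(a-1)])$, where $\widetilde{S}$ is DG Morita equivalent to $R^\dg$ and $S\subset\widetilde{S}$ is a finite-codimensional subalgebra shown to induce an equivalence of cluster categories by a unit/counit argument (Lemmas \ref{RStilde} and \ref{SStilde}). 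Only after these identifications does Theorem \ref{DGorbits}, in the form of Corollary \ref{typical}, close the loop between $C$ and $B$. Your reading of the ``in particular'' clause as transport of structure from the sign-twisted CY property of $R^\dg$ and the Amiot--Guo theorem is fine.
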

We postpone the proof of this result to Section \ref{CM} since it requires a general result on DG orbit categories, which we give in the following section.

\subsection{Examples}
Before going on let us give some demonstrations our results. See Section \ref{Cdn}, \ref{poly}, \ref{dimer} for systematic examples.
\begin{Ex}\label{trivial}
Let us start with an almost trivial example. Let
\[ R=k[x], \quad \deg x=-1. \]
This is bimodule $1$-CY of $a$-invariant $1$, thus $R^\dg$ is bimodule $2$-CY by Theorem \ref{dgcy}. We have $A=k$ (which is `$0$-representation infinite') and $U=k$, thus $B=A\oplus U=k[t]/(t^2)$. By Corollary \ref{maincor} and Theorem \ref{hope}, we have equivalences of triangulated categories
\[ \D^b(\md k)/[1]\simeq\C(R^\dg)\simeq \D_\sg(k[t]/(t^2)), \]
which are triangulated categories with $1$ point. See Example \ref{ntrivial} for a generalization, where the case $\deg x=-n$ for arbitrary $n\geq1$ is discussed.
\end{Ex}

\begin{Ex}\label{kmv3}
This is a continuation of Example \ref{kmv1} and \ref{kmv2}.
Let $m\geq2$ and set
\[ R=k\!\left\langle x_1,\ldots,x_m\right\rangle /(x_1^2+\cdots+x_m^2), \quad \deg x_i=-1, \]
which is twisted $2$-CY of $a$-invariant $2$ (Example \ref{kmv1}), and $R^\dg$ is $4$-CY (Example \ref{kmv2}). The $1$-representation infinite algebra $A$ is the path algebra of the $m$-Kronecker quiver $Q_m\colon\bullet\overset{m}{\longrightarrow}\bullet$, and the autoequivalence $\nu_1$ of $\D^b(\md kQ_m)$ has a square root (Example \ref{kmv1}). Also it is not difficult to see that the $1$-Iwanaga-Gorenstein algebra $B$ is presented by the following quiver with relations.
\[ \xymatrix@C=12mm{ \circ\ar@<0.8ex>[r]^{x_1}\ar@{}[r]|{\rotatebox{90}{$\cdot\hspace{-1mm}\cdot\hspace{-1mm}\cdot$}}\ar@<-0.7ex>[r]_{x_m} & \circ\ar@/_17pt/@<-0.1ex>[l]_u}, \quad \textstyle\sum_{i=1}^m(x_iux_i+x_iux_i)=0, \, ux_iu=0 \]
By Corollary \ref{maincor} and Theorem \ref{hope}, there exist triangle equivalences
\[ \D^b(\md kQ_m)/\nu_1^{-1/2}[1] \simeq \D_\sg(B)\simeq \C(R^\dg). \]
\end{Ex}

\begin{Rem}\label{rem1}
	In \cite[Theorem 1.4]{KMV}, Keller--Murfet--Van den Bergh proved that any algebraic $3$-CY triangulated category $\T$ with a $3$-cluster tilting object $T$ such that $\End_\T(T)=k$ and $\Hom_\T(T,T[-1])=k^m$ is triangle equivalent to $\D^b(\md kQ_m)/\tau^{-1/2}[1]$, where $\tau^{-1/2}$ is the square root of the AR translation defined in \cite{KMV} using reflection functors.
	
	Now, since $\C=\C(R^\dg)$ is a $3$-CY category with a cluster tilting object $R$ satisfying $\End_{\C}(R)=k$ and $\Hom_{\C}(R,R[-1])=k^m$, the above equivalent triangulated categories are precisely the $3$-CY category in \cite[Theorem 1.4]{KMV}. Our result shows that the $3$-CY category in \cite{KMV} is also the singularity category of $B$, and can be realized as the cluster category of the DG algebra $R^\dg$.
	We refer to Example \ref{kmv4} for a generalization, which covers the situation in \cite[Remark 3.4.5]{KMV}.
\end{Rem}

\section{Quasi-equivalence of DG orbit categories}\label{DG}
Let $\T$ be a triangulated category and $F \colon \T \to \T$ an autoequivalence. In order to discuss when the orbit category $\T/F$ is triangulated, a triangulated hull of this orbit category was constructed by Keller \cite{Ke05}. The idea was to take an orbit at the level of enhancement of $\T$.

Let $\A$ be a pretriangulated DG category and $F$ a DG endofunctor on $\A$ inducing an equivalence on $H^0\A$. Then the {\it DG orbit category} of $\A$ with respect to $F$, which we denote by $\B=\A/F$, is the DG category with the same objects as $\A$ and with the morphism complex
\[ \B(L,M)=\colim \left( \bigoplus_{n\geq0}\A(F^nL,M) \xrightarrow{F} \bigoplus_{n\geq0}\A(F^nL,FM) \xrightarrow{F} \bigoplus_{n\geq0}\A(F^nL,F^2M) \xrightarrow{F} \cdots \right) \]
for each $L, M \in \B$. It follows that we have $H^0\A/H^0F=H^0\B$, hence $\per\B$ is a triangulated hull of $H^0\A/H^0F$ in the sense that there is a fully faithful functor $H^0\A/H^0F\hookrightarrow\per\B$ whose image generates $\per\B$ as a thick subcategory.

Observe that the above embedding is not dense in general, and thus it is not clear a naive expectation for orbit categories carries over to triangulated hulls. We give an answer to one of such problems.

Let us briefly recall some relevant notions, see \cite[Section 7]{Ke94} for details. Let $\B$ and $\C$ be DG categories. A {\it quasi-functor} $\C \to \B$ is a $(\C,\B)$-bimodule $X$, whose value we denote by $X(B,C)$ for $B \in \B$ and $C \in \C$, such that the DG $\B$-module $X(-,C)$ is isomorphic in $\D(\B)$ to a representable DG $\B$-module for each $C \in \C$. A quasi-functor $X \colon \C \to \B$ is a {\it quasi-equivalence} if $-\otimes^L_\C X \colon\D(\C) \to \D(\B)$ is a triangle equivalence. In this case the equivalence restricts to $\per\C \xrightarrow{\simeq}\per\B$.

\begin{Thm}\label{DGorbits}
Let $\A$ be a pretriangulated DG category, and let $F$, $G$ be DG endofunctors on $\A$ such that $H^0F$ and $H^0G$ are mutually inverse equivalences on $H^0\A$. Suppose that there is a morphism $G\circ F \to 1_\A$ of DG functors inducing a natural isomorphism on $H^0\A$. Then the DG orbit categories $\B=\A/F$ and $\C=\A/G$ are quasi-equivalent. In particular, the triangulated hulls $\per\B$ and $\per\C$ are triangle equivalent.
\end{Thm}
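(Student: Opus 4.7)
The plan is to produce a quasi-equivalence $\tilde\pi\colon\B=\A/F\to\C=\A/G$ that is the identity on objects and extends the canonical DG quotient $\pi\colon\A\to\C$; the conclusion on triangulated hulls then follows from the general fact that a quasi-equivalence of DG categories induces a triangle equivalence between their perfect derived categories.

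Construction. The orbit construction of $\C$ produces a canonical closed degree-zero DG natural transformation $\epsilon\colon\mathrm{id}_\A\Rightarrow G$ between the relevant DG functors $\A\to\C$, whose component $\epsilon_A\colon A\to GA$ is represented by $1_{GA}\in\A(GA,GA)$ sitting in the $(n,k)=(1,0)$ summand of $\C(A,GA)=\colim_k\bigoplus_n\A(G^nA,G^{k+1}A)$. Naturality follows by a direct computation from the composition law in the orbit category (both $\pi(Gf)\circ\epsilon_A$ and $\epsilon_{A'}\circ\pi f$ are represented by $Gf$ in the same summand), and a parallel computation with $1_{GA}$ in the $(0,1)$ summand shows each $\epsilon_A$ is a strict isomorphism in $\C$. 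Combining with the given $\alpha\colon GF\Rightarrow\mathrm{id}_\A$ yields a DG natural transformation
\[ \beta\colon\pi F\xrightarrow{\epsilon F}\pi GF\xrightarrow{\pi\alpha}\pi \]
whose components are quasi-isomorphisms in $\C$. The universal property of the DG orbit category (see \cite{Ke05}) then yields, from $(\pi,\beta)$, a DG (quasi-)functor $\tilde\pi\colon\B\to\C$ extending $\pi$.

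Verification. Since $\tilde\pi$ is identity on objects, essential surjectivity is automatic. For quasi-fully-faithfulness, compute on cohomology. Since filtered colimits commute with $H^i$ and $H^0F$ is an equivalence with quasi-inverse $H^0G$ (realised by $H^0\alpha$), a direct re-indexing gives
\[ H^i\B(L,M)\;\cong\;\bigoplus_{m\in\Z}H^i\A(L,F^mM), \qquad H^i\C(L,M)\;\cong\;\bigoplus_{m\in\Z}H^i\A(L,F^{-m}M), \]
where $F^m$ for $m<0$ is interpreted via $G^{-m}$. The substitution $m\mapsto-m$ identifies the two, and unraveling the action of $\tilde\pi$ through $\beta$ shows that $H^i\tilde\pi$ realises precisely this identification.

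Main obstacle. The clean conceptual step is the construction of $\tilde\pi$; the technical one is verifying that $H^*\tilde\pi$ indeed realises the asserted re-indexing bijection, which requires careful bookkeeping because composition in the orbit category interleaves iterated $F$'s and $G$'s with the filtered colimit structure. A cleaner organisation may be to move to the bimodule picture: the $(\A,\A)$-bimodule $\A_F(X,Y)=\A(X,FY)$ becomes invertible in $\D(\A^\op\otimes\A)$ with quasi-inverse $\A_G$, exhibited by $\alpha$ through the counit $\A_G\lotimes_\A\A_F\simeq\A$. The DG orbit categories $\B$ and $\C$ are then tensor-algebra-like constructions in $\A$-bimodules depending, up to quasi-equivalence, only on this invertible bimodule, which makes the desired equivalence essentially formal.
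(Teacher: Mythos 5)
Your structural ingredients are sound and in fact mirror the paper's: the canonical strict isomorphism $\epsilon\colon\pi\Rightarrow\pi\circ G$ in $Z^0\C$ (your computation that $1_{GA}$ placed in the $(1,0)$ and $(0,1)$ summands gives mutually inverse closed morphisms is correct), the composite $\beta=\pi\alpha\circ\epsilon F\colon\pi F\Rightarrow\pi$ (which, as required, uses only $GF\to 1_\A$, cf.\ Remark \ref{typ}), and the re-indexing $H^i\B(L,M)\cong\bigoplus_{m}H^i\A(L,F^mM)\cong H^i\C(L,M)$, which is the paper's Lemma \ref{H0} together with the colimit analysis underlying Lemmas \ref{BU} and \ref{Uqis}. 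The genuine gap is the step you dispose of in one sentence: ``the universal property of the DG orbit category then yields a DG (quasi-)functor $\tilde\pi\colon\B\to\C$.'' The strict universal property of $\A/F$ produces an honest DG functor out of $(\pi,\beta)$ only when $\beta$ is invertible in $Z^0\C$; here $\beta_A=\pi(\alpha_A)\circ\epsilon_{FA}$ is invertible only in $H^0\C$, since $\alpha_A$ is merely a homotopy equivalence. Upgrading a natural transformation that is invertible only up to homotopy to a morphism $\A/F\to\C$ in the homotopy category of DG categories --- that is, to an actual bimodule, quasi-representable in one variable --- is precisely the content of the theorem, and it is not available as a citable black box in \cite{Ke05}: Keller constructs the DG orbit category and identifies its $H^0$, and his universal property for the triangulated hull is itself established by a bimodule construction of exactly the kind you are implicitly assuming. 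As written, the proposal is therefore circular at its central step.

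The paper fills exactly this gap by building the bimodule by hand: $U(L,M)=\colim_{p,q}\bigoplus_{n}\A(G^{p+q}F^nL,G^qM)$, where the horizontal transition maps $G^{p+1}F^{n+1}\to G^pF^n$ are induced by $\alpha$ (this is where your $\beta$ lives), the right $\B$-action comes from $F$ and the left $\C$-action from $G$; the two colimit directions are needed so that both actions are defined and commute. One then proves quasi-representability $U(-,M)\simeq\B(-,M)$ in $\D(\B)$ and full faithfulness of $-\otimes^L_\C U$, the latter being your deferred ``bookkeeping.'' Your closing suggestion that the bimodule picture makes the equivalence ``essentially formal'' is over-optimistic for the same reason: $\A/F$ is not the tensor algebra of the bimodule $\A_F$ --- the colimit over $k$ in its Hom-complexes formally inverts $F$ --- and showing that this inversion depends only on the quasi-isomorphism class of the invertible bimodule is again the theorem itself. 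To complete your argument you must either write out the bimodule underlying $\tilde\pi$ (which lands you at the paper's $U$, up to reversing its direction) or supply a precise, proved reference for the homotopy universal property in the quasi-functor form you invoke.
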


\begin{Rem}\label{typ}
	\begin{enumerate}
		\item We do not need a natural transformation $F \circ G \to 1_\A$.
		\item The assumption on the existence of a natural transformation $G \circ F \to 1_\A$ is satisfied in the following typical case: Let $\L$ be a finite dimensional algebra which is homologically smooth and $\A=\C^b(\proj \L)$. Let $X$ a complex of projective $(\L,\L)$-bimodules such that $F=-\otimes_\L X \colon \A \to \A$ gives an equivalence on $H^0\A=\D^b(\md \L)$. Letting $Y$ be the bimodule projective resolution of $\RHom_\L(X,\L)$, $G=-\otimes_\L Y \colon \A\to\A$ gives an inverse of $F$ on $\D^b(\md\L)$. Moreover, a quasi-isomorphism $Y\otimes_\L X \to \L$ of $(\L,\L)$-bimodule complexes gives a natural transformation $G\circ F \to 1_\A$.
	\end{enumerate}
\end{Rem}

In view of relating the categories $\B$ and $\C$, consider the following direct system of complexes indexed by $\mathbb{N}\times\mathbb{N}$:
\begin{equation}\label{eqU0}
	\xymatrix@C=8mm@R=2.5mm{
		\displaystyle{\bigoplus_{n\geq0}\A(F^nL,M)} \ar[dd]\ar[r]& \displaystyle{\bigoplus_{n\geq0}\A(GF^nL,M)} \ar[dd]\ar[r]& \displaystyle{\bigoplus_{n\geq0}\A(G^2F^nL,M)} \ar[r]\ar[dd]& \cdots \\ \\
		\displaystyle{\bigoplus_{n\geq0}\A(GF^nL,GM)} \ar[dd]\ar[r]& \displaystyle{\bigoplus_{n\geq0}\A(G^2F^nL,GM)} \ar[dd]\ar[r]& \displaystyle{\bigoplus_{n\geq0}\A(G^3F^nL,GM)} \ar[dd]\ar[r]& \cdots \\ \\
		\vdots& \vdots& \vdots& \qquad ,}
\end{equation}
where the vertical transition maps are induced by $G$, and the horizontal ones by $G^{p+1}F^{1+n}L \to G^pF^nL$.

We first fix $q\geq0$ and consider the colimit
\[ U_q(L,M):=\colim_{p\geq0}\bigoplus_{n\geq0}\A(G^{p+q}F^nL,G^qM) \]
of the $(q+1)$-st row.
We can regard $U_q(-,M)$ as a DG $\B$-module for each $M \in \A$ as follows: Let $b \in \B(K,L)$ a morphism represented by $F^mK \to F^rL$ and $u\in U_q(L,M)$ an element represented by $G^{p+q}F^nL \to G^qM$. Enlarging $n$ if necessarily we may assume $r\leq n$. Then define $u\cdot b$ by the composition $G^{p+q}F^{m+n-r}K \xrightarrow{G^{p+q}F^{n-r}b}G^{p+q}F^nL \xrightarrow{u}G^qM$.
Since there is a commutative diagram
\[ \xymatrix@C=10mm{
	G^{p+q}F^{m+n-r}K\ar[rr]^-{G^{p+q}F^{n-r}b}&& G^{p+q}F^nL\ar[r]^-u & G^qM\ar@{=}[d] \\
	G^{p+q+1}F^{1+m+n-r}K\ar[rr]^-{G^{p+q+1}F^{1+n-r}b}\ar[u]&& G^{p+q+1}F^{1+n}L\ar[u]\ar[r] & G^qM } \]
for each $r\leq n$, we see that this action is well-defined.

Let us note the following property of $U_0$.
\begin{Lem}\label{BU}
	The maps $\A(F^n(-),F^pM)\xrightarrow{G^p}\A(G^pF^n(-),G^pF^pM)\to\A(G^pF^n(-),M)$ induce a quasi-isomorphism
	\[ \xymatrix{ \B(-,M)={\colim_{p\geq0}\bigoplus_{n\geq0}\A(F^n(-),F^pM)} \ar[r]& \colim_{p\geq0}\bigoplus_{n\geq0}\A(G^pF^n(-),M)=U_0(-,M) } \]
	of DG $\B$-modules.
\end{Lem}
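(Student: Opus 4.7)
The plan is to exhibit the map on hom complexes as a morphism of filtered direct systems, then check that it is a quasi-isomorphism termwise. Denote the components of the comparison map by
\[ \phi_p\colon \bigoplus_{n\geq 0}\A(F^nL,F^pM)\longrightarrow \bigoplus_{n\geq 0}\A(G^pF^nL,M), \qquad f\longmapsto \eta^p_{F^pM}\circ G^pf, \]
where $\eta^p_M\colon G^pF^pM\to M$ is the iterated natural transformation defined inductively by $\eta^p_M=\eta^{p-1}_M\circ G^{p-1}\eta_{F^{p-1}M}$. The first step is to verify that the $\phi_p$ form a morphism of direct systems, i.e.\ that they commute with the transitions $F$ on the left-hand system and with precomposition with $G^p\eta_{F^nL}\colon G^{p+1}F^{n+1}L\to G^pF^nL$ on the right. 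This is a direct application of naturality of $\eta$: applying $G^p$ to the identity $\eta_{F^pM}\circ GFf=f\circ \eta_{F^nL}$ for $f\colon F^nL\to F^pM$ gives exactly the compatibility $\eta^p_M\circ G^{p+1}Ff\circ(\text{top route})=(\text{bottom route})\circ\eta^p_M\circ G^pf$.

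The second step is to show that each $\phi_p$ is a quasi-isomorphism. I would factor
\[ \A(F^nL,F^pM)\xrightarrow{G^p}\A(G^pF^nL,G^pF^pM)\xrightarrow{(\eta^p_{F^pM})_\ast}\A(G^pF^nL,M). \]
For the right-hand map, note that $\eta^p_M$ is an isomorphism in $H^0\A$ (iterate the assumed isomorphism $\eta_M\colon GFM\to M$ and apply $G$, which preserves $H^0$-isomorphisms since it induces an equivalence on $H^0\A$). Because $\A$ is pretriangulated, postcomposition with a homotopy equivalence is a quasi-isomorphism of hom complexes. For the left-hand map, it suffices to show $G$ itself induces quasi-isomorphisms $\A(X,Y)\to \A(GX,GY)$. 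Since $\A$ is pretriangulated, $H^n\A(X,Y)\cong \Hom_{H^0\A}(X,Y[n])$ and $G$ commutes with shifts, so the induced map on $H^n$ is the action of $H^0G$ on morphism spaces of $H^0\A$, which is bijective by the hypothesis that $H^0G$ is an equivalence.

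The third step is to pass to the colimit: direct sums of quasi-isomorphisms of chain complexes are quasi-isomorphisms, and filtered colimits of complexes of $k$-vector spaces are exact, so the induced map $\colim_p\phi_p\colon\B(L,M)\to U_0(L,M)$ is a quasi-isomorphism. It remains to verify that this map is $\B$-linear with respect to the $\B$-action on $U_0(-,M)$ described before the lemma, which is a routine check: the action on both sides is by precomposition in $\A$ (after lifting a class in $\B$ to a representative in $\A$), and $G^p$ and postcomposition with $\eta^p_{F^pM}$ are compatible with precomposition on the source.

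The only delicate point I expect is to keep track of which representatives of elements of the colimit are used in verifying $\B$-linearity, because the $\B$-action involves enlarging the index $n$ to fit the source. This is handled by noting that the horizontal transition maps in diagram~(\ref{eqU0}) commute with these enlargements via the same naturality argument as in the first step.
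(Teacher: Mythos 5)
Your proof is correct and follows essentially the same route as the paper's: the paper likewise exhibits the commutative square of transition maps to obtain the induced morphism on colimits and then deduces that it is a quasi-isomorphism from the fact that $F$ and $G$ are mutually inverse equivalences on $H^0\A$; you merely spell out the details (naturality of the counit, full faithfulness of $H^0G$ on the shifted morphism spaces of the pretriangulated category, homotopy invertibility of the iterated counit, exactness of filtered colimits, and $\B$-linearity) that the paper leaves implicit. The only blemish is notational: the map $G^pF^pM\to M$ you postcompose with should be written $\eta^p_M$ rather than $\eta^p_{F^pM}$ for the composite to typecheck.
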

\begin{proof}
	The commutative diagram
	\[ \xymatrix{
		\A(F^n(-),F^pM)\ar[r]^-{G^p}\ar[d]_F& \A(G^pF^n(-),G^pF^pM)\ar[r]& \A(G^pF^n(-),M)\ar[d] \\
		\A(F^{n+1}(-),F^{p+1}M)\ar[r]^-{G^{p+1}}& \A(G^{p+1}F^{n+1}(-),G^{p+1}F^{p+1}M)\ar[r]& \A(G^{p+1}F^{n+1}(-),M) } \]
	shows the existence of the morphism on the colimts.
	
	It is clear that the induced morphism is a quasi-isomorphism since $F$ and $G$ are mutually inverse equivalences on $H^0\A$. 
\end{proof}

Note that $U_q(-,-)$ constructed above does not have a $\C$-action. The next step toward relating $\B$ and $\C$ is constructing a DG $(\C,\B)$-bimodule which is quasi-isomorphic over $\B$ to $U_0$.

\begin{Lem}\label{Uqis}
	The vertical maps in (\ref{eqU0}) induce a sequence of quasi-isomorphisms
	\[ \xymatrix{ U_0(-,M)\ar[r]& U_1(-,M)\ar[r]& U_2(-,M)\ar[r]& \cdots } \]
	of DG $\B$-modules for each $M \in \A$.
\end{Lem}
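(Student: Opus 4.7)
The plan is to reduce the claim to a pointwise fact about how $G$ acts on Hom-complexes, and then invoke exactness of direct sums and filtered colimits in $\Md k$.

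First, I would unwind the construction of the map. By the definition of the direct system \eqref{eqU0}, the map $U_q(L,M)\to U_{q+1}(L,M)$ is induced at the $(p,n)$-term by the DG morphism
\[ G\colon \A(G^{p+q}F^nL,G^qM)\longrightarrow\A(G^{p+q+1}F^nL,G^{q+1}M),\quad f\longmapsto Gf. \]
Compatibility with the horizontal transition maps --- which are precomposition with $G^{p+q}(\epsilon_{F^nL})\colon G^{p+q+1}F^{n+1}L\to G^{p+q}F^nL$ for the given natural transformation $\epsilon\colon GF\to 1_\A$ --- follows from $G(G^{p+q}(\epsilon_{F^nL}))=G^{p+q+1}(\epsilon_{F^nL})$, so the squares commute strictly and the induced map on colimits is well-defined.

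Second, and this is the main step, I claim that for any $X,Y\in\A$ the DG map $G\colon\A(X,Y)\to\A(GX,GY)$ is a quasi-isomorphism. Since $H^0G$ is the quasi-inverse of $H^0F$ on $H^0\A$, it is in particular fully faithful. Because $\A$ is pretriangulated, we have $H^i\A(X,Y)\cong H^0\A(X,Y[i])$ and likewise for the target, and $H^0G$ (being a triangle functor on $H^0\A$) commutes with $[1]$ up to natural isomorphism. Under these identifications, $H^iG$ is identified with $H^0G$ applied between $X$ and $Y[i]$, which is a bijection for every $i$; thus $G$ is a quasi-isomorphism on Hom-complexes.

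Third, direct sums of quasi-isomorphisms between complexes of $k$-modules are quasi-isomorphisms, so the induced map
\[ \bigoplus_{n\geq0}\A(G^{p+q}F^nL,G^qM)\longrightarrow\bigoplus_{n\geq0}\A(G^{p+q+1}F^nL,G^{q+1}M) \]
is a quasi-isomorphism for each $p$. Filtered colimits in $\Md k$ are exact, so cohomology commutes with them, and consequently $U_q(L,M)\to U_{q+1}(L,M)$ is a quasi-isomorphism. The $\B$-linearity is preserved at every stage because the $\B$-action on $U_q(-,M)$ defined after \eqref{eqU0} is built from composition in $\A$, with which $G$ is compatible.

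The main obstacle will be the second step, establishing that $G$ induces a quasi-isomorphism on Hom-complexes, which amounts to upgrading the full faithfulness of $H^0G$ on $H^0\A$ to every cohomological degree via the pretriangulated structure. The remaining steps are formal manipulations with colimits and direct sums.
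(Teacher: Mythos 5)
Your proposal is correct and follows essentially the same route as the paper: the paper's proof simply asserts that the vertical maps in (\ref{eqU0}) are quasi-isomorphisms (which holds because $H^0G$ is fully faithful and $\A$ is pretriangulated, exactly as you argue via $H^i\A(X,Y)\cong\Hom_{H^0\A}(X,Y[i])$), passes to the colimit over $p$, and notes $\B$-linearity. You have merely spelled out the details that the paper leaves implicit.
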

\begin{proof}
	Since the vertical maps in (\ref{eqU0}) are quasi-isomorphisms, the induced map $U_q(L,M) \to U_{q+1}(L,M)$ is a quasi-isomorphism for each $q\geq0$, $L \in \B$. It is easily verified that $U_q(-,M) \to U_{q+1}(-,M)$ is $\B$-linear.
\end{proof}

Now define the DG $\B$-module $U(-,M)$ by the colimit
\[ U(-,M):=\colim_{q\geq0}U_q(-,M). \]
For each $L \in \B$ we have $U(L,M)=\colim_{p,q\geq0}\bigoplus_{n\geq0}\A(G^{p+q}F^nL,G^qM)$.

We observe that $U(L,M)$ has a left $\C$-action as follows: Let $c\in\C(M,N)$ be a morphism presented by $G^mM \to G^rN$ and $u\in U(L,M)$ an element presented by $G^{p+q}F^nL\to G^qM$. Enlarging $m$ and $p$ if necessarily we may assume $q\leq m$ and $r\leq p$. Then define $c\cdot u$ by the composition $G^{p+m}F^nL\xrightarrow{G^{m-q}u} G^mM \xrightarrow{c}G^rN$. It is clear that this is well-defined, and compatible with the right $\B$-action. We have therefore obtained a $(\C,\B)$-bimodule $U$.

As a final preparation, we describe an equivalence between the orbit categories $H^0\B$ and $H^0\C$.
\begin{Lem}\label{H0}
	The maps $H^0\A(F^nL,F^pM) \xrightarrow{G^{n+p}} H^0\A(G^{n+p}F^nL,G^{n+p}F^pM) \xrightarrow{\simeq}H^0\A(G^{n+p}F^nL,G^{n}M)\linebreak \xleftarrow{\simeq}H^0\A(G^{p}L,G^{n}M)\to H^0\C(L,M)$ induce an equivalence $H^0\B \simeq H^0\C$.
\end{Lem}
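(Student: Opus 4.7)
The plan is to use that $H^0$ commutes with filtered colimits to unfold the orbit Hom-spaces, reinterpret each summand via the natural isomorphism $GF \simeq 1$ on $H^0\A$, and show the resulting summand-wise bijections assemble to an equivalence. Concretely, writing
\[ H^0\B(L,M) = \colim_{p \geq 0} \bigoplus_{n \geq 0} H^0\A(F^nL, F^pM), \quad H^0\C(L,M) = \colim_{q \geq 0} \bigoplus_{m \geq 0} H^0\A(G^mL, G^qM), \]
the natural transformation $GF \to 1_\A$ induces on $H^0\A$ a natural isomorphism $\eta \colon H^0(GF) \xrightarrow{\simeq} 1_{H^0\A}$, whence iteratively $H^0(G^kF^k) \xrightarrow{\simeq} 1_{H^0\A}$ for all $k \geq 0$. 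Substituting $k = n$ and $k = p$ in the composite described by the lemma yields, for each pair $(n,p)$, an isomorphism $\phi_{n,p} \colon H^0\A(F^nL, F^pM) \xrightarrow{\simeq} H^0\A(G^pL, G^nM)$, the target of which is the $(m,q)=(p,n)$ summand in the colimit presentation of $H^0\C(L,M)$.

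The first step is to check that the collection $\{\phi_{n,p}\}$ is compatible with the two colimit structures: the transition $H^0F \colon \bigoplus_n H^0\A(F^nL, F^pM) \to \bigoplus_n H^0\A(F^nL, F^{p+1}M)$ on the $\B$ side should correspond, through $\phi$, to shifting the first index of the $\C$-side summands $H^0\A(G^pL, G^nM) \to H^0\A(G^{p+1}L, G^nM)$, which is precisely a transition map in the colimit defining $H^0\C(L,M)$. This is a diagram chase using the naturality of $\eta$ (and of its iterates). Once this is established, the $\phi_{n,p}$ assemble to a well-defined map $\Phi(L,M) \colon H^0\B(L,M) \to H^0\C(L,M)$, which is bijective because each $\phi_{n,p}$ is.

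Next I would verify that $\Phi$ is functorial: it sends the identity to the identity (take $n=p=0$ and both composites equal the identity on $L$), and it respects composition. The latter reduces, via the colimit presentations, to showing that the $\phi$-image of the composite of two representatives $f \colon F^{n_1}L \to F^{p_1}M$ and $g \colon F^{n_2}M \to F^{p_2}N$ (after equalizing indices by applying suitable powers of $F$) agrees with the composition of their $\phi$-images in $H^0\C$; once more this is a naturality diagram chase, using that $\eta$ together with its iterates behaves coherently with composition. Since $H^0F$ and $H^0G$ are mutually inverse equivalences there is also a natural isomorphism $H^0(FG) \xrightarrow{\simeq} 1_{H^0\A}$, and by the mirror construction one obtains a candidate inverse functor $\Psi \colon H^0\C \to H^0\B$. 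A final diagram chase shows $\Phi$ and $\Psi$ are mutually inverse on each summand and hence on the colimits.

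The main obstacle is purely bookkeeping: tracking indices through the two colimits and verifying that the summand-wise isomorphisms commute with both transition maps and with composition. Each verification collapses, after unwinding, to the naturality of the single natural isomorphism $\eta$, so no substantive new input is needed beyond the hypothesis $GF \to 1_\A$ and the formal consequence $FG \simeq 1$ on $H^0\A$.
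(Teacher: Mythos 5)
The paper states Lemma \ref{H0} without proof, so there is no argument of the author's to compare against; your proposal supplies the verification that the paper leaves implicit, and it follows exactly the route the statement suggests: unfold both orbit Hom-complexes as filtered colimits of direct sums (using that $H^0$ commutes with these), observe that each composite in the statement is a bijection $\phi_{n,p}\colon H^0\A(F^nL,F^pM)\to H^0\A(G^pL,G^nM)$ because $H^0G$ is fully faithful and the iterated counits are isomorphisms on $H^0\A$, and check compatibility with transitions and with composition by naturality of $\eta\colon H^0(GF)\to 1$. This is the right proof, and the detour through an explicit quasi-inverse $\Psi$ is harmless though unnecessary once full faithfulness and bijectivity on objects are in hand.

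Two pieces of your bookkeeping are off, though neither is fatal. First, the transition map in $\B(L,M)$ sends the summand $\A(F^nL,F^pM)$ into the summand $\A(F^{n+1}L,F^{p+1}M)$ — it shifts \emph{both} indices — so under $\phi$ it must correspond to $G\colon H^0\A(G^pL,G^nM)\to H^0\A(G^{p+1}L,G^{n+1}M)$, which is genuinely a transition map of the colimit defining $H^0\C(L,M)$. The map you wrote, $H^0\A(G^pL,G^nM)\to H^0\A(G^{p+1}L,G^{n}M)$, keeps the colimit index $n$ fixed and is \emph{not} a transition map; as literally stated that step would fail. The identity to verify is $\phi_{n+1,p+1}\circ H^0F=H^0G\circ\phi_{n,p}$, which does follow from naturality and coherence of the iterated counits. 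Second, "bijective because each $\phi_{n,p}$ is" is too quick: $\phi$ sends the summand with colimit index $p$ and sum index $n$ to the summand with colimit index $n$ and sum index $p$, so the two filtrations are interchanged and bijectivity of the colimit map is not formal. It does hold: surjectivity because every summand $H^0\A(G^mL,G^qM)$ is the image of $\phi_{q,m}$, and injectivity because, after pushing a finite sum $\sum_n\phi_{n,p}(x_n)$ to a common colimit level $N$, the terms for distinct $n$ land in distinct direct summands, so vanishing in the colimit forces each $F^{N-n}x_n$ to vanish. With these corrections the argument is complete.
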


Now we are ready to prove the main theorem of this section.
\begin{proof}[Proof of Theorem \ref{DGorbits}]
	We show that the $(\C,\B)$-bimodule
	\[ U(L,M)=\colim_{p,q\geq0}\bigoplus_{n\geq0}\A(G^{p+q}F^nL,G^qM) \]
	constructed above gives a quasi-equivalence.
	
	For each $M \in \C$ we have quasi-isomorphisms 
	\[ u_M \colon \B(-,M) \to U_0(-,M) \to U(-,M) \]
	of DG $\B$-modules by Lemma \ref{BU} and Lemma \ref{Uqis}, thus $U$ is a quasi-functor.
	
	It remains to show that the induced map
	\[ \Hom_{\D(\C)}(\C(-,L),\C(-,M)) \to \Hom_{\D(\B)}(U(-,L),U(-,M)) \]
	is an isomorphism for each $L, M \in\A$. It suffices to show that the following diagram is commutative:
	\[ \xymatrix{
		\Hom_{\D(\C)}(\C(-,L),\C(-,M)) \ar[r]^{-\otimes^L_\C U}\ar@{=}[d]& \Hom_{\D(\B)}(U(-,L),U(-,M))\ar[r]^\simeq_{u_L}& \Hom_{\D(\B)}(\B(-,L),U(-,M)) \\
		H^0\C(L,M)& \ar[l]_\simeq H^0\B(L,M) \ar@{=}[r]& \Hom_{\D(\B)}(\B(-,L),\B(-,M)) \ar[u]_{\rotatebox{-90}{$\simeq$}}^{u_M}. } \]
	The equivalence $H^0\B \to H^0\C$ given in Lemma \ref{H0} shows that if $f \in H^0\B(L,M)$ is presented by a morphism $F^nL \to F^pM$ in $Z^0\A$, then the corresponding morphism $g \in H^0\C(L,M)$ is presented by $G^pL \to G^nM$ in $Z^0\A$ so that the diagram
	\begin{equation}\label{fg}
		\xymatrix{
			G^{n+p}F^nL \ar[r]^{G^{n+p}f}\ar[d] & G^{n+p}F^pM\ar[d] \\
			G^pL \ar[r]^g& G^nM }
	\end{equation}
	is commutative in $H^0\A$.
	
	Let $f$ and $g$ be of the form above. Then the commutativity we want amounts to saying that $u_M\cdot f=g\cdot u_L$ in $H^0U(L,M)$, where we may regard $\cdot$ as the right action of $\B$ (resp. left action of $\C$) on $U$. Since $u_L \in U(L,L)$ is presented by the identity morphism in $\A$ the element $g \cdot u_L \in U(L,M)$ is presented by $G^{p+n}F^nL \to G^pL \xrightarrow{1} G^pL \xrightarrow{g} G^nM$.
	Similarly, since $u_M \in U(M,M)$ is presented by the identity morphism in $\A$, the element $u_M\cdot f\in U(L,M)$ is presented by $G^pF^nL \xrightarrow{G^pf} G^pF^pM \to M$, hence by one obtained by applying $G^n$.
	Then the commutativity of the diagram (\ref{fg}) in $H^0\A$ implies that we have $u_M\cdot f=g\cdot u_L$ in $H^0U(L,M)$.
\end{proof}

Let us apply our general result to the setting of finite dimensional algebras. Let $\L$ be a finite dimensional algebra which is homologically smooth, and $X$ a complex of $(\L,\L)$-bimodules such that $F=-\otimes^L_\L X$ gives an autoequivalence on $\D^b(\md\L)$. We assume that for each $L, M\in\D^b(\md\L)$, we have $\Hom_{\D(\L)}(L,F^iM)=0$ for almost all $i\in\Z$.

Let $\G$ be the trivial extension DG algebra of $\L$ by $X[-1]$, that is, $\G=\L\oplus X[-1]$ as a complex, and the multiplication is given by the bimodule structure on $X[-1]$. Then Keller's theorem \cite[Theorem 2]{Ke05} gives the equivalence
\[ \xymatrix{ \per\B \ar[r]^-\simeq & \thick_{\D(\G)}\L/\per \G } \]
which is compatible with the natural functors from $\D^b(\md\L)$.

We have the same equivalence arising from the inverse of $F$; Let $Y$ be a bimodule projective resolution of $\RHom_\L(X,\L)$ and $G=-\otimes_\L Y \colon \A \to \A$, which induces a quasi-inverse to $F=-\otimes_\L X$ on $\D^b(\md\L)$. Let $\C=\A/G$ be the DG orbit category and $\Del=\L\oplus Y[-1]$ be the trivial extension DG algebra so that we have an equivalence
\[ \xymatrix{ \per\C \ar[r]^-\simeq & \thick_{\D(\Del)}\L/\per \Del } \]
By the above equivalences and Theorem \ref{DGorbits} we deduce the following consequence.
\begin{Cor}\label{typical}
	There exists a triangle equivalence
	\[ \xymatrix{ \thick_{\D(\G)}\L/\per \G \ar[r]^\simeq & \thick_{\D(\Del)}\L/\per \Del } \]
	which is compatible with the projection functors from $\D^b(\md \L)$.
\end{Cor}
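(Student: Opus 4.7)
The plan is to combine Theorem \ref{DGorbits} with Keller's description of triangulated hulls from \cite{Ke05}. Set $\A=\C^b(\proj\L)$, $F=-\otimes_\L X$, and $G=-\otimes_\L Y$, and verify the hypotheses of Theorem \ref{DGorbits}. Since $Y$ is a bimodule projective resolution of $\RHom_\L(X,\L)$, the functors $H^0F$ and $H^0G$ are mutually inverse autoequivalences of $H^0\A=\D^b(\md\L)$; and the quasi-isomorphism $Y\otimes_\L X\to\L$ of $(\L,\L)$-bimodule complexes induces a DG natural transformation $G\circ F\to 1_\A$ which is an isomorphism on $H^0\A$. This is exactly the situation described in Remark \ref{typ}(2).

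Next I would apply Theorem \ref{DGorbits} directly to obtain a quasi-equivalence of DG orbit categories $\B=\A/F\simeq\C=\A/G$, and hence a triangle equivalence $\per\B\simeq\per\C$ of their perfect derived categories.

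The remaining step is to translate each side into the desired singular quotient via Keller's theorem \cite[Theorem 2]{Ke05}, which was already invoked in the excerpt preceding the statement. Applied to $\B=\A/F$ it yields a triangle equivalence $\per\B\simeq\thick_{\D(\G)}\L/\per\G$ with $\G=\L\oplus X[-1]$, and applied in parallel to $\C=\A/G$ it yields $\per\C\simeq\thick_{\D(\Del)}\L/\per\Del$ with $\Del=\L\oplus Y[-1]$. Composing these with the equivalence $\per\B\simeq\per\C$ supplies the asserted equivalence
\[ \thick_{\D(\G)}\L/\per\G\simeq\thick_{\D(\Del)}\L/\per\Del. \]

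For compatibility with the projection functors from $\D^b(\md\L)$, I would trace through the constructions: both of Keller's equivalences are, by design, compatible with the natural functor $\D^b(\md\L)=H^0\A\to H^0\B\to\per\B$ (and similarly on the $\C$-side), while the quasi-equivalence in Theorem \ref{DGorbits} is realised by the bimodule $U$ whose construction begins with the canonical map $\B(-,M)\to U(-,M)$ (Lemma \ref{BU}), so it fixes the image of $\A$. The main obstacle is precisely this naturality check: matching the three explicit constructions (Keller's two hulls and the bimodule $U$) on the common image of $\D^b(\md\L)$, though it is formal once one tracks how each equivalence is defined on representable objects.
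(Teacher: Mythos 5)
Your proposal is correct and is essentially the paper's own argument: the paper derives Corollary \ref{typical} by combining the two instances of Keller's triangulated-hull equivalence for $\B=\A/F$ and $\C=\A/G$ with the quasi-equivalence of Theorem \ref{DGorbits}, whose hypotheses are supplied exactly as in Remark \ref{typ}(2). The compatibility with the functors from $\D^b(\md\L)$ is likewise handled as you indicate, by tracking the constructions on representables.
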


Observe that the above result gives a certain singular equivalence of DG algebras $\G$ and $\Del$. We end this section by posing the following question.
\begin{Q}
	Is it possible to describe the equivalence in Corollary \ref{typical} directly?
\end{Q}

\section{Proof of Theorem \ref{hope}}\label{CM}
We now give a proof of a main result Theorem \ref{hope} of this paper using the result from the previous section. In fact, the essential part of the proof does not depend on our specific setup, so we first state the intermediate result in Proposition \ref{BS} below, which can also be viewed as a DG version of Theorem \ref{hope}.

Let $\L$ be a finite dimensional algebra which is homologically smooth, $X$ a complex of $(\L,\L)$-bimodules such that $F=-\otimes^L_\L X$ gives an autoequivalence on $\D^b(\md\L)$.
We assume the following on the tilting complex $X$:
\begin{enumerate}
	\renewcommand{\labelenumi}{(X\arabic{enumi})}
	\item For each $L, M\in\D^b(\md\L)$, we have $\Hom_{\D(\L)}(L,F^iM)=0$ for almost all $i\in\Z$. 
	\item $X$ is concentrated in degree $\leq0$.
\end{enumerate}
Let
\[ \G=\L\oplus X[-1], \quad \Sigma=T^L_\L X \]
be the trivial extension DG algebra, and respectively the derived tensor algebra, that is, the tensor algebra of a bimodule projective resolution of $X$. Then the conditions imply that $\Sigma$ is a negative DG algebra and its cohomology $H^0\Sigma=T_\L (H^0X)$ is finite dimensional.

Recall from the introduction that we have denoted by
\[ \C(\Pi)=\per\Pi/\D^b(\Pi) \]
and called it the cluster category for any DG algebra $\Pi$ satisfying $\per\Pi\supset\D^b(\Pi)$.  
Our general intermediate result is an equivalence between the cluster category of the tensor algebra and the singularity category of the trivial extension algebra.
\begin{Prop}\label{BS}
The DG algebra $\Sigma$ satisfies $\per\Sigma\supset\D^b(\Sigma)$, and there exists a triangle equivalence
\[ \C(\Sigma)\simeq\thick_{\D(\G)}\L/\per\G. \]
\end{Prop}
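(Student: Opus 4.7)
The plan is to realize both $\C(\Sigma)=\per\Sigma/\D^b(\Sigma)$ and $\thick_{\D(\Gamma)}\Lambda/\per\Gamma$ as the perfect derived category $\per(\A/F)$ of a common DG orbit category, modulo matching thick subcategories. Set $\A=\C^b(\proj\Lambda)$ viewed as a DG category, and let $F\colon\A\to\A$ be the DG endofunctor $-\otimes_\Lambda p(X)$, where $p(X)$ denotes a bimodule cofibrant resolution of $X$.

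I first verify $\per\Sigma\supset\D^b(\Sigma)$. By (X2), $\Sigma=T^L_\Lambda X$ is a negative DG algebra, and (X1) applied with $L=M=\Lambda$ implies that each cohomology $H^n\Sigma=\bigoplus_{k\geq 0}H^n(X^{\otimes k})$ (derived tensor over $\Lambda$) is a finite sum of finite-dimensional vector spaces; in particular $H^0\Sigma=T_\Lambda(H^0 X)$ is finite-dimensional. The silting object $\Sigma\in\per\Sigma$ then yields a right-adjacent $t$-structure on $\per\Sigma$ with heart $\md H^0\Sigma$, and the standard truncation argument after Amiot gives $\per\Sigma\supset\D^b(\Sigma)$.

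Next, Keller's theorem on triangulated hulls via DG orbit categories identifies $\per(\A/F)\simeq\thick_{\D(\Gamma)}\Lambda$, compatibly with the canonical functors from $\D^b(\md\Lambda)$; hence $\thick_{\D(\Gamma)}\Lambda/\per\Gamma$ is the quotient of $\per(\A/F)$ by the thick subcategory $\per\Gamma$. On the other side, I claim there is a quasi-equivalence between $\A/F$ and $\Sigma$ (viewed as a DG category with one object). Indeed, in the colimit defining $(\A/F)(\Lambda,\Lambda)=\colim_N\bigoplus_{n\geq 0}\A(F^n\Lambda,F^N\Lambda)$, the summand $\A(F^n\Lambda,F^N\Lambda)$ is quasi-isomorphic (for $N\geq n$) to $X^{\otimes(N-n)}$ by the homotopy inverse of $F$ on $H^0\A$, and the colimit identifies the morphism complex with $\bigoplus_{k\geq 0}X^{\otimes k}=\Sigma$. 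This gives $\per(\A/F)\simeq\per\Sigma$, and it remains to check that $\per\Gamma$ on one side corresponds to $\D^b(\Sigma)$ on the other. The image of $\Gamma=\Lambda\oplus X[-1]$ in $\per\Sigma$ has finite-dimensional total cohomology by a direct computation from the trivial extension structure, so it lies in $\D^b(\Sigma)$; conversely, by the first step $\D^b(\Sigma)$ is thickly generated in $\per\Sigma$ by the simple $H^0\Sigma$-modules, which lift to the image of $\per\Gamma$ via restriction along $\Gamma\twoheadrightarrow\Lambda\hookrightarrow H^0\Sigma$. Combining everything yields the triangle equivalence $\C(\Sigma)\simeq\thick_{\D(\Gamma)}\Lambda/\per\Gamma$.

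The main obstacle is the matching of $\per\Gamma$ and $\D^b(\Sigma)$ as subcategories under the equivalence $\per(\A/F)\simeq\per\Sigma$: the containment of images can be checked on generators, but the reverse direction is delicate. The cleanest route is probably to characterize both subcategories intrinsically inside $\per(\A/F)$ as the kernel of the canonical projection to the triangulated orbit category $\D^b(\md\Lambda)/F^{-1}[1]$, so that the resulting quotient coincides simultaneously with Keller's triangulated hull on both sides.
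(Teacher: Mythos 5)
Your argument breaks down at its central step: the claimed quasi-equivalence between the DG orbit category $\A/F$ and the one-object DG category $\Sigma=T^L_\L X$ is false. By the definition of the DG orbit category, $H^i\bigl((\A/F)(\L,\L)\bigr)=\bigoplus_{k\in\Z}\Hom_{\D(\L)}(\L,F^k\L[i])$: in the colimit over $N$ the transition maps send the summand indexed by $n$ to the one indexed by $n+1$, so the difference $k=N-n$ is preserved and, because new summands with $n=0$ keep appearing, $k$ ranges over \emph{all} integers, not just $k\geq0$. The negative powers contribute $\RHom_\L(X^{\otimes m},\L)$ for $m>0$, and these do not vanish: already in the intended application ($\L=A$, $X=U[1]$ with $U=\nu_d^{-1}A$) the $k=-1$ summand gives $\Hom_{\D(A)}(A,\RHom_A(U[1],A)[d+1])=\Ext^d_A(U,A)\cong DA\neq0$, so $H^{d+1}\bigl((\A/F)(\L,\L)\bigr)\neq0$ while $H^{>0}\Sigma=0$ since $\Sigma$ is negative. (This is forced: $\Hom_{\per(\A/F)}(\L,\L[>\!0])$ must be nonzero because it computes self-extensions of a cluster tilting object in a CY category.) Had your identification $\per(\A/F)\simeq\per\Sigma$ been correct, combining it with Keller's theorem would give $\C(\Sigma)\simeq\per\Sigma$, which is absurd. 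Relatedly, you misquote Keller's theorem: it identifies $\per(\A/F)$ with the quotient $\thick_{\D(\G)}\L/\per\G$ itself, not with $\thick_{\D(\G)}\L$, so your subsequent passage ``quotient of $\per(\A/F)$ by $\per\G$'' would be a double quotient.

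The missing ingredient is precisely Theorem \ref{DGorbits}. Relative Koszul duality $\RHom_{\Del}(\L,-)$ identifies $\per\Sigma$ with $\thick_{\D(\Del)}\L$ and $\D^b(\Sigma)$ with $\per\Del$ for the trivial extension $\Del=\L\oplus Y[-1]$ built from the \emph{inverse} bimodule $Y=p\RHom_\L(X,\L)$ (because $\RHom_\Del(\L,\L)\simeq T^L_\L X$), not for $\G=\L\oplus X[-1]$; this also yields $\D^b(\Sigma)\subset\per\Sigma$ (Lemma \ref{Ami}) --- your first paragraph establishes negativity and degreewise finite-dimensionality of $H^*\Sigma$, but not that the simples over $H^0\Sigma$ are perfect, which is what the containment really needs. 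Given the Koszul duality, $\C(\Sigma)\simeq\thick_{\D(\Del)}\L/\per\Del$ comes essentially for free, and the entire content of the proposition is the singular equivalence $\thick_{\D(\G)}\L/\per\G\simeq\thick_{\D(\Del)}\L/\per\Del$, i.e.\ Corollary \ref{typical}, which rests on the quasi-equivalence $\A/F\simeq\A/G$ of the DG orbit categories of $F$ and its homotopy inverse $G$. Your proposed repair in the final paragraph --- characterizing both subcategories as the kernel of the projection to $\D^b(\md\L)/F^{-1}[1]$ --- is not available either, since that orbit category is not triangulated and the ``kernel'' does not make sense as a thick subcategory in the required way.
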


The first step is to apply Corollary \ref{typical}. Let $Y$ be a bimodule projective resolution of $\RHom_\L(X,\L)$ and set
\[ \Del=\L\oplus Y[-1]. \]
Then we have a triangle equivalence
\begin{equation}\label{DGinv}
	\thick_{\D(\G)}\L/\per\G \simeq \thick_{\D(\Del)}\L/\per\Del.
\end{equation}

We next use the following computation of a DG endomorphism algebra.
\begin{Lem}[{See \cite[Lemma 4.13]{Am09}}]
There exists an isomorphism $\RHom_\Del(\L,\L) \simeq T^L_\L X=\Sigma$ in the homotopy category of DG algebras, that is, these two DG algebras are related by a zig-zag of quasi-isomorphisms of DG algebras.
\end{Lem}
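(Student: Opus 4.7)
The plan is to mimic the argument of \cite[Lemma 4.13]{Am09}, constructing an explicit DG $\Del$-module resolution of $\L$ and computing $\RHom_\Del(\L,\L)$ via this resolution, then identifying the resulting DG algebra with $\Sigma=T^L_\L X$.

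First, I would exploit the trivial extension structure $\Del=\L\oplus Y[-1]$: the projection $\Del\to\L$ is a morphism of DG $\Del$-bimodules whose kernel is $Y[-1]$, and since the product of any two elements of $Y[-1]$ vanishes in $\Del$, the module $\L$ acts on $Y[-1]$ through the projection. Iterating the bar-type construction, one obtains a resolution $P_\bullet\to\L$ by DG $\Del$-modules whose $n$-th term is (up to shifts) $(Y[-1])^{\otimes_\L n}\otimes_\L\Del$, equipped with the standard bar differential. After totalizing, this yields a cofibrant replacement of $\L$ over $\Del$.

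Next, apply $\cHom_\Del(-,\L)$ to this resolution. Because $Y[-1]$ acts as zero on $\L$, the adjunction collapses and one obtains
\[ \cHom_\Del(P_n,\L)\simeq\cHom_\L((Y[-1])^{\otimes_\L n},\L)\simeq\RHom_\L(Y,\L)^{\otimes_\L n}[n]. \]
Now $Y$ is by construction a bimodule projective resolution of $\RHom_\L(X,\L)$, and since $\L$ is homologically smooth and $F=-\otimes^L_\L X$ is an autoequivalence, $X$ is perfect as an $\L$-bimodule; hence biduality gives $\RHom_\L(Y,\L)\simeq X$ as DG bimodules. Assembling, $\RHom_\Del(\L,\L)$ is quasi-isomorphic as a complex to $\bigoplus_{n\geq0}X^{\otimes_\L n}=T^L_\L X=\Sigma$.

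The main obstacle will be the multiplicative part: to promote this quasi-isomorphism to one of DG algebras (in the homotopy category), one must check that the composition product on $\RHom_\Del(\L,\L)$ corresponds, under the above identification, to concatenation in the tensor algebra $T^L_\L X$. This is done by tracing through the bar differential and the comparison maps for $P_\bullet$: composing representatives in $\RHom_\Del(P_m,\L)$ and $\RHom_\Del(P_n,\L)$ along the bar complex structure produces precisely the element of $\RHom_\Del(P_{m+n},\L)$ corresponding to the tensor product $X^{\otimes m}\otimes_\L X^{\otimes n}\to X^{\otimes(m+n)}$. With this compatibility, one obtains a zig-zag of quasi-isomorphisms of DG algebras between $\RHom_\Del(\L,\L)$ and $\Sigma$, proving the statement.
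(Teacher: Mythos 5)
The paper does not prove this lemma itself but defers entirely to Amiot's Lemma 4.13, and your argument reconstructs exactly the computation underlying that reference: the square-zero bar resolution $(Y[-1])^{\otimes_\L n}\otimes_\L\Del$ of $\L$ over the trivial extension, the identification $\cHom_\Del(P_n,\L)\simeq\cHom_\L((Y[-1])^{\otimes_\L n},\L)$, biduality $\RHom_\L(Y,\L)\simeq X$ (valid since $X$ is a two-sided tilting complex, hence perfect on each side), and the convolution/composition product matching concatenation in $T^L_\L X$. This is correct and is essentially the same approach as the cited proof.
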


We also need the equivalence by relative Koszul dual.
\begin{Lem}\label{Ami}
The functor $\RHom_\Del(\L,-)\colon\D(\Del) \to \D(\Sigma)$ restricts to equivalences $\thick_{\D(\Del)}\L\simeq\per \Sigma$ and $\per \Del \simeq \D^b(\Sigma)$.
Therefore we have $\per \Sigma\supset\D^b(\Sigma)$ and an equivalence $\thick_{\D(\Del)}\L/\per \Del \xrightarrow{\simeq} \C(\Sigma)$.
\end{Lem}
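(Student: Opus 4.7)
The plan is to first exploit the preceding lemma, which yields a zig-zag of quasi-isomorphisms $\Sigma \simeq \RHom_\Del(\L,\L)$ of DG algebras. After standard cofibrant replacement this endows $\L$ with a DG $(\Del,\Sigma)$-bimodule structure, so that $\RHom_\Del(\L,-) \colon \D(\Del) \to \D(\Sigma)$ is a well-defined triangle functor sending $\L \mapsto \Sigma$. For the first equivalence, I would observe that by construction
\[ \Hom_{\D(\Del)}(\L, \L[n]) \simeq H^n \Sigma \simeq \Hom_{\D(\Sigma)}(\Sigma, \Sigma[n]) \quad \text{for all } n \in \Z, \]
so the functor is fully faithful on $\thick_{\D(\Del)}\L$ by a standard d\'evissage on triangles. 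Its image is a thick subcategory containing $\Sigma$, and is therefore all of $\per\Sigma$, giving $\thick_{\D(\Del)}\L \simeq \per\Sigma$.

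For the second equivalence $\per\Del \simeq \D^b(\Sigma)$, I would first note that $\per\Del \subset \thick_{\D(\Del)}\L$: since $X$ is a perfect $\L$-bimodule (as $-\otimes^L_\L X$ is an autoequivalence), one may take $Y$ bounded with finitely generated projective components, and the triangle $Y[-1] \to \Del \to \L$ of right DG $\Del$-modules exhibits $\Del$ as an iterated extension of shifts of $\L$. The key technical step is then to prove $\RHom_\Del(\L, \Del) \in \D^b(\Sigma)$. I would build a bar-type cofibrant resolution $p\L$ of $\L$ over $\Del$ with layers $\L \otimes_\L (Y[-1])^{\otimes_\L n} \otimes_\L \Del$ and apply $\Hom_\Del(-,\Del)$; using the trivial-extension structure (so that the $Y[-1]$-part squares to zero) together with the duality $\RHom_\L(Y,\L) \simeq X$, the resulting complex reassembles as a total module built from iterated tensor powers of $X$, and assumption (X2) together with the finite-dimensionality of $H^0\Sigma = T_\L(H^0 X)$ force its total cohomology to be finite-dimensional. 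Fully faithfulness of $\per\Del \to \D^b(\Sigma)$ is inherited from the first equivalence. For essential surjectivity, $\D^b(\Sigma)$ is generated by the finitely many simples of the finite-dimensional algebra $H^0\Sigma$, each of which pulls back along $\Del \to \L$ from a simple $\L$-module; these pullbacks are finite-dimensional $\Del$-modules and hence lie in $\per\Del$ by the homological smoothness of $\L$.

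Finally, the inclusion $\per\Del \subset \thick_{\D(\Del)}\L$ translates under the two equivalences to $\D^b(\Sigma) \subset \per\Sigma$, and passing to Verdier quotients yields the equivalence $\thick_{\D(\Del)}\L / \per\Del \xrightarrow{\simeq} \per\Sigma / \D^b(\Sigma) = \C(\Sigma)$. The main obstacle I anticipate is the finiteness verification $\RHom_\Del(\L, \Del) \in \D^b(\Sigma)$, which demands a careful interplay of the bar resolution, the total-module construction, and the degree bounds supplied by (X2); this should proceed along the lines of Amiot's original argument in \cite[Lemma 4.13]{Am09}, adapted to our more general tilting-complex setup.
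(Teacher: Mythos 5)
Your skeleton is the same as the paper's, and your first equivalence and the final passage to Verdier quotients are fine, but both halves of your argument for $\per\Del\simeq\D^b(\Sigma)$ have real problems. For the finiteness of $\RHom_\Del(\L,\Del)$: the bar-type complex you describe, assembled from iterated tensor powers of $X$, is precisely the shape of $\RHom_\Del(\L,\L)\simeq\Sigma$ itself, whose total cohomology is \emph{infinite}-dimensional; so ``(X2) plus finite-dimensionality of $H^0\Sigma$'' cannot by itself force finite total cohomology, and your sketch does not identify the cancellation that distinguishes $\RHom_\Del(\L,\Del)$ from $\RHom_\Del(\L,\L)$. The paper sidesteps all of this with a one-line duality: $\Del\simeq\cHom_\L(\Del,Y[-1])$ as $(\L,\Del)$-bimodules, hence $\RHom_\Del(\L,\Del)\simeq\RHom_\L(\L,Y[-1])\simeq Y[-1]$, a bounded complex of finitely generated projectives over the finite-dimensional algebra $\L$, which visibly lies in $\D^b(\Sigma)$.

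The essential surjectivity step is actually false as written. Restricting a simple $\L$-module along $\Del\twoheadrightarrow\L$ produces an object of $\thick_{\D(\Del)}\L$ (this is what smoothness of $\L$ buys you), \emph{not} of $\per\Del=\thick_{\D(\Del)}\Del$; smoothness does not pass to the trivial extension $\Del$. If all restricted simples were in $\per\Del$, then $\L$ itself, being a finite iterated extension of them, would lie in $\per\Del$, forcing $\thick_{\D(\Del)}\L/\per\Del\simeq\C(\Sigma)=0$, which is false whenever the cluster category is nonzero. (Separately, $\RHom_\Del(\L,-)$ does not carry these restricted simples to the simple $H^0\Sigma$-modules; Koszul-dual functors swap ``small'' and ``large'' objects.) The paper's argument goes through the injective side: since $(H^0\Sigma)_0=\L$ has finite global dimension, the heart $\md H^0\Sigma$ lies in the thick subcategory generated by $D\L=\RHom_\Del(\L,D\Del)$, and $D\Del\in\per\Del$ because $D\Del=\RHom_\L(\Del,D\L)$ with $D\L\in\thick_{\D(\L)}Y[-1]$. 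You need an argument of this shape to repair the step.
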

\begin{proof}
	The first assertion is clear. We prove second one. Since $\Del=\RHom_\L(\Del,Y[-1])$ as $(\L,\Del)$-bimodules, we have $\RHom_\Del(\L,\Del)=\RHom_\Del(\L,\RHom_\L(\Del,Y[-1]))=\RHom_\L(\L,Y[-1])=Y[-1]$, which has finite dimensional total cohomology. Therefore the functor maps $\per \Del$ into $\D^b(\Sigma)$. It remains to show the essential surjectivity. Since $\Sigma$ is a negative DG algebra, the finite dimensional derived category $\D^b(\Sigma)$ has a bounded $t$-structure whose heart is equivalent to the category of finite dimensional modules over $H^0\Sigma$, hence it suffices to show that the heart is contained in the image of the functor. Note that $H^0\Sigma=T_\L(H^0X)$ is a finite dimensional graded algebra whose degree $0$ part $\L$ has finite global dimension. Therefore it is sufficient to prove that $D\L$ is in the image. Clearly $D\L=\RHom_\Del(\L,D\Del)$, so it remains to show $D\Del\in\per \Del$. But we have $D\Del=\RHom_\L(\Del,D\L)$ and $D\L\in\thick_{\D(\L)}Y[-1]$, hence the assertion.
\end{proof}

\begin{proof}[Proof of Proposition \ref{BS}]
	It is a consequence of (\ref{DGinv}) and Lemma \ref{Ami}.
\end{proof}

We now return to our setup from Section \ref{thms}. Recall that $R$ is a negatively graded bimodule $(d+1)$-CY algebra of $a$-invariant $a$ such that each $R_i$ is finite dimensional, and that $A$ is a $d$-representation infinite algebra in Proposition \ref{mm} given by
\[ A=\left( 
\begin{array}{cccc}
R_0& 0 & \cdots& 0 \\ 
R_{-1} & R_0&\cdots& 0 \\
\vdots& \vdots&\ddots&\vdots \\
R_{-(a-1)}&R_{-(a-2)}&\cdots&R_0
\end{array}     
\right), \]
whose $d$-AR-translation $\nu_d$ has an $a$-th root defined by diagram (\ref{equp}). We have a cotilting bimodule $U=\nu_d^{-1/a}A$ and a $d$-Iwanaga-Gorenstein algebra $B=A\oplus U$ (see Proposition \ref{IG}).

Let us first apply Proposition \ref{BS}. Set $\L=A$ and $X=U[1]$. Since $U$ is a preprojective module over a $d$-representation infinite algebra, it clearly satisfies (X1) and (X2). Then the DG algebra $\G=\L\oplus X[-1]$ concentrates in degree $0$, and is nothing but our Iwanaga-Gorenstein algebra $B=A\oplus U$. Moreover, we see that the DG algebra $\Sigma$ is the tensor algebra 
\[ S:=T_A(U[1]) \]
with trivial differentials. Proposition \ref{BS} for this case gives the following equivalence.
\begin{Cor}\label{eqBS}
There exists a triangle equivalence
\[ \C(S)\simeq\D_\sg(B). \]
\end{Cor}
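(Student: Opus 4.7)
The plan is to obtain Corollary \ref{eqBS} as a direct application of Proposition \ref{BS}, by taking $\L=A$ and $X=U[1]$, and then identifying the resulting triangulated hull with $\D_\sg(B)$.

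First, I would verify the hypotheses of Proposition \ref{BS}. Homological smoothness of $A$ is immediate since $A$ is $d$-representation infinite, hence has finite global dimension. The bimodule $U$ realizes the autoequivalence $\nu_d^{-1/a}$ of $\D^b(\md A)$ from diagram (\ref{equp}), so $F=-\otimes^L_A X=\nu_d^{-1/a}[1]$ is an autoequivalence. Condition (X2) is immediate, since $X=U[1]$ is concentrated in degree $-1$. For condition (X1), the key point is that $F^a=\nu_d[a]$ by Corollary \ref{root}, so it suffices to check vanishing of $\Hom_{\D(A)}(L,\nu_d^j M[i])$ for $|j|\gg 0$ and all $i$, which is standard for $d$-representation infinite algebras (the preprojective and preinjective modules $\nu_d^{-j}A$ respectively $\nu_d^j(DA)$ have well-separated cohomological supports).

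Second, with these identifications, the trivial extension DG algebra becomes
\[ \G=A\oplus X[-1]=A\oplus U=B, \]
concentrated in degree $0$, and the derived tensor DG algebra $\Sigma=T^L_A(U[1])$ coincides, up to quasi-isomorphism, with $S=T_A(U[1])$ equipped with trivial differentials (one can take a bimodule projective resolution of $U[1]$ over $A$ and note that the induced DG algebra has cohomology concentrated in the expected bidegrees since $\gd A<\infty$, making $S$ formal). Proposition \ref{BS} then gives
\[ \C(S)\simeq\thick_{\D(B)}A/\per B. \]

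Third, I would identify the right-hand side with $\D_\sg(B)$. Since $B$ is $d$-Iwanaga-Gorenstein by Proposition \ref{IG}(2), one has $\D_\sg(B)=\D^b(\md B)/\per B$, so it suffices to prove $\thick_{\D(B)}A=\D^b(\md B)$. The inclusion $\subset$ is clear because $A=B/U$ is a finitely generated $B$-module. For the reverse, the exact sequence $0\to U\to B\to A\to 0$ of $B$-modules reduces the problem to showing $U\in\thick_{\D(B)}A$; but $U^2=0$ in $B$, so $U$ as a $B$-module is pulled back along $B\twoheadrightarrow A$ from $U\in\md A$, and since $\gd A\leq d$ it admits a finite projective resolution over $A$, which remains exact after the pullback.

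The main obstacle I anticipate is the careful verification of (X1), which is the only step that requires genuine input beyond the setup; once that is in place, the rest is a matter of unwinding definitions and invoking Proposition \ref{BS} together with the Iwanaga-Gorenstein property of $B$.
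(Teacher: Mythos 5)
Your proposal is correct and follows the paper's own route exactly: apply Proposition \ref{BS} with $\L=A$ and $X=U[1]$, identify $\G=B$ and $\Sigma\simeq S$, and then identify $\thick_{\D(B)}A/\per B$ with $\D_\sg(B)$ via $\thick_{\D(B)}A=\D^b(\md B)$ (the paper leaves the verification of (X1) and this last identification implicit, which you spell out). One cosmetic slip: since $F=\nu_d^{-1/a}[1]$ one has $F^a=\nu_d^{-1}[a]$, not $\nu_d[a]$, but this does not affect the argument.
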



To compare the cluster categories of $S$ and $R^\dg$, we need another intermediate DG algebra, which is
\[ \widetilde{S}:=\cEnd_R(T), \text{ with } T=R \oplus R[-1] \oplus \cdots\oplus R[-(a-1)]. \]

Let us first state an easy relationship between $R^\dg$ and $\widetilde{S}$.
We say that DG algebras $\Pi_1$ and $\Pi_2$ are {\it DG Morita equivalent} if there is a $(\Pi_1,\Pi_2)$-bimodule $X$ such that $-\otimes^L_{\Pi_1} X$ induces an equivalence $\D(\Pi_1) \xrightarrow{\simeq}\D(\Pi_2)$, or equivalently there exists a compact generator $M \in \D(\Pi_2)$ whose DG endomorphism algebra is quasi-equivalent to $\Pi_1$ \cite[Theorem 8.2]{Ke94}\cite[Theorem 3.11]{Ke06}.

We immediately have the following lemma.
\begin{Lem}\label{RStilde}
\begin{enumerate}
\item The DG algebras $R^\dg$ and $\widetilde{S}$ are DG Morita equivalent.
\item We have $\per \widetilde{S}\supset \D^b(\widetilde{S})$.
\item The cluster categories of $R^\dg$ and $\widetilde{S}$ are equivalent.
\end{enumerate}
\end{Lem}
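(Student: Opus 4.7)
The plan is to verify Keller's criterion for DG Morita equivalence recalled just above the statement, after which (2) and (3) will follow essentially formally.

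For (1), the idea is to show that $T=R\oplus R[-1]\oplus\cdots\oplus R[-(a-1)]$ is a compact generator of $\D(R^\dg)$ and invoke Keller's theorem. Compactness is immediate: each summand $R[-i]$ lies in $\per R^\dg$, being a shift of the free DG module of rank one. Generation is equally clear, since the summand $R$ alone already generates $\D(R^\dg)$, so a fortiori $T$ does. Because by definition $\widetilde{S}=\cEnd_{R^\dg}(T)$, the criterion of \cite[Theorem 8.2]{Ke94}, \cite[Theorem 3.11]{Ke06} produces the desired DG Morita equivalence, realised explicitly by $\RHom_{R^\dg}(T,-)\colon\D(R^\dg)\xrightarrow{\simeq}\D(\widetilde{S})$.

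For (2), I will verify that this DG Morita equivalence preserves both $\per$ and $\D^b$, from which the inclusion $\per\widetilde{S}\supset\D^b(\widetilde{S})$ follows from the corresponding inclusion for $R^\dg$ (which is implicit in the fact that $\C(R^\dg)$ is defined in Section \ref{thms}). Preservation of $\per$ is automatic, since $\per$ is intrinsically the subcategory of compact objects of the derived category. For $\D^b$, the key observation is that for any $M\in\D(R^\dg)$ one has
\[ \RHom_{R^\dg}(T,M)\simeq\bigoplus_{i=0}^{a-1}M[i] \]
as a DG $k$-module, so the total cohomology is multiplied by exactly $a$ and finiteness is preserved in both directions.

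For (3), the cluster categories are by definition the Verdier quotients $\per R^\dg/\D^b(R^\dg)$ and $\per\widetilde{S}/\D^b(\widetilde{S})$, and by the previous step the DG Morita equivalence restricts to equivalences on both numerator and denominator; it therefore descends to a triangle equivalence of the quotients. No serious obstacle is anticipated, since the entire argument rests on standard consequences of DG Morita equivalence, and the only point that requires a small calculation is the preservation of $\D^b$, which is settled by the explicit formula for $\RHom_{R^\dg}(T,-)$ above.
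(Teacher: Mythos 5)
Your proposal is correct and follows essentially the same route as the paper, whose entire proof is the one-line observation that $\widetilde{S}$ is the DG endomorphism algebra of the compact generator $T\in\D(R^\dg)$, with (2) and (3) then following formally. Your extra computation $\RHom_{R^\dg}(T,M)\simeq\bigoplus_{i=0}^{a-1}M[i]$ correctly fills in the detail, left implicit in the paper, of why the Morita equivalence preserves the subcategories of DG modules with finite-dimensional total cohomology.
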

\begin{proof}
	Since $\widetilde{S}$ is the DG endomorphism ring of a compact generator $T \in \D(R^\dg)$, we have (1). Then the assertions (2) and (3) follow.
\end{proof}

We next discuss the relationship between $S$ and $\widetilde{S}$.
\begin{Lem}\label{SStilde}
\begin{enumerate}
\item $S$ is a finite codimensional subalgebra of $\widetilde{S}$.
\item The cluster categories of $S$ and $\widetilde{S}$ are equivalent.
\end{enumerate}
\end{Lem}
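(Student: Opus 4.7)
My strategy is to identify $S$ with the non-positive cohomological part of $\widetilde{S}$, with the finite codimension coming from a small positive part. First, I compute $\widetilde{S}$ explicitly. For $T = \bigoplus_{l=0}^{a-1} R[-l]$ and $R$ viewed as a DG algebra with vanishing differential, the morphism complex between summands reads off as
\[
  \widetilde{S}^n_{l,l'} \;=\; \cHom_R\bigl(R[-l'], R[-l]\bigr)^n \;=\; R_{l'-l+n} \qquad (0 \le l,l' \le a-1,\ n \in \Z),
\]
which at $n=0$ and $n=-1$ reproduces $A$ and $U$. Since $R$ is negatively graded, $\widetilde{S}^n$ in positive cohomological degrees is nonzero only when $1 \le n \le l-l' \le a-1$, so the positive part $\bigoplus_{n \ge 1} \widetilde{S}^n$ is a finite sum of finite dimensional pieces $R_i$, hence finite dimensional by (R1). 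The universal property of the tensor algebra then promotes the identifications $A \xrightarrow{\sim} \widetilde{S}^0$ and $U \xrightarrow{\sim} \widetilde{S}^{-1}$ to a DG algebra map $\phi \colon S = T_A(U[1]) \to \widetilde{S}$ landing in the subalgebra $\widetilde{S}^{\le 0}$.

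The crucial step for (1) is to prove that $\phi$ restricts to an isomorphism $S \xrightarrow{\sim} \widetilde{S}^{\le 0}$, equivalently that the composition map $U^{\otimes_A n} \to \widetilde{S}^{-n} = \bigoplus_{l,l'} R_{l'-l-n}$ is an isomorphism for every $n \ge 0$. I use the tilting equivalence $\Hom_{\qper R}(T,-) \colon \qper R \xrightarrow{\simeq} \D^b(\md A)$ from Proposition \ref{mm} together with diagram (\ref{equp}), which identifies the shift $(1)$ on $\qper R$ with $F = \nu_d^{1/a}$; under it $U$ corresponds to $F^{-1}A$. Iterating, $U^{\otimes_A^L n}$ corresponds to $T(-n) \in \qper R$, and since the graded modules $R(-n)$ for $n \ge 0$ lie in the standard heart of $\qper R$ and the tilting functor sends this heart into $\md A$, no higher Ext's appear; derived and underived tensor powers coincide, giving $U^{\otimes_A n} \xrightarrow{\sim} \Hom_{\qper R}(T,T(-n)) = \widetilde{S}^{-n}$ via composition. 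Hence $\phi$ identifies $S$ with $\widetilde{S}^{\le 0}$ and $\widetilde{S}/S \simeq \bigoplus_{n=1}^{a-1}\widetilde{S}^n$ is finite dimensional, proving (1).

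Part (2) is formal given finite codimension. The restriction $\iota^* \colon \D(\widetilde{S}) \to \D(S)$ preserves $\D^b$ trivially, and preserves $\per$ because the exact sequence $0 \to S \to \widetilde{S} \to \widetilde{S}/S \to 0$ of left $S$-modules displays $\widetilde{S}$ as an extension of $S$ by the finite dimensional module $\widetilde{S}/S \in \D^b(\md S) \subset \per S$, the last inclusion using smoothness of $S$ as a derived CY completion of $A$. Symmetrically the right adjoint $\iota^! = \RHom_S(\widetilde{S},-)$ preserves both $\per$ and $\D^b$, so both functors descend to triangle functors between $\C(S)$ and $\C(\widetilde{S})$. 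The unit and counit of the adjunction $\iota^* \dashv \iota^!$ have cones built from $(\widetilde{S}/S) \otimes^L_S (-)$ and $\RHom_S(\widetilde{S}/S,-)$, which lie in $\D^b$ and vanish in the Verdier quotients; hence the descended functors are mutually inverse equivalences. The main obstacle is really step (1), specifically the identification $U^{\otimes_A n} \simeq \widetilde{S}^{-n}$, which rests on the tilting compatibility of the grading shift with the root functor $F$ and on the absence of higher Tor's.
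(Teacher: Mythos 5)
Your part (1) is correct and follows essentially the same route as the paper's (one‑line) argument: compute $\widetilde{S}^n=\bigoplus_{l,l'}R_{l'-l+n}$ directly, note that the strictly positive cohomological part is finite dimensional by (R1) and the negativity of the grading, and identify $S$ with $\widetilde{S}^{\leq0}$ using that the powers $U^{\otimes_A n}$ have no higher Tor's (the paper phrases this as $S=\bigoplus_{i\geq0}\Hom_{\D(A)}(A,U^i)$ versus $\widetilde{S}=\bigoplus_{i\in\Z}\Hom_{\D(A)}(A,U^i)$).

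Part (2) has a genuine gap. You work with the adjoint pair $(\mathrm{res},\,\RHom_S(\widetilde{S},-))$ and justify that the right adjoint $\iota^!=\RHom_S(\widetilde{S},-)$ preserves $\per$ by the word ``symmetrically''. This is not symmetric: restriction preserves $\D^b$ because it does not change the underlying complex, and preserves $\per$ because $\widetilde{S}|_S\in\per S$; but for $\iota^!$ to preserve $\per$ you must show $\RHom_S(\widetilde{S},S)\in\per\widetilde{S}=\thick_{\D(\widetilde{S})}(\widetilde{S})$, a nontrivial statement about the right $\widetilde{S}$-module structure on the $S$-linear dual of $\widetilde{S}$, for which you give no argument (and your description of the unit/counit cones mixes in $(\widetilde{S}/S)\otimes^L_S(-)$, which belongs to the other adjunction). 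The paper avoids this point entirely by using the pair $(-\otimes^L_S\widetilde{S},\,\mathrm{res})$, whose left adjoint tautologically sends the generator $S$ to the generator $\widetilde{S}$ and hence preserves $\per$ for free. Your route can be repaired: the cone of the unit $N\to\iota^!\iota^*N$ has underlying complex $\RHom_S(\widetilde{S}/S,N|_S)$, hence lies in $\D^b(\widetilde{S})\subset\per\widetilde{S}$ for $N\in\per\widetilde{S}$, so $\RHom_S(\widetilde{S},\widetilde{S})=\iota^!\iota^*\widetilde{S}\in\per\widetilde{S}$; then the triangle of right $\widetilde{S}$-modules $\RHom_S(\widetilde{S},S)\to\RHom_S(\widetilde{S},\widetilde{S})\to\RHom_S(\widetilde{S},\widetilde{S}/S)$, whose third term is in $\D^b(\widetilde{S})$, gives $\iota^!(S)\in\per\widetilde{S}$. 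With that supplied your unit/counit argument goes through, but as written the proof is incomplete at exactly its most delicate step.
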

\begin{proof}
	(1)  This is a consequence of $S=\bigoplus_{i\geq0}\Hom_{\D(A)}(A,U^i)$ and $\widetilde{S}=\bigoplus_{i\in\Z}\Hom_{\D(A)}(A,U^i)$.\\
	(2)  Consider the pair of adjoint functors $F=-\otimes^L_S \widetilde{S} \colon \D(S) \to \D(\widetilde{S})$ and $G=\mathrm{res} \colon \D(\widetilde{S}) \to \D(S)$. 
	
	{\it Step 1: Restrictions of the adjoint pair.}\quad
	We observe that these functors restrict to the perfect and finite dimensional derived categories. Clearly $-\otimes^L_S\widetilde{S}$ restricts to $\per S \to \per\widetilde{S}$, and $\mathrm{res}$ to $\D^b(\widetilde{S})\to\D^b(S)$. Also since $\widetilde{S}$ is perfect over $S$ by (1) and Lemma \ref{Ami}, the remaining assertions follow.
	Therefore the above functors induce an adjoint pair between the Verdier quotients. 
	
	{\it Step 2: The unit and counit maps.}\quad
	We show that the unit and counit maps are isomorphisms. 
	Let $X \in \C(S)$ and consider the unit map $u_X\colon X \to X \otimes^L_S\widetilde{S}$. Since this is obtained by applying $X \otimes^L_S-$ to an isomorphism $S \to \widetilde{S}$ in $\C(S)$, it is an isomorphism. Next let $Y \in \C(\widetilde{S})$ and $v_Y \colon FGY \to Y$ the counit. Note that $G$ detects isomorphisms. Indeed $v_Y$ is an isomorphism in $\C(\widetilde{S})$ if and only if, as a map in $\per\widetilde{S}$, the cone of $v_Y$ is in $\D^b(\widetilde{S})$. But this property is detected by the restriction functor $G$.
	Now the claim $G(v_Y)$ is an isomorphism follows from the fact that the composition $GY\xrightarrow{u_{GY}}GFGY\xrightarrow{G(v_Y)}GY$ equals the identity,	which is a general property of an adjoint pair, and the isomorphism of the unit.
\end{proof}
	
Now, Theorem \ref{hope} is a consequence of the following sequence of equivalences in the upper row.
\begin{equation*}
	\xymatrix@R=2mm@C=9mm{
	\D_\sg(B)\ar@{=}[rr]^{\text{Cor \ref{eqBS}}}\ar@{=}[dr]_{\text{Cor \ref{typical}\qquad}}&& \C(S)\ar@{=}[rr]^{\text{Lem \ref{SStilde}}}&& \C(\widetilde{S})\ar@{=}[rr]^{\text{Lem \ref{RStilde}}}&& \C(R^\dg) \\
	& \thick_{\D(C)}A/\per C\ar@{=}[ur]_{\qquad \text{Lem \ref{Ami}}} }
\end{equation*}
Here we have included $C=A\oplus\RHom_A(U[1],A)[-1]$, which is the DG algebra $\Del$ in Proposition \ref{BS} for our specific setup.

\bigskip
We record the following formula for the $d$-representation infinite algebra $A$, the cotilting bimodule $U$, and the $d$-Iwanaga-Gorenstein algebra $B$ which are determined by $R$.
\begin{equation}\label{eqAUB}
A=\left( 
\begin{array}{cccc}
R_0& 0 & \cdots& 0 \\ 
R_{-1} & R_0&\cdots& 0 \\
\vdots& \vdots&\ddots&\vdots \\
R_{-(a-1)}&R_{-(a-2)}&\cdots&R_0
\end{array}     
\right), \quad
U=\left( 
\begin{array}{cccc}
R_{-1} & R_0&\cdots& 0 \\
\vdots& \vdots&\ddots&\vdots \\
R_{-(a-1)}&R_{-(a-2)}&\cdots&R_0 \\
R_{-a}&R_{-(a-1)}&\cdots&R_{-1}
\end{array}     
\right), \quad
B=A\oplus U.
\end{equation}
Let us also record the equivalences we have shown.
\begin{equation}\label{eqsumm}
\xymatrix{ \D^b(\md A)/\nu_d^{-1/a}[1]\ar@{^(->}[r]& \D_\sg(B)\ar[r]^-\simeq& \C(R^\dg) }
\end{equation}

We end this section with the obvious lemma, which is useful for later computation.
\begin{Lem}\label{comp}
	Let $J_0$ be the Jacobson radical of $R_0$.
	\begin{enumerate}
		\item The Jacobson radical $J_A$ of $A$ is $\begin{pmatrix}J_0&0&\cdots&0\\R_{-1}&J_0&\cdots&0\\\vdots&\vdots&\ddots&\vdots\\R_{-(a-1)}&R_{-(a-2)}&\cdots&J_0\end{pmatrix}$.
		\item The Jacobson radical $J_B$ of $B$ is $J_A\oplus U$.
		\item We have $J_B/J_B^2=J_A/J_A^2\oplus U/(J_AU+UJ_A)$, with
		\[ U/(J_AU+UJ_A)=\left(
		\begin{array}{cccc}
		0& R_0/J_0&\cdots& 0 \\
		\vdots& \vdots&\ddots&\vdots \\
		0&0&\cdots&R_0/J_0 \\
		\frac{R_{-a}}{J_0R_{-a}+R_{-a}J_0+\sum_{i+j=a, i,j>0}R_{-i}R_{-j}}&0&\cdots&0
		\end{array}     
		\right). \]
	\end{enumerate}
\end{Lem}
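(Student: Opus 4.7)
The plan is to verify each of the three parts by direct matrix manipulation, and I describe the organizing ideas rather than the bookkeeping.

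For (1), I will check that the matrix $I$ on the right-hand side coincides with $J_A$ by establishing both that $A/I \cong \prod_{i=1}^{a}(R_0/J_0)$ is semisimple, and that $I$ is nilpotent. The latter is handled by a two-step filtration: let $I_0 \subset I$ be the strictly lower-triangular ideal (replacing the diagonal $J_0$'s by $0$), so $I_0^a = 0$ by size; modulo $I_0$ the ideal $I$ becomes the product of $a$ copies of $J_0$, which is nilpotent since $R_0$ is finite dimensional, hence $I^{Na}=0$ for large $N$.

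For (2), I use the trivial extension structure $U \cdot U = 0$ in $B$, which together with (1) shows that $J_A \oplus U$ is a nilpotent ideal with semisimple quotient $B/(J_A \oplus U) = A/J_A$, forcing $J_B = J_A \oplus U$.

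Part (3) is where the work concentrates. Using $U^2 = 0$ once more gives
\[ J_B^2 \;=\; J_A^2 \oplus (J_A U + U J_A), \]
so the direct sum splitting of $J_B/J_B^2$ is immediate. The main obstacle is the explicit matrix form of $U/(J_A U + U J_A)$, which I will verify entry by entry. Two complementary mechanisms do most of the work: for an entry $U_{i,j} = R_{-(i-j+1)}$ with $j \ge 2$ and $i \ge j$, the product $(J_A)_{i, j-1} \cdot U_{j-1, j} = R_{-(i-j+1)} \cdot R_0$ recovers the whole entry via $1 \in R_0$; and dually, for $i \le a-1$ and $j \le i$, the product $U_{i, i+1} \cdot (J_A)_{i+1, j} = R_0 \cdot R_{-(i-j+1)}$ does the same from the other side. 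Together these kill every diagonal and sub-diagonal entry of $U$ modulo $J_A U + U J_A$ except at the corner $(a, 1)$, where both mechanisms fail for position reasons (no row $a+1$ of $J_A$, no column $0$ of $U$). At each super-diagonal position $(i, i+1)$, a short computation shows $(J_A U + U J_A)_{i, i+1} = J_0$, yielding the quotient $R_0/J_0$. The final and most delicate calculation is at the corner: enumerating the sums $\sum_k (J_A)_{a,k} U_{k,1}$ and $\sum_k U_{a,k} (J_A)_{k,1}$ and collecting terms produces exactly $J_0 R_{-a} + R_{-a} J_0 + \sum_{i+j=a,\, i,j>0} R_{-i} R_{-j}$, which is the stated denominator.
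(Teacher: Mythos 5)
Your proof is correct; the paper states this as ``the obvious lemma'' and gives no proof, and your direct matrix verification (nilpotent ideal with semisimple quotient for (1)--(2), then the entrywise analysis of $J_AU+UJ_A$ using the superdiagonal $R_0$'s of $U$ and the subdiagonal entries of $J_A$, with the corner $(a,1)$ as the sole survivor) is exactly the computation the author is taking for granted. All the entry-by-entry claims check out, including the corner denominator $J_0R_{-a}+R_{-a}J_0+\sum_{i+j=a,\,i,j>0}R_{-i}R_{-j}$.
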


\section{Higher cluster categories of higher representation infinite algebras}\label{Cdn}
We give an application of Theorem \ref{hope}, which is given by taking the CY algebra $R$ as a (higher) preprojective algebra. We prove that any $m$-cluster category of a $d$-representation infinite algebra with $m>d$ is the singularity category of a $d$-Iwanaga-Gorenstein algebra. Moreover we explicitly describe the quiver and relations of the Iwanaga-Gorenstein algebra for the case $d=1$, that is, when $A$ is hereditary. 
\begin{Thm}\label{corpi}
Let $A$ be a $d$-representation infinite algebra and fix $n\geq1$. Let $U=\Ext_A^d(DA,A)$ and
\[ B=\left( 
\begin{array}{cccc}
	A & 0&\cdots& 0 \\
	0&A&\cdots&0\\
	\vdots& \vdots&\ddots&\vdots \\
	0&0&\cdots&A
\end{array}     
\right)\oplus
\left( 
\begin{array}{cccc}
	0 & A&\cdots& 0 \\
	\vdots& \vdots&\ddots&\vdots \\
	0&0&\cdots&A\\
	U&0&\cdots&0
\end{array}     
\right) \]
the trivial extension algebra, where the matrix is $n\times n$.
Then $B$ is $d$-Iwanaga-Gorenstein and there exists a triangle equivalence
\[ \C_{d+n}(A)\simeq\D_\sg(B). \]
\end{Thm}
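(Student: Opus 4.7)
My plan is to exhibit $\C_{d+n}(A)$ as the cluster category of a formal DG algebra coming from a graded Calabi-Yau algebra, so that Theorem \ref{hope} applies directly. Set $R = \Pi_{d+1}(A) = T_A U$, the $(d+1)$-preprojective algebra of $A$, and equip it with the grading in which $U$ sits in degree $-n$; explicitly $R_0 = A$, $R_{-ni} = U^{\otimes_A i}$ for $i \geq 1$, and $R_j = 0$ whenever $n \nmid j$. Then $R$ is negatively graded with finite dimensional graded pieces, since each tensor power of the finite dimensional bimodule $U$ is finite dimensional over $k$. Under the standard grading in which $U$ lies in degree $-1$, the higher preprojective algebra $\Pi_{d+1}(A)$ is known to be bimodule $(d+1)$-CY of $a$-invariant $1$ (see \cite{Ke11,IO,MM,HIO,AIR}); rescaling the grading by a factor of $n$ multiplies the $a$-invariant by $n$, so our $R$ is bimodule $(d+1)$-CY of $a$-invariant $n$, with $R_0 = A$. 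This places us squarely in the setting of Theorem \ref{hope}.

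Applying that theorem yields $\C(R^\dg) \simeq \D_\sg(B(R))$, where $B(R) = A(R) \oplus U(R)$ is given by the matrix formulas (\ref{eqAUB}). Because only $R_0 = A$ and $R_{-n} = U$ are non-zero among $\{R_j\}_{-n \leq j \leq 0}$, the $(i,j)$-entry $R_{j-i}$ of $A(R)$ is non-zero only on the diagonal, where it equals $A$; hence $A(R)$ is the block-diagonal algebra $\operatorname{diag}(A, \ldots, A)$ appearing in the statement. Similarly, the $(i,j)$-entry $R_{j-i-1}$ of $U(R)$ is non-zero only when $j - i - 1 \in \{0, -n\}$, namely on the superdiagonal (with value $R_0 = A$) and at the bottom-left corner $(n,1)$ (with value $R_{-n} = U$), and zero elsewhere. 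Thus $B(R)$ agrees verbatim with the algebra $B$ in the statement, and its $d$-Iwanaga-Gorenstein property follows from Proposition \ref{IG}.

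It remains to identify $\C(R^\dg)$ with $\C_{d+n}(A) := \C(\mathbf{\Pi}_{d+n+1}(A))$, and this is where I expect the main technical content to lie. Since $A$ is $d$-representation infinite of global dimension $d$, its inverse dualizing complex is concentrated in degree $d$ with $\RHom_A(DA, A) \simeq U[-d]$, so
\[ \mathbf{\Pi}_{d+n+1}(A) = T_A^L \RHom_A(DA, A)[d+n] \simeq T_A^L(U[n]). \]
One then has to check that the rigidity of $U$ (vanishing of the relevant higher $\Tor^A$'s among its tensor powers, a standard feature of higher preprojective algebras of $d$-representation infinite algebras, cf.\ \cite{Ke11}) causes $T_A^L(U[n])$ to collapse to the formal DG algebra $T_A(U)$ with $U$ placed in cohomological degree $-n$, i.e.\ to our $R^\dg$. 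Once this quasi-isomorphism of DG algebras is in hand, the chain
\[ \C_{d+n}(A) = \C(\mathbf{\Pi}_{d+n+1}(A)) \simeq \C(R^\dg) \simeq \D_\sg(B) \]
completes the proof.
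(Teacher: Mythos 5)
Your proposal is correct and follows essentially the same route as the paper: grade $\Pi_{d+1}(A)=T_AU$ by $\deg U=-n$ so it becomes bimodule $(d+1)$-CY of $a$-invariant $n$, identify its formal DG version with the $(d+n+1)$-CY completion $T_A^L\RHom_A(DA,A)[d+n]$ so that its cluster category is $\C_{d+n}(A)$, and then apply Theorem \ref{hope} together with the matrix formulas (\ref{eqAUB}). The formality/rigidity check you flag as the main technical point is exactly the quasi-isomorphism the paper asserts (citing \cite{Ke11}), and your verification of the shape of $B(R)$ matches the paper's.
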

\begin{proof}
	Let $\Pi=T_AU$ be the $(d+1)$-preprojective algebra of $A$. Give a grading on $\Pi$ by setting $\deg U=-n$ so that $\Pi$ is a bimodule $(d+1)$-CY algebra of $a$-invariant $n$. Note that $\Pi^\dg$ is quasi-isomorphic to the derived $(d+n+1)$-preprojective algebra (or the $(d+n+1)$-Calabi-Yau completion) $T_A^L\RHom_A(DA,A)[d+n]$, in the sense of \cite{Ke11}. Therefore the cluster category $\C(\Pi^\dg)$ is nothing but the $(d+n)$-cluster category $\C_{d+n}(A)$ of $A$. By Theorem \ref{hope}, we have a triangle equivalence $\C(\Pi^\dg)\simeq\D_\sg(B(\Pi))$ for the $d$-Iwanaga-Gorenstein algebra $B(\Pi)$ in (\ref{eqAUB}) for the Calabi-Yau algebra $\Pi$, which is precisely the $B$ above in the statement.
\end{proof}

\begin{Rem}
We give a general discussion in Appendix \ref{nbai} the effect of `multiplying gradings' as we did in the above proof. See Corollary \ref{Bhat} for a description as a derived orbit category, which predicts the above equivalence.
\end{Rem}

\subsection{The case $d=1$.}
Let us record the special case $d=1$, that is, when $A$ is hereditary.
\begin{Cor}\label{d=1}
Let $Q$ be a finite connected acyclic non-Dynkin quiver, $A=kQ$ and fix $n\geq1$. Then we have a triangle equivalence
\[ \C_{n+1}(kQ)\simeq\D_\sg(B) \]
for the $1$-Iwanaga-Gorenstein algebra $B$ in Theorem \ref{corpi}.
\end{Cor}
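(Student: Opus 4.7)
The plan is to verify the hypotheses of Theorem \ref{corpi} for $d=1$, after which the corollary is a direct instantiation. Concretely, all I really need to check is that $A=kQ$ is $1$-representation infinite; the explicit form of $B$, its Iwanaga--Gorenstein property, and the claimed triangle equivalence $\C_{n+1}(kQ)\simeq\D_\sg(B)$ then follow verbatim from Theorem \ref{corpi}.

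First I would confirm the $1$-representation infiniteness of $A=kQ$. Since $Q$ is acyclic, $A$ is hereditary, so $\gd A\leq 1$. The autoequivalence $\nu_1=-\otimes_A^L DA[-1]$ of $\D^b(\md A)$ sends an indecomposable projective $P_j$ to the shifted injective $I_j[-1]$, so its quasi-inverse $\nu_1^{-1}$ restricts, on indecomposables lying in the preprojective component of $\md A$, to the classical Auslander--Reiten translation $\tau^{-1}$. Hence the condition $\nu_1^{-i}A\in\md A$ for all $i\geq 0$ amounts to every indecomposable summand of $A$ having an infinite $\tau^{-1}$-orbit entirely inside $\md A$. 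For hereditary algebras this is equivalent to $A$ being representation infinite, which by Gabriel's theorem is precisely the condition that the connected acyclic quiver $Q$ is non-Dynkin.

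Second, I would apply Theorem \ref{corpi} with $d=1$ and the given $n$, taking $U=\Ext^1_A(DA,A)$. The resulting trivial extension $B$, assembled from $n\times n$ blocks over $A$ and $U$ exactly as displayed in Theorem \ref{corpi}, is $1$-Iwanaga--Gorenstein, and that theorem supplies the desired equivalence $\C_{n+1}(kQ)\simeq\D_\sg(B)$.

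The only step of genuine content is the verification of $1$-representation infiniteness, which rests on the classical Gabriel dichotomy between Dynkin and representation infinite types; no serious obstacle arises beyond Theorem \ref{corpi} itself.
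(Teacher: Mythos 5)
Your proposal is correct and matches the paper's (implicit) argument: the corollary is recorded as the direct specialization of Theorem \ref{corpi} to $d=1$, with the only substantive input being that $kQ$ for a connected acyclic non-Dynkin quiver is $1$-representation infinite, which you verify by the standard identification of $\nu_1^{-1}$ with $\tau^{-1}$ on preprojectives and Gabriel's dichotomy (this is the basic example of a $1$-representation infinite algebra in \cite{HIO}).
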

In this case we can explicitly describe the quiver and relations for $B$.
\begin{Prop}\label{Qhat}
The $1$-Iwanaga-Gorenstein algebra ${B}$ in Theorem \ref{corpi} is presented by the quiver $\widehat{Q}$ with 
\begin{enumerate}
\renewcommand{\labelenumi}{(\alph{enumi})}
\renewcommand{\labelenumii}{(\roman{enumii})}
	\item vertices $Q_0\times\{1,\ldots,n\}$,
	\item three kinds of arrows
	\begin{enumerate}
		\item $a=a^l\colon(i,l)\to(j,l)$ for each $a\colon i\to j$ in $Q_1$ and $1\leq l\leq n$.
		\item $v=v_i^l\colon(i,l+1)\to(i,l)$ for each $i\in Q_0$ and $1\leq l< n$.
		\item $a^\ast\colon(j,1)\to(i,n)$ for each $a\colon i\to j$ in $Q_1$.
	\end{enumerate}
	\item three kinds of relations
	\begin{enumerate}
		\item $a^{l}v_i^{l}=v_j^{l}a^{l+1}$ for each $a\colon i\to j$ in $Q_1$ and $1\leq l<n$.
		\item $\sum_{s(a)=i}a^\ast a^1=\sum_{t(a)=i}a^la^\ast$ for all $i\in Q_0$.
		\item $v_i^{l-1}v_i^l=0$, $a^\ast v=0$, $va^\ast=0$ if $n\geq2$, and $a^\ast bc^\ast=0$ for any composable $a, b, c \in Q_1$ if $n=1$.
	\end{enumerate}
\end{enumerate}
\end{Prop}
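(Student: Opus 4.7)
The plan is to read off the Gabriel quiver of $B$ via Lemma \ref{comp}, verify each of the three families of relations holds in $B$, and then match dimensions to conclude the presentation is complete. Throughout we identify $B = B(\Pi)$ with $\Pi = T_A U$ graded by $\deg U = -n$, so that $R_0 = A$, $R_{-n} = U$, and all other graded pieces vanish, as in the proof of Theorem \ref{corpi}.

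First, I would apply Lemma \ref{comp}. Because $R_{-i}=0$ for $0<i<n$, the algebra $A(\Pi)$ collapses to the block-diagonal $A^n$, and $U(\Pi)$ is supported only on the super-diagonal blocks (each equal to $A$) and the $(n,1)$-corner (equal to $U$); the primitive idempotents are indexed by $Q_0 \times \{1,\dots,n\}$. Lemma \ref{comp}(3) then produces the arrows: the diagonal contribution $J_A/J_A^2$ gives the $a^l$; each super-diagonal block contributes $A/J_A$, giving the $v_i^l$; and the corner simplifies to $U/(J_AU+UJ_A)$, since the correction term $\sum_{i+j=n,\,i,j>0} R_{-i}R_{-j}$ vanishes. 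For the hereditary algebra $A=kQ$ the preprojective algebra admits the presentation as the path algebra of the double quiver modulo mesh relations, and $U=\Pi_{-n}$ is generated as an $(A,A)$-bimodule by the dual arrows $a^*\in e_{s(a)}Ue_{t(a)}$ for $a\in Q_1$; these then furnish precisely the arrows $a^*\colon(t(a),1)\to(s(a),n)$.

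Next I would verify the relations. The commutativity relation (a) is a direct matrix computation: both $a^l v_i^l$ and $v_j^l a^{l+1}$ evaluate to the element $a\in e_j A e_i$ placed in the super-diagonal block at position $(l,l+1)$. The preprojective relation (b) is the mesh equation $\sum_{s(a)=i}a^*a=\sum_{t(a)=i}aa^*$ inside $e_iUe_i$, which holds automatically in $U=\Ext^1_A(DA,A)$ because for hereditary $A$ the tensor algebra $T_A U$ is isomorphic to the preprojective algebra $\Pi$, so the mesh relations are encoded in the bimodule structure of $U$. The nilpotency relations (c) all follow from $U(\Pi)\cdot U(\Pi)=0$ in the trivial extension: each of $v_i^{l-1}v_i^l$, $a^*v$, $va^*$, and (for $n=1$) $a^*bc^*$ is a product of two entries of $U(\Pi)$.

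Finally I would establish the isomorphism $k\widehat{Q}/I\cong B$ by showing both sides have dimension $(2n-1)\dim A+\dim U$. The claim is that modulo $I$ every non-zero path reduces to one of three normal forms: a path in $Q$ at a single fixed level (contributing $n\dim A$), a composition $v_i^l\cdot p$ for $p$ a $Q$-path at level $l+1$ (contributing $(n-1)\dim A$, uniquely so via relation (a)), or a composition $q\cdot a^*\cdot p$ with $p,q$ paths in $Q$ at levels $1$ and $n$ respectively (contributing $\dim U$, since such paths realize the whole bimodule $U$ once relation (b) is applied). Paths descending two levels via $v$'s can be transported by (a) so that the two $v$'s end up at the same vertex, where (c) kills them; paths mixing $a^*$ with $v$ are zero by (c); and two $a^*$'s cannot coexist in a non-zero path, either because the required intermediate descent by $v$'s vanishes (for $n\geq 2$) or because of $a^*bc^*=0$ (for $n=1$). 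The totals sum to $\dim B$, which finishes the proof. The main obstacle is this last step, in particular verifying that paths of shape $qa^*p$ span a space of dimension exactly $\dim U$; this rests on the identification of $U=\Ext^1_A(DA,A)$ with $\Pi_{-n}$ modulo the mesh relations, which for hereditary $A$ is precisely the tensor-algebra presentation of the preprojective algebra.
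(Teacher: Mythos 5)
Your argument follows the same skeleton as the paper's: read off the quiver from Lemma \ref{comp}, use Lemma \ref{ast} to pin down the arrows $a^\ast$ and the mesh relation (ii), check that the listed relations hold in $B$, and then prove completeness by a counting argument. Your final step (a global dimension count via normal forms) is just a repackaging of the paper's graded, idempotent-by-idempotent comparison of $e_l(k\widehat{Q}/I)e_m$ with $e_lBe_m$, and for $n\geq2$ your reduction of paths to the three normal forms is correct and complete: any monomial containing two degree-one arrows can be transported by relation (a) until a subword $vv$, $va^\ast$ or $a^\ast v$ appears, and two $a^\ast$'s force an intervening $v$.

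There is, however, a genuine gap in the case $n=1$, at the step ``two $a^\ast$'s cannot coexist in a non-zero path \dots because of $a^\ast bc^\ast=0$.'' The relation (iii) only kills monomials $a^\ast p c^\ast$ in which $p$ is a \emph{single} arrow; monomials with $p$ trivial or of length $\geq2$ contain no subword of the form $x^\ast y z^\ast$, and one checks they do not lie in $I$: grading $k\widehat{Q}=k\overline{Q}$ by path length, the mesh relations are homogeneous of length $2$ and supported on $\bigoplus_i e_i(-)e_i$, while the relations (iii) have length $3$, so for instance for $Q\colon 1\rightrightarrows 2\xrightarrow{c}3$ the length-$2$ monomial $a^\ast c^\ast\colon 3\to1$ is nonzero in $k\widehat{Q}/I$ although it maps to $0$ in $B$ (being a product of two elements of $U$). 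So your list of normal forms is incomplete whenever $Q$ has a vertex with both an incoming and an outgoing arrow, and the dimension count breaks down there; what is actually needed is the full ideal generated by $U\cdot U$, i.e.\ $a^\ast p c^\ast=0$ for \emph{all} paths $p$ (for bipartite orientations such as the Kronecker quivers in the paper's examples this reduces to (iii), which is why the examples do not detect the problem). Be aware that the paper's own proof is silent on the vanishing of $k\widehat{Q}/I$ in degrees $\geq2$ and has the same issue, so this is not a point you could have settled by arguing closer to the source; but as written your step would fail, and either the relation (iii) must be strengthened or an additional argument supplied.
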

We use a reformulation of a well-known fact on preprojective algebras. We denote by $J_\L$ the Jacobson radical of a ring $\L$.
\begin{Lem}\label{ast}
Let $Q$ be a finite acyclic quiver, $A=kQ$, and $U=\Ext_{A}^1(DA,A)$. Then there exists a subset $\{u(a^\ast)\mid a\in Q_1\}$ of $U$ which gives a basis of $U/(J_AU+UJ_A)$ as a $(kQ_0,kQ_0)$-bimodule, and such that $\sum_{a\in Q_1}(au(a^\ast)-u(a^\ast) a)=0$ in $U$.
\end{Lem}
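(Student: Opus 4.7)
The plan is to use the classical presentation of the preprojective algebra as a quotient of the double quiver algebra. Let $\overline{Q}$ denote the double quiver obtained by adjoining to $Q$, for each arrow $a\colon i\to j$ in $Q_1$, a formal reverse arrow $a^*\colon j\to i$, and set
\[ \rho=\sum_{a\in Q_1}(aa^*-a^*a)\in k\overline{Q}. \]
It is classical (Ringel, Baer--Geigle--Lenzing) that the $2$-preprojective algebra $\Pi=T_AU$ of $A=kQ$ admits the presentation $\Pi\cong k\overline{Q}/\langle\rho\rangle$.

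Equip $k\overline{Q}$ with the grading given by $\deg a=0$ for $a\in Q_1$ and $\deg a^*=1$ for $a\in Q_1$. Then $\rho$ is homogeneous of degree one, so the grading descends to $\Pi$, with $\Pi_0=A$ and $\Pi_1=U$. Let $\pi\colon k\overline{Q}\to\Pi$ denote the canonical surjection, and set $u(a^*):=\pi(a^*)\in U$ for each $a\in Q_1$. Since $a^*\in e_{s(a)}\,k\overline{Q}\,e_{t(a)}$, we have $u(a^*)\in e_{s(a)}Ue_{t(a)}$, and the identity $\pi(\rho)=0$ gives exactly
\[ \sum_{a\in Q_1}\bigl(a\,u(a^*)-u(a^*)\,a\bigr)=0 \]
in $U$, as required.

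It remains to verify that the classes of $\{u(a^*)\mid a\in Q_1\}$ form a basis of $U/(J_AU+UJ_A)$. The degree-one part of $k\overline{Q}$ has as $k$-basis the weight-one paths $p\,a^*\,q$, with $a\in Q_1$ and $p,q$ paths in $Q$. Such a summand lies in $J_AU$ whenever $\ell(p)\geq 1$ and in $UJ_A$ whenever $\ell(q)\geq 1$, so modulo $J_AU+UJ_A$ only the classes $a^*$ survive. Moreover, each mesh summand $aa^*$ and $a^*a$ has both factors in the Jacobson radical of $k\overline{Q}$, hence already lies in $J_AU+UJ_A$; therefore passing to $\Pi_1=U$ imposes no nontrivial relation among the $a^*$'s in the semisimple quotient. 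The main substantive ingredient is the presentation $\Pi\cong k\overline{Q}/\langle\rho\rangle$ from the first paragraph; the remainder is elementary bookkeeping with the grading and the radical.
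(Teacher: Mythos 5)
Your proposal is correct and follows essentially the same route as the paper: both identify the preprojective algebra $T_AU$ with $k\overline{Q}/(\sum_{a\in Q_1}(aa^\ast-a^\ast a))$, take $u(a^\ast)$ to be the image of $a^\ast$ so that the mesh relation yields the required identity in $U$, and deduce the basis claim from the degree-one part of the radical-square quotient (your explicit check that the degree-one part of the relation ideal is absorbed into $J_AU+UJ_A$ is just a spelled-out version of the paper's one-line appeal to $(J_\Pi/J_\Pi^2)_1$). No gaps.
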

\begin{proof}
	Consider the two presentations $T_AU=k\overline{Q}/(\sum_{a\in Q_1}(aa^\ast-a^\ast a))$ of the preprojective algebra $\Pi$ of $Q$, where $\overline{Q}$ is the double quiver of $Q$ obtained by adding the opposite arrows $\{a^\ast\colon j\to i\mid a\colon i\to j \text{ in } Q \}$. Take the elements of $U$ corresponding to $\{a^\ast\mid a\in Q_1\}\subset\overline{Q}_1$. This is a desired set since we have $U/(J_AU+UJ_A)=kQ_1^\ast$ as $(kQ_0,kQ_0)$-bimodules by looking at the degree $1$ part of $J_\Pi/J_\Pi^2$.
\end{proof}

\begin{proof}[Proof of Proposition \ref{Qhat}]
	By Lemma \ref{comp}, we have
	\[ J_B/J_B^2=
	\left( 
	\begin{array}{cccc}
	J_A/J_A^2& 0&\cdots& 0 \\
	0&J_A/J_A^2&\cdots&0\\
	\vdots& \vdots&\ddots&\vdots \\
	0&0&\cdots&J_A/J_A^2
	\end{array}     
	\right)\oplus
	\left(
	\begin{array}{cccc}
	0& A/J_A&\cdots& 0 \\
	\vdots& \vdots&\ddots&\vdots \\
	0&0&\cdots&A/J_A \\
	{U}/{(J_AU+UJ_A)}&0&\cdots&0
	\end{array}     
	\right). \]
	Therefore we see that the quiver of $B$ consists of the following.
	\begin{itemize}
		\item $n$ copies $Q^1, \ldots, Q^n$ of $Q$.
		\item The arrows from $Q^{l+1}$ to $Q^{l}$ corresponding to the idempotents in $A$.
		\item The arrows from $Q^1$ to $Q^n$ corresponding to the basis of $U/(J_AU+UJ_A)$ as $(kQ_0,kQ_0)$-bimodules.
	\end{itemize}
	In view of Lemma \ref{ast}, these vertices and arrows are precisely the ones described in (a) and (b), hence the quiver of $B$ is $\widehat{Q}$.
	
	Now we determine the relations. To simplify the discussion below, we give a grading on $B$ in Theorem \ref{corpi} by setting the first factor to have degree $0$, and the second one to have degree $1$. 
	Similarly, give a grading on $\widehat{Q}$ by setting the arrows in (i) to have degree $0$, and in (ii), (iii) to have degree $1$.
	Take a subset $\{u(a^\ast)\mid a\in Q_1\}$ of $U$ given in Lemma \ref{ast} and consider the map $\widehat{Q}\to B$ defined by
	\begin{itemize}
		\item the natural embedding $kQ^l\to A\times\cdots\times A=B_0$ into the $l$-th factor,
		\item $v_i^l\mapsto e_i$, where $e_i$ is the corresponding idempotent of $A$ in the $(l+1,l)$-component of $B_1$,
		\item $a^\ast\mapsto u(a^\ast)$, where $u(a^\ast)\in U$ is in the $(n,1)$-component of $B_1$.
	\end{itemize}
	These maps induce a homogeneous homomorphism $\varphi\colon k\widehat{Q}\to B$, which clearly preserves the relations. Denoting by $I$ the ideal generated by the relations, we obtain a homomorphism $k\widehat{Q}/I\to B$. Since it is clearly an isomorphism in degree $0$, it is enough to consider the degree $1$ part. Let $e_{(i,l)}$ be the idempotent of $k\widehat{Q}/I$ at the vertex $(i,l)$, and set $e_l=\sum_{i\in Q_0}e_{(i,l)}$. We denote their images under $\varphi$ by the same symbols. It is sufficient to show that $\varphi$ induces an isomorphism $e_l(k\widehat{Q}/I)e_m \to e_lBe_m$ for each $1\leq l, m\leq n$. By the relation (iii), each term is $0$ in degree $1$ unless $m-l=1$ or $(l,m)=(n,1)$, so we only have to consider these two cases.
	
	{\it Case 1: The map $e_l(k\widehat{Q}/I)e_{l+1} \to e_lBe_{l+1}$ is an isomorphism for each $1\leq l<n$.} By the relation (i), any element in $e_l(k\widehat{Q}/I)e_{l+1}$ can be written as $a\cdot(\sum_{i\in Q_0}v^l_i)$ for some $a\in kQ^l=A$. This observation immediately shows the map is an isomorphism.
	
	{\it Case 2: The map $e_n(k\widehat{Q}/I)e_{1} \to e_nBe_1$ is an isomorphism.} By the relation (ii), the space $e_n(k\widehat{Q}/I)e_{1}$ is isomorphic to the degree $1$ part of the preprojective algebra of $Q$, thus to $U$. On the other hand, the space $e_nBe_1$ is also clearly $U$.
\end{proof}
We look at the most special case $d=1$ and $n=1$.
\begin{Ex}\label{usual}
	Let $Q$ be a finite connected acyclic non-Dynkin quiver and $kQ$ its path algebra, which is $1$-representation-infinite. 
	
	The $1$-Iwanaga-Gorenstein algebra $B=kQ\oplus U$ with $U=\tau^{-1}kQ$ is a truncation of the preprojective algebra $\Pi$ of $Q$, which is presented by the same quiver as $\Pi$ and the additional relations `the elements of $U$ square to zero', as stated in Proposition \ref{Qhat}.
	
	The equivalence $\D_\sg(B)\simeq\C_2(kQ)$ in Corollary \ref{d=1} is given in \cite{BIRSc,ART} as the $2$-CY category associated to the square of the Coxeter element in the Coxeter group of $Q$. 
	Our proof is different from theirs since our equivalence comes from quasi-equivalence of DG orbit categories.
\end{Ex}

The next example is the case $d=1$ and $n=2$.
\begin{Ex}
	Let $Q$ be the following quiver
	\[ \xymatrix{
		\circ\ar@<0.4ex>[r]^a\ar@<-0.3ex>[r]_b& \circ\ar[r]^c&\circ}, \]
	thus $A=kQ$ is $1$-representation infinite. Let $n=2$, so by Proposition \ref{Qhat}, the $1$-Iwanaga-Gorenstein algebra ${B}$ is presented by the following quiver with relations.
	\[ \xymatrix@R=5mm@C=12mm{
		\circ\ar@<0.4ex>[r]^a\ar@<-0.3ex>[r]_b& \circ\ar[r]^c\ar@<0.4ex>[ddl]_{a^\ast}\ar@<-0.3ex>[ddl]^{b^\ast}&\circ\ar[ddl]|{c^\ast} \\ \\
		\circ\ar@<0.4ex>[r]^a\ar@<-0.3ex>[r]_b\ar[uu]^v& \circ\ar[r]_c\ar[uu]_v&\circ\ar[uu]_v }\qquad
	   \xymatrix@R=2mm{
	   	av=va,\, bv=vb,\,cv=vc\\
	   	a^\ast a+b^\ast b=0,\,aa^\ast+bb^\ast=c^\ast c,\,cc^\ast=0 \\
	   	va^\ast=0,\, vb^\ast=0,\, vc^\ast=0,\quad a^\ast v=0,\, b^\ast v=0,\, c^\ast v=0 } \]
	By Corollary \ref{d=1}, we have triangle equivalences $\C_3(kQ)\simeq\D_\sg({B})$.
\end{Ex}

\subsection{The case $n=1$.}
We now turn to another special case of $n=1$. In this case, the algebra $B$ is a truncation of the $(d+1)$-preprojective algebra of $A$.
\begin{Cor}\label{n=1}
	Let $A$ be a $d$-representation infinite algebra, $U=\Ext_A^d(DA,A)$, and $B=A\oplus U$. Then $B$ is $d$-Iwanaga-Gorenstein and there is a triangle equivalence $\C_{d+1}(A)\simeq\D_\sg(B)$.
\end{Cor}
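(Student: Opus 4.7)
The plan is to observe that Corollary \ref{n=1} is literally Theorem \ref{corpi} specialized to $n=1$, so the argument is just to unwind what the theorem says in that case. When $n=1$ the $1\times1$ matrices defining $B$ in the statement of Theorem \ref{corpi} collapse: the block-diagonal factor becomes $(A)$ and the off-diagonal/corner factor becomes $(U)$, giving the trivial extension $B = A\oplus U$ of the corollary.

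To verify this at the level of the underlying Calabi-Yau algebra, I would set $\Pi = T_A U$, the $(d+1)$-preprojective algebra of $A$, and grade it by $\deg U = -1$, so that $\Pi_{-i} = U^{\otimes i}$ for $i\geq 0$ and $\Pi$ is a negatively graded bimodule $(d+1)$-CY algebra of $a$-invariant $1$ (a standard fact about higher preprojective algebras, see \cite{IO,Ke11,MM}). The formula (\ref{eqAUB}) applied to this $\Pi$ gives $A(\Pi)=\Pi_0=A$ and $U(\Pi)=\Pi_{-1}=U$, so $B(\Pi)=A\oplus U$ is precisely our $B$, and Proposition \ref{IG}(2) already guarantees that $B$ is $d$-Iwanaga-Gorenstein.

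For the equivalence of triangulated categories, the DG algebra $\Pi^{\dg}$ with vanishing differentials is quasi-isomorphic to the derived $(d+2)$-preprojective algebra $T_A^L\RHom_A(DA,A)[d+1]$ of Keller \cite{Ke11}, so by construction $\C(\Pi^{\dg}) = \C_{d+1}(A)$. Theorem \ref{hope} applied to the CY algebra $\Pi$ then yields
\[ \C_{d+1}(A) = \C(\Pi^{\dg}) \simeq \D_\sg(B(\Pi)) = \D_\sg(B), \]
which is the desired equivalence. There is essentially no obstacle: every ingredient has been set up, and the only things to check are the two identifications $A(\Pi)=A$, $U(\Pi)=U$ (immediate from the grading) and $\C(\Pi^{\dg})=\C_{d+1}(A)$ (by definition of the higher cluster category).
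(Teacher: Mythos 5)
Your proposal is correct and follows exactly the paper's own route: Corollary \ref{n=1} is obtained by specializing Theorem \ref{corpi} to $n=1$, and your verification via $\Pi=T_AU$ with $\deg U=-1$ (so that $\Pi$ is bimodule $(d+1)$-CY of $a$-invariant $1$, $A(\Pi)=A$, $U(\Pi)=U$, and $\C(\Pi^{\dg})=\C_{d+1}(A)$) is precisely the argument given in the proof of Theorem \ref{corpi}. Nothing is missing.
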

This is a higher dimensional analogue of Example \ref{usual} above. It is predicted in \cite{Iydc} as a generalization of \cite{BIRSc} that $\D_\sg(B)$ has a $(d+1)$-cluster tilting object. We deduce this from our equivalence with the $(d+1)$-cluster category.

Let us now give an example. See also Example \ref{di}(1) for an example in $d=2$.
\begin{Ex}
(1)  Let $A$ be the tensor product of two path algebras of Kronecker quivers, thus is presented by the following quiver with relations.
\newcommand{\arvuv}{\ar@<0.3ex>[d]_u\ar@<-0.3ex>[d]^v}
\[ \xymatrix{1{\ar@<0.55ex>[r]_y\ar@<-0.05ex>[r]^x}\arvuv& 2\arvuv \\ 3\ar@<0.55ex>[r]_y\ar@<-0.05ex>[r]^x& 4 } \quad {xu=ux, \, xv=vx, \, yu=uy, \, yv=vy.} \]
This is a $2$-representation infinite algebra. (See \cite[Theorem 2.10]{HIO}.) This is also the endomorphism algebra of a tilting bundle $\mathcal{O}\oplus\mathcal{O}(1,0)\oplus\mathcal{O}(0,1)\oplus\mathcal{O}(1,1)$ over $\P^1\times\P^1$. The preprojective algebra $\Pi$ of $A$ is presented by the following quiver with suitable commutativity relations,
\[ \xymatrix{1\ar@<0.55ex>[r]_y\ar@<-0.05ex>[r]^x\arvuv& 2\arvuv \\ 3\ar@<0.55ex>[r]_y\ar@<-0.05ex>[r]^x& 4\ar@<0.8ex>[ul]\ar@<0.4ex>[ul]\ar@<0.0ex>[ul]\ar@<-0.4ex>[ul] } \]
thus so is its truncation $B$ with some additional relations.
By Corollary \ref{n=1} we have a triangle equivalence $\C_3(A)\simeq\D_\sg(B)$.

(2) Let $A'$ be the algebra presented by the following quiver with relations.
\[ \xymatrix{ 1\ar@<0.7ex>[r]^x\ar@<-0.2ex>[r]_y& 2\ar@<0.7ex>[r]^u\ar@<-0.2ex>[r]_v& 3\ar@<0.7ex>[r]^x\ar@<-0.2ex>[r]_y&4 }, \quad xuy=yux, \, xvy=yvx. \]
This is obtained from $A$ by taking for example the left mutation at vertex $3$, that is, the endomorphism algebra of another tilting bundle $\mathcal{O}\oplus\mathcal{O}(1,0)\oplus\mathcal{O}(1,1)\oplus\mathcal{O}(2,1)$. Then it is easy to see that $A'$ is also $2$-representation infinite. Its preprojective algebra $\Pi'$ is given by the following quiver with commutativity relations,
\[ \xymatrix{ 1\ar@<0.55ex>[r]_y\ar@<-0.05ex>[r]^x&2\arvuv\\4\ar@<0.3ex>[u]_v\ar@<-0.3ex>[u]^u&3\ar@<0.05ex>[l]_x\ar@<-0.55ex>[l]^y } \]
thus its truncation $B'$ by the same quiver with suitable additional relations.
By Corollary \ref{n=1} we have a triangle equivalence $\C_3(A')\simeq\D_\sg(B')$.

(3)  The $2$-representation infinite algebras $A$ and $A'$ above are derived equivalent, hence their cluster categories are equivalent; $\C_3(A)\simeq\C_3(A')$. Therefore we deduce that all the relevant $3$-CY categories are equivalent; $\D_\sg(B)\simeq\C_3(A)\simeq\C_3(A')\simeq\D_\sg(B')$.
\end{Ex}

\section{Examples: Polynomial rings}\label{poly}
In this section, we apply our main results for polynomial rings and give a concrete description of the $d$-representation-infinite algebra $A$ and the $d$-Iwanaga-Gorenstein algebra $B$. (See (\ref{eqAUB}) for the definition of $A$ and $B$.)

Let $R=k[x_0,\ldots,x_d]$ with $\deg x_i=-a_i<0$. It is bimodule $(d+1)$-CY algebra with $a$-invariant $a=\sum_{i=0}^da_i$. We have the following result on the algebras $A$ and $B$.
For a finite subgroup $G\subset\mathrm{GL}_{d+1}(k)$, which naturally acts on $R$, the {\it skew group algebra} $R\ast G$ is a vector space $R\otimes_kkG$ with multiplication $(a\otimes g)(b\otimes h)=ag(b)\otimes gh$.
\begin{Prop}\label{typeA}
Suppose that $a$ is invertible in $k$, there exists a primitive $a$-th root of unity $\zeta \in k$, and that $a_0,\ldots,a_d$ are relatively prime.
\begin{enumerate}
\item The algebra $A$ is presented by the quiver $Q$ with the vertices $\{0, 1,\ldots,a-1 \}$, the arrows $x_i=x_i^l \colon l \to l+a_i$ for each $0\leq i\leq d$ and $0\leq l\leq a-1$ such that $l+a_i\leq a-1$, and with the commutativity relations $x_j^{l+a_i}x_i^l=x_i^{l+a_j}x_j^l$.
\item Let $g=\mathrm{diag}(\zeta^{a_0},\ldots,\zeta^{a_d})\in\mathrm{SL}_{d+1}(k)$ and $G$ the cyclic subgroup generated by $g$. Then, the $(d+1)$-preprojective algebra of $A$ is isomorphic to $R \ast G$.
\item $A$ is $d$-representation-infinite of type $\widetilde{A}$, in the sense of \cite{HIO}.
\item $B$ is presented by the quiver $\widehat{Q}$ obtained by adding to $Q$ the arrows $u=u^l\colon l\to l-1$ for each $1\leq l\leq a-1$ and two types of additional relations:
\begin{enumerate}
\renewcommand{\labelenumii}{(\roman{enumii})}
\item $x_i^{l-1}u^l=u^{l+a_i}x_i^l$ whenever $1\leq l$ and $l+a_i\leq a-1$.
\item $u^{l+a_i-1}x^{l-1}_iu^l=0$ whenever $1\leq l$ and $l+a_i\leq a$.
\end{enumerate}
\end{enumerate}
\end{Prop}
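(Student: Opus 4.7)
\smallskip
Part (1) follows from the matrix presentation (\ref{eqA}): the primitive idempotents $e_l$ ($0\le l\le a-1$) correspond to the summands $R(l)$ of the tilting object $T=\bigoplus_{l=0}^{a-1}R(l)$, and $e_lAe_m=\Hom_{\qper R}(R(m),R(l))=R_{m-l}$. Since $R_0=k$ and $R$ is generated over $R_0$ by $x_0,\ldots,x_d$, the radical quotient $J_A/J_A^2$ has a basis given by the elements $x_i\in R_{-a_i}\subset e_{l+a_i}Ae_l$ (whenever $l+a_i\le a-1$), producing the arrows $x_i^l\colon l\to l+a_i$. The relations $x_j^{l+a_i}x_i^l=x_i^{l+a_j}x_j^l$ are then direct translations of $x_ix_j=x_jx_i$ in $R$.

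For (2), the plan is to produce an explicit isomorphism $\varphi\colon R\ast G\to\Pi_{d+1}(A)$. Using the primitive idempotents $\widetilde{e}_l=\tfrac{1}{a}\sum_k\zeta^{-lk}g^k$ of $R\ast G$, the identity $g\cdot x^\alpha=\zeta^{\sum\alpha_ia_i}x^\alpha\cdot g$ yields $\widetilde{e}_l(R\ast G)\widetilde{e}_m=\mathrm{span}\{x^\alpha:\sum\alpha_ia_i\equiv l-m\pmod a\}$ by a direct computation. On the other hand, under the tilting equivalence of Proposition \ref{mm} and the identification $U=\nu_d^{-1/a}A$ corresponding to a shift of $T$, one obtains $\Pi_{d+1}(A)=T_AU$ as an algebra whose $(l,m)$-component is described by monomials of the matching residues modulo $a$. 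The assignment $\widetilde{e}_l\mapsto e_l$ and $x_i\mapsto x_i$ then defines an algebra homomorphism which is bijective by the matching of bases. Part (3) is an immediate consequence of (2) and the definition of type $\widetilde{A}$ in \cite{HIO}.

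For (4), one applies Lemma \ref{comp} to $B=A\oplus U$ with $U$ as in (\ref{eqAUB}). The $A$-part of $J_B/J_B^2$ yields the arrows of $Q$ from (1); the $R_0$-entries of $U/(J_AU+UJ_A)$ at positions $(l-1,l)$ for $1\le l\le a-1$ yield the arrows $u^l\colon l\to l-1$. The $R_{-a}$-entry at position $(a-1,0)$ contributes no additional arrow, because the hypothesis that each $a_i$ is positive (together with $d\ge1$) forces $a_i<a$, so any monomial of degree $-a$ in $R$ splits as a product of two monomials of strictly positive degree, making the quotient $R_{-a}/\sum_{i+j=a,\,i,j>0}R_{-i}R_{-j}$ vanish. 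Relation (i) is then verified by observing that both composites $u^{l+a_i}x_i^l$ and $x_i^{l-1}u^l$ represent the element $x_i\in R_{-a_i}=e_{l+a_i-1}Ue_l$ within the bimodule $U$, and relation (ii) is the relation $U\cdot U=0$ inherent to the trivial extension structure of $B$.

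The main obstacle lies in (2): matching the skew group algebra structure on $R\ast G$ with the tensor algebra structure on $\Pi_{d+1}(A)=T_AU$ requires a careful identification of the bimodule $U$ with a shifted copy of the tilting object and tracking how the $\Z/a\Z$-grading arising from $G$ compares with the natural tensor grading on the preprojective algebra, in order to ensure the homomorphism $\varphi$ respects all the structural relations.
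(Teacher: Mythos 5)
Your treatment of (1) matches the paper's (which reads $A$ off as $\End_R^\Z(\bigoplus_{l=0}^{a-1}R(-l))$ inside the presented category $\proj^\Z\!R$), and your strategy for (2) is a genuinely different and legitimate route: the paper does not compute with the idempotents $\widetilde{e}_l=\frac1a\sum_k\zeta^{-lk}g^k$ at all, but instead imports the machinery of \cite[Section 5]{HIO} --- the lattice $L$, the covering category $\mathcal{L}$, the orbit category $\mathcal{L}/B$, and \cite[Lemma 5.3]{HIO} identifying $\mathcal{L}/B$ with $R\ast G$ --- and then exhibits an explicit periodic bounding cut $C$ so that \cite[Theorem 5.6]{HIO} yields both (2) and (3) simultaneously. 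Your direct comparison of $\widetilde{e}_l(R\ast G)\widetilde{e}_m$ with $\bigoplus_{i\geq0}\Hom_{\qper R}(R(m),R(l+ia))$ can be made to work, but as you acknowledge it is only a plan; note also a slip: the $(d+1)$-preprojective algebra is $T_A\!\Ext_A^d(DA,A)=T_A(\nu_d^{-1}A)$, not $T_AU$ for the bimodule $U=\nu_d^{-1/a}A$ of (\ref{eqAUB}) --- you must take the $a$-th tensor power of that $U$ (equivalently sum only over twists by multiples of $a$) before the residues match.

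Two further gaps are more serious. First, (3) is \emph{not} an immediate consequence of (2): ``$d$-representation-infinite of type $\widetilde{A}$'' in \cite{HIO} means that $A$ arises as the degree-$0$ part of $\mathcal{L}/B$ with respect to a \emph{periodic bounding cut}, so after identifying the preprojective algebra with $R\ast G$ you still have to produce the cut (the paper takes $C=\{x\colon l\to l'\mid m(l)<na\leq m(l')\}$) and check it is bounding and $B$-stable; without this, even $d$-representation-infiniteness of $A$ is not established by your argument (the paper gets it independently from Proposition \ref{mm}(2), but ``of type $\widetilde{A}$'' still needs the cut). Second, in (4) you verify that the relations (i) and (ii) \emph{hold} in $B$, which only gives a surjection $k\widehat{Q}/I\to B$; the substantive half of a presentation is injectivity, i.e.\ that no further relations are needed. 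The paper handles this by truncating with the idempotents and exhibiting, using the listed relations, a spanning set of $e_j(k\widehat{Q}/I)e_i$ by monomials containing at most one $u$, which $\varphi$ carries to a basis of $e_jBe_i$. Your observation that the $(a-1,0)$-corner of $U/(J_AU+UJ_A)$ vanishes because every monomial of degree $-a$ factors (valid since each $a_i<a$ when $d\geq1$) is exactly the paper's argument for the shape of $\widehat{Q}$, but the sufficiency of the relations still needs to be supplied.
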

\begin{proof}
(1)  Note that the category $\proj^\Z\!R$ is presented by the quiver with vertices set $\Z$, arrows $x_i^l \colon l \to l+a_i$, and with the commutativity relations. Since $A\simeq\End_R^\Z(R \oplus R(-1) \oplus \cdots R(-(a-1)))$, it is presented by its full subquiver with vertices $\{ 0, 1,\ldots,a-1 \}$ and with the induced relations.\\
(2)(3) We follow the construction of $d$-representation-infinite algebras of type $\widetilde{A}$ \cite[Section 5]{HIO}.
	
Let $L$ be the free $\Z$-module with basis $\a_1,\ldots,\a_d$, and set $\a_0:=-\a_1-\cdots-\a_d$. Let $Q$ be the quiver with vertices $Q_0=L$ and the set of arrows $Q_1=\{ x_i=x_i^l \colon l \to l+\a_i \mid l \in L, \,  0\leq i \leq d \}$. Moreover for each $l \in L$ and $0\leq i<j\leq d$, define the relation $r_{ij}=r_{ij}^l=x_ix_j-x_jx_i$. Then we have a category $\mathcal{L}$ presented by this quiver and relations. We assign for each point $l=\sum_{i=1}^dl_i\a_i \in L$ the integer $m(l)=\sum_{i=1}^dl_ia_i$. 
	
Now let $B \subset L$ the subgroup consisting of points $l$ in $L$ such that $m(l)$ is a multiple of $a$. The subgroup $B$ has finite index $a$, and acts on $\mathcal{L}$ by translation. We then have the orbit category $\mathcal{L}/B$, which can naturally be identified with the algebra $\Pi$ presented by the (finite) quiver $Q/B$ and the induced relations. By \cite[Lemma 5.3]{HIO}, $\Pi$ is isomorphic to the skew group algebra $R\ast G$.

Going back to the original quiver $Q$, we set
\[  C= \{ x \colon l \to l' \text{ in } Q_1 \mid m(l)<na\leq m(l') \text{ for some } n \in \Z \}, \]
which is a {periodic and bounding cut} in the sense of \cite[Definitions 5.4, 5.5]{HIO}, and is stable under $B$. Then $C$ induces a grading on $Q/B$, hence on $\Pi$ by
\[ \deg x=\begin{cases} 1 & (x \in C) \\ 0 & (x \not\in C) \end{cases} \]
for each $x \in Q_1$. Now by \cite[Theorem 5.6]{HIO}, $\Pi$ is the preprojective algebra of its degree $0$ part, and we see that it is nothing but $A$, since they are presented by the quiver $(Q/B)\setminus (C/B)$ and the commutativity relations.\\
(4)  We first compute the quiver of $B$ using Lemma \ref{comp}(3). Since $R$ is generated by degree $>-a$, the vector space on the lower left corner of $U/(J_AU+UJ_A)$ is $0$. Therefore the arrows we have to add are just the ones corresponding to $1\in R_0/J_0=k$, and the quiver of $B$ is $\widehat{Q}$. Then there exists a natural homomorphism $k\widehat{Q}\to B$, which preserves the relations, thus induces a homomorphism $\varphi\colon k\widehat{Q}/I\to B$, where $I$ is the ideal generated by the relations. We show that $\varphi$ is an isomorphism. 
We may truncate by the idempotents; denote by $e_i$ the idempotent of $k\widehat{Q}/I$ at vertex $i$, and we show that the induced map $e_j(k\widehat{Q}/I)e_i \to e_jBe_i$ is an isomorphism for each $0\leq i,j\leq a-1$. Note that the relations show that each space $e_j(k\widehat{Q}/I)e_i$ has a basis consisting of monomials of degree $-(j-i+1)$ (defined from the grading on $R$) each of which contains at most one of the $u^l$'s (which have degree $0$). It is then clear that $\varphi$ maps these basis to the basis of $e_jBe_i$.
\end{proof}

Let us first look at the easiest case. 
\begin{Ex}\label{2hensu}
This is a continuation of Example \ref{easiest}.
Let $R=k[x,y]$ with $\deg x=\deg y=-1$, so $R$ is $2$-CY  of $a$-invariant $2$. By Theorem \ref{dgcy}, $R^\dg$ is twisted $4$-CY. As we have seen in Example \ref{easiest}, we have an equivalence $\D^b(\qgr R) \simeq \D^b(\md A)$ with $A$ the Kronecker algebra, and its AR translation $\nu_1$ has a square root.
On the other hand, the Iwanaga-Gorenstein algebra $B$ is presented by the following quiver with relations:
\[ \xymatrix@C=12mm{ 0 \ar@<0.5ex>[r]^x\ar@<-0.4ex>[r]_y & 1\ar@/_15pt/@<-0.1ex>[l]_u}, \quad xuy=yux, \, uxu=uyu=0. \]
By (\ref{eqsumm}), there are equivalences
\[ \D^b(\md A)/\nu_1^{-1/2}[1] \simeq \D_\sg(B) \simeq \C(R^\dg) \]
of twisted $3$-CY categories; compare the case $m=2$ in Example \ref{kmv2}. Using the description as an orbit category, we can classify the objects in $\C(R^\dg)$ or $\D_\sg(B)$, which we leave to the reader.
\end{Ex}

We look at higher dimensional case.
\begin{Ex}
Let $R=k[x_0,x_1,\ldots,x_d]$ with $\deg x_0=\cdots=\deg x_d=-1$. Then $R$ is $(d+1)$-CY of $a$-invariant $d+1$, thus by Theorem \ref{dgcy}, $R^\dg$ is sign-twisted $(2d+2)$-CY. 
It is well-known that $\qgr R$ is equivalent to the category $\coh\P^d$ of coherent sheaves over the projective space $\P^d$. The tilting object in $\D^b(\qgr R)$ given in Proposition \ref{mm} is the tilting bundle $T=\bigoplus_{l=0}^d\mathcal{O}_{\P^d}(l)$ on $\P^d$, whose endomorphism ring $A$ is the $d$-Beilinson algebra. It is presented by the following quiver with commutativity relations:
\newcommand{\vd}{\rotatebox{90}{$\cdot\hspace{-1mm}\cdot\hspace{-1mm}\cdot$}}
\newcommand{\arrr}{\ar@<1.1ex>[r]\ar@<0.3ex>@{}[r]|\vd\ar@<-0.4ex>[r]}
\[ A=\xymatrix{ 0\arrr& 1\arrr&\cdots\arrr&d.} \]
The category $\add\{R(-i)\mid i\in\Z \}=\add\{\mathcal{O}(i)\mid i\in\Z\}=\add\{ \nu_d^{-i}A\mid i\in\Z\}$ (which are identified via the equivalence $\D^b(\qgr R)\simeq\D^b(\coh\P^d)\simeq\D^b(\md A)$) is presented by the following quiver.
\newcommand{\hd}{\rotatebox{0}{$\cdot\hspace{-1mm}\cdot\hspace{-1mm}\cdot$}}
\newcommand{\nd}{\rotatebox{-45}{$\cdot\hspace{-1mm}\cdot\hspace{-1mm}\cdot$}}
\newcommand{\ld}{\rotatebox{45}{$\cdot\hspace{-1mm}\cdot\hspace{-1mm}\cdot$}}
\newcommand{\ara}{\ar@<1.0ex>[u]\ar@<0.3ex>@{}[u]|\hd\ar@<-0.4ex>[u]}
\newcommand{\arb}{\ar@<1.0ex>[ur]\ar@<0.3ex>@{}[ur]|\nd\ar@<-0.4ex>[ur]}
\newcommand{\arc}{\ar@<1.0ex>[dr]\ar@<0.3ex>@{}[dr]|\ld\ar@<-0.4ex>[dr]}
\[ \xymatrix@!R=2mm@!C=1mm{ &\arc&&&R(-2)\arc&&&\cdots\\ 
\arb&&\cdots\arc&R(-1)\arb&&\cdots\arc&\arb& \\
\cdots\ara&&&R\ara&&&R(-d-1)\ara& } \]
The autoequivalence $\nu_d^{-1/(d+1)}$ on $\D^b(\md A)$ acts on this subcategory by `moving one place' along the $d$-fold arrows. 
On the other hand the Iwanaga-Gorenstein algebra $B=A\oplus U$ is presented by the quiver
\[ \xymatrix@C=12mm{ 0\arrr & 1\arrr\ar@/_10pt/@<-0.1ex>[l]_u& \cdots\ar@/_10pt/@<-0.1ex>[l]_u\arrr& d\ar@/_10pt/@<-0.1ex>[l]_u} \]
and the commutativity relations, and $u^2=0$.
This is a truncation of $T_AU=A\oplus U\oplus U^2\oplus\cdots$, which is the endomorphism ring of a tilting object $\pi^\ast T$ over the total space of the line bundle $\pi\colon\mathcal{O}(-1)\to\P^d$.
Applying (\ref{eqsumm}), we have an embedding and an equivalence
\[ \D^b(\md A)/\nu_d^{-1/(d+1)}[1] \hookrightarrow \D_\sg(B) \simeq \C(R^\dg). \]
\end{Ex}


\section{Examples: Jacobian algebras arising from dimer models}\label{dimer}
A {\it dimer model} is a finite bipartite graph $\G$ on a real $2$-torus inducing a polygonal cell decomposition. We denote by $\G_0$, (resp. $\G_1$, $\G_2$) the set of vertices (resp. edges, faces) of $\G$. It gives rise to a quiver with potential $(Q,W)$ in the sense of \cite{DWZ} in the following way: Let $Q$ denote the dual quiver of $\G$, thus the set of vertices $Q_0$ (resp. arrows $Q_1$) corresponds bijectively to $\G_2$ (resp. $\G_1$). By convention, the arrows of $Q$ see white vertices on the right. Then for each vertex $v$ there is a unique cycle $c_v$ consisting of arrows of $Q$ corresponding to the edges of $\G$ which are adjacent to $v$. Now define the potential by $W=\sum_{v \colon \text{white}}c_v-\sum_{v \colon \text{black}}c_v$.

We assume that $\G$ is {\it consistent} (see \cite{Boc}) in the sense that there exists a map $\mathrm{R} \colon \G_1 \to \mathbb{R}_{>0}$ such that
\begin{itemize}
\item $\sum_{v\in\partial a}\mathrm{R}(a)=2$ for all $v\in\G_0$, where the sum runs over $a\in\G_1$ adjacent to $v$.
\item $\sum_{a\in\partial f}(1-\mathrm{R}(a))=2$ for all $f\in\G_2$, where the sum runs over $a\in\G_1$ in the boundary of $f$.
\end{itemize}
Fix a map
\begin{equation}\label{eqdeg}
d \colon Q_1=\G_1 \longrightarrow \Z
\end{equation}
such that $\sum_{v\in\partial a}d(a)$ is a constant $l$ for all $v\in\G_0$. 
Such maps are typically given by {perfect matchings} on $\G$. Recall that a {\it perfect matching} on a graph is a set of its edges such that each vertex is contained in precisely one edge in the set. It is known that the consistency condition ensures the existence of perfect matchings \cite[Section 2.3]{Br}. We can identify a perfect matching $P$ on $\G$ with a map $d\colon \G_1\to\{0,1\}$ such that $\sum_{v\in\partial a}d(a)=1$ for all $v\in\G_0$ by setting $d(a)=1$ if and only if $a\in P$. Consequently, any $\Z$-linear combination of perfect matchings gives a function (\ref{eqdeg}).

\begin{Prop}[See {\cite[Theorem 7.7]{Br}, \cite[Proposition 6.1]{AIR}}]
Let $\G$ be a consistent dimer model, and let $d$ be a map (\ref{eqdeg}) such that $\sum_{v\in\partial a}d(a)=l$ for all $v\in\G_0$. Then $d$ gives a grading on the Jacobian algebra making it into a bimodule $3$-CY algebra of $a$-invariant $-l$.
\end{Prop}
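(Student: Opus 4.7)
The plan is to verify that the function $d$ induces a well-defined grading on the Jacobian algebra $\Lambda$, then invoke the standard bimodule resolution of $\Lambda$ coming from the consistency of $\Gamma$, and finally read off the Calabi-Yau dimension and $a$-invariant by dualizing this resolution.

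First I would check homogeneity of the potential. The condition $\sum_{v\in\partial a}d(a)=l$ for each vertex $v\in\Gamma_0$ says precisely that the cycle $c_v$ appearing in $W$, whose arrows correspond to the edges of $\Gamma$ incident to $v$, has total $d$-degree equal to $l$. Hence $W$ is homogeneous of degree $l$, each partial derivative $\partial_a W$ is homogeneous of degree $l-d(a)$, and therefore the Jacobian ideal is homogeneous. This makes $\Lambda=kQ/\partial W$ into a graded $k$-algebra, with the trivial path idempotents $e_i$ in degree $0$ and arrows in degrees prescribed by $d$.

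Next I would recall the Ginzburg-type bimodule resolution
\[
0\longrightarrow P_3\longrightarrow P_2\longrightarrow P_1\longrightarrow P_0\longrightarrow \Lambda\longrightarrow 0,
\]
where
\[
P_0=\bigoplus_{v\in Q_0}\Lambda e_v\otimes e_v\Lambda,\quad
P_1=\bigoplus_{a\in Q_1}\Lambda e_{t(a)}\otimes e_{s(a)}\Lambda(d(a)),
\]
\[
P_2=\bigoplus_{a\in Q_1}\Lambda e_{s(a)}\otimes e_{t(a)}\Lambda(d(a)-l),\quad
P_3=\bigoplus_{v\in Q_0}\Lambda e_v\otimes e_v\Lambda(-l),
\]
with differentials built from the arrows of $Q$, the derivatives $\partial_a W$, and the relations $\partial_aW$ dual to arrows. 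Its existence in the ungraded setting, together with the fact that it is a resolution, is Broomhead's key theorem using consistency of $\Gamma$ (see \cite[Theorem 7.7]{Br}); the refinement with grading shifts compatible with $d$ is an immediate check once one knows that $W$ is homogeneous of degree $l$ and the cycles $c_v$ give rise to the dualities between $P_i$ and $P_{3-i}$. The grading shifts are forced by the fact that the two maps $P_1\to P_0$ and $P_2\to P_1$ are sums of arrows and of paths of the form $\partial_a\partial_b W$, which have the appropriate degrees.

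Finally I would apply $\Hom_{\Lambda^e}(-,\Lambda^e)$ to this resolution. Identifying $\Hom_{\Lambda^e}(\Lambda e\otimes e'\Lambda(n),\Lambda^e)\simeq \Lambda e'\otimes e\Lambda(-n)$ and using the self-duality of the complex (the summands of $P_i$ and $P_{3-i}$ are indexed by the same set, and the middle differential is symmetric up to the sign appearing in the potential), one sees that the dual complex is isomorphic to the original complex placed in cohomological degrees $0,1,2,3\mapsto 3,2,1,0$ and with an overall grading shift by $(l)$. Hence $\RHom_{\Lambda^e}(\Lambda,\Lambda^e)$ is concentrated in cohomological degree $3$, where it equals $\Lambda(l)$, which rewrites as
\[
\RHom_{\Lambda^e}(\Lambda,\Lambda^e)(-l)[4]\simeq \Lambda
\]
in $\D(\Md^\Z\!\Lambda^e)$. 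Combined with the homological smoothness that follows from the existence of the finite length projective bimodule resolution, this is exactly the definition of bimodule $3$-CY of $a$-invariant $-l$. The main technical input, and the only genuinely nontrivial step, is the construction of the resolution above and the verification that it is exact, which relies crucially on consistency; I would simply cite \cite[Theorem 7.7]{Br} and \cite[Proposition 6.1]{AIR} for this.
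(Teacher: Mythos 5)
Your proposal follows essentially the same route as the paper: homogeneity of $W$ (degree $l$) makes the Jacobian algebra graded, Broomhead's bimodule resolution supplies the $3$-CY property, and tracking the grading shifts through the self-dual resolution yields the $a$-invariant $-l$; the paper simply writes down the complex and says ``taking degree into account,'' whereas you spell out the shifts, which is fine. One slip to correct: since you find $\RHom_{\Lambda^e}(\Lambda,\Lambda^e)$ concentrated in cohomological degree $3$ where it equals $\Lambda(l)$, the conclusion is $\RHom_{\Lambda^e}(\Lambda,\Lambda^e)(-l)[3]\simeq\Lambda$, with a shift $[3]$ rather than $[4]$, matching the paper's convention that bimodule $n$-CY of $a$-invariant $a$ means $\RHom_{R^e}(R,R^e)(a)[n]\simeq R$.
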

\begin{proof}
	Give a grading on the quiver $Q$ by setting $\deg a=d(a)$ for $a\in Q_1$. Then the potential $W$ is homogeneous of degree $l$, thus $d$ induces a grading on the Jacobian algebra $R$. Consider the complex
	\[ \xymatrix{\disoplus_{i\in Q_0}Re_i\otimes e_iR\ar[r]^-{d_3}& \disoplus_{a\in Q_1}Re_{s(a)}\otimes e_{t(a)}R\ar[r]^-{d_2}& \disoplus_{a\in Q_1}Re_{t(a)}\otimes e_{s(a)}R\ar[r]^-{d_1}& \disoplus_{i\in Q_0}Re_i\otimes e_iR } \]
	with maps
	\begin{equation*}
	\begin{aligned}
		d_1(e_{t(a)}\otimes e_{s(a)})&=a\otimes e_{s(a)}-e_{t(a)}\otimes a \\
		d_2(e_{s(a)}\otimes e_{t(a)})&=\sum_{b\in Q_1}p\otimes q \text{ for each cycle } apbq \text{ in } W \\
		d_3(e_i\otimes e_i)&=\sum_{t(a)=i}a\otimes e_i-\sum_{s(a)=i}e_i\otimes a.
	\end{aligned}
	\end{equation*}
	By \cite[Theorem 7.7]{Br}, this complex together with the multiplication map $\bigoplus_{i\in Q_0}Re_i\otimes e_iR \to R$ gives a bimodule projective resolution of $R$ making it into a $3$-CY algebra. Now, taking degree into account, we deduce that $R$ is graded bimodule $3$-CY of $a$-invariant $-l$.
\end{proof}

\begin{Ex}\label{di}
Let $\G$ be a dimer model as in the left picture below, where the vertical and horizontal ends are identified so that it has $4$ faces which are labeled by 1, 2, 3, and 4. It gives a $3$-CY algebra $R$ presented by the quiver in the right picture.
\[ \xymatrix@R=1mm@C=1mm{
	\ar@{-}[rrrrrrrr]\ar@{-}[dddddddd]&&&&&&&&\ar@{-}[dddddddd] \\
	&&&&& 1 &&& \\
	&2 & \circ\ar@{-}[uurr]\ar@{-}[rrrr]\ar@{-}[dd]&&&& \bullet\ar@{-}[uurr]\ar@{-}[dd]& & \\
	&&&& 3 &&&2& \\
	\ar@{-}[rr]&& \ar@{-}[rrrr]\ar@{-}[dd]\bullet&&&&\circ\ar@{-}[rr]\ar@{-}[dd]&& \\
	&1&&& 4 &&&& \\
	&& \circ\ar@{-}[rrrr]&&&& \bullet& 1 & \\
	&&& 2 &&&&& \\
	\ar@{-}[rrrrrrrr]\ar@{-}[uurr]&&&& \ar@{-}[uurr] &&&& &,} \qquad\qquad
   \xymatrix@R=1mm@C=1mm{
	\ar@{-}[rrrrrrrr]\ar@{-}[dddddddd]&&&&&&&&\ar@{-}[dddddddd] \\
	&&&&& 1 \ar[ddl]&&& \\
	&2\ar@/^7pt/[rrrru]\ar[ddd] & \circ\ar@{--}[uurr]\ar@{--}[rrrr]\ar@{--}[dd]&&&& \bullet\ar@{--}[uurr]\ar@{--}[dd]& & \\
	&&&& 3\ar@/^5pt/[lllu]\ar[rrr] &&&2\ar@/_10pt/[uull]\ar[ddd]& \\
	\ar@{--}[rr]&& \ar@{--}[rrrr]\ar@{--}[dd]\bullet&&&&\circ\ar@{--}[rr]\ar@{--}[dd]&& \\
	&1\ar[rrr]&&& 4\ar[uu]\ar@/^5pt/[ddl] &&&& \\
	&& \circ\ar@{--}[rrrr]&&&& \bullet& 1\ar@/_5pt/[lllu] & \\
	&&& 2\ar@/^10pt/[uull]\ar@/_7pt/[rrrru] &&&&& \\
	\ar@{-}[rrrrrrrr]\ar@{--}[uurr]&&&& \ar@{--}[uurr] &&&& } \]
Now we consider the grading $d$ in (\ref{eqdeg}). We discuss two variations.

(1)  First we consider the grading below. The labels on the edges show the values under $d$, and unlabeled ones have degree $0$, thus it is a perfect matching. This grading makes $R$ into a bimodule $3$-CY algebra of $a$-invariant $1$, thus $A=R_0$ and $B=R_{\geq-1}$ are given by quivers below.
\newcommand{\arrarr}{\ar@<0.4ex>[rr]\ar@<-0.3ex>[rr]}
\newcommand{\triul}{\ar@<1.0ex>[uull]\ar@<0.3ex>[uull]\ar@<-0.4ex>[uull]}
\[ \xymatrix@R=1mm@C=1mm{
	\ar@{-}[rrrrrrrr]\ar@{-}[dddddddd]&&&&&&&&\ar@{-}[dddddddd] \\
	&&&&& 1 &&& \\
	&2 & \circ\ar@{-}[uurr]|{-1}\ar@{-}[rrrr]\ar@{-}[dd]&&&& \bullet\ar@{-}[uurr]|{-1}\ar@{-}[dd]& & \\
	&&&& 3 &&&2& \\
	\ar@{-}[rr]|{-1}&& \ar@{-}[rrrr]\ar@{-}[dd]\bullet&&&&\circ\ar@{-}[rr]|{-1}\ar@{-}[dd]&& \\
	&1&&& 4 &&&& \\
	&& \circ\ar@{-}[rrrr]&&&& \bullet& 1 & \\
	&&& 2 &&&&& \\
	\ar@{-}[rrrrrrrr]\ar@{-}[uurr]|{-1}&&&& \ar@{-}[uurr]|{-1} &&&& &,} \qquad
  \xymatrix@R=3mm@C=4mm{
  	&1\arrarr\ar[dd]&& 4\ar[dd]\ar[ddll]\\ 
  	A\ar@{}|{=}[r]&&&\\
  	&3\arrarr&& 2 \\
  	&&& \\
  	&1\arrarr\ar[dd]&& 4\ar[dd]\ar[ddll]\\ 
  	B\ar@{}|{=}[r]&&&\\
  	&3\arrarr&& 2\triul\ar@{}[r]_<<{.}& } \]
This $2$-representation infinite algebra $A$ is the endomorphism ring of a tilting bundle $T$ on the Hirzeburch surface $\Sigma_1=\mathbb{P}(\mathcal{O}_{\mathbb{P}^1}\oplus\mathcal{O}_{\mathbb{P}^1}(-1))$ which is the blow-up of $\mathbb{P}^2$ at one point as well. Also, $R$ is the $3$-preprojective algebra of $A$, which is the endomorphism ring of a tilting bundle $\pi^\ast T$ on the total space of the canonical bundle $\pi\colon\omega\to\Sigma_1$ over $\Sigma_1$ (see \cite{BuH}).
Now the DG algebra $R^\dg$ is $4$-CY by Theorem \ref{dgcy}, and applying Corollary \ref{n=1} we have triangle equivalences
\[ \D_\sg(B)\simeq\C_3(A)=\C(R^\dg). \]

(2)  We next consider the grading below, again the non-zero degree of each edge is labeled. This is not given by a perfect matching (but by a sum of two perfect matchings), and makes $R$ into a bimodule $3$-CY algebra of $a$-invariant $2$. In this case, the $2$-representation infinite algebra $A=\begin{pmatrix}R_0& 0 \\ R_{-1}& R_0\end{pmatrix}$ and the $2$-Iwanaga-Gorenstein algebra $B=A\oplus U$ with $U=\begin{pmatrix}R_{-1}& R_0 \\ R_{-2}& R_{-1}\end{pmatrix}$ is presented as follows.
\newcommand{\arr}{\ar@<0.4ex>[r]\ar@<-0.3ex>[r]}
\newcommand{\ardr}{\ar@<0.4ex>@/_5pt/[ur]\ar@<-0.3ex>@/_5pt/[ur]}
  \[ \xymatrix@R=1mm@C=1mm{
	\ar@{-}[rrrrrrrr]\ar@{-}[dddddddd]&&&&&&&&\ar@{-}[dddddddd] \\
	&&&&& 1 &&& \\
	&2 & \circ\ar@{-}[uurr]|{-2}\ar@{-}[rrrr]\ar@{-}[dd]&&&& \bullet\ar@{-}[uurr]|{-2}\ar@{-}[dd]& & \\
	&&&& 3 &&&2& \\
	\ar@{-}[rr]|{-1}&& \ar@{-}[rrrr]|{-1}\ar@{-}[dd]\bullet&&&&\circ\ar@{-}[rr]|{-1}\ar@{-}[dd]&& \\
	&1&&& 4 &&&& \\
	&& \circ\ar@{-}[rrrr]&&&& \bullet& 1 & \\
	&&& 2 &&&&& \\
	\ar@{-}[rrrrrrrr]\ar@{-}[uurr]|{-2}&&&& \ar@{-}[uurr]|{-2} &&&& &,} \qquad
   \xymatrix{ 
   	1\arr\ar[d]_*+{A=\hspace{3mm}}& 4\ar[dr]\ar[d]& 1'\ar[d]\arr& 4'\ar[d] \\
	3\arr& 2\ar[ur]& 3'\arr& 2' \\
	1\arr\ar[d]_*+{B=\hspace{3mm}}& 4\ar[dr]\ar[d]& 1\ar[d]'\arr\ar@/_10pt/[ll]& 4'\ar[d]\ar@/_10pt/[ll] \\
   	3\arr& 2\ar@/^5pt/[ur]\ardr& 3'\arr\ar@/^10pt/[ll]& 2'\ar@/^10pt/[ll] } \]
Applying our results, the DG algebra $R^\dg$ is sign twisted $5$-CY, and there exist an embedding and a triangle equivalence 
\[ \D^b(\md A)/\nu_2^{-1/2}[1] \hookrightarrow\D_\sg(B) \simeq \C(R^\dg). \]
\end{Ex}

\begin{appendix}
\renewcommand{\theequation}{\Alph{section}.\arabic{equation}}
\section{Multiplying gradings}\label{nbai}
Let $R$ be a graded ring. For a fixed integer $n\geq1$, define the graded ring ${}^n\!R$ by
\[	({}^n\!R)_i=\begin{cases} R_{i/n} & \text{if } n\mathrel{|}i \\0 & \text{if } n\!\mathrel{\not|}i\end{cases}. \]
If $R$ is bimodule $(d+1)$-CY of $a$-invariant $a$, then clearly ${}^n\!R$ is bimodule $(d+1)$-CY of $a$-invariant $na$. Although the category $\qper {}^n\!R$ just splits as a direct product of $n$ copies of $\qper R$ and yields nothing new, the cluster category $\C({}^n\!R^\dg)$, being a triangulated hull of $\qper {}^n\!R/(-1)[1]$, becomes `connected' by the action of the automorphism $(-1)[1]$, which yields something new.

The aim of this section is to describe the category $\C({}^n\!R^\dg)$ in terms of the relevant objects from $R$. Although this can be regarded as a special case of our main results, we shall obtain a better presentation of orbit categories.

Recall that $R$ is a negatively graded bimodule $(d+1)$-CY algebra of $a$-invariant $a$ such that each $R_i$ is finite dimensional, and recall the definitions of the $d$-representation infinite algebra $A=A(R)$, the cotilting bimodule $U=U(R)$, and the $d$-Iwanaga-Gorenstein algebra $B=B(R)$ from (\ref{eqAUB}).
We have the following description of $\C({}^n\!R^\dg)$ in terms of $A$, which generalizes Theorem \ref{reasonable} and Corollary \ref{maincor}.
\begin{Thm}\label{n}
There exists a fully faithful functor
\[ \xymatrix{\D^b(\md A)/\nu_d^{-1/a}[n]=\qper R/(-1)[n]\ar[r]&\C({}^n\!R^\dg) } \]
whose image generates $\C({}^n\!R^\dg)$ as a thick subcategory.
\end{Thm}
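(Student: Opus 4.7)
The plan is to apply Theorem \ref{reasonable} directly to ${}^n\!R$ and then identify the resulting orbit category with the one appearing in the statement. First, since ${}^n\!R$ is a negatively graded bimodule $(d+1)$-CY algebra of $a$-invariant $na$ with finite-dimensional homogeneous components, Theorem \ref{reasonable} produces a fully faithful functor $\qper {}^n\!R/(-1)[1] \hookrightarrow \C({}^n\!R^\dg)$ whose image generates the target as a thick subcategory. It then remains to identify the source with $\qper R/(-1)_R[n]$ and hence with $\D^b(\md A)/\nu_d^{-1/a}[n]$.

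For this, I would exploit the fact that ${}^n\!R$ has nonzero graded parts only in degrees divisible by $n$. This yields a canonical product decomposition
\[ \qper {}^n\!R \simeq \prod_{j=0}^{n-1} \qper R, \]
where the $j$-th factor is the subcategory of graded ${}^n\!R$-modules concentrated in degrees $\equiv j \pmod n$. A direct computation on the definition of graded shifts shows that under this decomposition the shift $(1)$ on $\qper {}^n\!R$ becomes the ``twisted cyclic permutation'' $\tilde{\sigma}(N_0,\ldots,N_{n-1})=(N_1,\ldots,N_{n-1},N_0(1)_R)$, where $(1)_R$ denotes the grading shift on $\qper R$. In particular, $\tilde{\sigma}^{-n}$ acts componentwise as $(-1)_R$.

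Next, let $F_0\colon\qper R \to \qper {}^n\!R$ be the embedding into the zeroth factor. The composition with the orbit projection has the property that iterating the orbit functor $n$ times on $F_0(N)$ produces $F_0(N(-1)_R[n])$, so it descends to a functor $\qper R/(-1)_R[n] \to \qper {}^n\!R/(-1)[1]$. I would verify that this is an equivalence by checking: (i) essential surjectivity -- any object decomposes as a direct sum of objects supported in single factors, and an object supported at the $j$-th position with value $N$ is isomorphic in the orbit to $F_0(N[-j])$ via $\tilde\sigma^j[-j]$ (which introduces no $R$-twist when $0\leq j<n$); (ii) fully faithfulness -- in the direct sum expressing $\Hom$ in the orbit of $\qper {}^n\!R$, only iterate exponents divisible by $n$ contribute (otherwise $F_0(N_1)$ is compared with an object supported in a nonzero position, yielding zero), and those terms reassemble into the $\Hom$ in $\qper R/(-1)_R[n]$.

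Finally, Proposition \ref{mm} together with diagram \eqref{equp} (identifying $(-1)_R$ with $\nu_d^{-1/a}$ under $\qper R \simeq \D^b(\md A)$) upgrades $\qper R/(-1)_R[n]$ to $\D^b(\md A)/\nu_d^{-1/a}[n]$, completing the proof. I expect the main technical subtlety to be the correct bookkeeping of the wrap-around twist by $(1)_R$ in the formula for $\tilde{\sigma}$: this twist is precisely what converts the naive shift $[n]$ in the iterated orbit functor into the AR-root twist $\nu_d^{-1/a}[n]$ appearing in the statement, and getting this sign/grading correct is what makes the identification work.
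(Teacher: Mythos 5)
Your proposal is correct and follows essentially the same route as the paper: apply Theorem \ref{reasonable} to ${}^n\!R$, decompose $\qper{}^n\!R$ as an $n$-fold product of copies of $\qper R$, identify the degree shift with the twisted cyclic permutation (this is the paper's Lemma \ref{aut}), and show the inclusion into one factor induces an equivalence $\qper R/(-1)[n]\simeq\qper{}^n\!R/(-1)[1]$ by exactly the same density and Hom-space arguments (the paper's Lemma \ref{orb}). The wrap-around twist you flag as the key bookkeeping point is indeed where the identification happens, and your formula for it is the correct one.
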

Let the $d$-representation infinite algebra $\widetilde{A}=A({}^n\!R)$, the cotilting $(\widetilde{A},\widetilde{A})$-bimodule $\widetilde{U}=U({}^n\!R)$, and the $d$-Iwanaga-Gorenstein algebra $\widetilde{B}=B({}^n\!R)$ be as given in (\ref{eqAUB}) for ${}^n\!R$, thus we have
\begin{equation}\label{hatAU}
	\widetilde{A}=
	\left( 
	\begin{array}{cccc}
		A & 0&\cdots& 0 \\
		0&A&\cdots&0\\
		\vdots& \vdots&\ddots&\vdots \\
		0&0&\cdots&A
	\end{array}     
	\right)=A\times\cdots\times A, \quad
	\widetilde{U}=\left( 
	\begin{array}{cccc}
		0 & A&\cdots& 0 \\
		\vdots& \vdots&\ddots&\vdots \\
		0&0&\cdots&A\\
		U&0&\cdots&0
	\end{array}     
	\right), \quad
	\widetilde{B}=\widetilde{A}\oplus\widetilde{U}.
\end{equation}
Then we also have the consequence in terms of singularity category of $\widetilde{B}$.
\begin{Cor}\label{Bhat}
	There exists a fully faithful functor
	\[ \xymatrix{ \D^b(\md A)/\nu_d^{-1/a}[n]\ar[r]& \D_\sg(\widetilde{B}) } \]
	whose image generates $\D_\sg(\widetilde{B})$ as a thick subcategory.
\end{Cor}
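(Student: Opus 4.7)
The plan is to obtain this corollary essentially as a formal combination of Theorem \ref{n} and Theorem \ref{hope}, with no new homological input. The key observation is that the graded algebra ${}^n\!R$ itself satisfies all the hypotheses needed to feed into Theorem \ref{hope}: since $R$ is negatively graded bimodule $(d+1)$-CY of $a$-invariant $a$ with finite dimensional pieces $R_i$, the algebra ${}^n\!R$ is negatively graded bimodule $(d+1)$-CY of $a$-invariant $na$ with finite dimensional pieces $({}^n\!R)_i$. Therefore Theorem \ref{hope} applies verbatim to ${}^n\!R$ and yields a triangle equivalence
\[ \C({}^n\!R^\dg) \simeq \D_\sg(B({}^n\!R)) = \D_\sg(\widetilde B), \]
where the identification $B({}^n\!R)=\widetilde B$ is by definition of $\widetilde B$ in (\ref{hatAU}) via the formula (\ref{eqAUB}) applied to ${}^n\!R$ (using that $({}^n\!R)_{-i}=R_{-i/n}$ if $n \mid i$ and $0$ otherwise).

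Then I compose with the fully faithful functor produced by Theorem \ref{n}, obtaining
\[ \D^b(\md A)/\nu_d^{-1/a}[n] \;\hookrightarrow\; \C({}^n\!R^\dg) \;\xrightarrow{\simeq}\; \D_\sg(\widetilde B). \]
Full faithfulness is preserved because the second arrow is a triangle equivalence, and the image still generates the target as a thick subcategory for the same reason: an equivalence carries a thick-generating subcategory to a thick-generating subcategory.

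I do not expect any genuine obstacle here; the entire content is already in Theorem \ref{n} and Theorem \ref{hope}, and the only thing to verify is the bookkeeping identification $B({}^n\!R)=\widetilde B$ of the $d$-Iwanaga--Gorenstein algebras produced by the construction (\ref{eqAUB}). This is straightforward: the $na \times na$ lower-triangular matrix construction applied to ${}^n\!R$, once rows and columns are rearranged according to residue classes mod $n$, decomposes the diagonal $\widetilde A$-part as $A\times\cdots\times A$, while the bimodule $\widetilde U=U({}^n\!R)$ has exactly the block form displayed in (\ref{hatAU}) because the only nonvanishing degree $-n$ (resp. degree $-na$) strip of ${}^n\!R$ is concentrated in the blocks corresponding to consecutive (resp. wrap-around) residue classes. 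Thus the corollary is immediate.
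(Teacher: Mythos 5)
Your proposal is correct and follows exactly the paper's route: the paper also obtains Corollary \ref{Bhat} by composing the fully faithful functor of Theorem \ref{n} with the equivalence $\C({}^n\!R^\dg)\simeq\D_\sg(\widetilde B)$ given by applying Theorem \ref{hope} to the $(d+1)$-CY algebra ${}^n\!R$ of $a$-invariant $na$, using that $\widetilde B=B({}^n\!R)$ by definition. Your extra remark verifying the block form of $B({}^n\!R)$ displayed in (\ref{hatAU}) is a harmless (and welcome) check that the paper leaves implicit.
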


Now we start the proof. The first step is to apply our results in Section \ref{thms} for the CY algebra ${}^n\!R$. By Corollary \ref{root}, the $d$-AR translation $\nu_d$ on $\D^b(\md\widetilde{A})$ has an $na$-th root, and by Theorem \ref{reasonable} and Corollary \ref{maincor}, we have an equivalence and an embedding
\begin{equation}\label{eqn}
	\D^b(\md\widetilde{A})/\nu_d^{-1/na}[1]\simeq\qper {}^n\!R/(-1)[1]\hookrightarrow\C({}^n\!R^\dg).
\end{equation}

We next compare the derived orbit categories of ${}^n\!R$ (resp. $\widetilde{A}$) and of $R$ (resp. $A$). Obviously there is a diagram of equivalences and compatible autoequivalences
\[ \xymatrix@R=5mm@C=0.1mm{
	\ar@(ul,dl)[]_{(-1)}&\qper {}^n\!R\ar[d]_{\rotatebox{90}{$\simeq$}}\ar@{=}[rr]&& \qper R\times\cdots\times\qper R\ar[d]^{\rotatebox{-90}{$\simeq$}} \\
	\ar@(ul,dl)[]_{\nu_d^{-1/na}}&\D^b(\md\widetilde{A})\ar@{=}[rr]&& \D^b(\md A)\times\cdots\times\D^b(\md A). } \]
We describe the action of these autoequivalences on the right-hand-side.
\begin{Lem}\label{aut}
	\begin{enumerate}
		\item The action of $(-1)$ on $\qper {}^n\!R$ becomes $(X_1,\ldots,X_n)\mapsto(X_n(-1),X_1,\ldots,X_{n-1})$ on $\qper R\times\cdots\times\qper R$.
		\item The action of $\nu_d^{-1/na}$ on $\D^b(\md\widetilde{A})$ is $(X_1,\ldots,X_n)\mapsto(\nu_d^{-1/a}X_n,X_1,\ldots,X_{n-1})$ on $\D^b(\md A)\times\cdots\times\D^b(\md A)$.
	\end{enumerate}
\end{Lem}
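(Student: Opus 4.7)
The plan is to reduce both parts to a direct calculation with the decomposition of graded modules over ${}^n\!R$ by residue class of the grading. Since $({}^n\!R)_i = 0$ unless $n\mid i$, any graded ${}^n\!R$-module $M$ splits canonically as $M = \bigoplus_{j=0}^{n-1} M_{\equiv j}$ with $M_{\equiv j}=\bigoplus_{i\equiv j\,(n)} M_i$, and each summand is itself a graded ${}^n\!R$-module. Rescaling the grading by $n$ identifies $\Md^\Z\!{}^n\!R$-modules supported in degrees $\equiv j \pmod n$ with $\Md^\Z\!R$-modules; I would verify that this induces an equivalence $\qper{}^n\!R \simeq \qper R\times\cdots\times\qper R$ ($n$ factors), with $M\mapsto (X_0,\ldots,X_{n-1})$ where $(X_j)_k$ is identified with $M_{nk+j}$.

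For part (1), I would then trace through the definition of the shift. Writing $M(-1)_i = M_{i-1}$, one has $M(-1)_{\equiv j} = M_{\equiv j-1}$. For $j\geq 1$ the rescaling is clean and $X_j^{\text{new}}=X_{j-1}$, but for $j=0$ the identity $nk-1 = n(k-1)+(n-1)$ forces a degree shift, so the $0$-th component of $M(-1)$ corresponds to $X_{n-1}(-1)$. With the indexing of the lemma (running over $\{1,\ldots,n\}$) this is exactly the formula $(X_1,\ldots,X_n)\mapsto (X_n(-1),X_1,\ldots,X_{n-1})$.

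For part (2), I would use that the tilting object $\widetilde T=\bigoplus_{l=0}^{na-1}{}^n\!R(l)$ of Proposition \ref{mm} for ${}^n\!R$ is compatible with the product decomposition: each ${}^n\!R(l)$ is supported in a single residue class $j\equiv l$, and for fixed $j$ the values $l=j,n+j,\ldots,(a-1)n+j$ give, after rescaling, exactly the summands $R,R(1),\ldots,R(a-1)$ of the tilting object $T$ for $R$. Hence $\widetilde T$ corresponds to $(T,\ldots,T)$, and $\widetilde A=\End \widetilde T=A\times\cdots\times A$. Therefore the equivalence $\qper{}^n\!R \simeq \D^b(\md \widetilde A)$ respects the product decomposition into $\D^b(\md A)^n$. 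By definition of $\nu_d^{-1/na}$ (resp.\ $\nu_d^{-1/a}$) via diagram (\ref{equp}) applied to ${}^n\!R$ (resp.\ $R$), the autoequivalence $\nu_d^{-1/na}$ on $\D^b(\md \widetilde A)$ corresponds to $(-1)$ on $\qper{}^n\!R$ and, on each factor, $(-1)$ on $\qper R$ corresponds to $\nu_d^{-1/a}$ on $\D^b(\md A)$. Transporting the formula of part (1) under this identification yields part (2).

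The only real obstacle is bookkeeping: matching the residue-class decomposition with the rescaling of gradings cleanly, and making sure the indexing convention of the lemma (with indices $1,\ldots,n$ rather than $0,\ldots,n-1$) lines up. Everything else is formal once the compatibility of $\widetilde T$ with the product structure is in hand.
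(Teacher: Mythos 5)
Your plan is correct. For part (1) it coincides with what the paper intends (the paper omits the argument, declaring it ``similar'' to (2)); your residue-class decomposition of graded ${}^n\!R$-modules and the index shift at $j=0$ giving the extra twist $(-1)$ on the wrap-around factor is exactly the right bookkeeping (note only that ${}^n\!R(l)$ is supported in degrees $\equiv -l \pmod n$, not $\equiv l$, though this does not affect the conclusion). For part (2) you take a genuinely different route: the paper simply identifies $\nu_d^{-1/na}$ with $-\otimes^L_{\widetilde A}\widetilde U$ and reads off the formula from the explicit matrix form (\ref{hatAU}) of $\widetilde U$ --- a one-line computation --- whereas you transport the formula of part (1) through the derived equivalence, after checking that the tilting object $\widetilde T$ for ${}^n\!R$ decomposes as $(T,\ldots,T)$ across the factors so that the equivalence $\qper{}^n\!R\simeq\D^b(\md\widetilde A)$ respects the product structure. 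Your version is slightly longer but arguably cleaner conceptually: it works directly from the \emph{definition} of the roots $\nu_d^{-1/na}$ and $\nu_d^{-1/a}$ as transports of $(-1)$ via diagram (\ref{equp}), so you never need to know that $\nu_d^{-1/na}$ is realized by tensoring with $\widetilde U$; the paper's version buys brevity at the cost of invoking that identification. Both arguments are sound.
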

\begin{proof}
	We only prove (2), the proof of (1) is similar. By the form (\ref{hatAU}) of $\widetilde{U}$, we see that $-\otimes^L_{\widetilde{A}}\widetilde{U}$ maps $(X_1,\ldots,X_n)$ to $(X_n\otimes^L_AU,X_1,\ldots,X_{n-1})$.
\end{proof}

We next relate the orbit categories arising from $R$ and ${}^n\!R$.
\begin{Lem}\label{orb}
	\begin{enumerate}
		\item The functor $\qper R\to\qper {}^n\!R$ given by $X\mapsto(X,0,\ldots,0)$ induces an equivalence
		\[ \qper R/(-1)[n]\xrightarrow{\simeq}\qper {}^n\!R/(-1)[1]. \]
		\item The functor $\D^b(\md A)\to\D^b(\md\widetilde{A})$ given by $X\mapsto(X,0,\ldots,0)$ induces an equivalence
		\[ \D^b(\md A)/\nu_d^{-1/a}[n]\xrightarrow{\simeq}\D^b(\md\widetilde{A})/\nu_d^{-1/na}[1]. \]
	\end{enumerate}
\end{Lem}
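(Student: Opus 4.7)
The plan is to reduce both parts to the same calculation in a product category, and I will focus on (1); part (2) then follows either by a parallel argument or by transporting (1) along the Minamoto--Mori equivalences $\qper R\simeq\D^b(\md A)$ and $\qper{}^n\!R\simeq\D^b(\md\widetilde{A})$, which by Lemma \ref{aut} intertwine the relevant autoequivalences.

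For (1), I would first identify $\qper{}^n\!R\simeq(\qper R)^{\times n}$ by decomposing along the central idempotents (the factors indexed by positions $1,\ldots,n$) and write $\Phi:=(-1)[1]$. By Lemma \ref{aut}(1), this acts as the twisted cyclic shift
\[\Phi(X_1,\ldots,X_n)=(X_n(-1)[1],\,X_1[1],\ldots,X_{n-1}[1]).\]
A short induction then shows the key identity $\Phi^n(X_1,\ldots,X_n)=(X_1(-1)[n],\ldots,X_n(-1)[n])$, i.e.\ $\Phi^n$ acts as the diagonal $(-1)[n]$. More generally, for $i=nk+r$ with $0\leq r\leq n-1$, the same bookkeeping gives
\[\Phi^i(X,0,\ldots,0)=(0,\ldots,0,\,X(-k)[nk+r],\,0,\ldots,0),\]
with the nonzero entry in position $r+1$.

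For fully faithfulness of the induced functor $\bar\iota\colon\qper R/(-1)[n]\to\qper{}^n\!R/\Phi$, I would compute
\[\Hom_{\qper{}^n\!R/\Phi}(\iota X,\iota Z)=\bigoplus_{i\in\Z}\Hom_{\qper{}^n\!R}((X,0,\ldots,0),\Phi^i(Z,0,\ldots,0)).\]
By the displayed formula the $i$-th summand vanishes unless $r=0$, in which case it equals $\Hom_{\qper R}(X,Z(-k)[nk])=\Hom_{\qper R}(X,((-1)[n])^kZ)$. Summing over $k$ recovers exactly $\Hom_{\qper R/(-1)[n]}(X,Z)$. For essential surjectivity (up to isomorphism in the quotient), any object $(X_1,\ldots,X_n)$ splits as $\bigoplus_{j=1}^n(0,\ldots,X_j,\ldots,0)$, and by the position-tracking formula each summand equals $\Phi^{j-1}\iota(X_j[-(j-1)])$; since $\Phi$ is made invertible by the identity in the orbit category, each summand is isomorphic to $\iota(X_j[-(j-1)])$, so the whole object lies in the image of $\bar\iota$.

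I do not expect a serious obstacle here; the only step that requires attention is bookkeeping the single extra $(-1)$ picked up each time the cyclic shift crosses position $n\to 1$, which is precisely what produces $\Phi^n=(-1)[n]$ and aligns the two orbit categories. Once that is confirmed, both the $\Hom$ computation and the decomposition argument are mechanical, and exactly the same reasoning applied to Lemma \ref{aut}(2) (replacing $(-1)$ by $\nu_d^{-1/a}$ throughout) gives part (2).
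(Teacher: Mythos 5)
Your proposal is correct and follows essentially the same route as the paper: both use the cyclic-shift description from Lemma \ref{aut} to see that the cross-terms $\Hom(\iota X,\Phi^i\iota Z)$ vanish unless $n\mid i$ (which identifies the orbit Hom-spaces) and that each coordinate summand $(0,\ldots,X_j,\ldots,0)$ is a power of $\Phi$ applied to an object in the image (which gives density). The only cosmetic difference is that you work out part (1) in detail while the paper works out part (2), each declaring the other case analogous.
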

\begin{proof}
	Again, we only prove (2). We first want to show that there is a natural isomorphism
	\[ \xymatrix{ \disoplus_{l\in\Z}\Hom_{\D(A)}(X,(\nu_d^{-1/a}[n])^lY) \ar[r]& \disoplus_{l\in\Z}\Hom_{\D(\widetilde{A})}(FX,(\nu_d^{-1/na}[1])^lFY) }, \]
	where $FX=(X,0,\ldots,0)$. Since $F(\nu_d^{-1/a}Y[n])=(\nu_d^{-1/na}[1])^nF(Y)$ and $\Hom_{\D(\widetilde{A})}(FX,(\nu_d^{-1/na}[1])^lFY)=0$ unless $n\mathrel{|}l$ by Lemma \ref{aut}, we have a natural bijection.\\
	We next verify that the functor is dense. Note that $(0,\ldots,X_i,\ldots,0)\simeq((\nu_d^{-1/a}[1])^{-i+1}X_i,0\ldots,0)$ in the orbit category $\D^b(\widetilde{A})/\nu_d^{-1/na}[1]$. Therefore $\bigoplus_{i=1}^n(\nu_d^{-1/a}[1])^{-i+1}X_i\in\D^b(A)$ is mapped to $(X_1,\ldots,X_n)$.
\end{proof}
We now have our desired results.
\begin{proof}[Proof of Theorem \ref{n} and Corollary \ref{Bhat}]
	We have Theorem \ref{n} by (\ref{eqn}) and Lemma \ref{orb}. Then Corollary \ref{Bhat} follows by Theorem \ref{hope}.
\end{proof}

Let us demonstrate the difference of $\C(R^\dg)$ and $\C({}^n\!R^\dg)$. 
\begin{Ex}\label{ntrivial}
This is a generalization of Example \ref{trivial}, which is still almost trivial. Let
\[ R=k[x], \quad \deg x=-1, \]
which is bimodule $1$-CY of $a$-invariant $1$. Then we have $A=k$ and $U=k$. Now fix $n\geq1$ and consider the graded algebra ${}^n\!R$. We have
\[ \widetilde{A}=\begin{pmatrix}k&0&\cdots&0\\0&k&\cdots&0\\\vdots&\vdots&\ddots&\vdots\\0&0&\cdots&k\end{pmatrix}, \quad \widetilde{U}=\begin{pmatrix}0&k&\cdots&0\\\vdots&\vdots&\ddots&\vdots\\0&0&\cdots&k\\k&0&\cdots&0\end{pmatrix}, \]
thus $\widetilde{B}=\widetilde{A}\oplus\widetilde{U}$ is the self-injective Nakayama algebra with $n$ vertices and of Loewy length $2$. By Theorem \ref{n} and Corollary \ref{Bhat} we have equivalences of triangulated categories
\[ \D^b(\md k)/[n]\simeq\C({}^n\!R^\dg)\simeq\D_\sg(\widetilde{B}), \]
which is the $n$-cluster category of $k$.
\end{Ex}

\begin{Ex}\label{kmv4}
This is a generalization of Example \ref{kmv3}. As in Example \ref{kmv1}, \ref{kmv2}, and \ref{kmv3}, let
\[ R=k\!\left\langle x_1,\ldots,x_m\right\rangle /(x_1^2+\cdots+x_m^2), \quad \deg x_i=-1, \]
which is twisted $2$-CY of $a$-invariant $2$, the DG algebra $R^\dg$ is $4$-CY, and the $1$-representation infinite algebra $A$ is the path algebra $kQ_m$ of the $m$-Kronecker quiver.

Now we consider the cluster category $\C({}^n\!R^\dg)$. The algebra $\widetilde{A}$ is just the $n$ copies of $A=kQ_m$, and the $1$-Iwanaga-Gorenstein algebra $\widetilde{B}$ is presented by the following quiver with relations. 
\[ \xymatrix{
	\circ\ar@3[d]_m&\circ\ar[l]_v\ar@3[d]_m&\cdots\ar[l]_v&\circ\ar[l]_v\ar@3[d]^m\\
	\circ\ar[rrru]|u&\circ\ar[l]^v&\cdots\ar[l]^v&\circ\ar[l]^v\ar@{}[r]_,& }\quad
   \xymatrix@R=5mm{
   	x_iv=vx_i \, (1\leq i\leq m)\\ \sum_{i=1}^mx_iux_i=0, } \]
where we have denoted by $x_1,\ldots,x_m$ the $m$-fold arrows. By Theorem \ref{n}, we obtain triangle equivalences
\[ \D^b(\md kQ_m)/\nu_1^{-1/2}[n]\simeq\D_\sg(\widetilde{B})\simeq\C({}^n\!R^\dg). \]
Similarly to Corollary \ref{kmv3} and Remark \ref{rem1}, these are precisely the $(2n+1)$-CY triangulated category in \cite[Remark 3.4.5]{KMV}.
\end{Ex}

\section{$t$-structure in $\D^b(\md^\Z\!R)$}\label{D}
We give a version of Theorem \ref{tstr} for the derived category $\D^b(\md^\Z\!R)$ for graded coherent rings, as announced in Remark \ref{coh}. Let $R$ be a negatively graded, graded coherent ring. Then the category $\md^\Z\!R$ of finitely presented graded $R$-modules is abelian. We impose the following technical assumption.
\begin{itemize}
	\item[(R3)] The ideal $R_{>i}$ is finitely presented as a right $R$-module for each $i\leq0$.
\end{itemize}
Note that this is automatic when $R$ is Noetherian.
\begin{Lem}\label{trunc}
	Let $R$ be a negatively graded ring satisfying (R3) and let $X$ be a finitely presented graded $R$-module. Then the truncation $X_{>i}$ is finitely presented for each $i\in\Z$.
\end{Lem}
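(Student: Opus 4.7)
The plan is to exploit that the truncation functor $(-)_{>i}$, sending a graded right $R$-module $Y$ to the quotient $Y/Y_{\leq i}$, is right exact on $\Md^\Z R$, and then transport a finite presentation of $X$ to one of $X_{>i}$ using (R3).

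First I would verify right exactness of $(-)_{>i}$. Given an exact sequence $A \xrightarrow{f} B \xrightarrow{g} C \to 0$ in $\Md^\Z R$, the induced map $B_{>i}\to C_{>i}$ is obviously surjective, and exactness in the middle follows from the internal direct sum decomposition $B=B_{\leq i}\oplus\bigoplus_{j>i}B_j$ as a graded $k$-space: any $b\in B$ with $g(b)\in C_{\leq i}$ splits, since $g$ is graded, into a piece in $B_{\leq i}$ and a piece in $\ker g=\Im f$.

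Next, choose a finite presentation $P_1 \to P_0 \to X \to 0$ with $P_0=\bigoplus_l R(-m_l)$ and $P_1=\bigoplus_k R(-n_k)$ finite direct sums of degree-shifts of $R$. Applying $(-)_{>i}$ yields the right exact sequence
\[ (P_1)_{>i} \to (P_0)_{>i} \to X_{>i} \to 0. \]
Since $R(-n)_{>i}=(R_{>i-n})(-n)$, I split into cases on the sign of $i-n$: if $i-n\geq 0$ then $R_{>i-n}=0$ because $R$ is negatively graded; if $i-n<0$ (in particular $i-n\leq 0$) then (R3) applies and $R_{>i-n}$ is finitely presented. In either case each $R(-n)_{>i}$, and therefore the finite direct sums $(P_0)_{>i}$ and $(P_1)_{>i}$, are finitely presented.

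Finally, $X_{>i}$ is the cokernel of a map between finitely presented modules, equivalently the quotient of the finitely presented module $(P_0)_{>i}$ by the finitely generated submodule $\Im((P_1)_{>i}\to(P_0)_{>i})$, and such a quotient is always finitely presented. There is no serious obstacle: the content amounts to checking right exactness of $(-)_{>i}$ and performing the case analysis on the sign of $i-n$ that reduces the statement to assumption (R3).
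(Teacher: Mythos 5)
Your proof is correct and follows essentially the same route as the paper: apply the truncation functor $(-)_{>i}$ to a finite presentation $P_1\to P_0\to X\to 0$ and use (R3) to see that the truncated projectives are finitely presented, so that $X_{>i}$ is a cokernel of a map between finitely presented modules. The paper's version is just terser, leaving the right exactness of $(-)_{>i}$ and the case analysis on the degree shifts implicit.
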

\begin{proof}
	Let $P_1\to P_0\to X\to0$ be a finite presentation of $X$ and consider its truncation $(-)_{>i}$. Since the $(P_0)_{>i}$ and $(P_1)_{>i}$ are finitely presented by the assumption (R3), so is $X_{>i}$.
\end{proof}

\begin{Thm}[cf. Theorem \ref{tstr}]\label{t}
	Let $R$ be a negatively graded, graded coherent ring satisfying (R3). Set
	\begin{equation*}
		\begin{aligned}
			t^{\leq0}&=\{ X \in \D^b(\md^\Z\!R) \mid H^i(X) \in \md^{\leq-i}\!R \text{ for all } i \in \Z \} ,\\
			t^{\geq0}&=\{ X \in \D^b(\md^\Z\!R) \mid H^i(X) \in \md^{\geq-i}\!R \text{ for all } i \in \Z \}.
		\end{aligned}
	\end{equation*}
	Then $(t^{\leq0},t^{\geq0})$ is a $t$-structure in $\D^b(\md^\Z\!R)$.
\end{Thm}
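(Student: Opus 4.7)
The plan is to follow the strategy of Theorem \ref{tstr} closely: start with the $t$-structure on the big derived category $\D(\Md^\Z\!R)$ associated to the silting subcategory $\M=\add\{R(-i)[i]\mid i\in\Z\}$, and show it restricts to $\D^b(\md^\Z\!R)$. Observe that the proofs of Proposition \ref{silt} and Lemma \ref{henkei} do not invoke (R2), so they already yield a $t$-structure $(\D_\M^{\leq 0},\D_\M^{\geq 0})$ on $\D(\Md^\Z\!R)$ whose aisles are described exactly by the formulas in the statement but with $\md^\Z\!R$ replaced by $\Md^\Z\!R$. Under graded coherence the inclusion $\D^b(\md^\Z\!R)\hookrightarrow\D(\Md^\Z\!R)$ is fully faithful, so the orthogonality axiom $\Hom(t^{\leq 0},t^{\geq 1})=0$ in $\D^b(\md^\Z\!R)$ follows at once from the big version.

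The remaining, and main, task is to show that the truncation triangles can be taken inside $\D^b(\md^\Z\!R)$. Given $X\in\D^b(\md^\Z\!R)$, take its truncation $X'\to X\to X''\to X'[1]$ in $\D(\Md^\Z\!R)$ with $X'\in\D_\M^{\leq 0}$ and $X''\in\D_\M^{\geq 1}$. By Lemma \ref{ses}, the induced short exact sequence on cohomology is identified with the grading truncation
\[ 0\to (H^iX)_{\leq-i}\to H^iX\to (H^iX)_{>-i}\to 0, \]
so that $H^iX'=(H^iX)_{\leq-i}$ and $H^iX''=(H^iX)_{>-i}$. By hypothesis $H^iX\in\md^\Z\!R$ is finitely presented and vanishes outside a finite range of $i$.

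The key technical point is to verify that both $H^iX'$ and $H^iX''$ remain in $\md^\Z\!R$; this is precisely where assumption (R3) enters, via Lemma \ref{trunc}. That lemma, applied to $H^iX$, ensures $H^iX''=(H^iX)_{>-i}$ is finitely presented, and since $R$ is graded coherent, $\md^\Z\!R$ is closed under kernels in $\Md^\Z\!R$, so the subobject $H^iX'=(H^iX)_{\leq-i}$ is also finitely presented. As the cohomologies vanish for almost all $i$, we conclude $X',X''\in\D^b(\md^\Z\!R)$, giving the required truncation inside the small category. Thus Lemma \ref{trunc} plays here the role that (R2) plays in Theorem \ref{tstr}, and once this closure under grading truncation is established, the remainder of the argument carries over essentially verbatim. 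The only real obstacle is the verification of this closure, which is exactly what (R3) provides.
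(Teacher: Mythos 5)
Your argument is correct and coincides with the paper's first proof of this theorem: both restrict the silting $t$-structure from $\D(\Md^\Z\!R)$ to the small derived category using Lemma \ref{ses} together with Lemma \ref{trunc} (where (R3) enters), and your extra remark that coherence handles the kernel term $(H^iX)_{\leq -i}$ is exactly the implicit step there. (The paper also supplies a second, more elementary direct verification of the axioms, but your route is the intended one.)
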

We give two independent proofs. The first one is a short proof using silting theory and DG categories. For the sake of reader who is not familiar with these, we include the second direct proof.
\subsection{The first proof}
Recall that we have a $t$-structure in the big derived category $\D:=\D(\Md^\Z\!R)$ which is given by 
\begin{equation*}
	\begin{aligned}
		\D_\M^{\leq0}&=\{ X \in \D(\Md^\Z\!R) \mid H^i(X) \in \Md^{\leq-i}\!R \text{ for all } i \in \Z \} ,\\
		\D_\M^{\geq0}&=\{ X \in \D(\Md^\Z\!R) \mid H^i(X) \in \Md^{\geq-i}\!R \text{ for all } i \in \Z \}.
	\end{aligned}
\end{equation*}
As in Section \ref{per}, we show that the $t$-structure $(\D_\M^{\leq0},\D_\M^{\geq0})$ above on $\D$ restricts to that on $\D^b(\md^\Z\!R)$.
\begin{proof}[Proof of Theorem \ref{t}]
	Since $R$ is right graded coherent the small derived category $\D^b(\md^\Z\!R)$ identifies with the thick subcategory of $\D$ whose cohomology is bounded and each one is finitely presented. Let $X\in\D^b(\md^\Z\!R)$ and consider the truncation triangle $X'\to X\to X'' \to X'[1]$ in $\D$. Since $X$ has bounded cohomology, so do $X'$ and $X''$ by Lemma \ref{ses}(1). Moreover, since each $H^iX$ is finitely presented, so are $H^iX'$ and $H^iX''$ by Lemma \ref{ses}(2) and Lemma \ref{trunc}. Therefore the $t$-structure in the big derived category restricts to that of the small one, which is precisely $(t^{\leq0},t^{\geq0})$.
\end{proof}

\subsection{The second proof}
We turn to the second direct proof. In this subsection we will use $\D$ for the small derived category $\D^b(\md^\Z\!R)$. We need several lemmas for the proof. Put, as usual, $t^{\leq n}=t^{\leq 0}[-n]$ and $t^{\geq n}=t^{\geq 0}[-n]$. The first one is obvious.
\begin{Prop}
	We have $t^{\leq-1} \subset t^{\leq0}$ and $t^{\geq1} \subset t^{\geq0}$.
\end{Prop}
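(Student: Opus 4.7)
The plan is to unwind the shift conventions carefully and reduce to the obvious nesting of the subcategories $\md^{\leq n}R$ and $\md^{\geq n}R$.

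First I would handle $t^{\leq-1}\subset t^{\leq0}$. By the convention $t^{\leq n}=t^{\leq0}[-n]$, an object $X$ lies in $t^{\leq-1}=t^{\leq0}[1]$ precisely when $X[-1]\in t^{\leq0}$, i.e., when $H^{i-1}(X)\in\md^{\leq-i}\!R$ for all $i\in\Z$. Re-indexing with $j=i-1$, this says $H^{j}(X)\in\md^{\leq-j-1}\!R$ for all $j\in\Z$. Since $\md^{\leq-j-1}\!R\subset\md^{\leq-j}\!R$, we conclude $H^{j}(X)\in\md^{\leq-j}\!R$, hence $X\in t^{\leq0}$.

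The argument for $t^{\geq1}\subset t^{\geq0}$ is symmetric. If $X\in t^{\geq1}=t^{\geq0}[-1]$, then $X[1]\in t^{\geq0}$, so $H^{i+1}(X)\in\md^{\geq-i}\!R$ for all $i$. Re-indexing with $j=i+1$, this becomes $H^{j}(X)\in\md^{\geq-j+1}\!R\subset\md^{\geq-j}\!R$, hence $X\in t^{\geq0}$.

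There is no real obstacle: the statement is a pure bookkeeping check. The only thing worth being careful about is the sign convention $t^{\leq n}=t^{\leq0}[-n]$, which flips the direction of the shift, and the elementary inclusions $\md^{\leq n}\!R\subset\md^{\leq m}\!R$ for $n\leq m$ and $\md^{\geq n}\!R\subset\md^{\geq m}\!R$ for $n\geq m$.
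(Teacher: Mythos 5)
Your proof is correct and is exactly the definition-unwinding the paper has in mind — the paper simply labels this proposition ``obvious'' and gives no argument, so your careful bookkeeping with the shift convention $t^{\leq n}=t^{\leq0}[-n]$ and the inclusions $\md^{\leq-j-1}\!R\subset\md^{\leq-j}\!R$, $\md^{\geq-j+1}\!R\subset\md^{\geq-j}\!R$ is precisely the justification it omits.
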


The following easy observations will be useful.
\begin{Lem}\label{easy}
	Let $\A$ be an abelian category with enough projectives $\mathcal{P}$. Let $P \in \K^-(\mathcal{P})$, $X \in \D^b(\A)$ and suppose that $\Hom_\A(P^i, H^i(X))=0$ for all $i \in \Z$. Then $\Hom_{\D(\A)}(P,X)=0$.
\end{Lem}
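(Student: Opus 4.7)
The plan is to prove the statement by induction on the number of non-zero cohomology groups of $X$, which is finite since $X \in \D^b(\A)$. The essential input is that a bounded-above complex of projectives is $K$-projective, so throughout the argument $\Hom_{\D(\A)}(P,-)$ may be computed directly in $\K(\A)$ without taking any resolution of the second variable.

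For the base case I would take $X \simeq Y[-m]$ with $Y = H^m(X)$. A morphism $P \to Y[-m]$ in $\K(\A)$ is represented by a single map $f \colon P^m \to Y$ satisfying $f \circ d_P^{m-1} = 0$, modulo the relation $f \sim f + s \circ d_P^m$ for $s \colon P^{m+1} \to Y$. The hypothesis $\Hom_\A(P^m, H^m(X)) = 0$ kills all such $f$, so the hom group vanishes.

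For the inductive step, let $m$ be the maximal degree with $H^m(X) \neq 0$ and apply $\Hom_{\D(\A)}(P, -)$ to the canonical truncation triangle
\[ \tau^{\leq m-1} X \longrightarrow X \longrightarrow H^m(X)[-m] \longrightarrow (\tau^{\leq m-1} X)[1]. \]
The right-hand hom vanishes by the base case. For the left-hand hom, I would invoke the induction hypothesis: $\tau^{\leq m-1} X$ has strictly fewer non-zero cohomology groups, and the hypothesis transfers to it because $H^i(\tau^{\leq m-1} X)$ equals $H^i(X)$ for $i \leq m-1$ and is zero otherwise, so $\Hom_\A(P^i, H^i(\tau^{\leq m-1} X)) = 0$ in every degree. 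The long exact sequence then forces $\Hom_{\D(\A)}(P,X) = 0$.

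I do not anticipate any substantive obstacle. The subtle point, if any, is that the hypothesis is a \emph{degreewise} one on $P$, not a global condition on $P$ or its cohomology; the $K$-projectivity of $P$ makes the base case depend only on the single degree $P^m$, and the canonical truncation triangle respects the degreewise hypothesis on each of the two outer terms, so the induction proceeds cleanly.
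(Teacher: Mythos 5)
Your proof is correct and follows essentially the same route as the paper: induction on the number of nonzero cohomology groups via canonical truncation, reducing to a complex concentrated in a single degree, where $K$-projectivity of the bounded-above complex of projectives $P$ lets one compute in $\K(\A)$ and see that the hom group is a subquotient of $\Hom_\A(P^m,H^m(X))=0$. The only difference is that you spell out the transfer of the degreewise hypothesis to the truncations, which the paper leaves implicit.
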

\begin{proof}
	We may assume by induction on the length of $X$ that $X \in \A$. Then we have $\Hom_{\D(\A)}(P,X)=\Hom_{\K(\A)}(P,X) \twoheadleftarrow \Hom_{\C(\A)}(P,X) \subset \Hom_\A(P^0,X)=0$.
\end{proof}

\begin{Lem}\label{resol}
	Let $X \in t^{\leq0}$. Then there exists a projective resolution $P \to X$ such that each term $P^i \in \add R(\geq i)$ for each $i \in \Z$.
\end{Lem}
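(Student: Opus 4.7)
The plan is to proceed by induction on the length of the cohomological support of $X$, namely the quantity $n-m$ where $n$ (resp.\ $m$) is the largest (resp.\ smallest) integer with $H^n(X)\neq0$ (resp.\ $H^m(X)\neq0$). For the base case $n=m$ the object $X\simeq M[-n]$ is concentrated in one degree with $M\in\md^{\leq-n}R$. Since $M$ is supported in degrees $\leq-n$, it admits a surjection $P^0\twoheadrightarrow M$ from a finite sum of shifts $R(j)$ with $j\geq n$; the kernel is a submodule of $P^0\in\md^{\leq-n}R$ and hence still lies in $\md^{\leq-n}R$, and is finitely presented thanks to graded coherence of $R$ together with (R3) (cf.\ Lemma \ref{trunc}). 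Iterating yields a projective resolution of $M$ with every term in $\add R(\geq n)$, and shifting produces the desired resolution of $M[-n]$ since $\add R(\geq n)\subseteq\add R(\geq i)$ for all $i\leq n$.

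For the inductive step I would take $n$ maximal with $H^n(X)\neq0$, pick a surjection $\pi\colon P^n\twoheadrightarrow H^n(X)$ with $P^n\in\add R(\geq n)$ exactly as in the base case, and lift $\pi$ to a morphism $P^n[-n]\to X$ in $\D$ (possible since $P^n$ is projective). Completing this to a triangle
\[ P^n[-n]\to X\to C\to P^n[-n+1], \]
the long exact sequence of cohomology yields $H^i(C)=H^i(X)$ for $i<n-1$, $H^n(C)=0$, and
\[ 0\to H^{n-1}(X)\to H^{n-1}(C)\to\ker\pi\to 0. \]
Since $\ker\pi\subseteq P^n\in\md^{\leq-n}R\subseteq\md^{\leq-(n-1)}R$ and $H^{n-1}(X)\in\md^{\leq-(n-1)}R$, it follows that $H^{n-1}(C)\in\md^{\leq-(n-1)}R$, so $C\in t^{\leq0}$ has strictly smaller length. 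The inductive hypothesis then supplies a projective resolution $P'\to C$ with $(P')^i\in\add R(\geq i)$ and $(P')^i=0$ for $i\geq n$.

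To assemble the pieces I would lift the connecting morphism $C\to P^n[-n+1]$ to a chain map $g\colon P'\to P^n[-n+1]$ (possible because $P'$ is a complex of projectives), and set $P:=\cocone(g)$; the resulting triangle $P^n[-n]\to P\to P'\xrightarrow{g}P^n[-n+1]$ is isomorphic to the one defining $C$, yielding by the $2$-of-$3$ property a quasi-isomorphism $P\to X$. As a graded module $P=P^n[-n]\oplus P'$, so $P^n\in\add R(\geq n)$ and $P^i=(P')^i\in\add R(\geq i)$ for $i<n$, as required. The main technical point I anticipate is controlling the cone $C$ and keeping it inside $t^{\leq0}$; this reduces to showing that $\ker\pi$ is concentrated in degrees $\leq-(n-1)$, which is immediate because $P^n$ itself already lies in $\md^{\leq-n}R$, using that $R$ is negatively graded.
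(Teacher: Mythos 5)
Your proof is correct. It reaches the same destination as the paper's but is packaged differently: the paper constructs the resolution directly, one cohomological degree at a time, via an explicit pullback diagram at the level of complexes (its object $A$, sitting in $0 \to H^{n-1}(X) \to A \to B \to 0$ with $B \subseteq P^n$, is the module whose projective cover becomes $P^{n-1}$), whereas you induct on the cohomological amplitude of $X$ and split off the top projective by a triangle in the derived category. The crucial degree estimate is identical in both arguments — the new syzygy-type object ($A$ there, $H^{n-1}(C)$ here) is an extension of a submodule of $P^n$, hence concentrated in degrees $\leq -n$, by $H^{n-1}(X) \in \md^{\leq -(n-1)}\!R$ — so neither approach is stronger; yours trades the explicit choice of a model for $X$ against the extra assembly step (realizing the connecting map by a chain map and taking $\cocone(g)$), which you handle correctly.

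Two small points to tighten. First, the finite presentation of $\Ker\pi$ and of $H^{n-1}(C)$ needs only that $\md^\Z\!R$ is abelian, i.e.\ graded coherence; (R3) and Lemma \ref{trunc} (which concern truncations $X_{>i}$) are not what is being used. Second, your inductive step invokes that the resolution $P'$ of $C$ vanishes in degrees $\geq n$, which is not part of the statement being proved; you should strengthen the induction hypothesis to ``there is a resolution with $P^i \in \add R(\geq i)$ and $P^i=0$ for $i$ above the top of the cohomological support,'' which your construction visibly delivers in both the base case and the inductive step.
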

\begin{proof}
	This can be seen by recalling the construction of a projective resolution: Suppose we have constructed such $P$ for degree $\geq n$. As in the diagram below, let $B=\Ker(P^n \to C)$, $A$ the pull-back, and $P^{n-1} \to A$ a surjection from a projective. 
	\newdir{ >}{{}*!/-5pt/@{>}}
	\[ \xymatrix@C=3mm@R=3mm{
		P^{n-1}\ar@{-->}[rrr]\ar@{-->}[ddd]\ar@{-->>}[dr]&&& P^n\ar[rr]\ar[ddd]\ar@{->>}[dr] && \\
		& A\ar@{-->}[ddd]\ar@{-->>}[r]\ar@{}[dddr]|{\mathrm{PB}}& B\ar@{ >-->}[ur]\ar@{-->}[ddd]&& C\ar[ddd]\ar@{->>}[r]& \\
		\\
		X^{n-1}\ar@{->>}[dr]\ar[rrr]&&& X^n\ar[rr]\ar@{->>}[dr] &&\\
		& C^{n-1}\ar@{->>}[r]& B^n\ar@{ >->}[ur] && C^n \ar@{->>}[r]& } \]
	Then $B \in \md^{\leq-n}R$ since it is a subset of $P^n$ and $P^n \in \add R(\geq\!n)$. Also, since there exists an exact sequence $0 \to H^{n-1}(X) \to A \to B \to 0$ and $H^{n-1}(X) \in \md^{\leq-n+1} R$ by $X \in t^{\leq0}$, we have $A \in \md^{\leq-n+1}$. Therefore we can take its projective cover $P^{n-1} \in \add R(\geq n-1)$.
\end{proof}

These observations yield the following.
\begin{Prop}
	We have $\Hom_\D(X,Y)=0$ for all $X \in t^{\leq0}$ and $Y \in t^{\geq1}$.
\end{Prop}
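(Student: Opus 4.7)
The plan is to combine the two preceding lemmas \ref{easy} and \ref{resol} in the obvious way. Given $X \in t^{\leq 0}$, Lemma \ref{resol} provides a projective resolution $P \to X$ in $\K^{-}(\md^{\Z}\!R)$ with the control $P^{i} \in \add R(\geq i)$ on each term. Since $P$ is quasi-isomorphic to $X$, we have $\Hom_{\D}(X,Y) = \Hom_{\D}(P,Y)$, and it suffices to show the latter vanishes.

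To apply Lemma \ref{easy}, I need $\Hom_{\md^{\Z}\!R}(P^{i}, H^{i}(Y)) = 0$ for every $i \in \Z$. Since $P^{i}$ is a finite direct sum of shifts $R(j)$ with $j \geq i$, and $\Hom_{\md^{\Z}\!R}(R(j), N) = N_{-j}$ for any graded module $N$, this reduces to verifying that $H^{i}(Y)_{-j} = 0$ whenever $j \geq i$.

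Now $Y \in t^{\geq 1}$ unpacks to $H^{i}(Y) \in \md^{\geq -i+1}\!R$ for all $i$, that is, $H^{i}(Y)_{\ell} = 0$ for $\ell \leq -i$. Since $j \geq i$ gives $-j \leq -i$, the required vanishing holds and Lemma \ref{easy} yields $\Hom_{\D}(P,Y) = 0$. I do not anticipate any real obstacle here; the content is entirely packaged in Lemmas \ref{easy} and \ref{resol}, and the main point is just to match the degree bound on $P^{i}$ (coming from $X \in t^{\leq 0}$) against the degree bound on $H^{i}(Y)$ (coming from $Y \in t^{\geq 1}$).
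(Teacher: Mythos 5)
Your proof is correct and is essentially identical to the paper's: take the resolution from Lemma \ref{resol}, note $H^i(Y)\in\md^{>-i}\!R$, and apply Lemma \ref{easy}. The degree-matching computation you spell out ($\Hom(R(j),N)=N_{-j}$ with $-j\leq -i$) is exactly the check the paper leaves implicit.
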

\begin{proof}
	Take a projective resolution $P \to X$ in Lemma \ref{resol}. On the other hand, we have $H^i(Y) \in \md^{>-i}R$ for each $i \in \Z$. Therefore we deduce $\Hom_\D(X,Y)=\Hom_\D(P,Y)=0$ by Lemma \ref{easy}.
\end{proof}

We now give a final observation.
\begin{Prop}
	For any $X \in \D$, there exists a triangle $X' \to X \to X'' \to X'[1]$ in $\D$ with $X' \in t^{\leq0}$ and $X'' \in t^{\geq 1}$.
\end{Prop}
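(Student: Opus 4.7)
My plan is to induct on the cohomological length $n = b - a + 1$, writing $X$ with nonzero cohomology only in $[a,b]$. For the base case $n = 1$, where $X \simeq M[-b]$ for some $M \in \md^\Z\!R$, the coherence of $R$ together with Lemma \ref{trunc} ensures that both $M_{\leq -b}$ and $M_{>-b}$ lie in $\md^\Z\!R$, so the short exact sequence $0 \to M_{\leq -b} \to M \to M_{>-b} \to 0$ shifts to the required triangle in $\D$, whose first term is in $t^{\leq 0}$ and last term in $t^{\geq 1}$.

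For the inductive step ($n \geq 2$), I start with the standard triangle $\sigma^{<b}X \to X \to H^b(X)[-b] \to \sigma^{<b}X[1]$. Since $\sigma^{<b}X$ has strictly shorter cohomological length, by induction there is a triangle $Y' \to \sigma^{<b}X \to Y''$ with $Y' \in t^{\leq 0}$, $Y'' \in t^{\geq 1}$; the base case applied to $H^b(X)[-b]$ supplies $W' \to H^b(X)[-b] \to W''$ with $W' \in t^{\leq 0}$, $W'' \in t^{\geq 1}$. I then assemble these into a morphism of triangles
\[
\xymatrix{
Y' \ar[r] \ar[d] & X' \ar[r] \ar[d] & W' \ar[r] \ar[d] & Y'[1] \ar[d] \\
\sigma^{<b}X \ar[r] & X \ar[r] & H^b(X)[-b] \ar[r] & \sigma^{<b}X[1]
}
\]
whose outer verticals are the maps given above and whose top row completes to a triangle defining $X'$. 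The $3 \times 3$ lemma applied to this morphism of triangles realizes $\cone(X' \to X)$ as an extension of $W''$ by $Y''$, hence in $t^{\geq 1}$; symmetrically, $X'$ is an extension of $W'$ by $Y'$, hence in $t^{\leq 0}$. Both closure properties follow from the long exact sequence of cohomology together with the fact that $\md^{\leq -i}R, \md^{>-i}R \subset \md^\Z\!R$ are Serre subcategories.

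The technical core of the argument is the existence of the connecting map $W' \to Y'[1]$ making the right-hand square commute, which amounts to showing that the composition
\[ W' \to H^b(X)[-b] \to \sigma^{<b}X[1] \to Y''[1] \]
vanishes. The $\Hom$-vanishing from the previous proposition does not directly apply here, because $Y''[1]$ need not lie in $t^{\geq 1}$: its cohomology at degree $i$ is $H^{i+1}(Y'') \in \md^{\geq -i}R$, which may carry a nonzero component in degree exactly $-i$. I would establish the vanishing by taking the projective resolution of $W'$ provided by Lemma \ref{resol} so that $P^i \in \add R(\geq i)$, applying Lemma \ref{easy}, and exploiting that the connecting map $H^b(X)[-b] \to \sigma^{<b}X[1]$ induces zero on every cohomology module (the supports being disjoint), which forces the problematic degree-$(-i)$ contributions to cancel on the specific element in question. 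Making this final step rigorous is the main obstacle.
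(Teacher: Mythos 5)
Your overall strategy coincides with the paper's: induct on the cohomological width, split $X$ by a standard truncation, decompose the two pieces by induction and the base case, and glue via \cite[Proposition 1.1.11]{BBD}, using extension-closedness of $t^{\leq0}$ and $t^{\geq1}$ to finish. (The paper cuts at a general degree $n$ making both pieces shorter; taking $n=b-1$ gives exactly your decomposition, and the paper's required vanishing $\Hom_\D(Z'[-1],Y'')=0$ is precisely your $\Hom_\D(W',Y''[1])=0$.) The gap is that you do not prove this vanishing, and the mechanism you propose for it would not work: a morphism can induce zero on every cohomology module without being zero, so the fact that the connecting map $H^b(X)[-b]\to\sigma^{<b}X[1]$ has ``disjoint cohomological supports'' does not force the composite $W'\to Y''[1]$ to vanish, and there is no well-defined ``cancellation of degree-$(-i)$ contributions'' to appeal to. You correctly observe that the orthogonality $\Hom(t^{\leq0},t^{\geq1})=0$ does not apply since $Y''[1]$ only lies in $t^{\geq0}$; but then the argument stops exactly at the essential point.

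What is missing is a refinement of the degree bookkeeping, which is how the paper closes this gap. First, the already-proved orthogonality implies that the triangle $Y'\to\sigma^{<b}X\to Y''$ induces a short exact sequence on each $H^i$ (the connecting maps vanish for internal-degree reasons); hence $Y''\in\D^{\leq b-1}$, and combined with $Y''\in t^{\geq1}$ this gives $H^i(Y''[1])\in\md^{\geq-b+2}\!R$ for \emph{all} $i$. Second, $W'=(H^bX)_{\leq-b}[-b]$ is a shift of a finitely presented module concentrated in internal degrees $\leq-b$; since $R$ is negatively graded, every syzygy of such a module again lies in internal degrees $\leq-b$, so $W'$ admits a projective resolution all of whose terms lie in $\add R(\geq\!b)$ (a sharpening of Lemma \ref{resol} available in this one-cohomology case). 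Since $\Hom_R^\Z(R(j),N)=N_{-j}=0$ for $j\geq b$ and $N\in\md^{\geq-b+2}\!R$, Lemma \ref{easy} yields $\Hom_\D(W',Y''[1])=0$; the right-hand square then commutes, the morphism of triangles exists, and the rest of your argument goes through. Without this step the proof is incomplete.
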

\begin{proof}
	We proceed by induction on $w(X)=\max\{i \in \Z\mid H^i(X)\neq0\}-\min\{i \in \Z\mid H^i(X)\neq0\}$.
	
	If $w(X)=0$, then $X \in (\md^\Z\!R)[n]$ for some $n \in \Z$. In this case, truncating the graded module $X$ as $0 \to X_{\leq -n} \to X \to X_{>-n} \to 0$ in $\md^\Z\!R$ yields a desired triangle by Lemma \ref{trunc}.
	
	If $w(X)>0$, there exists $n \in \Z$ such that in the truncation $X^{\leq n} \to X \to X^{>n} \to X^{\leq n}[1]$ of $X$ with respect to (the shift of) the standard $t$-structure $(\D^{\leq n}, \D^{\geq n})$, one has $w(Y),w(Z)<w(X)$, where $Y:=X^{\leq n}$ and $Z:=X^{>n}$. By induction hypothesis, there exist triangles $Y' \to Y \to Y'' \to Y'[1]$ and $Z' \to Z \to Z'' \to Z'[1]$ such that $Y', Z' \in t^{\leq0}$ and $Y'', Z'' \in t^{\geq1}$, thus the diagram below.
	\[ \xymatrix{
		Z'[-1]\ar[r]\ar@{-->}[d]& Z[-1]\ar[r]\ar[d]& Z''[-1]\ar[r]\ar@{-->}[d]& Z'\ar@{-->}[d] \\
		Y'\ar[r]& Y\ar[r]\ar[d]& Y''\ar[r]& Y'[1] \\
		& X\ar[d]& & \\
		Z'\ar[r]& Z\ar[r]& Z''\ar[r]& Z'[1] } \]
	
	We claim that $\Hom_\D(Z'[-1],Y'')=0$. This allows us to complete the morphism $Z[-1] \to Y$ to a morphism of triangles as in the dashed line above, thus the diagram above to a $3 \times 3$ diagram of triangles by \cite[Proposition 1.1.11]{BBD}. We then have a triangle $X' \to X \to X'' \to X'[1]$ in the third row, which is a desired one since the first and the third column are triangles and $t^{\leq0}$ and $t^{\geq0}$ are extension-closed.
	
	We now prove the claim. Observe that any triangle $W' \to W \to W'' \to W'[1]$ in $\D$ with $W' \in t^{\leq0}$ and $W'' \in t^{\geq1}$ yields a short exact sequence
	\[ \xymatrix{ 0 \ar[r]& H^i(W')\ar[r]& H^i(W)\ar[r]& H^i(W'')\ar[r]& 0} \]
	in $\md^\Z\!R$ for all $i \in \Z$. Indeed, we have $H^{i-1}(W'')\in\md^{>-i+1}R$ by $W'' \in t^{\geq1}$ and $H^i(W') \in \md^{\leq-i}R$ by $W' \in t^{\leq0}$, thus the connecting homomorphisms are $0$. In particular, $W', W'' \in \D^{\leq n}$ (resp. $\in \D^{\geq n}$) if and only if $W \in \D^{\leq n}$ (resp. $\in \D^{\geq n}$).
	
	Now apply the above argument to $Z' \to Z \to Z'' \to Z'[1]$, which shows $Z' \in \D^{\geq n+1}$, hence $Z'[-1] \in t^{\leq1}\cap\D^{\geq n+2}$. Therefore $H^i(Z'[-1])\in \md^{\leq -n-1} R$ for all $i \in \Z$ since it is $0$ for $i\leq n+1$ and is in $\md^{\leq-i+1}R$ for $i \geq n+2$.
	Similarly, we have $H^i(Y'') \in \md^{\geq-n+1}R$ for all $i$ since $Y'' \in \D^{\leq n}\cap t^{\geq1}$.
	We therefore obtain the claim.
\end{proof}
Now Theorem \ref{t} is a consequence of the Propositions.

\section{Proof of Proposition \ref{mm}}\label{mmpr}
In this section we give a proof of Minamoto--Mori's equivalence \cite{MM} based on Theorem \ref{dact}. The main tool is the realization of the Verdier quotient as a subcategory given in Theorem \ref{dact}(2).

We need the following computation of morphism in $\qper R$ which is not covered by Theorem \ref{dact}(2).
\begin{Lem}\label{nex}
	Let $X, Y \in \Md^\Z\!R$ such that $X, Y\in\per^\Z\!R$ and $\Hom_R^\Z(L,Y)=0$ for all $L\in\fl^\Z\!R$. Then we have $\Hom_{\qper R}(X,Y[<\!0])=0$.
\end{Lem}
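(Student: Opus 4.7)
The plan is to use the calculus of left fractions for the Verdier quotient $\qper R=\per^\Z R/\D^b(\fl^\Z R)$. Every element of $\Hom_{\qper R}(X,Y[n])$ is represented by a roof $X\xleftarrow{s}X'\xrightarrow{g}Y[n]$ in $\per^\Z R$ with $L:=\cone(s)\in\D^b(\fl^\Z R)$, and $f$ will vanish in $\qper R$ as soon as $g$ factors in $\per^\Z R$ through some object of $\D^b(\fl^\Z R)$.

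The key step, which I expect to be the main obstacle, is a cohomological analysis of $X'$. From the defining triangle $X'\to X\to L\to X'[1]$ and its long exact cohomology sequence, together with $X$ being concentrated in cohomological degree $0$ and each $H^i L$ lying in $\fl^\Z R$, one reads off that $H^j(X')\in\fl^\Z R$ for every $j\neq 0$: indeed $H^j(X')\simeq H^{j-1}(L)$ whenever $j\neq 0,1$, while $H^1(X')$ is a quotient of $H^0(L)$. Hence for any $m\geq 1$ the standard truncation $\tau^{\geq m}X'$ is a bounded complex with cohomology in $\fl^\Z R$, so $\tau^{\geq m}X'\in\D^b(\fl^\Z R)\subset\per^\Z R$, the last inclusion being supplied by the homological smoothness of $R$.

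Now take $n<0$ and set $m:=-n\geq 1$. For the standard $t$-structure on $\D(\Md^\Z R)$ we have $\tau^{<m}X'\in\D^{\leq m-1}$ and $Y[n]\in\D^{m}$, whence $\Hom_{\D(\Md^\Z R)}(\tau^{<m}X',Y[n])=0$; this forces $g$ to factor canonically as $X'\to\tau^{\geq m}X'\to Y[n]$ in $\per^\Z R$ via the truncation triangle $\tau^{<m}X'\to X'\to\tau^{\geq m}X'$. Since $\tau^{\geq m}X'$ lies in $\D^b(\fl^\Z R)$, the morphism $g$ vanishes in $\qper R$, and consequently $f=g\circ s^{-1}=0$, as desired. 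It is worth noting that the hypothesis $\Hom_R^\Z(L,Y)=0$ for $L\in\fl^\Z R$ does not appear to be needed for this argument, so I would double-check against the authors' intended proof in case a cleaner alternative route is in mind.
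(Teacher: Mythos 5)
Your proof is correct, and it takes a genuinely different route from the paper's. The paper also starts from a roof $X\xleftarrow{s}Z\to Y[l]$ with $\cone s\in\D^b(\fl^\Z\!R)$, but instead of truncating the apex it normalizes the denominator: in two steps (first replacing $\cone s$ by its non-positive truncation, using $\Hom(X,(\cone s)^{>0})=0$ since $X$ is a module; then passing to the degree-zero cohomology of what remains, using $\Hom(M^{<0}[-1],Y[l])=0$) it reduces to a roof whose denominator has cone a single module $L\in\fl^\Z\!R$. At that point $\Hom_{\per^\Z\!R}(L[-1],Y[l])=\Ext^{l+1}(L,Y)$ vanishes — automatically for $l<-1$, and by the hypothesis $\Hom_R^\Z(L,Y)=0$ precisely when $l=-1$ — so the numerator lifts to $\Hom_{\per^\Z\!R}(X,Y[l])=0$. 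Your argument instead truncates the apex $X'$ with respect to the standard $t$-structure on $\D(\Md^\Z\!R)$, notes via the long exact sequence that $H^j(X')\in\fl^\Z\!R$ for all $j\neq0$, and factors the numerator through $\tau^{\geq -n}X'\in\D^b(\fl^\Z\!R)$ by the orthogonality $\Hom(\D^{\leq m-1},\D^{\geq m})=0$. This is shorter, and your observation that the hypothesis on $Y$ is superfluous is correct: it is genuinely used in the paper's proof, but only because of the particular normal form (cone concentrated in a single degree) that the paper reduces to; your factorization bypasses that step entirely. Both arguments rely on the same underlying ingredients, namely (R2)/smoothness to guarantee $\D^b(\fl^\Z\!R)\subset\per^\Z\!R$ so that the truncated piece lives in the subcategory being annihilated, and the standard $t$-structure orthogonality in the ambient derived category.
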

\begin{proof}
	Let $l<0$ and let a morphism $X\to Y[l]$ in $\qper R$ be presented by a diagram $X\xleftarrow{s}Z\xrightarrow{}Y[l]$ in $\per R$ with $\cone s\in\D^b(\fl^\Z\!R)$. We claim that we can replace $s$ by a morphism whose cone lies in $\fl^\Z\!R$.
	\[ \xymatrix@!R=2mm@!C=2mm{
		&&Z'\ar@{-->}[dl]_-{s'}\ar[dr]&&\\
		&X\ar@{-->}[dl]\ar@{=}[dr]&&Z\ar[dl]_-s\ar[dr]&\\
		L^{\leq0}\ar[dr]&&X\ar[dl]&&Y[l]\\
		&L&&&}\qquad\qquad
	\xymatrix@!R=2mm@!C=2mm{
		&&Z'\ar@{=}[dr]\ar[dl]&&\\
		&Z''\ar[dr]_-{s''}\ar[dl]\ar@{-->}@/^/[drrr]&&Z'\ar[dl]^-{s'}\ar[dr]&\\
		M^{<0}\ar[dr]&&X\ar[dr]\ar[dl]&&Y[l]\\
		&M\ar[dr]&&H^0M\ar@{=}[dl]&\\
		&&H^0M&& } \]
	
	First complete $s$ to a triangle $Z\to X\xrightarrow{s}L\to Z[1]$ and consider the truncation $L^{\leq0}\to L\to L^{>0}\to L^{\leq0}[1]$ with respect to the standart $t$-structure. Since $\Hom_{\per^\Z\!R}(X,L^{>0})=0$ there is a map $X\to L^{\leq0}$ and as in the left diagram above, the original moprhism equals the morphism $X\xleftarrow{s'}Z'\to Z\to Y[l]$ with $\cone s'=L^{\leq0}$ concentrated in (cohomological) degree $\leq0$.
	
	Next consider the truncation of $M:=L^{\leq0}$ along the standard $t$-structure: $M^{<0}\to M\to H^0M\to M^{<0}[1]$. By the octahedral axiom we find a commutative diagram in the above right. Now we have $\Hom_{\per^\Z\!R}(M^{<0}[-1],Y[l])=0$, the morphism $Z'\to Y$ factors through $Z''$. Then the diagram $X\xleftarrow{s''}Z'\xrightarrow{}Y[l]$ with $\cone s''=H^0M\in\fl^\Z\!R$ gives the same morphism in $\qper R$ as the original one, which establishes our claim.
	
	Now let $X\to Y[l]$ be a morphism in $\qper R$ presented by the diagram $X\xleftarrow{s}Z\xrightarrow{}Y[l]$ in $\per R$ with $L:=\cone s\in\fl^\Z\!R$. Since $\Hom_{\per^\Z\!R}(L[-1],Y[l])=0$ by the assumption on $Y$, the map $Z\to Y[l]$ factors through $s$, hence we have $\Hom_{\qper R}(X,Y[l])\twoheadleftarrow\Hom_{\per^\Z\!R}(X,Y[l])=0$.
\end{proof}
Now we are ready to give our proof.
\begin{proof}[Proof of Proposition \ref{mm}]
	(\ref{tilt})  We first show the vanishing of extensions, that is, $\Hom_{\qper R}(T,T[i])=0$ for all $i\neq0$. The case $i<0$ follows from Lemma \ref{nex}. Also, when $i>d$ we have $\Hom_{\qper R}(T,T[i])=D\Hom_{\qper R}(T,T(a)[d-i])$ by Serre duality, thus $0$ again by Lemma \ref{nex}. Therefore it remains to consider the case $0\leq i\leq d$. Note that in $\per^\Z\!R$, we have $R(l)=(R(l)[-l])[l] \in \M[l]$, thus $T \in \M\ast\cdots\ast\M[a-1]$. Therefore, $T[i]$ lies in the fundamental domain $\M \ast \cdots\ast\M[d+a-1]$ for all $0\leq i \leq d$. This shows $\Hom_{\qper R}(T,T[i])=\Hom_{\per^\Z\!R}(T,T[i])=0$ for $0<i\leq d$. We next show that $T$ generates $\qper R$. Consider the minimal projective resolution $0 \to P_{d+1} \to \cdots \to P_0 \to R_0 \to 0$ of the graded $R$-module $R_0$. We know that $P_0=R$. Since $R$ is twisted $(d+1)$-CY of $a$-invariant $a$, we see that $P_{d+1}=R(a)$ and $P_i \in \add\{R(l)\mid 0<l<a \}$ for all $0<i<d+1$. (See the proofs of \cite[Proposition 4.3]{MM} or \cite[Lemma 3.8]{AIR}.) Therefore $R(a) \in \thick T$ in $\qper R$ and we see inductively that $T$ generates $\qper R$.\\
	(\ref{end})  We deduce by (\ref{tilt}) that there exists a triangle equivalence $\qper R \simeq \D^b(\md A)$. Comparing the Serre functor of each category, we have $(-)_\a(a)[d] \leftrightarrow \nu:=-\otimes^L_ADA$, thus $(-)_\a(a)\leftrightarrow \nu_d:=-\otimes^L_ADA[-d]$.
	
	We first show $\nu_d^{-i}A \in \md A$ for all $i\geq0$. For this we have to show $\Hom_{\D^b(\md A)}(A,\nu_d^{-i}A[l])=0$ for all $l\neq0$ and $i\geq0$. By the triangle equivalence, this is to show $\Hom_{\qper R}(T,T(-ia)[l])=0$ since $R_\a\simeq R$, or $\Hom_{\qper R}(T(ia),T[l])=0$. By Lemma \ref{nex} and the Serre duality we may assume $0<l\leq d$.
	To prove this we apply Theorem \ref{dact}(2).
	Note that $T(ia)\in \M\ast\cdots$ for all $i\geq0$ and $T[l]\in \cdots\ast\M[d+a-1]$ for all $l\leq d$. By Theorem \ref{dact}(2), we deduce $\Hom_{\qper R}(T(ia),T[l])=\Hom_{\per^\Z\!R}(T(ia),T[l])$, which is zero for $l\neq0$.
	
	Finally, we prove $\gd A\leq d$. Since $\gd A$ is certainly finite, it is sufficient to show $\Ext_A^l(DA,A)=0$ for $l>d$. For any $i>0$, we have $\Ext_A^{d+i}(DA,A)=\Hom_{\D^b(A)}(\nu A,A[d+i])=\Hom_{\D^b(A)}(A,\nu_d^{-1}A[i])$, which is $0$ by the previous claim.
\end{proof}

\section{On silting-cluster tilting correspondence}
Throughout this appendix, let $d$ be a positive integer, and let $\G$ be a bimodule $(d+1)$-CY DG algebra over a field $k$ such that $H^i\G=0$ for all $i>0$ and $H^0\G$ is finite dimensional. Amiot \cite{Am09} and Guo \cite{Guo} introduced the {\it cluster category} as 
\[ \C(\G):=\per\G/\D^b(\G). \]
Then $\G$ is a silting object in $\per\G$ and a $d$-cluster tilting object in $\C(\G)$ \cite[Theorem 2.1]{Am09}\cite[Theorem 2.2]{Guo}; see Theorem \ref{IYa}. More generally, the functor $\per\G\to\C(\G)$ sends each silting objet in $\per\G$ to a $d$-cluster tilting object in $\C(\G)$ \cite[Corollary 5.12]{IYa1}. Therefore we have a map
\begin{equation}\label{sct}
	\xymatrix{\silt\G\ar[r]& \ct{d}\G },
\end{equation}
where $\silt\G$ (resp. $\ct{d}\G$) is the set of isomorphism classes of silting objects in $\per\G$ (resp. $d$-cluster tilting objects in $\C(\G)$). The aim of this appendix is to discuss the following problem.
\begin{Q}
	How far is the map (\ref{sct}) being from bijective?
\end{Q}

A natural approach to study this question is to consider the full subcategory (called the {\it fundamental domain})
\[ \F=\mathcal{P}\ast\mathcal{P}[1]\ast\cdots\ast\mathcal{P}[d-1] \subset\per\G, \]
where $\mathcal{P}=\add\G\subset\per\G$. Then the composition $\F\subset\per\G\to\C(\G)$ is an additive equivalence (see Theorem \ref{IYa}). In particular the map (\ref{sct}) restricts to an injection
\begin{equation}\label{fsct}
	\xymatrix{\silt^\F\!\G\ar@{^(->}[r]& \ct{d}\G },
\end{equation}
where $\silt^\F\!\G$ is the subset of $\silt\G$ consisting of silting objects contained in $\F$.
\begin{Def}
	We call $\G$ {\it mild} if the map (\ref{fsct}) is bijective.
\end{Def}
For $d=1$ and $2$, $\G$ is always mild \cite{KN} (see also \cite[Corollary 5.12]{IYa1}). It was asked in \cite[Conjecture 5.14]{IYa1} if $\G$ is always mild for $d\geq3$. We will show that this is far from being true. In fact, under the assumption that $H^0\G=k$, we will characterize mildness as follows.
\begin{Thm}
	Let $\G$ be a $(d+1)$-Calabi-Yau DG $k$-algebra such that $H^i\G=0$ for $i>0$ and $H^0\G=k$. Then the following are equivalent.
	\begin{enumerate}
		\renewcommand{\labelenumi}{(\alph{enumi})}
		\renewcommand{\theenumi}{\alph{enumi}}
		\item\label{m} $\G$ is mild.
		\item\label{dco} $\ct{d}\G=\{\G,\ldots,\G[d-1]\}$.
		\item\label{qis} $\G$ is quasi-isomorphic to $k[x]$ with $\deg x=-d$ and with zero differentials.
		\item\label{tai} $\C(\G)$ is triangle equivalent to $\C_d(k)$, the $d$-cluster category of $k$.
	\end{enumerate}
\end{Thm}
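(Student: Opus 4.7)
The plan is to prove the cyclic chain $(c)\Rightarrow(d)\Rightarrow(b)\Rightarrow(a)\Rightarrow(c)$, with the last implication carrying the main content.

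For $(c)\Rightarrow(d)$, if $\G\simeq k[x]$ with $\deg x=-d$ and trivial differential, then $\G$ is a model for the $(d+1)$-Calabi--Yau completion $\Pi_{d+1}(k)=T_k^L(k[d])$ of the field $k$ in the sense of \cite{Ke11}, so by definition $\C(\G)=\C_d(k)$. For $(d)\Rightarrow(b)$, I work in $\C_d(k)$ directly: the exact triangle $\G[d]\xrightarrow{x}\G\to k\to\G[d+1]$ in $\per\G$ has cofiber $k=H^0\G\in\D^b(\G)$, and so becomes an isomorphism $\G[d]\simeq\G$ in $\C(\G)$; since $\G$ generates $\per\G$, the indecomposables of $\C(\G)$ are exactly $\G,\G[1],\dots,\G[d-1]$, pairwise non-isomorphic via the vanishing $\Hom_{\C(\G)}(\G,\G[j])=0$ for $0<j<d$ coming from the $d$-cluster-tilting property. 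Each shift is a $d$-cluster tilting object, giving $\ct{d}\G=\{\G[i]:0\le i\le d-1\}$. For $(b)\Rightarrow(a)$, each $\G[i]$ with $0\le i\le d-1$ lies in $\F$ and is silting in $\per\G$ (since $\Hom_{\per\G}(\G[i],\G[i][j])=H^j\G=0$ for $j>0$), and its image under \eqref{fsct} is itself; by (b) these images exhaust $\ct{d}\G$, so the injection \eqref{fsct} is also surjective.

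For $(a)\Rightarrow(c)$ I proceed in two stages. Stage 1 reduces (a) to (b). Since $H^0\G=k$ has a unique simple, every basic silting object of $\per\G$ has a single indecomposable summand, and starting from the silting object $\G$ each left silting mutation at its one summand produces only shifts $\G[n]$. By connectedness of the silting mutation graph of $\per\G$ in this local setting, every silting object is of the form $\G[n]$, so $\silt^\F\G=\{\G[i]:0\le i\le d-1\}$ has exactly $d$ elements. Mildness (a) then forces $\ct{d}\G$ to have the same cardinality, and since it already contains these $d$ classes we obtain (b).

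Stage 2 reconstructs $\G$ from (b). The object $\G[d]$, viewed in $\C(\G)$, must equal some $\G[j]$ with $0\le j\le d-1$, and the vanishing $\Hom_{\C(\G)}(\G,\G[j])=0$ for $0<j<d$ forces $\G[d]\simeq\G$. This isomorphism lifts to a morphism $f\colon\G[d]\to\G$ in $\per\G$ with $\cone(f)\in\D^b(\G)$, equivalent to a nonzero class $x\in H^{-d}\G$. Iterating, the morphisms $x^n\colon\G[nd]\to\G$ have cones in $\D^b(\G)$; a long-exact-sequence analysis of these triangles, using $H^i\G=0$ for $i>0$, shows inductively that $\dim_k H^{-nd}\G=1$ (spanned by $x^n$) and $H^i\G=0$ for $i\notin d\Z_{\le0}$. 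The DG algebra map $k[x]\to\G$ sending $x$ to $x$ is then a quasi-isomorphism, yielding (c). The main obstacle is the silting classification in Stage 1, which requires genuine control over the whole silting mutation graph of $\per\G$ beyond the mutations starting from $\G$ itself.
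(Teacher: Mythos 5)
Your overall architecture matches the paper's (the cycle $(c)\Rightarrow(d)\Rightarrow(b)\Rightarrow(a)$ plus a reconstruction of $\G$ from periodicity), and the steps $(c)\Rightarrow(d)$ and $(b)\Rightarrow(a)$ are fine. But Stage 1 of your $(a)\Rightarrow(c)$ has a genuine gap, which you yourself flag: you classify $\silt\G$ by iterating silting mutation from $\G$ and then appealing to ``connectedness of the silting mutation graph''. That connectedness is an open problem in general and is not something you may assume; nothing in your argument rules out a silting object in $\F$ lying in a different mutation component. The paper's route avoids mutation entirely: since $H^0\G=k$, the Grothendieck group of $\per\G$ has rank one, so every basic silting object is indecomposable (all silting objects have the same number of indecomposable summands), and an indecomposable silting object over such a connective ``local'' DG algebra is forced to be a shift of $\G$. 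Hence $\silt\G=\{\G[i]\mid i\in\Z\}$ and $\silt^\F\G=\{\G,\dots,\G[d-1]\}$ directly, which makes $(a)\Leftrightarrow(b)$ immediate. You should replace your Stage 1 by this argument.

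Two smaller points. First, in Stage 2 your ``long-exact-sequence analysis'' of the cones of $x^n$ is not enough as stated: knowing $\cone(x)\in\D^b(\G)$ does not by itself kill $H^{-j}\G$ for $0<j<d$, and you also need $\dim_kH^{-nd}\G\le 1$. The clean way, which is what the paper does, is to invoke Theorem \ref{IYa}(2): the quotient functor induces bijections $\Hom_{\per\G}(\G[i],\G)\to\Hom_{\C(\G)}(\G[i],\G)$ for all $i\ge0$, so $H^{-i}\G\cong\Hom_{\C(\G)}(\G[i],\G)$, and the cohomology of $\G$ is read off immediately from the $d$-periodicity of $\G$ in $\C(\G)$ together with the cluster-tilting vanishing; the nonvanishing of $x^n$ then follows because $x^n$ presents an isomorphism in $\C(\G)$. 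Second, your $(d)\Rightarrow(b)$ is written using the element $x$ and the triangle $\G[d]\xrightarrow{x}\G\to k$, which presupposes $(c)$; under $(d)$ alone you should instead argue that a triangle equivalence $\C(\G)\simeq\C_d(k)$ matches up $d$-cluster tilting objects, and in $\C_d(k)$ these are exactly the shifts of $k$, so $\ct{d}\G$ consists of the shifts of $\G$.
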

\begin{proof}
	(\ref{m})$\Leftrightarrow$(\ref{dco})  Since $H^0\G=k$, $\G$ is an indecomposable silting object in $\per\G$, thus $\silt\G=\{\G[i]\mid i\in\Z\}$. Then $\silt^\F\G=\{\G[i]\mid0\leq i\leq d-1\}$ and the desired equivalence follows.\\
	(\ref{dco})$\Rightarrow$(\ref{qis})  We first show that $\G$ is $d$-periodic in $\C(\G)$, that is, $\G\simeq\G[d]$ in $\C(\G)$. Suppose that $\ct{d}\G=\{\G[i]\mid0\leq i\leq d-1\}$. Then the $d$-cluster tilting object $\G[d]\in\C(\G)$ has to be isomorphic to $\G[i]$ for some $0\leq i\leq d-1$. The only possible $i$ is $0$ since $\Hom_{\C(\G)}(\G,\G[i])=0$ for $1\leq i\leq d-1$. Thus we have $\G\simeq\G[d]$ in $\C(\G)$.\\
	We next compute the cohomology of $\G$. By Theorem \ref{IYa}(2) the functor $\per\G\to\C(\G)$ induces bijections $\Hom_{\per\G}(\G[i],\G)\to\Hom_{\C(\G)}(\G[i],\G)$ for each $i\geq0$, thus $H^{-i}\G=\Hom_{\C(\G)}(\G[i],\G)$ for $i\geq0$. By periodicity of $\G$, this is $k$ if $i\mid d$, and $0$ if $i\mathrel{\not|}d$ since $\Hom_{\C(\G)}(\G,\G[i])=0$ for $0<i<d$. Now lift an isomorphism $\G[d]\to\G$ in $\C(\G)$ to a morphism $f\colon\G[d]\to\G$ in $\per\G$, and let $y\in Z^{-d}\G$ give the morphism $f$ in $H^{-d}\G$. Then we obtain a homomorphism $k[x]\to\G$ of DG algebras, taking $x$ to $y$. Consider the power $y^n\in Z^{nd}\G$ of $y$. It presents the morphism $\G[nd]\xrightarrow{f[(n-1)d]}\G[(n-1)d]\to\cdots\xrightarrow{f[1]}\G[1]\xrightarrow{f}\G$ in $\per\G$ which is an isomorphism in $\C(\G)$. Therefore $y^n$ is non-zero in $H^{-nd}\G$. We conclude that $k[x]\to\G$ is a quasi-isomorphism.\\
	(\ref{qis})$\Rightarrow$(\ref{tai})  Since $k[x]$ with $\deg x=-d$ is the derived $(d+1)$-preprojective algebra of $k$, the assertion follows.\\
	(\ref{tai})$\Rightarrow$(\ref{dco})  If $\C(\G)\simeq\C_d(k)$ then clearly $\ct{d}\G=\{\G[i]\mid0\leq i\leq d-1\}$.
\end{proof}

\begin{Ex}
	We regard the polynomial ring $\G=k[x_1,\ldots,x_n]$ as a DG algebra with $\deg x_i=-a_i<0$ and zero differential. We assume that $a=\sum_{i=1}^na_i$ is odd. Then $\G$ is an $(n+a)$-CY DG algebra by Theorem \ref{dgcy}. By (\ref{qis}) of the above theorem we immdediately deduce the following.
	\begin{Cor}
		$\G$ is mild if and only if $n=1$.
	\end{Cor}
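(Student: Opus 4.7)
The plan is to apply part (\ref{qis}) of the preceding theorem. Since $\G = k[x_1,\ldots,x_n]$ is bimodule $(n+a)$-CY by Theorem \ref{dgcy}, the relevant dimension is $d = n+a-1$. The hypotheses of the theorem, $H^{>0}\G = 0$ and $H^0\G = k$, are immediate from the definition: every generator is placed in strictly negative degree and the differential vanishes. Hence $\G$ is mild if and only if $\G$ is quasi-isomorphic as a DG $k$-algebra to $k[x]$ with $\deg x = -d = -(n+a-1)$ and zero differential.

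For $n = 1$, the CY dimension reduces to $d = a_1$, and our DG algebra $\G = k[x_1]$ with $\deg x_1 = -a_1 = -d$ is already of the required form. Hence $\G$ is mild.

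For $n \geq 2$, I will derive a contradiction from the assumption that $\G$ is quasi-isomorphic to $k[x]$ with $\deg x = -d$. Since both DG algebras are formal and concentrated in nonpositive degrees, any quasi-isomorphism would induce an isomorphism of the underlying graded vector spaces. Now the cohomology of $k[x]$ with $\deg x = -d$ is one-dimensional in each degree of the form $-md$ with $m \geq 0$, and zero in every other degree. On the other hand, $x_i \in H^{-a_i}\G$ is nonzero for each $i$, and the estimate
\[ d = n - 1 + \textstyle\sum_j a_j \geq n - 1 + a_i \geq 1 + a_i > a_i > 0, \]
valid whenever $n \geq 2$, shows that $-a_i$ lies strictly between $0$ and $-d$, hence is not a nonpositive multiple of $-d$. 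This is the desired contradiction, so $\G$ is not mild.

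No genuine obstacle arises; the only point requiring care is the bookkeeping between the CY dimension $d+1$ used in the theorem and the dimension $n+a$ supplied by Theorem \ref{dgcy}. Once $d = n+a-1$ is fixed, the conclusion reduces to an elementary comparison of Hilbert series.
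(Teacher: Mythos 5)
Your proof is correct and is exactly the argument the paper intends: the paper simply says the corollary follows ``immediately'' from criterion (c) of the theorem, and your comparison of the cohomology of $\G$ with that of $k[x]$, $\deg x=-(n+a-1)$, via the observation that $0<a_i<d$ for $n\geq2$, is the right way to fill that in. The bookkeeping $d=n+a-1$ and the verification for $n=1$ are both handled correctly.
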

\end{Ex}
\end{appendix}

\thebibliography{BMRRT}
\bibitem[AI]{AI} T. Aihara and O. Iyama, {Silting mutation in triangulated categories}, J. London Math. Soc. 85 (2012) no.3, 633-668.
\bibitem[Am]{Am09} C. Amiot, {Cluster categories for algebras of global dimension 2 and quivers with potentional}, Ann. Inst. Fourier, Grenoble 59, no.6 (2009) 2525-2590.
\bibitem[AIR]{AIR} C. Amiot, O. Iyama, and I. Reiten, {Stable categories of Cohen-Macaulay modules and cluster categories}, Amer. J. Math, 137 (2015) no.3, 813-857.
\bibitem[AIRT]{AIRT} C. Amiot, O. Iyama, I. Reiten, and G. Todorov, {Preprojective algebras and $c$-sortable words}, Proc. Lond. Math. Soc. (3) 104 (2012) no. 3, 513-539.
\bibitem[AO]{AO} C. Amiot and S. Oppermann, {Higher preprojective algebras and stably Calabi-Yau properties}, Math. Res. Lett. 21 (2014) no. 4, 617-647.
\bibitem[ART]{ART} C. Amiot, I. Reiten, and G. Todorov, {The ubiquity of generalized cluster categories}, Adv. Math. 226 (2011) no. 4, 3813-3849.
\bibitem[AS]{AS} M. Artin and W. Schelter, {Graded algebras of global dimension 3}, Adv. Math. 66 (1987), no. 2, 171-216.
\bibitem[AZ]{AZ} M. Artin and J. J. Zhang, {Noncommutative projective schemes}, Adv. Math. 109 (1994) 228-287.
\bibitem[BBD]{BBD} A. A. Beilinson, J. Bernstein, and P. Deligne, {Faisceaux pervers}, in: Analysis and topology on singular spaces I, Luminy, 1981, Ast\'erisque 100 (1982) 5-171.
\bibitem[Boc1]{Boc08} R. Bocklandt, {Graded Calabi Yau algebras of dimension $3$}, J. Pure Appl. Algebra 212 (2008), no. 1, 14-32.
\bibitem[Boc2]{Boc} R. Bocklandt, {Consistency conditions for dimer models}, Glasgow Math. J. 54 (2012) 429-447.
\bibitem[BSW]{BSW} R. Bocklandt, T. Schedler, and M. Wemyss, {Superpotentials and higher order derivations}, J. Pure Appl. Algebra 214 (2010), no. 9, 1501–1522.
\bibitem[BV]{BV} A. Bondal and M. Van den Bergh, {Generators and representability of functors in commutative and noncommutative geometry}, Mosc. Math. J. 3 (2003), no. 1, 1-36, 258.
\bibitem[Bon]{Bo} M. V. Bondarko, {Weight structures vs. $t$-structures; weight filtrations, spectral sequences, and complexes (for motives and in general)}, J. K-Theory 6 (2010) no.3, 387-504.
\bibitem[BS]{BS} T. Bridgeland and D. Stern, {Helices on del Pezzo surfaces and tilting Calabi-Yau algebras}, Adv. Math. 224 (2010), no. 4, 1672-1716.
\bibitem[Bri]{Bri} J. Brightbill, {The Differential Graded Stable Category of a Self-Injective Algebra}, arXiv:1811.08992.
\bibitem[Bro]{Br} N. Broomhead, {Dimer models and Calabi-Yau algebras}, Mem. Amer. Math. Soc. 215 (2012) no.1011, viii+86.
\bibitem[Bu]{Bu} R. O. Buchweitz, {Maximal Cohen-Macaulay modules and Tate-cohomology over Gorenstein rings}, unpublished manuscript.
\bibitem[BH]{BuH} R. O. Buchweitz and L. Hille, {Higher Representation-Infinite Algebras from Geometry}, Oberwolfach Rep. 11 (2014), 466-469.
\bibitem[BIRS]{BIRSc} A. B. Buan, O. Iyama, I. Reiten, and J. Scott, {Cluster structures for $2$-Calabi-Yau categories and unipotent groups}, Compos. Math. 145 (2009) 1035-1079.
\bibitem[BMRRT]{BMRRT} A. B. Buan, R. Marsh, M. Reineke, I. Reiten, and G. Todorov, {Tilting theory and cluster combinatorics}, Adv. Math. 204 (2006) 572-618.
\bibitem[DWZ]{DWZ} H. Derksen, J. Weyman, and A. Zelevinsky, {Quivers with potentials and their representations I. Mutations}, Selecta Math. (N.S.) 14 (2008), no. 1, 59-119.
\bibitem[FZ]{CA1} S. Fomin and A. Zelevinsky, {Cluster algebras. I. Foundations}, J. Amer. Math. Soc. 15 (2002), no. 2, 497-529.
\bibitem[GLS]{GLS} C. Geiss, B. Leclerc, and J. Schr\"oer, {Rigid modules over preprojective algebras}, Invent. math. 165 (2006) 589-632.
\bibitem[Gi]{G} V. Ginzburg, {Calabi-Yau algebras}, arXiv:0612139.
\bibitem[Gu]{Guo} L. Guo, {Cluster tilting objects in generalized higher cluster categories}, J. Pure Appl. Algebra 215 (2011), no. 9, 2055-2071.
\bibitem[HM]{HM} J. He and X. Mao, {Connected cochain DG algebras of Calabi-Yau dimension $0$}, Proc. Amer. Math. Soc. 145 (2017), no. 3, 937-953.
\bibitem[HIO]{HIO} M. Herschend, O. Iyama, and S. Oppermann, {$n$-representation infinite algebras}, Adv. Math. 252 (2014) 292-342.
\bibitem[HJ]{HJ} T. Holm and P. J\o rgensen, {Realizing higher cluster categories of Dynkin type as stable module categories}, Q. J. Math. 64 (2013), no. 2, 409-435.
\bibitem[I1]{Iy07a} O. Iyama, {Higher-dimensional Auslander-Reiten theory on maximal orthogonal subcategories}, Adv. Math. 210 (2007) 22-50.
\bibitem[I2]{Iydc} O. Iyama, {$d$-Calabi-Yau algebras and $d$-cluster tilting subcategories}, unpublished manuscript, available at {https://www.math.nagoya-u.ac.jp/\verb|~|iyama/ctilt2.pdf}.
\bibitem[IO]{IO} O. Iyama and S. Oppermann, {Stable categories of higher preprojective algebras}, Adv. Math. 244 (2013), 23-68.
\bibitem[IR]{IR} O. Iyama and I. Reiten, {Fomin-Zelevinsky mutation and tilting modules over Calabi-Yau algebras}, Amer. J. Math. 130 (2008), no. 4, 1087-1149.
\bibitem[IYa1]{IYa1} O. Iyama and D. Yang, {Silting reduction and Calabi-Yau reduction of triangulated categories}, Trans. Amer. Math. Soc. 370 (2018) no.11,  7861-7898.
\bibitem[IYa2]{IYa2} O. Iyama and D. Yang, {Quotients of triangulated categories and equivalences of Buchweitz, Orlov and Amiot--Guo--Keller}, to appear in Amer. J. Math, arXiv:1702.04475.
\bibitem[IYo]{IYo} O. Iyama and Y. Yoshino, {Mutation in triangulated categories and rigid Cohen-Macaulay modules}, Invent. math. 172, 117-168 (2008)
\bibitem[KY]{KY} M. Kalck and D. Yang, {Derived categories of graded gentle one-cycle algebras}, J. Pure Appl. Algebra 222 (2018) 3005-3035.
\bibitem[Ke1]{Ke94} B. Keller, {Deriving DG categories}, Ann. scient. \'Ec. Norm. Sup. (4) 27 (1) (1994) 63-102.
\bibitem[Ke2]{Ke05} B. Keller, {On triangulated orbit categories}, Doc. Math. 10 (2005) 551-581.
\bibitem[Ke3]{Ke06} B. Keller, {On differential graded categories}, Proceedings of the International Congress of Mathematicians, vol. 2, Eur. Math. Soc, 2006, 151-190.
\bibitem[Ke4]{Ke08} B. Keller, {Calabi-Yau triangulated categories}, in: {Trends in representation theory of algebras and related topics}, EMS series of congress reports, European Mathematical Society, Z\"{u}rich, 2008.
\bibitem[Ke5]{Ke10} B. Keller, {Cluster algebras, quiver representations and triangulated categories}, in: {Triangulated categories}, London Math. Soc. Lecture Note Ser. 375, Cambridge Univ. Press, Cambridge, 2010.
\bibitem[Ke6]{Ke11} B. Keller, {Deformed Calabi-Yau completions}, with an appendix by M. Van den Bergh, J. Reine Angew. Math. 654 (2011) 125-180.
\bibitem[KMV]{KMV} B. Keller, D. Murfet, and M. Van den Bergh, {On two examples of Iyama and Yoshino}, Compos. Math. 147 (2011) 591-612.
\bibitem[KN]{KN} B. Keller and P. Nicolas, {Cluster hearts and cluster tilting objects}, in preparation.
\bibitem[KR1]{KR} B. Keller and I. Reiten, {Cluster tilted algebras are Gorenstein and stably Calabi-Yau}, Adv. Math. 211 (2007) 123-151.
\bibitem[KR2]{KRac} B. Keller and I. Reiten, {Acyclic Calabi-Yau categories}, with an appendix by M. Van den Bergh, Compos. Math. 144 (2008) 1332-1348.
\bibitem[KS]{KS08} M. Kontsevich and Y. Soibelman, {Stability structures, Donaldson–Thomas invariants and cluster transformations}, arXiv:0811.2435.
\bibitem[MGYC]{MGYC} X. Mao, X. Gao, Y. Yang, and J. Chen, {DG polynomial algebras and their homological properties}, Sci. China Math. 62 (2019), no. 4, 629-648.
\bibitem[Mi]{Mi12} H. Minamoto, {Ampleness of two-sided tilting complexes}, Int. Math. Res. Not. (2012) no. 1, 67-101.
\bibitem[MM]{MM} H. Minamoto and I. Mori, {The structure of AS-Gorenstein algebras}, Adv. Math. 226 (2011) 4061-4095.
\bibitem[MY]{MY} H. Minamoto and K. Yamaura, {Happel's functor and homologically well-graded Iwanaga-Gorenstein algebras}, arXiv:1811.08036.
\bibitem[N]{Ne92} A. Neeman, {The connection between the K-theory localization theorem of Thomason, Trobaugh andYao and the smashing subcategories of Bousfield and Ravenel}, Ann. scient. \'Ec. Norm. Sup. (4) 25 (5) (1992) 547-566.
\bibitem[P]{P} D. Pauksztello, {Compact corigid objects in triangulated categories and co-t-structures}, Cent. Eur. J. Math. 6 (2008), no. 1, 25-42.
\bibitem[RR]{RR} M. L. Reyes and D. Rogalski, {A twisted Calabi-Yau toolkit}, arXiv:1807.10249.
\bibitem[TV]{TV} L. de Thanhoffer de V\"{o}lcsey and M. Van den Bergh, {Explicit models for some stable categories of maximal Cohen-Macaulay modules}, Math. Res. Lett. 23 (2016), no. 5, 1507-1526.
\bibitem[V]{VdB15} M. Van den Bergh, {Calabi-Yau algebras and superpotentials}, Selecta Math. (N.S.) 21 (2015), no. 2, 555-603.
\bibitem[Y]{Ye} A. Yekutieli, {Derived categories}, Cambridge Studies in Advanced Mathematics, 183. Cambridge University Press, Cambridge, 2020.
\bibitem[YZ]{YZ} A. Yekutieli and J. J. Zhang, {Homological transcendence degree}, Proc. London Math. Soc. (3) 93 (2006), no. 1, 105-137.
\bibitem[Z]{Zh} J. Zhang, {Non-Noetherian regular rings of dimension 2}, Proc. Amer. Math. Soc. 126 (1998), no. 6, 1645-1653.
\end{document}